\documentclass[11pt,reqno]{amsart}

\textheight=22truecm
\textwidth=17truecm
\voffset=-1cm
\hoffset=-2cm

\usepackage{amsmath, amsthm, amssymb}
\usepackage{amsfonts}

\usepackage{hyperref}

\usepackage[ansinew]{inputenc}
\usepackage[dvips]{epsfig}
\usepackage{graphicx}
\usepackage[english]{babel}

\usepackage{thmtools}
\theoremstyle{plain}
\declaretheorem[title=Theorem, parent=section]{theorem}
\declaretheorem[title=Lemma,sibling=theorem]{lemma}
\declaretheorem[title=Proposition,sibling=theorem]{proposition}
\declaretheorem[title=Corollary,sibling=theorem]{corollary}

\theoremstyle{definition}
\declaretheorem[title=Definition,sibling=theorem]{definition}
\declaretheorem[title=Remark,sibling=theorem]{remark}
\declaretheorem[title=Remark, numbered=no]{remark*}
\declaretheorem[title=Example, sibling=theorem]{example}
\declaretheorem[title=Assumption, numbered=no]{assumption*}

\numberwithin{equation}{section}

\usepackage[backgroundcolor=white, bordercolor=blue,
linecolor=blue]{todonotes}

\parindent0ex 
\parskip1ex 

\usepackage{dsfont}
\usepackage{bbm}

\newcommand{\N}{\mathbb{N}}
\newcommand{\R}{\mathbb{R}}

\newcommand{\cP}{\mathcal{P}}

\newcommand{\eps}{\varepsilon}

\newcommand{\1}{\mathbbm{1}}

\DeclareMathOperator{\dist}{dist}
\DeclareMathOperator{\sgn}{sgn}

\DeclareMathOperator{\supp}{supp}

\DeclareMathOperator*{\osc}{osc}

\renewcommand{\d}{\textnormal{\,d}}

\newcommand{\average}{{\mathchoice {\kern1ex\vcenter{\hrule height.4pt
width 6pt depth0pt} \kern-9.7pt} {\kern1ex\vcenter{\hrule
height.4pt width 4.3pt depth0pt} \kern-7pt} {} {} }}
\newcommand{\dashint}{\average\int}

\begin{document}
\allowdisplaybreaks
\title[Optimal regularity for kinetic Fokker-Planck equations in domains]{Optimal regularity for kinetic Fokker-Planck equations\\ in domains}

\author{Xavier Ros-Oton}
\author{Marvin Weidner}

\address{ICREA, Pg. Llu\'is Companys 23, 08010 Barcelona, Spain \& Universitat de Barcelona, Departament de Matem\`atiques i Inform\`atica, Gran Via de les Corts Catalanes 585, 08007 Barcelona, Spain \& Centre de Recerca Matem\`atica, Barcelona, Spain}
\email{xros@icrea.cat}
\urladdr{https://www.ub.edu/pde/xros/}

\address{Departament de Matem\`atiques i Inform\`atica, Universitat de Barcelona, Gran Via de les Corts Catalanes 585, 08007 Barcelona, Spain}
\email{mweidner@ub.edu}
\urladdr{https://sites.google.com/view/marvinweidner/}

\keywords{kinetic, Fokker-Planck, regularity, boundary, specular reflection, Kolmogorov equation}

\subjclass[2020]{35Q84, 35B65, 82C40}

\allowdisplaybreaks

\begin{abstract}
We study the smoothness of solutions to linear kinetic Fokker-Planck equations in domains $\Omega\subset \R^n$ with specular reflection condition, including Kolmogorov's equation $\partial_t f +v\cdot\nabla_x f-\Delta_v f=h$.
Our main results establish the following:
\begin{itemize}
\item Solutions are always $C^\infty$ in $t,v,x$ away from the grazing set $\{x\in\partial\Omega,\ v\cdot n_x=0\}$.
\item They are $C^{4,1}_{\text{kin}}$ up to the grazing set.
\item This regularity is optimal, i.e. we show that that they are in general not $C^5_{\text{kin}}$.
\end{itemize}
These results show for the first time that solutions are classical up to boundary, i.e. $C^1_{t,x}$ and $C^2_v$.
\end{abstract}

\allowdisplaybreaks

\maketitle

\section{Introduction}

Kinetic theory emerged in the 19th century through the foundational works of Maxwell and Boltzmann to statistically describe large systems of particles via their distribution function $f(t,x,v)$, where $t\in (0,T)$ represents the time, $x\in \Omega\subset \R^n$ the position in space, and $v\in \R^n$ the velocity.
The Boltzmann equation is the most famous example of such an equation, and reads as
\[ \partial_t f + v\cdot \nabla_x f = Q(f,f) \quad \textrm{in}\quad (0,T)\times \Omega\times \R^n, \]
where $Q$ is the Boltzmann collision operator, a nonlinear and nonlocal operator acting in the $v$ variable.

Another well-known kinetic model is the so-called Kolmogorov equation, 
\begin{align}
\label{eq:Kolmogorov0}
\partial_t f + v \cdot \nabla_x f -\Delta_v f = h \quad \textrm{in}\quad (0,T)\times \Omega\times \R^n,
\end{align}
which is much simpler because it is both linear and local.
It belongs to the wider class of linear kinetic Fokker-Planck equations.
Here, we may assume $h\in C^\infty$, or even $h\equiv0$.

From the PDE perspective, an important property of kinetic equations is that they are degenerate in the sense that there is a transport term in the $x$ variable, combined with a diffusion term that acts only in the $v$ variable.
This makes the equation hypoelliptic.

A fundamental challenge in kinetic theory is to develop a well-posedness theory for these equations, which means to establish a priori estimates for solutions, ideally showing that they are $C^\infty$.
This great challenge has attracted a lot of attention in the last decades \cite{GS11,GHTT17,GS25,ImSi22,ISV24,HST25,FRW24}, and is also crucial in the study of convergence to equilibrium as $t\to\infty$ (see \cite{DV01,DV05}).

Of course, any kinetic PDE has to be supplemented with boundary conditions. 
The most natural boundary condition (see e.g. \cite{Vil02}) is the \emph{specular reflection}
\begin{equation}\label{specular-ref}
 f(t,x,\mathcal R_x v)=f(t,x,v) \quad \textrm{for}\quad x\in \partial\Omega, \qquad \text{ where } \quad \mathcal R_x v = v-2(v\cdot n_x)n_x,
\end{equation}
where $n_x$ is the outward unit normal vector at $x$.\footnote{The physical interpretation is clear: particles bounce back on the boundary wall with a post-collision angle equal to the pre-collision angle.}

There are of course other possible boundary conditions, including the in-flow, bounce-back, or diffuse condition. We will discuss them later on and focus for now on the most natural and well-known one, namely specular reflection.
 
Even in the simplest case of Kolmogorov's equation \eqref{eq:Kolmogorov0}-\eqref{specular-ref}, the regularity of solutions in domains is far from understood.
Solutions are known to be $C^\infty$ inside $\Omega$ \cite{Kol34}, however it is not even known if solutions are classical (i.e. $C^2$ in $v$, and $C^1$ in $t$ and $x$) up to the boundary or not.

The best known results in this direction are the following:
\begin{itemize}
\item When $\Omega=\{x_n>0\}$ and $h\equiv0$, solutions to  \eqref{eq:Kolmogorov0}-\eqref{specular-ref} are $C^\infty$ up to the boundary.

\item In general smooth domains $\Omega\subset \R^n$, solutions are $C^{0,\alpha}$ up to the boundary for some $\alpha\in(0,1)$ \cite{Sil22, Zhu22}. When $n=3$, they are $L^{\infty}_t C^{\alpha/3,\alpha}_{x,v}$ and $C^{1,\alpha}$ in $v$ for any $\alpha\in(0,1)$ \cite{DGY22}.

\item For other types of boundary conditions (in-flow), solutions are in general \emph{not} $C^1$ up to the boundary, not even in case of a flat boundary \cite{Guo95,GJW99,HJV14,HwPh17,HLW24}.

\item For the Boltzmann equation with cutoff in strictly convex domains of $\R^3$, solutions are in a weighted $C^1$ space, and are in general not $C^2$ \cite{GHTT17}.
\end{itemize}

The proofs of the estimates in \cite{DGY22,Sil22, Zhu22} (see also \cite{DGO22}) are all based on reflecting the solution (after flattening the boundary) and using interior regularity estimates. 
This technique is useful to get H\"older estimates, but does not seem to give higher order smoothness of solutions. 
In particular, it does not yield that solutions are classical up to the boundary, i.e. $C^1$ in $t,x$ and $C^2$ in $v$.

On the other hand, it is not clear at all if the existence of non-$C^\infty$ solutions is a particular phenomenon of the in-flow boundary condition (and of the cutoff Boltzmann equation), or if it also happens for the Kolmogorov equation (or for the Boltzmann equation without cutoff) with specular reflection.
In this direction, as explained above, solutions to the Kolmogorov equation \eqref{eq:Kolmogorov0} with specular reflection \eqref{specular-ref} are always $C^\infty$ in the special case of a flat boundary and $h\equiv0$. In line with this, the famous result of Desvillettes-Villani \cite{DV05} for the Boltzmann equation without cutoff assumes that solutions are $C^\infty$ up to the boundary.

Summarizing, the following problem has been open for a long time:
\[\begin{array}{c}
\textit{Are solutions to  \eqref{eq:Kolmogorov0}-\eqref{specular-ref} always $C^1_{x,t}\cap C^2_v$ up to the boundary? Are they $C^\infty$?}
\end{array}\]
This is the question we tackle in this paper.

Due to the scaling of the PDE, the most natural way to measure regularity for kinetic Fokker-Planck equations is the kinetic H\"older space $C^\beta_{\ell}$, which essentially consists of functions that are $C^\beta$ in $v$, $C^{\beta/3}$ in $x$, and $C^{\beta/2}$ in $t$ (see Subsection \ref{kinetic-dist} for the precise definition).
Thus, in order for solutions to \eqref{eq:Kolmogorov0} to be classical up to the boundary, we would like to prove that they belong to the space $C^3_{\ell}$.
Notice that, however, solutions are not even known to belong to $C^1_{\ell}$ so far.

\subsection{Main results}

Our main result for the Kolmogorov equation gives a definitive answer to the long-standing open question described above, and completely characterizes the smoothness of solutions.

We state our main result below in a simplified form. For a more general version, we refer to \autoref{thm:global-weighted-SR-reg} (see also \eqref{eq:weighted-Holder-spaces} for the definition of the weighted H\"older space $C^4_{\ell,4}$).

\begin{theorem}
\label{thm1}
Let $\Omega \subset \R^n$ be a bounded smooth domain, and let $h \in C^\infty((0,T) \times \Omega \times \R^n)$. Let $f$ be any solution of the Kolmogorov equation  \eqref{eq:Kolmogorov0} with specular reflection \eqref{specular-ref} and assume that both, $f$ and $h$ have fast decay as $|v|\to\infty$.

Let us denote $\gamma_0 = (0,T)\times \{x\in \partial\Omega, v\in \R^n : \ v\cdot n_x=0\}$.
Then, for any $[t_0,t_1] \subset (0,T)$,
\begin{itemize} 
\item $f$ is smooth away from the grazing set, i.e. $f\in C^\infty(([t_0,t_1]\times \overline\Omega\times \R^n)\setminus \gamma_0)$,
\item $f \in C^{4,1}_{\ell}([t_0,t_1]\times\overline\Omega \times \R^n)$, and more precisely we have
\[\|f\|_{C^{4,1}_{\ell}([t_0,t_1]\times\overline\Omega \times \R^n)} \leq C\left( \big\|(1+|v|)^p f\|_{L^1((0,T)\times \Omega\times \R^n)} + \|h\|_{C^4_{\ell,4}((0,T)\times \overline\Omega\times \R^n)} \right)\]
with $p=73+36n$ and $C$ depending only on $n$, $t_0$, $t_1$, and $\Omega$.
\end{itemize}

Moreover, this regularity is optimal, i.e. there exist bounded smooth domains $\Omega$ and functions $h\in C^\infty$ for which $f\notin C^5_{\ell}([t_0,t_1]\times\overline\Omega \times \R^n)$.
\end{theorem}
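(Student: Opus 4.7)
The three statements of the theorem split the proof into three blocks. As a preliminary step, I cover $\partial\Omega$ by finitely many charts and straighten each boundary piece by a smooth diffeomorphism $\Phi$ to the flat half-space. The pushed-forward equation is a Kolmogorov-type equation with variable coefficients encoding the curvature of $\partial\Omega$, with a modified specular reflection across $\{x_n=0\}$: the new reflection map equals $(v',v_n)\mapsto(v',-v_n)$ only to leading order, the nonlinear correction being driven by the second fundamental form. After this reduction, all the difficulty concentrates on the flattened grazing set $\{v_n=0\}$, which is where the regularity can break down.

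\emph{Smoothness away from $\gamma_0$.} At a boundary point $(t_0,x_0,v_0)$ with $v_0\cdot n_{x_0}\neq 0$ the specular condition can be read as a prescription of an even-type extension of $f$ across $\{x_n=0\}$ into $\{x_n<0\}$ through the local reflection map. Since $v_n$ is bounded away from zero on a small neighborhood, the transport field $v\cdot\nabla_x$ is transversal to the boundary, and the extended function solves a Kolmogorov equation with smooth coefficients on an open set in $\R\times\R^n\times\R^n$. The hypoellipticity theorem of Kolmogorov/H\"ormander applied to this extension then yields $C^\infty$ regularity, hence the first claim.

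\emph{Global $C^{4,1}_{\ell}$ estimate.} This is the main technical part. Starting from the H\"older regularity of \cite{Sil22, Zhu22}, I would run a Schauder bootstrap in the kinetic H\"older scale. Tangential derivatives, meaning derivatives in $t$, in tangential directions of $x$, and in all components of $v$ composed with the reflection symmetry, preserve the specular condition, so each such derivative gains a full kinetic unit of regularity after applying the interior Schauder estimate to the reflected equation. The normal $x$-derivative is then recovered from the equation itself via the identity $v\cdot\nabla_x f=\partial_t f-\Delta_v f-h$. The key quantitative step is to expand the reflected solution through the modified reflection map: the curvature correction is of order $(v\cdot n_x)^2$, and propagating this through four iterations of the bootstrap leaves a residue which is exactly smooth up to kinetic order $4+1$. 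This is where the scheme terminates, and the weight $(v\cdot n_x)^4$ implicit in $C^4_{\ell,4}$ absorbs this obstruction into the right-hand side; the polynomial $v$-weight $(1+|v|)^{73+36n}$ is the accumulated velocity growth through the successive applications of the Schauder estimate. I expect this block to be the main obstacle of the proof, because one must both track curvature corrections with enough precision to reach order $4$ and show that tangential/normal decomposition survives through five rounds of differentiation.

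\emph{Optimality.} To show $C^5_\ell$ can fail, I would work in a strictly curved domain such as the unit ball of $\R^n$ and construct a smooth $h$ (or smooth initial datum) engineered so that the formal kinetic Taylor expansion of $f$ at a grazing point $(x_0,v_0)$ develops, at kinetic order $5$, a non-smooth term of the form $|v\cdot n_{x_0}|^\alpha$ with non-integer $\alpha\in(4,5)$, or a logarithmic contribution like $(v\cdot n_{x_0})^5\log|v\cdot n_{x_0}|$. The construction exploits the fact that bouncing characteristics in a strictly convex domain are quantitatively controlled by the curvature, so that the curvature correction identified in the previous block produces a genuine, non-cancellable singular term at this order. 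The last obstacle is to verify rigorously that this formally predicted singular piece survives in the true solution; this should follow from uniqueness combined with the $C^{4,1}_\ell$ estimate of the second block, which forces the regular part of $f$ to be strictly more regular than the candidate singular term.
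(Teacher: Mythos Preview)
Your proposal has substantial gaps in each block, and the paper proceeds by a completely different mechanism.

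\textbf{Reflection and bootstrap do not reach higher regularity.} Your argument for smoothness away from $\gamma_0$ is precisely the technique the paper identifies as insufficient. After flattening a curved boundary the transformed equation has variable coefficients $\tilde a^{i,j},\tilde b^i$; the mirror extension $\bar f(t,x,v)=f(t,x',-x_n,v',-v_n)$ then yields reflected coefficients such as $\bar a^{i,n}(z)=-\tilde a^{i,n}(t,x',-x_n,v',-v_n)$ for $i\neq n$, which generically jump across $\{x_n=0\}$. So the extended equation has only bounded measurable coefficients and H\"ormander hypoellipticity does not apply; this is why prior reflection-based work stops at $C^\alpha$. Your Schauder bootstrap for the $C^{4,1}_\ell$ bound has an analogous problem: the claim that derivatives ``in all components of $v$ composed with the reflection symmetry'' preserve specular reflection already fails for $\partial_{v_n}$, which satisfies an odd condition, and nothing in the scheme explains why the iteration terminates exactly at order $5$ rather than $3$ or $7$. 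The ``curvature correction of order $(v\cdot n_x)^2$'' is not the obstruction.

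\textbf{What the paper actually does.} The paper runs a blow-up at each boundary point. For $z^*\in\gamma_\pm$ the kinetic rescaling $z^*\circ S_r$ sends the domain to a \emph{half-line in time} (not a half-space in $x$), and the limit is classified by Liouville theorems on $(\mp\infty,0)\times\R^{2n}$; the $\gamma_-$ case requires recasting specular reflection as an in-flow condition with datum $F=f(\cdot,\mathcal R_x\cdot)$, whose regularity is imported from the $\gamma_+$ analysis. For $z^*\in\gamma_0$ the rescaled domain is $\R\times\{x_n>0\}\times\R^n$, and the core result is a Liouville theorem there: global solutions with growth $\le R^{5+\eps}$ are polynomials plus a possible multiple of an explicit $5$-homogeneous \emph{Tricomi solution}
\[
\mathcal T(x,v)=v^5-2\cdot 9^5\,x^{5/3}\,U\!\Big(-\tfrac{5}{3};\tfrac{2}{3};-\tfrac{v^3}{9x}\Big),
\]
obtained by reducing the $1$D stationary problem to Kummer's ODE. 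This function is exactly $C^{4,1}_\ell$ but not $C^5_\ell$ because of the $x^{5/3}$ factor, and it is what forces the expansion at grazing points to live in the enlarged space $\tilde\cP_5\oplus\R\mathcal T$ rather than $\cP_5$. The optimality statement is then an explicit open condition on $D^2\phi(0)$ and $D^2_vf(z_0)$ guaranteeing the Tricomi coefficient is nonzero; the singularity is neither $|v\cdot n|^\alpha$ with $\alpha\in(4,5)$ nor logarithmic in $v_n$.
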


Notice that, in particular, solutions are $C^{2,\frac12}$ in $t$, $C^{1,\frac23}$ in $x$, and $C^{4,1}$ in $v$, so that our result yields for the first time that solutions are classical up to the boundary.
Moreover, both the regularity in $x$ and in $v$ are optimal.
On the other hand, concerning the regularity in $t$, a standard bootstrap argument yields that solutions are $C^\infty$ in the $t$ variable.

Finally, notice also that the regularity on the grazing set $\gamma_0$ is the most delicate part of our result, but even the $C^\infty$ regularity away from $\gamma_0$ is far from trivial and was not known before.

The counterexamples we construct are stationary solutions, and we can even take $\partial\Omega$ flat when $h\not\equiv0$.
These examples are constructed in Section \ref{sec:counterex}, and show that $f\notin C^5_v$ and $f\notin C^{1,\frac23+\varepsilon}_x$ for any $\varepsilon>0$.
We actually expect these counterexamples to be ``generic'', in some sense (see Section \ref{sec:counterex} for more details).

As said above, all previous regularity results for \eqref{eq:Kolmogorov0}-\eqref{specular-ref} were based in one way or another on reflecting the solution to then use interior regularity estimates.
Our proof of \autoref{thm1} is completely different, and the optimal regularity that we prove cannot be inferred without a delicate analysis of the possible boundary behaviors near $\partial\Omega$.
A key result in this direction is a Liouville-type theorem in a half-space, a simplified version of which we state in the following.
This is one of the core results of our paper, and its proof requires several new ideas (even in dimension $n=1$).

\begin{theorem}
\label{thm2}
Let $\beta \ge 0$. Let $p$ be any polynomial in $t,x,v$, and $f$ be any solution of 
\[ \partial_t f + v \cdot \nabla_x f -\Delta_v f = p \quad \textrm{in}\quad \R \times \{x_n>0\} \times \R^n \]
satisfying $f(t,x,v)=f(t,x,\mathcal R_x v)$ on $\R \times \{x_n=0\} \times \R^n$ and the growth condition
\[|f(t,x,v)| \leq C \big(1+|t|^{1/2}+|x|^{1/3}+|v|\big)^\beta.\]
\begin{itemize}
\item If $\beta<5$, then $f$ must be a polynomial.

\item If  $\beta=5$, then $f$ must be a linear combination of a polynomial and a (unique) non-trivial solution $\mathcal T$, which satisfies $\mathcal T(\lambda^3 x,\lambda v)=\lambda^5 \mathcal T(x,v)$ for all $\lambda>0$.
\end{itemize}
\end{theorem}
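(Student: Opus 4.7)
I would prove Theorem~\ref{thm2} by reducing the half-space Liouville theorem to a one-dimensional classification problem in the normal variables $(x_n,v_n)$, and then carrying out a careful ODE analysis at the grazing point.

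\emph{Step 1: Reduction to the homogeneous equation.} By linearity, I would first construct a polynomial particular solution $f_p$ of the half-space problem with source $p$ and specular BC. Decomposing $p$ according to parity under the involution $\sigma:(x_n,v_n)\mapsto(-x_n,-v_n)$, the $\sigma$-invariant part of any full-space polynomial solution automatically satisfies the specular condition (since $\sigma$-invariance forces evenness in $v_n$ at $x_n=0$); the $\sigma$-anti-invariant piece is handled by a separate polynomial correction. Subtracting $f_p$ reduces the problem to the case $p\equiv 0$, at the cost of enlarging $\beta$ by a constant depending on the kinetic degree of $p$.

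\emph{Step 2: Tangential reduction to one dimension.} The PDE and the specular BC are jointly invariant under translations in the tangential variables $(t,x',v')$. Combining iterated tangential difference quotients with interior regularity estimates for Kolmogorov and a covering by kinetic cylinders whose sizes scale with the distance to the boundary, one shows that each tangential derivative $\partial^\gamma_{t,x',v'} f$ solves a similar half-space problem with polynomial source and inherits kinetic growth of order $\beta-|\gamma|_\ell$, where $|\gamma|_\ell=2\gamma_t+3|\gamma_{x'}|+|\gamma_{v'}|$. For $|\gamma|_\ell>\beta$, a standard rescaling and compactness argument forces $\partial^\gamma f\equiv 0$, so that $f$ is a polynomial in $(t,x',v')$ with coefficients $c_\alpha(x_n,v_n)$ satisfying a triangular system of 1D Kolmogorov-type equations
\begin{equation*}
v_n\,\partial_{x_n}c_\alpha - \partial_{v_n}^2 c_\alpha = r_\alpha(x_n,v_n), \qquad c_\alpha(0,v_n)=c_\alpha(0,-v_n),
\end{equation*}
on $\{x_n>0\}$, with polynomial sources $r_\alpha$ built from lower-order coefficients and polynomial kinetic growth inherited from $\beta$.

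\emph{Step 3: One-dimensional classification.} The heart of the argument is the classification of solutions $u(x,v)$ to $v\partial_x u-\partial_v^2 u=q(x,v)$ on $\{x>0\}\times\mathbb{R}$ with specular BC, polynomial source $q$, and kinetic growth at most $\gamma$. An induction on $\gamma$ combined with a particular-solution construction reduces this to classifying homogeneous degree-$\gamma$ solutions of the homogeneous equation. Using the scaling ansatz $u(x,v)=|v|^\gamma G_\pm(x/|v|^3)$ on $\{v\gtrless 0\}$ converts the PDE into a second-order linear ODE for $G_\pm$ whose characteristic exponents at $s=\infty$ are $\gamma/3$ and $(\gamma-1)/3$, with an irregular singular point at $s=0$ linked to $x=0$ through the specular condition $G_+(0)=G_-(0)$, while classical smoothness across the interior line $\{v=0\}$ imposes further matching between the asymptotic coefficients of $G_+$ and $G_-$. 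A careful enumeration then shows that only polynomial profiles survive for $\gamma<5$, while at $\gamma=5$ exactly one additional non-polynomial profile $\mathcal{T}$ appears, unique up to scalar multiples; combining with Step~2 gives the statement of Theorem~\ref{thm2}.

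\emph{Main obstacle.} The principal difficulty is Step~3. The drift $v\partial_x$ reverses sign across $\{v=0\}$, so the 1D equation is well-posed forward in $x$ only for $v>0$ (and backward for $v<0$); the specular BC at $x=0$ provides the only direct coupling of the two regimes, while smoothness across $\{v=0\}$ supplies a separate family of matching constraints. Pinning down precisely the critical exponent $\gamma=5$ and ruling out exceptional non-polynomial solutions for $\gamma<5$ requires a delicate analysis of the singular behavior at the grazing point $(x,v)=(0,0)$, and is where the genuinely new ideas flagged in the introduction enter.
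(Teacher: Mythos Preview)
Your overall architecture (tangential reduction to a one-dimensional problem in $(x_n,v_n)$, then ODE analysis via a similarity variable) matches the paper's, but Step~1 contains a genuine error that undermines the structure. You claim that for $\sigma$-anti-invariant sources a ``separate polynomial correction'' produces a polynomial particular solution satisfying the specular condition. This is false: for the source $p=v_n^3$ (kinetic degree $3$, $\sigma$-anti-invariant), every polynomial solution of $v\partial_x P-\partial_v^2 P=v^3$ has $\sigma$-anti-invariant part with boundary trace $-v^5/20$, and there is \emph{no} homogeneous polynomial solution of kinetic degree $5$ (the system $\beta v^3-20\alpha v^3-2\beta x=0$ for $\alpha v^5+\beta xv^2$ forces $\alpha=\beta=0$), so the odd boundary trace cannot be cancelled by any polynomial of any degree. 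The impossibility of this reduction is precisely the mechanism that produces the Tricomi solution at $\beta=5$; by assuming it away you have hidden the phenomenon you are trying to detect.

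The paper avoids this trap by \emph{not} reducing to $p\equiv 0$ up front. It carries the polynomial source through the tangential reduction, then in 1D first constructs a polynomial particular solution \emph{ignoring} the boundary condition, and only afterwards imposes specular reflection on the remaining homogeneous piece. Two further points where your sketch is thin compared to the paper: (i) the reduction from a general 1D solution to a homogeneous one is done not by ``induction on $\gamma$'' but by the non-obvious observation that $g:=\frac{d}{dr}\big|_{r=1}\,r^{-(\lambda+2)}f(r^3x,rv)$ solves the homogeneous equation with specular BC, hence is a polynomial by mirror extension and the full-space Liouville theorem, which then yields an explicit first-order PDE for $f$; (ii) the similarity variable $\tau=-v^3/(9x)$ (rather than your $s=x/|v|^3$) converts the homogeneous equation into Kummer's equation, whose solutions ${}_1F_1$ and the Tricomi function $U$ have known asymptotics that make the critical exponent $5$ and the uniqueness of $\mathcal{T}$ transparent.
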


We believe it to be quite remarkable that we are able to establish a complete classification of global solutions, since we do not have any reflection principle nor any monotonicity formula that allows us to show a priori that all solutions must be homogeneous.
We refer to \autoref{thm:Liouville-higher-half-space-SR} for a more precise version of this result.

This classification result is necessary to prove the optimal $C^{4,1}_\ell$ regularity on the grazing set $\gamma_0$ stated in \autoref{thm1}.
Still, the proof of \autoref{thm1} requires much more than that, as explained in more detail later on.

\subsection{Regularity estimates for general kinetic Fokker-Planck equations}

The Kolmogorov equation is only a particular case in the class of linear kinetic Fokker-Planck equations
\begin{equation}\label{eq:kinetic-eq-main}
\partial_t f + v \cdot \nabla_x f -a^{i,j} \partial_{v_i,v_j}f = - b \cdot \nabla_v f - c f + h ~~ \text{ in }  (-1,1) \times \Omega \times \R^n.
\end{equation}
Throughout this article we will assume that $a^{i,j}$ is uniformly elliptic,
\begin{align}
\label{eq:unif-ell}
\Lambda |\xi|^2 \ge a^{i,j}(z) \xi_i \xi_j \ge \lambda |\xi|^2 ~~ \forall \xi \in \R^n, ~~ z=(t,x,v) \in \R^{1+2n}.
\end{align}
Here, and throughout the rest of the paper, the time interval $(-1,1)$ is chosen for convenience, so that the analysis happens at $t=0$.

The kinetic boundary $\gamma := (-1,1) \times \partial \Omega \times \R^n$ is split into the following three parts $\gamma = \gamma_+ \cup \gamma_- \cup \gamma_0$:
\begin{align*}
\gamma_{\pm} = \{ (t,x,v) \in (-1,1) \times \partial \Omega \times \R^n : \pm n_x \cdot v > 0 \}, ~~ \gamma_0 = \big( (-1,1) \times \partial \Omega \times \R^n \big) \setminus (\gamma_+ \cup \gamma_-),
\end{align*}
where $n_x \in \mathbb{S}^{n-1}$ denotes the outward unit normal vector of $\Omega$ at $x \in \partial \Omega$.


All our results hold for general operators of the form \eqref{eq:kinetic-eq-main}-\eqref{eq:unif-ell}. We establish several results that are more precise versions of \autoref{thm1}.
More precisely, we
\begin{itemize}
\item prove global $C^{4,1}_\ell$ estimates in bounded domains $\Omega$ of class $C^{4,\frac12}$ for general equations \eqref{specular-ref}-\eqref{eq:kinetic-eq-main}-\eqref{eq:unif-ell} with $a,b,c,h\in C^{3,\varepsilon}_{\ell}$ (see \autoref{thm:global-weighted-SR-reg}).

\item establish localized version of these estimates (see \autoref{thm:regularity-SR-halfspace} and \autoref{thm:regularity-SR-constants}).

\item prove $C^{k,\alpha}_\ell$ estimates away from $\gamma_0$ for flat boundaries (see \autoref{prop:regularity-SR}) and for general domains (see \autoref{thm:regularity-SR-constants-nongrazing}).

\item prove $C^{k,\alpha}_\ell$ estimates away from $\gamma_0$ also in case of in-flow boundary conditions (see \autoref{prop:regularity-inflow}).

\item find an open condition under which solutions to \eqref{eq:Kolmogorov0}-\eqref{specular-ref} with $h\equiv0$ in non-flat domains $\Omega$ are not $C^{5}_{\ell}$ (see \autoref{thm:counterexample}).
\end{itemize}

All these results are entirely new, and open the road to proving optimal regularity estimates for other kinetic equations.
In particular, we believe that using the techniques from this paper it should be possible to establish optimal regularity estimates for the other typical boundary conditions: in-flow, bounce-back, diffuse, and inelastic.

Moreover, in a future work we plan to study the smoothness of solutions to the Boltzmann equation without cutoff in bounded domains.

\subsection{Strategy of the proof}
\label{subsec:strategy}

In stark contrast to previous works on the higher order boundary regularity for kinetic equations \cite{HJV14,HJJ15,HJJ18} and \cite{GHJO20,DGY22}, our proof of \autoref{thm:global-weighted-SR-reg} is based on \emph{localized} regularity estimates in kinetic cylinders 
\begin{align*}
Q_r(z_0) &= \big\{ (t,x,v) \in \R \times \R^n \times \R^n : |t-t_0| < r^2 , |x - x_0 - (t-t_0)v_0| < r^{3}, |v-v_0| < r \big\}.
\end{align*}
These cylinders respect the kinetic scaling and translation invariance and are commonly used in the study of interior regularity estimates (see for instance \cite{PaPo04,GuMo22,ImSi21,Loh23}). They also appear in \cite{Sil22}, where boundary $C^{\alpha}$ regularity has been established. 

A key advantage of proving localized estimates is that they allow us to consider the different boundary regions $\gamma = \gamma_+ \cup \gamma_- \cup \gamma_0$ in a separate way, and to obtain precise control on the behavior of our estimates for large velocities $v_0$.

Let us start by explaining how to prove localized regularity estimates in the simplest case of flat boundaries. Later, we will elaborate on the generalization of our results to curved domains.

\subsubsection{Flat domains}

Let $\Omega = \{ x_n > 0 \}$. In this case, we have $\gamma = (-1,1) \times \{ x_n = 0 \} \times \R^n$ and 
\begin{align*}
\gamma_+ = (-1,1) \times \{ x_n = 0 \} \times \{ v_n < 0 \},\\
\gamma_- = (-1,1) \times \{ x_n = 0 \} \times \{ v_n > 0 \},\\
\gamma_0 = (-1,1) \times \{ x_n = 0 \} \times \{ v_n = 0 \}.
\end{align*}

For $z_0 = (t_0,x'_0, 0, v_0) \in \gamma$ we define
\begin{align*}
\mathcal{S}(H_R(z_0)) &= \mathcal{S}(Q_R(z_0)) \cap ((-1,1) \times \{ x_n > 0 \} \times \R^n), \\
\mathcal{S}(Q_R(z_0)) &= Q_R(z_0) \cup \{ (t,x,v',-v_n) : (t,x,v) \in Q_R(z_0) \}.
\end{align*}
The set $\mathcal{S}(Q_R(z_0))$ consists of the kinetic cylinder $Q_R(z_0)$ itself and the cylinder reflected in $v$ with respect to $\gamma_0$.

The following result is a localized estimate in flat domains for operators with coefficients.

\begin{theorem}
\label{thm:regularity-SR-halfspace}
Let $\Omega = \{ x_n > 0 \}$ and $z_0 \in \gamma$. Let $R \in (0,1]$, $\eps \in (0,1)$, and $a^{i,j},b,c,h \in C^{3+\eps}_{\ell}(\mathcal{S}(H_R(z_0)))$ and assume that $a^{i,j}$ satisfies \eqref{eq:unif-ell}. Let $f$ be a solution to 
\begin{equation}
\label{eq:intro-PDE}
\left\{\begin{array}{rcl}
\partial_t f + v \cdot \nabla_x f + (-a^{i,j} \partial_{v_i,v_j})f &=& - b \cdot \nabla_v f - c f + h ~~ \text{ in }  \mathcal{S}(Q_R(z_0)) \cap ((-1,1) \times \{ x_n > 0 \} \times \R^n) , \\
f(t,x,v) &=& f(t,x,\mathcal{R}_x v) ~~ \qquad\quad \text{ on } \gamma_- \cap  \mathcal{S}(H_R(z_0)) .
\end{array}\right.
\end{equation}
Then, it holds:
\begin{align*}
[ f ]_{C^{4,1}_{\ell}(H_{R/16}(z_0))} \le C  R^{-5} \big( \Vert f \Vert_{L^{\infty}(\mathcal{S}(H_R(z_0)))} + R^{5} [ h ]_{C^{3+\eps}_{\ell}(\mathcal{S}(H_R(z_0)) )} \big).
\end{align*}
The constant $C$ depends only on $n,\eps,\lambda,\Lambda$, $\Vert a^{i,j} \Vert_{C^{3+\eps}_{\ell}(\mathcal{S}(H_R(z_0)))}, \Vert b \Vert_{C^{3+\eps}_{\ell}(\mathcal{S}(H_R(z_0)))}$, and \\$\Vert c \Vert_{C^{3+\eps}_{\ell}(\mathcal{S}(H_R(z_0)))}$, but not on $z_0$ and $R$.
\end{theorem}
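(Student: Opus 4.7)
I would prove Theorem~\ref{thm:regularity-SR-halfspace} by a localized blow-up/compactness argument, with the Liouville classification of Theorem~\ref{thm2} playing the pivotal role. Kinetic scaling and translation invariance (which preserve the half-space $\{x_n>0\}$ and the specular reflection condition) reduce the problem to the normalized case $R=1$, and the $C^{3+\eps}_\ell$-regularity of the coefficients permits the standard freezing argument: after rescaling at a blow-up point, the coefficients converge to constants and, after an affine change in $v$ reducing the leading matrix to the identity, the limit solves a Kolmogorov equation on either the full space or on $\R\times\{x_n>0\}\times\R^n$ with specular reflection.

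\textbf{Blow-up and compactness.} Arguing by contradiction, suppose there exist sequences $(f_k,h_k,a_k^{i,j},b_k,c_k,z_k)$ violating the estimate; normalize so that $\Vert f_k\Vert_{L^{\infty}(\mathcal{S}(H_1(z_k)))}+[h_k]_{C^{3+\eps}_\ell}\le 1$ while $M_k:=[f_k]_{C^{4,1}_\ell(H_{1/16}(z_k))}\to\infty$. A Campanato-type search locates base points $\bar z_k\in H_{1/16}(z_k)$ and radii $r_k\to 0$ where the seminorm concentrates. Defining $g_k$ as $f_k$ kinetically translated and dilated by $r_k$ about $\bar z_k$, minus the best kinetic polynomial $P_k$ of degree $\le 4$ matching the $4$-jet of $f_k$ at $\bar z_k$, divided by $r_k^{5}M_k$, the sequence $g_k$ has: vanishing $4$-jet at the origin; polynomial growth $\Vert g_k\Vert_{L^{\infty}(H_\rho)}\lesssim\rho^{5}$ for $\rho\le r_k^{-1}$; unit-scale $C^{4,1}_\ell$-seminorm bounded below by a positive constant; and rescaled right-hand side with $[\tilde h_k]_{C^{3+\eps}_\ell}\lesssim r_k^{\eps}/M_k\to 0$. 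Using a preliminary boundary Hölder estimate from \cite{Sil22,Zhu22} together with interior kinetic Schauder for bootstrap, $g_k$ is precompact in $C^{4}_{\ell,\loc}$, and the limit $g_\infty$ solves the Kolmogorov equation on the relevant limit domain (either the full space, if the rescaled $\bar z_k$ escapes to the interior, or the half-space with specular reflection, if it stays on $\partial\Omega$).

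\textbf{Liouville and contradiction.} The limit $g_\infty$ a priori inherits a growth bound of order $5$. Crucially, the $\eps$-gain from the $C^{3+\eps}_\ell$-regularity of $h$ and of the coefficients (which enters the rescaled right-hand side with an extra factor $r_k^\eps$) can be propagated through a refined Campanato iteration to upgrade this to $|g_\infty(z)|\le C(1+\Vert z\Vert_\ell)^{5-\delta}$ for some small $\delta>0$. Theorem~\ref{thm2} with $\beta<5$ then forces $g_\infty$ to be a kinetic polynomial of degree $\le 4$; combined with the vanishing $4$-jet at the origin this gives $g_\infty\equiv 0$, contradicting the positive lower bound on $[g_\infty]_{C^{4,1}_\ell(H_1)}$.

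\textbf{Main obstacle.} The most delicate step is the extraction of the strictly subcritical exponent $5-\delta$, because the naive Campanato bound only delivers the critical $\rho^{5}$ exponent --- precisely the growth at which Theorem~\ref{thm2} admits the nontrivial solution $\mathcal{T}$ and hence fails to conclude polynomiality. Without this refinement the blow-up limit could be a nonzero multiple of $\mathcal{T}$ plus a degree-$5$ polynomial, and the contradiction would collapse. A secondary issue is uniformity over the full boundary $\gamma=\gamma_+\cup\gamma_-\cup\gamma_0$: for $z_0$ with $|v_{0,n}|$ bounded away from zero the set $\mathcal{S}(H_R(z_0))$ decomposes into two disconnected kinetic cylinders and the problem reduces to an interior-type Schauder applied to the reflection-symmetric extension, whereas for $z_0\in\gamma_0$ the full strength of Theorem~\ref{thm2} at the grazing set is unavoidable. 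The symmetric extension $\mathcal{S}$ is precisely the structure that makes the above argument uniform in $z_0$.
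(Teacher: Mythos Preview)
Your blow-up/compactness skeleton is right, but the step you flag as ``most delicate'' is in fact impossible: you cannot extract a growth bound $(1+\|z\|_\ell)^{5-\delta}$ for the blow-up limit if you only subtract kinetic polynomials of degree $\le 4$. A generic solution $f$ has a nontrivial $5$th-order expansion at a grazing point, of the form $P_4+P_5+\tau\mathcal T+o(\rho^5)$ with $P_5$ a homogeneous degree-$5$ polynomial and $\tau\in\R$; after subtracting $P_4$ and rescaling, the limit is precisely $P_5+\tau\mathcal T$, which has \emph{exactly} order-$5$ growth. The $\eps$-gain from $h\in C^{3+\eps}_\ell$ controls the $o(\rho^{5+\eps})$ remainder \emph{beyond} order $5$; it cannot suppress the order-$5$ terms you have not subtracted. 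So your appeal to Theorem~\ref{thm2} with $\beta<5$ is illusory: the blow-up lands squarely in the $\beta=5$ case, where the non-polynomial solution $\mathcal T$ is available and no contradiction follows.

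The paper resolves this by enlarging the projection space: instead of $\cP_4$, one projects (in $L^2$) onto the finite-dimensional space $\tilde\cP_{a,5}=\tilde\cP_5\oplus\R\mathcal T_{a}$ of degree-$\le5$ polynomials satisfying the specular reflection condition \emph{together with} the Tricomi solution, where $a=a^{n,n}(z_0)$. The Campanato iteration then gives growth $\rho^{5+\eps}$, the Liouville theorem in the half-space (the $k=5$ case, i.e.\ Theorem~\ref{thm2} with $\beta=5$) forces the blow-up limit to lie in $\tilde\cP_{a,5}$, and orthogonality to that very space yields the contradiction. This produces an expansion $|f-P_{z_0}|\le C(r/R)^{5+\eps}$ with $P_{z_0}\in\tilde\cP_{a,5}$ at each $z_0\in\gamma_0$; the passage from this to the $C^{4,1}_\ell$ estimate then uses crucially that $\mathcal T_a\in C^{4,1}_\ell$ (Proposition~\ref{prop:Tricomi}(iii)), so the non-polynomial piece of $P_{z_0}$ does not obstruct the final bound.

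A secondary inaccuracy: for $z_0\in\gamma_\pm$ the paper does \emph{not} reduce to interior Schauder via the mirror extension (that extension has discontinuous coefficients and only yields $C^\alpha$). Instead $\gamma_+$ is handled by a blow-up whose limit domain is $(-\infty,0)\times\R^{2n}$ (Liouville for negative times), and $\gamma_-$ is recast as an in-flow problem with boundary datum $F(t,x,v)=f(t,x,\mathcal R_xv)$, whose $C^{5+\eps}_\ell$ regularity is inherited from the $\gamma_+$ expansion at the reflected point.
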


Note that \autoref{thm:regularity-SR-halfspace} immediately implies a global estimate for solutions in $(-1,1) \times \{ x_n > 0 \} \times \R^n$. We state this result in \autoref{cor:regularity-SR-halfspace}.

Let us explain how to prove \autoref{thm:regularity-SR-halfspace} in case $R = 1$. Due to the hypoelliptic nature of kinetic equations of type \eqref{eq:kinetic-eq-main}, the regularity of $f$ in the interior of the solution domains can be achieved by perturbation arguments of Schauder-type. Such a result was established for instance in \cite{Loh23} and it implies that $f \in C^{5+\eps}_{\ell}$ locally inside the solution domain under the assumptions of \autoref{thm:regularity-SR-halfspace} (see also \autoref{lemma:interior-reg}).
In the light of these interior regularity results, we can reduce the proof of \autoref{thm:regularity-SR-halfspace} to the study of the local behavior of $f$ at boundary points $z^{\ast} \in \gamma \cap Q_{1/2}(z_0)$. Since the behavior differs significantly depending on whether $z^{\ast} \in \gamma_{\pm}$, or $z^{\ast} \in \gamma_0$, we need to treat these cases separately.

In case $z^{\ast} \in \gamma_{\pm}$, we prove that there is a polynomial $P_{z^{\ast}} \in \cP_5$ of degree five, such that
\begin{align}
\label{eq:intro-expansion-gamma-pm}
|f(z) - P_{z^{\ast}}(z)| \lesssim d_{\ell}(z,z_0)^{5 + \eps} ~~ \forall z \in Q_{1/2}(z^{\ast}) \cap H_1(z_0),
\end{align}
where $d_\ell$ is the kinetic distance defined in Subsection \ref{kinetic-dist}.
By the definition of the kinetic H\"older spaces, this property implies that $f$ is of class $C^{5+\eps}_{\ell}$ at $z^{\ast}$ (see \autoref{lemma:higher-reg-gamma_+} and \autoref{lemma:higher-reg-gamma_-SR}). 

For $z^{\ast} \in \gamma_0$ the situation is more complicated since it turns out that \eqref{eq:intro-expansion-gamma-pm} \emph{is in general false} (see also \autoref{thm:counterexample}). In fact, for $z^{\ast} \in \gamma_0$ we prove that there are a polynomial $P_{z^{\ast}} \in \cP_5$ and $\tau \in \R$ such that
\begin{align}
\label{eq:intro-expansion-gamma-0}
\big|f(z) - P_{z^{\ast}}(z) - \tau \mathcal{T}_{z^{\ast}}(z)\big| \lesssim d_{\ell}(z,z_0)^{5 + \eps} ~~ \forall z \in Q_{1/2}(z^{\ast}) \cap H_1(z_0),
\end{align}
where $\mathcal{T}_{z^{\ast}}$ is a so-called ``Tricomi solution'', given by
\begin{align*}
\mathcal{T}_{z^{\ast}}(z) = A^{-\frac{5}{2}} v_n^{5} - 2 \cdot 9^5 A^{-\frac{5}{6}} x_n^{\frac{5}{3}} U \left( - \frac{5}{3} ; \frac{2}{3} ; \frac{-v_n^3}{9Ax_n} \right), \qquad \text{ where } \qquad A = a^{n,n}(z^{\ast}),
\end{align*}
and $U$ denotes the Tricomi confluent hypergeometric function (see \autoref{lemma:higher-reg-gamma_0-SR-halfspace}). The function $\mathcal{T}_{z^{\ast}}$ is a $5$-homogeneous solution of the one-dimensional, stationary kinetic equation with specular reflection condition
\begin{equation}
\label{eq:intro-Tricomi-PDE}
\left\{\begin{array}{rcll}
v \partial_x \mathcal{T}_{z^{\ast}} -A \partial_{v,v} \mathcal{T}_{z^{\ast}} &=& p_{z^{\ast}} & \text{in } \{ x > 0 \} \times \R,\\
\mathcal{T}_{z^{\ast}}(0,v) &=& \mathcal{T}_{z^{\ast}}(0,-v) & \text{for } v \in \R,
\end{array}\right.
\end{equation}
where $p_{z^\ast}(x,v) = -40 A v^3 \in \cP_3$.
The existence of a non-polynomial solution to a one-dimensional kinetic equation with a polynomial right-hand side and subject to the specular reflection condition, such as \eqref{eq:intro-Tricomi-PDE}, was not known before, and it is the main reason why solutions to \eqref{eq:intro-PDE} are not smooth. 
In fact, the function $\mathcal{T}_{z^{\ast}}$ belongs to $C^{4,1}_{\ell}$, but it is not of class $C^5_{\ell}$ (see \autoref{prop:Tricomi}). This  proves the sharpness of \autoref{thm:regularity-SR-halfspace} and is sufficient to deduce from \eqref{eq:intro-expansion-gamma-0} that $f$ is $C^{4,1}_{\ell}$ at $z^{\ast}$.

We show the expansions \eqref{eq:intro-expansion-gamma-pm} and \eqref{eq:intro-expansion-gamma-0} at boundary points $z^{\ast} \in \gamma$ by blow-up arguments. Variants of such techniques are quite common nowadays in PDE theory and have also recently been applied to the study of the interior and behavior for solutions to nonlocal equations (see e.g. \cite{Ser15a,Ser15b,RoSe16a,RoSe16b,AbRo20,ImSi21,RoWe24}). In this paper, we use it for the first time in the context of boundary estimates for kinetic equations, which requires several new ideas and nontrivial adaptations, as we explain below.

The aforementioned blow-up arguments, which are used to establish the expansions \eqref{eq:intro-expansion-gamma-pm} and \eqref{eq:intro-expansion-gamma-0} at boundary points $z^{\ast} \in \gamma$, rely on ``zooming into'' the equation \eqref{eq:intro-PDE} at $z^{\ast}$. This is achieved by a proper rescaling of the equation, using the kinetic shift and scaling operator
\begin{align*}
z^{\ast} \circ S_r z := (t^{\ast} + r^2 t , x^{\ast} + r^3 x + r^2 t v^{\ast}, v^{\ast} + rv).
\end{align*}
Note that the rescaling $z^{\ast} \circ S_r z$ respects the kinetic geometry in the sense that it maps the kinetic cylinder $Q_1(z^{\ast})$ to $Q_{r^{-1}}(0)$. However, the rescaling of the solution domain $(-1,1) \times \{ x_n > 0 \} \times \R^n$ depends heavily on the position of $z^{\ast}$ on the kinetic boundary. It holds
\begin{align*}
(z^{\ast} \circ S_r) ((-1,1) \times \Omega \times \R^n) &= \{(t,x): t \in (r^{-2}(-1-t^{\ast}) , r^{-2}(1 - t^{\ast})) ~,~ x_n > -r^{-1} t v^{\ast}_n \} \times \R^n \\
& \to D_{\infty} := \begin{cases}
\{t<0, \ x,v\in\R^n\} ~~ \text{ if } z^{\ast} \in \gamma_+,\\
\{t>0, \ x,v\in\R^n\} ~~ \text{ if } z^{\ast} \in \gamma_-,\\
\{t\in\R,\ x_n > 0,\ v\in\R^n \} ~~ \text{ if } z^{\ast} \in \gamma_0.
\end{cases}
\end{align*}
This means that the shape of the rescaled domains changes dramatically depending on the boundary point! At $\gamma_\pm$ we get a full space (in $x$) but only a half-line in time, while at $\gamma_0$ we get a half-space in~$x$.

Using a rather delicate contradiction compactness argument, we can reduce  the proof of the expansions at $z^{\ast}$ to a classification of solutions to a translation invariant kinetic equation of the form
\begin{align*}
\partial_t f_{\infty} + v \cdot \nabla_x f_{\infty} + (-\bar{a}^{i,j} \partial_{v_i,v_j})f_{\infty} = p ~~ \text{ in } D_{\infty}
\end{align*} 
for some polynomial $p \in \cP_3$ of degree three and a constant matrix $\bar{a}^{i,j}$.

In case $z^{\ast} \in \gamma_{\pm}$, we achieve this by proving a Liouville theorem for such equations in $(\mp\infty,0) \times \R^n \times \R^n$ (see \autoref{prop:Liouville-higher-neg-times} and \autoref{prop:Liouville-higher-pos-times}). However, note that Liouville theorems do not hold true in general for positive times, even in the parabolic case, without suitable initial conditions at $t = 0$. Hence, some additional ideas are needed in case $z^{\ast} \in \gamma_-$. 

In order to prove \eqref{eq:intro-expansion-gamma-pm} for $z^{\ast} \in \gamma_-$ we need to use the boundary condition on $f$ in order to get an initial condition for the limiting equation. Unfortunately, the specular reflection condition degenerates under the kinetic rescaling $z^{\ast} \circ S_r \cdot$ and does not seem to provide any additional information in the limit. To solve this issue, we first establish the expansion \eqref{eq:intro-expansion-gamma-pm} for solutions to kinetic equations satisfying an in-flow boundary condition, i.e.
\begin{equation*}
\left\{\begin{array}{rcl}
\partial_t f + v \cdot \nabla_x f + (-a^{i,j} \partial_{v_i,v_j})f &=& - b \cdot \nabla_v f - c f + h ~~ \text{ in }  Q_{1/2}(z^{\ast}) \cap ((-1,1) \times \{ x_n > 0 \} \times \R^n) , \\
f &=& F ~~ \qquad\qquad\qquad\quad \text{ on } \gamma_- \cap Q_{1/2}(z^{\ast}),
\end{array}\right.
\end{equation*}
and then, in a second step, we recast \eqref{eq:intro-PDE} as a kinetic equation with in-flow condition
\begin{align*}
F(t,x,v) := f(t,x,\mathcal{R}_x v) ~~ \text{ in } \gamma_- \cap Q_{1/2}(z^{\ast}).
\end{align*}
Then, since $\mathcal{R}_{\cdot}(\gamma_- \cap Q_{1/2}(z^{\ast})) = \gamma_+ \cap Q_{1/2}(\mathcal{R}_x(z^{\ast}))$, we can use the expansion for $\mathcal{R}_x(z^{\ast}) \in \gamma_+$ to deduce that $F \in C^{5,\eps}_{\ell}$ and obtain the expansion \eqref{eq:intro-expansion-gamma-pm} for $z^{\ast} \in \gamma_-$.

We emphasize that the proof of the boundary regularity near $\gamma_-$, even under the in-flow condition is much more involved than the proof at $\gamma_+$, because the boundary condition has to be passed to the limit, as well. For this, we need to analyze polynomials that are restricted to the boundary $\partial \Omega$, which comes with certain additional difficulties (see Step 4 in the proof of \autoref{lemma:higher-reg-gamma_--Dirichlet}).

Finally, in case $z^{\ast} \in \gamma_0$, we prove that the specular reflection condition is preserved under the kinetic rescaling (which heavily uses that $\{ x_n > 0 \}$ is flat and stays flat under the rescaling). This reduces the proof of \eqref{eq:intro-expansion-gamma-0} to establishing a Liouville-type classification result for solutions to 
\begin{equation*}
\left\{\begin{array}{rcl}
\partial_t f_{\infty} + v \cdot \nabla_x f_{\infty} + (-\bar{a}^{i,j} \partial_{v_i,v_j})f_{\infty} &=& p \in \cP_3 ~~~ \qquad \text{ in } \R \times \{ x_n > 0 \} \times \R^n,\\
f_\infty(t,x,\mathcal{R}_x v) &=& f_\infty(t,x,\mathcal{R}_x v) ~~ \text{ in } \R \times \{ x_n = 0 \} \times \R^n.
\end{array}\right.
\end{equation*}
This is essentially \autoref{thm2}, stated above. We have already seen that not all solutions to this problem are polynomials, but that Tricomi solutions might appear. Therefore, the proof of such Liouville-type theorem is rather involved. We establish it in its most general form in \autoref{thm:Liouville-higher-half-space-SR} and believe it to be of independent interest.

\subsubsection{General domains}

\autoref{thm:regularity-SR-halfspace} implies the optimal $C^{4,1}_{\ell}$ regularity for solutions in the half-space $\Omega = \{ x_n > 0 \}$. In order to prove \autoref{thm1} in general domains, we need to flatten the boundary. 
Note that this is a nontrivial task for kinetic equations, since one needs to ensure that the kinetic distance and the boundary condition are preserved under the change of variables. 
We use a modification of the diffeomorphism $\Phi$ introduced in \cite{GHJO20,DGY22} (see \eqref{eq:Phi-def}) that flattens the boundary of $\Omega$ locally, near any point $x_0 \in \partial \Omega$, and keeps the specular reflection condition. Note that, in contrast to \cite{GHJO20,DGY22}, our diffeomorphism preserves the regularity of $\partial \Omega$ in $x$.

Given a solution $f$ to \eqref{eq:kinetic-eq-main} with specular reflection condition on $\gamma$, the function $\tilde{f}(z) := f(\Phi^{-1}(z))$ is a solution to
\begin{equation*}
\left\{\begin{array}{rcl}
\partial_t \tilde{f} + v \cdot \nabla_x \tilde{f} + (- \tilde{a}^{i,j}\partial_{v_i,v_j}) \tilde{f} &=& - \tilde{b} \cdot \nabla_v \tilde{f} + \tilde{h} ~~ \text{ in } (-1,1) \times (\{x_n > 0\} \cap B_1(x_0',0) ) \times \R^n,\\
\tilde f(t,x,v) &=&\tilde f(t,x,\mathcal{R}_x v) ~~~ \text{ in } (-1,1) \times (\{x_n = 0\} \cap B_1(x_0',0) ) \times \R^n.
\end{array}\right.
\end{equation*}

In order to prove \autoref{thm1}, we need to apply \autoref{thm:regularity-SR-halfspace} in appropriate kinetic cylinders. This approach comes with several technical issues:
\begin{itemize}
\item Since $\Phi$ is a local change of variables in $x$, one needs to choose the size of the kinetic cylinders $Q_r(z_0)$ depending on $|v_0|$. This leads to the appearance of weights in term of $(1 + |v_0|)$ in the estimate from \autoref{thm1}.

\item The norms of the coefficients $\tilde{a}^{i,j}$ and $\tilde{b}^i$ depend on $\Phi$ and therefore also on $|v_0|$. Therefore it is crucial to track the dependence of the constants in \autoref{thm:regularity-SR-halfspace} on the coefficients. For $\tilde{b}^i$ we can obtain such a refined estimate a posteriori by comparison with the equation without drift and an absorption argument (see \autoref{prop:regularity-SR-halfspace-constants}). For $\tilde{a}^{i,j}$ this approach does not work, since solutions are only in $C^{4,1}_{\ell}$ (and not in $C^{5,\eps}_{\ell}$). We work around this issue by assuming a little more regularity in $a^{i,j}$ (see \autoref{lemma:SR-preserved}(iii)).
\end{itemize}

\subsection{Organization of the paper}
This article is structured as follows. In Section \ref{sec:prelim} we collect several preliminary definitions and basic results. Section \ref{sec:Liouville} is dedicated to the proof of Liouville theorems in the full space and in the half-space. In particular, we establish \autoref{thm2}. In Sections \ref{sec:reg-outside} and \ref{sec:reg-gamma0} we prove boundary regularity estimates away from $\gamma_0$ and at~$\gamma_0$, respectively, and prove our main result \autoref{thm1}. We prove the optimality of \autoref{thm1} in Section \ref{sec:counterex}.

\subsection{Acknowledgments}

The authors were supported by the European Research Council under the Grant Agreement No. 101123223 (SSNSD), and by AEI project PID2021-125021NA-I00 (Spain).
Moreover, X.R was supported by the AEI grant RED2024-153842-T (Spain),  by the AEI--DFG project PCI2024-155066-2 (Spain--Germany), and by the Spanish State Research Agency through the Mar\'ia de Maeztu Program for Centers and Units of Excellence in R{\&}D (CEX2020-001084-M).

\section{Preliminaries}
\label{sec:prelim}

In this section we collect several auxiliary definitions and results. Most of them are well-known in slightly different settings. We provide proofs whenever necessary to make the article self-contained.

\subsection{Scaling properties of kinetic equations}

Our strategy to establish the boundary regularity will be by a blow-up and contradiction compactness argument. Hence, it is crucial to understand the behavior of the aforementioned kinetic equation under scaling. 

First, we introduce kinetic cylinders which respect the geometry of the PDE under consideration, namely given $z_0 = (t_0,x_0,v_0) \in \R \times \R^n \times \R^n$ and $r > 0$, we define
\begin{align*}
\tilde{Q}_r(z_0) &= \big\{ (t,x,v) \in \R \times \R^n \times \R^n : t_0 - r^2 < t \le t_0 , |x - x_0 - (t-t_0)v_0| < r^{3}, |v-v_0| < r \big\} \\
&= \big\{ (t,x,v) \in \R \times \R^n \times \R^n : t \in (t_0 - r^2 , t_0) , x \in B_{r^{3}}(x_0 + (t - t_0)v_0), v \in B_r(v_0) \big\}.
\end{align*}

In this article, we will mostly work with two-sided kinetic cylinders, defined as follows:
\begin{align*}
Q_r(z_0) &= \big\{ (t,x,v) \in \R \times \R^n \times \R^n : |t-t_0| < r^2 , |x - x_0 - (t-t_0)v_0| < r^{3}, |v-v_0| < r \big\} \\
&= \tilde{Q}_r(z_0) \cup \tilde{Q}_r(z_0 + (r^2, r^2 v_0, 0)).
\end{align*}

We observe that 
\begin{align*}
Q_r(z_0) \subset \tilde{Q}_{2r}\big(z_0 + (r^2 , r^2 v_0 , 0)\big), \quad \tilde{Q}_r(z_0) = Q_{r}\big(z_0 - (r^2/2,r^2v_0/2,0)\big).
\end{align*}

Moreover, whenever $D \subset \R^{1+2n}$, then we define
\begin{align*}
\tilde{H}_r(z_0) = D \cap \tilde{Q}_r(z_0), \qquad H_r(z_0) = D \cap Q_r(z_0).
\end{align*}
We will only use this notation whenever there is no doubt about the definition of $D$, and in case we are given $\Omega \subset \R^n$, we apply this definition with $D = (-1,1) \times \overline{\Omega} \times \R^n$.

We also introduce the following inertial change of variables and a scaling operator
\begin{align*}
(s,y,w) \circ (t,x,v) = (s+t,x+y+tw,v+w), \qquad S_r(t,x,v) = (r^2 t , r^3 x , r v).
\end{align*}
Note that the operation $\circ$ is not commutative.

\begin{remark}
The inertial change of variables $\circ$ and scaling operator $S_r$ are compatible with the kinetic cylinders in the following sense:
Given $z_0 = (t_0,x_0,v_0) \in \R \times \R^n \times \R^n$ and $R,r > 0$, then 
\begin{align}
\label{eq:trafo-cylinders}
z \in Q_{R/r}(0) ~~ \Leftrightarrow ~~ z_0 \circ S_r z \in Q_R(z_0).
\end{align}
Indeed, writing $z = (t,x,v)$ we have
\begin{align*}
(t_0 + r^2 t , x_0 + r^3 x + r^2 t v_0, v_0 + rv) = (t_0 , x_0 , v_0 ) \circ (r^2 t , r^3 x , r v) = z_0 \circ S_r z,
\end{align*}
and moreover,
\begin{align*}
\begin{cases}
|t| < (R/r)^2  ~~ &\Leftrightarrow ~~ \big|(t_0 + r^2 t) - t_0 \big| = r^2 |t| < R^2 , \\
|x| < (R/r)^3 ~~ &\Leftrightarrow ~~ \big|(x_0 + r^3 x + r^2 t v_0) - x_0 - ((t_0 + r^2 t) - t_0)v_0 \big| = r^3 |x| < R^3, \\
|v| < (R/r) ~~ &\Leftrightarrow ~~ \big|(v_0 + rv) - v_0 \big| = r|v| < R.
\end{cases}
\end{align*}
Moreover, an analogous equivalence holds true for one-sided cylinders.

We also have a slightly more general version of \eqref{eq:trafo-cylinders}: for any $z_1 = (0,x_1,v_1) \in \R^{2n+1}$ it holds
\begin{align}
\label{eq:trafo-cylinders-2}
z \in Q_{R/r}(z_1) ~~ \Leftrightarrow ~~ z_0 \circ S_r z \in Q_R(z_0 \circ S_r z_1) = Q_R(z_0 + S_r z_1).
\end{align}
Indeed, 
\begin{align*}
\begin{cases}
|t| < (R/r)^2  ~~ &\Leftrightarrow ~~ \big|(t_0 + r^2 t) - t_0 \big| = r^2 |t - t_1| < R^2 , \\
|x - x_1 - t v_1 | < (R/r)^3 ~~ &\Leftrightarrow ~~\big |(x_0 + r^3 x + r^2 t v_0) - (x_0 + r^3 x_1) \\
&\qquad \qquad \qquad - ((t_0 + r^2 t) - t_0(v_0 + r v_1)\big| = r^3 |x - x_1 - t v_1| < R^3, \\
|v - v_1| < (R/r) ~~ &\Leftrightarrow ~~ \big|(v_0 + rv) - (v_0 + r v_1) \big| = r|v - v_1| < R.
\end{cases}
\end{align*}
\end{remark}

We also have the following elementary lemma:
\begin{lemma}
\label{lemma:kinetic-inclusion}
Let $r > 0$ and $R \ge 1$ and $z_0,z_1 \in \R^{2n+1}$ with $t_1 = 0$. Then,
\begin{align*}
 Q_{Rr}(z_0 + S_{r} z_1) \subset Q_{2Rr(1 + |z_1|)}(z_0).
\end{align*}
\end{lemma}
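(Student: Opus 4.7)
The plan is to unpack both cylinders from the definition and verify the three defining inequalities (in $t$, $x$, $v$) one at a time, using only the triangle inequality together with the two standing hypotheses $R\ge 1$ (so that $r\le Rr$) and $t_1=0$ (so that $|x_1|,|v_1|\le |z_1|$). Writing $z_0=(t_0,x_0,v_0)$, the hypothesis $t_1=0$ gives $S_r z_1=(0,r^3 x_1,rv_1)$ and hence $z_0+S_r z_1=(t_0,x_0+r^3 x_1,v_0+rv_1)$. For $z=(t,x,v)\in Q_{Rr}(z_0+S_r z_1)$ I have
\[|t-t_0|<(Rr)^2,\qquad |v-v_0-rv_1|<Rr,\qquad |x-x_0-r^3 x_1-(t-t_0)(v_0+rv_1)|<(Rr)^3.\]

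The time inequality is immediate: $|t-t_0|<(Rr)^2\le (2Rr(1+|z_1|))^2$ since $1+|z_1|\ge 1$. For the velocity component, I would use the triangle inequality
\[|v-v_0|\le |v-v_0-rv_1|+r|v_1|<Rr+r|v_1|\le Rr(1+|v_1|)\le Rr(1+|z_1|)\le 2Rr(1+|z_1|),\]
where the step $r\le Rr$ uses $R\ge 1$ and the step $|v_1|\le |z_1|$ uses $t_1=0$.

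For the spatial component, which is the only step requiring some care, I would split
\[x-x_0-(t-t_0)v_0=\bigl[x-x_0-r^3 x_1-(t-t_0)(v_0+rv_1)\bigr]+r^3 x_1+(t-t_0)\, r v_1\]
and apply the triangle inequality, bounding the three summands by $(Rr)^3$, $r^3|x_1|\le (Rr)^3|z_1|$, and $|t-t_0|\cdot r|v_1|<(Rr)^2\cdot r|z_1|\le (Rr)^3|z_1|$ respectively (again using $R\ge 1$). Combining these gives
\[|x-x_0-(t-t_0)v_0|<(Rr)^3\bigl(1+|x_1|+|v_1|\bigr)\le (Rr)^3\bigl(1+2|z_1|\bigr)\le 2(Rr)^3(1+|z_1|)\le \bigl(2Rr(1+|z_1|)\bigr)^3,\]
where the last step uses $2\le 8=2^3$. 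This establishes all three inclusions, proving $z\in Q_{2Rr(1+|z_1|)}(z_0)$.

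Since the lemma is a purely geometric statement about the kinetic cylinders, there is essentially no conceptual obstacle; the only thing to track is the interplay between the scaling parameter $r$, the radius $Rr$, and the shift $S_r z_1$. The hypothesis $R\ge 1$ is used precisely to absorb factors of $r$ into $Rr$ in the spatial term, while the hypothesis $t_1=0$ ensures that the time coordinate of the shift vanishes so that the resulting cylinder is still centered at time $t_0$.
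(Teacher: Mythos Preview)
Your proof is correct and follows essentially the same approach as the paper: unpack the definition of both cylinders, use $t_1=0$ to identify the center $z_0+S_r z_1=(t_0,x_0+r^3x_1,v_0+rv_1)$, and verify the spatial inclusion via the triangle inequality together with $R\ge 1$. The paper's proof is terser---it only writes out the spatial estimate in detail and leaves the time and velocity checks implicit---but the argument is identical.
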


\begin{proof}
Clearly, $z \in Q_{Rr}(z_0 + S_{r} z_1)$ implies
\begin{align*}
|t - t_0| < (Rr)^2, \quad |x - (x_0 + r^3 x_1) - (t-t_0) (v_0 + r v_1)| < (Rr)^3, \quad |v - (v_0 + r v_1)| < Rr.
\end{align*}
Then, it holds since $R \ge 1$:
\begin{align*}
|x - x_0 - (t-t_0)v_0| &< |x - (x_0 + r^3 x_1) - (t-t_0) (v_0 + r v_1)| + |r^3 x_1 - (t-t_0)rv_1| \\
&< (Rr)^3 + r^3 |x_1| + R^2 r^3 |v_1| < 2(Rr)^3(1 + |z_1|).
\end{align*}
This implies the desired result.
\end{proof}

Given a domain $\Omega \subset \R^n$, $z_0 = (t_0,x_0,v_0) \in \R \times \R^n \times \R^n$ and $r > 0$, we define
\begin{align}
\label{eq:Tr-def}
\begin{split}
T_{z_0,r} &= \big\{ (t,x) : t \in \big( r^{-2}(-1 - t_0) , r^{-2}(1 - t_0) \big],~ x \in \Omega_{z_0,r}(t)  \big\}, \\
\Omega_{z_0,r}(t) &= \{ x \in \R^n : x_0 + r^3 x + r^2 t v_0 \in \Omega \}.
\end{split}
\end{align}
Then, by construction we have
\begin{align}
\label{eq:trafo-domains}
z \in T_{z_0,r} \times \R^n ~~ \Leftrightarrow ~~ z_0 \circ S_r z \in (- 1 , 1) \times \Omega \times \R^n.
\end{align}

The following lemma is a straightforward corollary of the previous remark and indicates how kinetic equations behave under the change of variables $z_0 \circ S_r$.

\begin{lemma}
\label{lemma:scaling}
Let $z_0 = (t_0,x_0,v_0) \in \R \times \R^n \times \R^n$ and $r,R > 0$. Let $\Omega \subset \R^n$. Let $f$ be a solution to
\begin{align*}
\partial_t f + v \cdot \nabla_x f + (- a^{i,j}\partial_{v_i,v_j}) f = - b \cdot \nabla_v f - c f + h ~~ \text{ in } Q_R(z_0) \cap \big( (-1,1) \times \Omega \times \R^n \big).
\end{align*}
Then 
\begin{align*}
f_{z_0,r}(t,x,v) := f(z_0 \circ S_r z) = f(t_0 + r^2 t , x_0 + r^3 x + r^2 t v_0, v_0 + rv)
\end{align*}
is a solution to
\begin{align*}
\partial_t f_{z_0,r} + v \cdot \nabla_x f_{z_0,r} + (-\tilde{a}^{i,j}\partial_{v_i,v_j}) f_{z_0,r} = - \tilde{b} \cdot \nabla_v f_{z_0,r} - \tilde{c} f_{z_0,r} + \tilde{h} ~~ \text{ in } Q_{R/r}(0) \cap \big( T_{z_0,r} \times \R^n \big),
\end{align*}
where $\tilde{a}^{i,j}(z) = a^{i,j}(z_0 \circ S_r z)$, $\tilde{b}^i(z) = r b^i(z_0 \circ S_r z)$, $\tilde{c}(z) = r^2 c(z_0 \circ S_r)$, and $\tilde{h}(z) = r^2 h(z_0 \circ S_r z)$.
\end{lemma}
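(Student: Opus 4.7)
The proof is a direct chain-rule computation, so my plan is largely to indicate what to differentiate and where the algebraic cancellation happens, then invoke the earlier identifications \eqref{eq:trafo-cylinders} and \eqref{eq:trafo-domains} for the domain statement.

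First I would record the relevant derivatives of $f_{z_0,r}(z) = f(z^{\ast})$, where $z^{\ast} := z_0 \circ S_r z$, obtained by the chain rule. The only non-obvious ingredient is that $\partial_t f_{z_0,r}(z)$ equals $r^2(\partial_t f)(z^{\ast})$ plus the extra term $r^2 v_0 \cdot (\nabla_x f)(z^{\ast})$, coming from the Galilean-type shift $r^2 t v_0$ built into the $x$-component of $z_0 \circ S_r z$. The spatial and velocity derivatives produce only the expected powers of $r$: $\nabla_x f_{z_0,r}(z) = r^3 (\nabla_x f)(z^{\ast})$, $\nabla_v f_{z_0,r}(z) = r (\nabla_v f)(z^{\ast})$, and $\partial_{v_i,v_j} f_{z_0,r}(z) = r^2 (\partial_{v_i,v_j} f)(z^{\ast})$.

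Next I would substitute into the left-hand side of the rescaled equation. The key algebraic identity is the cancellation
\[ \partial_t f_{z_0,r}(z) + v \cdot \nabla_x f_{z_0,r}(z) = r^2 (\partial_t f)(z^{\ast}) + r^2 (v_0 + rv) \cdot (\nabla_x f)(z^{\ast}), \]
and $v_0 + rv$ is precisely the velocity coordinate of $z^{\ast}$. This is the only step requiring any thought: it expresses the fact that the kinetic transport operator $\partial_t + v \cdot \nabla_x$ is invariant under the shift $z_0 \circ \cdot$ and scales covariantly (with factor $r^2$) under $S_r$, and it is what forces the particular form of $S_r$. Once this identity is in place, the entire second-order part of the rescaled left-hand side equals $r^2$ times the left-hand side of the original PDE, evaluated at $z^{\ast}$.

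Then I would use the PDE satisfied by $f$ at $z^{\ast}$ to replace this by $r^2[-b \cdot \nabla_v f - cf + h](z^{\ast})$ and rewrite $(\nabla_v f)(z^{\ast}) = r^{-1} \nabla_v f_{z_0,r}(z)$ and $f(z^{\ast}) = f_{z_0,r}(z)$. Collecting powers of $r$ yields exactly the coefficients $\tilde a^{i,j}(z) = a^{i,j}(z^{\ast})$, $\tilde b(z) = r b(z^{\ast})$, $\tilde c(z) = r^2 c(z^{\ast})$, $\tilde h(z) = r^2 h(z^{\ast})$ announced in the statement. For the domain, I would simply invoke \eqref{eq:trafo-cylinders} for $z^{\ast} \in Q_R(z_0) \Leftrightarrow z \in Q_{R/r}(0)$ and \eqref{eq:trafo-domains} for $z^{\ast} \in (-1,1) \times \Omega \times \R^n \Leftrightarrow z \in T_{z_0,r} \times \R^n$. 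There is no serious obstacle here; the lemma is bookkeeping, but the exact powers of $r$ appearing in $\tilde b$, $\tilde c$, $\tilde h$ must be tracked carefully, since these scaling weights underpin the entire blow-up strategy of Subsection \ref{subsec:strategy}.
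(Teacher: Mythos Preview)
Your proposal is correct and takes essentially the same approach as the paper: the paper's proof is a single sentence invoking \eqref{eq:trafo-cylinders}, \eqref{eq:trafo-domains}, and the chain rule, and you have simply spelled out that chain-rule computation in detail, including the key Galilean cancellation in $\partial_t f_{z_0,r} + v\cdot\nabla_x f_{z_0,r}$.
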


\begin{proof}
The proof follows immediately from \eqref{eq:trafo-cylinders}, the construction of $T_{z_0,r}$ \eqref{eq:trafo-domains}, and the chain rule.
\end{proof}

As we mentioned before, our proof of the boundary regularity goes by blow-up techniques, i.e. zooming into the solution at a boundary point $z_0 \in \gamma$. This corresponds to investigating the limit of solutions $f_{z_0,r}$ as $r \searrow 0$. The following lemma determines the limit of the corresponding solution domains $T_r \times \R^n$, which heavily depends on the position of $z_0 \in \gamma$.

\begin{lemma}
\label{lemma:domains-convergence}
Let $\partial \Omega \in C^{1}$, $R > 0$, and $(z_m^0)_m \subset (-1,1) \times \partial \Omega \times \R^n$, where we denote $z_m^0 = (t_m^0,x_m^0,v_m^0)$ and assume that $x_m^0 \to x^0 \in \partial \Omega$. Let $r_m \searrow 0$ and in case $z_m^0 \in \gamma_{\pm}$ for all $m \in \N$ we assume in addition that $|v_m^0 \cdot n_{x_m^0}| r_m^{-1} \ge M_m$ for some sequence $M_m \nearrow \infty$. Then, it holds
\begin{align*}
Q_{R/r_m}(0) \cap (T_{r_m} \times \R^n) \to (T_0 \times \R^n),
\end{align*}
as $m \to \infty$, where
\begin{align*}
T_0 := 
\begin{cases}
(-\infty,0) \times \R^n ~~ &\text{ if } z_m^0 \in \gamma_+ ~ \forall m \in \N,\\
(0,\infty) \times \R^n ~~ &\text{ if } z_m^0 \in \gamma_- ~ \forall m \in \N,\\
\R \times \{ x \cdot n_{x^0} < 0 \}  ~~ &\text{ if } z_m^0 \in \gamma_0 ~ \forall m \in \N.
\end{cases}
\end{align*}
Moreover, note that if $z_m^0 \in \gamma_-$ for any $m \in \N$, then, as $m \to \infty$
\begin{align*}
\gamma_-^{(r_m)} := \{ (t,x,v) \in T_{r_m} \times \R^n : (z_m^0 \circ S_{r_m} z) \in \gamma_- \} \to \{ 0 \} \times \R^n \times \R^n.
\end{align*}
\end{lemma}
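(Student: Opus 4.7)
The plan is to perform a case-by-case analysis of the defining inequality $x_m^0 + r_m^3 x + r_m^2 t v_m^0 \in \Omega$ after introducing local $C^1$ coordinates near $x^0 \in \partial\Omega$. First I would rotate coordinates so that $n_{x^0} = e_n$ and write $\partial\Omega$ locally as a graph $\{x_n = g(x')\}$ with $g \in C^1$, $g(0)=0$, $\nabla g(0)=0$, and $\Omega = \{x_n < g(x')\}$. Setting $x_m^0 - x^0 = (\zeta'_m, g(\zeta'_m))$ with $\zeta'_m \to 0$ and decomposing $v_m^0 = ((v_m^0)', v_{m,n})$ in this frame, the inclusion $x_m^0 + r_m^3 x + r_m^2 t v_m^0 \in \Omega$ becomes
\begin{align*}
g(\zeta'_m) + r_m^3 x_n + r_m^2 t v_{m,n} < g\bigl(\zeta'_m + r_m^3 x' + r_m^2 t (v_m^0)'\bigr).
\end{align*}
A first-order Taylor expansion of $g$ at $\zeta'_m$ combined with $\nabla g(\zeta'_m)\to 0$ reduces this to
\begin{align*}
r_m^3 x_n + r_m^2 t v_{m,n} + o(r_m^2) < 0,
\end{align*}
which is the key inequality driving the three cases.

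The core step is the case analysis. In $\gamma_+$, the hypothesis $v_m^0 \cdot n_{x_m^0}\ge M_m r_m$ together with $n_{x_m^0}\to n_{x^0}$ makes the term $r_m^2 t v_{m,n}$ dominate, with sign equal to $\operatorname{sgn}(t)$, so for every fixed $(t,x,v)$ with $t\neq 0$ the inequality holds for all large $m$ iff $t<0$; moreover the time constraint $t\in(r_m^{-2}(-1-t_m^0), r_m^{-2}(1-t_m^0))$ is eventually vacuous on bounded sets (after passing to a subsequence for which $t_m^0$ stays in a compact subset of $(-1,1)$). This yields $T_0=(-\infty,0)\times\R^n$. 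The case $\gamma_-$ is symmetric and gives $T_0=(0,\infty)\times\R^n$. In $\gamma_0$, where $v_m^0\cdot n_{x_m^0}=0$, continuity of the normal gives $v_{m,n}=v_m^0\cdot(n_{x^0}-n_{x_m^0})=o(1)$, so the $r_m^2 t v_{m,n}$ contribution is $o(r_m^2)$ and the inequality collapses, after division by $r_m^3$, to $x\cdot n_{x^0}<0$ with no constraint on $t$, giving $T_0=\R\times\{x\cdot n_{x^0}<0\}$.

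To conclude the convergence I would verify both Kuratowski inclusions: any point in the interior of $T_0\times\R^n$ lies in $Q_{R/r_m}(0)\cap(T_{r_m}\times\R^n)$ for $m$ large (using $R/r_m\to\infty$ together with the leading-order inequality), and any accumulation point of a sequence in $Q_{R/r_m}(0)\cap(T_{r_m}\times\R^n)$ belongs to $\overline{T_0\times\R^n}$ by a direct continuity argument. For the final statement about $\gamma_-^{(r_m)}$, I would combine the same Taylor expansion with the fact that in $\gamma_-$ we have $v_{m,n}<0$ with $|v_{m,n}|/r_m\to\infty$: for any fixed $t\neq 0$ the displaced point $x_m^0+r_m^3 x+r_m^2 t v_m^0$ is strictly inside or outside $\Omega$ for $m$ large and therefore cannot lie on $\partial\Omega$; this forces $t\to 0$, so the limit set is $\{0\}\times\R^n\times\R^n$.

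The hard part will be the careful control of the error $o(r_m^2)$ when $\partial\Omega$ is only $C^1$, since $\nabla g(\zeta'_m)\to 0$ is given without a quantitative modulus. The argument must therefore be arranged pointwise, choosing $m$ large for each given test point $(t,x,v)$ so that the leading term dominates the correction; some additional care is needed in $\gamma_0$ to handle the tangential displacement $r_m^2 t (v_m^0)'$ when $v_m^0$ is not a priori bounded, which in the intended applications is handled by a prior reduction to bounded velocities.
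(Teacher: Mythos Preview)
Your overall approach—reducing to a scalar inequality via graph coordinates and Taylor expansion, then analyzing the dominant term case by case—mirrors the paper's proof, and your treatment of the $\gamma_\pm$ cases and of $\gamma_-^{(r_m)}$ is correct. The paper's version is more compact because it works directly with the normal $n_{x_m^0}$ at the moving base point rather than fixing a frame at $x^0$, writing the limiting condition as $x\cdot n_{x_m^0} < (-t)(v_m^0\cdot n_{x_m^0})\,r_m^{-1}$; this makes the three cases immediate from the sign (or vanishing) of $v_m^0\cdot n_{x_m^0}$.

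There is, however, a genuine gap in your $\gamma_0$ argument. You write the key inequality as $r_m^3 x_n + r_m^2 t\,v_{m,n} + o(r_m^2) < 0$ and assert that since $v_{m,n}=o(1)$ the middle term is $o(r_m^2)$, so after dividing by $r_m^3$ one recovers $x_n<0$. But $o(r_m^2)/r_m^3$ need not tend to zero—you are dividing by a strictly smaller power—so the inequality does \emph{not} collapse as claimed. The missing ingredient is an exact cancellation: in your coordinates the grazing condition $v_m^0\cdot n_{x_m^0}=0$ reads $v_{m,n}=\nabla g(\zeta'_m)\cdot(v_m^0)'$ \emph{exactly}, not merely $o(1)$, and this matches precisely the first-order term $\nabla g(\zeta'_m)\cdot r_m^2 t\,(v_m^0)'$ that you prematurely absorbed into $o(r_m^2)$. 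Keeping that term explicit kills the $r_m^2$ contribution entirely and leaves $r_m^3\bigl(x_n-\nabla g(\zeta'_m)\cdot x'\bigr)<o(|h|)$. This is exactly what the paper's choice of $n_{x_m^0}$ (rather than $n_{x^0}$) buys: the cancellation is built in because $v_m^0\cdot n_{x_m^0}=0$ on the nose. Note that even after this repair the residual $o(|h|)/r_m^3$ with $|h|\sim r_m^2|v_m^0|$ remains delicate for merely $C^1$ boundaries with $v_m^0\neq 0$; the paper's proof is equally brief on this point, and in its applications the $\gamma_0$ case is only invoked for the half-space, where $g\equiv 0$ and the issue disappears.
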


All convergences of sets are to be understood with respect of the Hausdorff distance.

Note that $\tilde{Q}_{r^{-1}}(0) \to ((-\infty,0] \times \R^n \times \R^n)$ as $r \to 0$. Hence, if we considered one-sided cylinders, there would be no reasonable limiting domain whenever $z_0 \in \gamma_-$. Moreover, note that all the assumptions in the previous lemma are satisfied, if $(z_m^0)_m$ is constant and $r_m \searrow 0$.

\begin{proof}
It is easy to see that 
\begin{align*}
\big( r_m^{-2}(-1 - t_0) , r_m^{-2}(1 - t_0) \big) \to \R, \qquad Q_{R/r_m}(0) \to (\R \times \R^n \times \R^n)
\end{align*}
Hence, it remains to determine the limit of $\Omega_{r_m}(t)$ for any fixed $t \in \R$. We have
\begin{align*}
\Omega_{z_m^0,r_m}(t) = r_m^{-3}(\Omega - x_m^0) - r_m^{-1}t v_m^0 = r_m^{-3}(\Omega - x_m^0 - r_m^2 t v_m^0).
\end{align*}
Note that $r_m^{-3}(\Omega - x_m^0) \to \{ x \cdot n_{x_0} < 0 \}$ since $\partial \Omega \in C^{1}$. Therefore, for any $t \in \R$ it holds
\begin{align*}
\lim_{m \to \infty} \Omega_{r_m}(t) = \lim_{m \to \infty} r_m^{-3} (\{ x \cdot n_{x_m^0} < 0 \} - r_m^2 t v_m^0 ) = \lim_{m \to \infty} \{ x \cdot n_{x_m^0} < (-t) (v_m^0 \cdot n_{x_m^0}) r_m^{-1} \},
\end{align*}
and since by assumption we have $|v_m^0 \cdot n_{x_m^0}| r_m^{-1} \ge M_m \to \infty$ in case $z_m^0 \in \gamma_{\pm}$, we deduce
\begin{align}
\label{eq:domains-convergence-half-space}
\lim_{m \to \infty} \{ x \cdot n_{x_m^0} < -r_m^{-1} t v_m^0 \cdot n_{x_m^0} \} =
\begin{cases}
\R^n ~~ &\text{ if } t v_m^0 \cdot n_{x_m^0} < 0 ~\forall m \in \N,\\
\emptyset ~~ &\text{ if } t v_m^0 \cdot n_{x_m^0} > 0 ~\forall m \in \N,\\
\{ x \cdot n_{x^0} < 0\} ~~ &\text{ if } t v_m^0 \cdot n_{x_m^0} = 0 ~\forall m \in \N.
\end{cases}
\end{align}
The first case occurs if and only if $\mp t > 0$ and $z_m^0 \in \gamma_{\pm}$ and the second case occurs if and only if $\pm t > 0$ and $z_m^0 \in \gamma_{\mp}$. The third case occurs if and only if $z_m^0 \in \gamma_0$. Note that all of these convergences can be lifted immediately to convergence in the sense of appropriate norms (e.g. in the Gromov-Hausdorff sense).
This concludes the proof of the first claim.

The second claim follows immediately from \eqref{eq:domains-convergence-half-space}.
\end{proof}

\subsection{Kinetic distance and H\"older spaces}\label{kinetic-dist}

The goal of this subsection is to introduce a distance, which respects the kinetic scaling and is suitable in order to define natural H\"older spaces. Our definitions are in align with those in \cite{ImSi22} and respect the geometry induced by the kinetic cylinders.

The distance in \cite{ImSi22} is defined as the infimum of $r$ s.t. $z_1, z_2 \in \tilde{Q}_r(z)$ for some $z$ or equivalently
\begin{align*}
d_{\ell}(z_1,z_2) = \min_{w \in \R^n} \Big\{ \max \big( |t_1 - t_2|^{1/2} , |x_1 - x_2 - (t_1 - t_2)w|^{1/3} , |v_1 - w| , |v_2 - w| \big) \Big\}.
\end{align*} 
Note that $d_{\ell}$ defines a metric on $\R^{1+2n}$ and that we have for some universal $C > 0$
\begin{align}
\label{eq:dist-comp}
\begin{split}
C^{-1} \max \big( |t_1 - t_2|^{1/2} , |x_1 - x_2 - & (t_1 - t_2)v_1|^{1/3} , |v_1 - v_2| \big) \le d_{\ell}(z_1,z_2) \\
& \le C \max \big( |t_1 - t_2|^{1/2} , |x_1 - x_2 - (t_1 - t_2)v_2|^{1/3} , |v_1 - v_2| \big).
\end{split}
\end{align}
The kinetic distance $d_{\ell}$ is left-invariant with respect to the  translation $\circ$ and homogeneous with respect to the scaling $S_r$ operation that was introduced before:
\begin{align}
\label{eq:distance-invariance}
d_{\ell}(S_r z_1,S_r z_2) = r d_{\ell}(z_1,z_2), \qquad d_{\ell}(z \circ z_1 , z \circ z_2) = d_{\ell}(z_1,z_2).
\end{align}
These two properties are in align with the translation and scaling properties of kinetic equations (see \autoref{lemma:scaling}) and are the main motivations to work with $d_{\ell}$.

We have the following connection between kinetic cylinders $Q_r(z_0)$ and the kinetic distance $d_{\ell}$:

\begin{lemma}
\label{lemma:kinetic-balls}
Let $z_1 = (t_1,x_1,z_1), z_2 = (t_2,x_2,z_2) \in \R^{1+2n}$ and $r > 0$. Then, the following hold true:
\begin{itemize}
\item[(i)] When $z_1 \in Q_r(z_2)$, then $d_{\ell}(z_1,z_2) \le r$.
\item[(ii)] When $d_{\ell}(z_1,z_2) < r$, then $z_1 \in Q_{2 r}(z_2)$.
\end{itemize}
In particular, when $z_1 \in Q_r(z_2)$, then $z_2 \in Q_{2r}(z_1)$, and
\begin{align*}
Q_r(z_1) \subset \{ z \in \R^{1+2n} : d_{\ell}(z,z_1) < r \} \subset Q_{2r}(z_1),
\end{align*}
and moreover, when $z_1 \in Q_{r/4}(z_0)$, then $Q_{r/4}(z_1) \subset Q_r(z_0)$. 
\end{lemma}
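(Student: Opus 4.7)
The claim follows directly from the definitions of $Q_r$ and $d_\ell$, together with the fact (already stated in the text) that $d_\ell$ is a symmetric metric. My plan is to verify (i) and (ii) by unwinding the definitions, and then to deduce the remaining three statements by standard metric manipulations.

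For (i), I would test the infimum defining $d_\ell(z_1,z_2)$ against the specific choice $w = v_2$. This kills the $|v_2-w|$ term, and the other three terms in the max are precisely the quantities bounded by $r$, $r^3$, and $r$ in the definition of $Q_r(z_2)$, giving $d_\ell(z_1,z_2) < r$. For (ii), I would take an almost-optimal $w$ in the infimum making all four terms strictly smaller than $r$, and then compare with the defining conditions of $Q_{2r}(z_2)$: the $t$-condition is immediate, $|v_1-v_2| \le |v_1-w|+|v_2-w| < 2r$, and
\[
|x_1-x_2-(t_1-t_2)v_2| \le |x_1-x_2-(t_1-t_2)w| + |t_1-t_2|\,|v_2-w| < r^3 + r^2\cdot r = 2r^3 < (2r)^3,
\]
so $z_1 \in Q_{2r}(z_2)$.

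The remaining ``in particular'' assertions then follow formally. Combining (i) with the symmetry $d_\ell(z_1,z_2) = d_\ell(z_2,z_1)$ and feeding the result into (ii) yields $z_2 \in Q_{2r}(z_1)$ whenever $z_1 \in Q_r(z_2)$. The two-sided inclusion between $Q_r(z_1)$, the open $d_\ell$-ball, and $Q_{2r}(z_1)$ is just a restatement of (i) and (ii). Finally, for $z_1 \in Q_{r/4}(z_0)$ and $z \in Q_{r/4}(z_1)$, part (i) gives $d_\ell(z,z_1),\, d_\ell(z_1,z_0) < r/4$, the triangle inequality yields $d_\ell(z,z_0) < r/2$, and (ii) (applied with $r/2$ in place of $r$) concludes $z \in Q_r(z_0)$. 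The only subtlety to watch for is the distinction between strict and non-strict inequalities---handled by choosing an almost-minimizer whose value is strictly below the prescribed threshold---but this is a bookkeeping matter rather than a genuine obstacle.
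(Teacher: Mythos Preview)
Your proof is correct and follows essentially the same approach as the paper: choosing $w=v_2$ for (i), the triangle-inequality computation for (ii), and then deducing the ``in particular'' claims via the symmetry and triangle inequality of $d_\ell$. The paper is slightly terser on the final inclusions, but the ideas are identical.
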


Analogous statements hold true for one-sided kinetic cylinders $\tilde{Q}$, under additional restrictions on the ordering of $t_1,t_2$.

\begin{proof}
The proof of (i) is immediate upon choosing $w = v_2$ in the definition of $d_{\ell}(z_1,z_2)$. To see (ii), note that $d_{\ell}(z_1,z_2) < r$ implies the existence of $w \in \R^n$ such that
\begin{align*}
|t_1 - t_2| < r^2, \quad |x_1 - x_2 - (t_1-t_2)w| < r^3, \quad |v_1 - w| < r, \quad |v_2 - w| < r.
\end{align*}
To prove that $z_1 \in Q_{2r}(z_2)$, we need to verify 
\begin{align*}
|t_1 - t_2| < (2r)^2, \quad |x_1 - x_2 - (t_1 - t_2)v_2| < (2r)^3, \quad |v_1 - v_2| < 2r.
\end{align*}
Let now $z_1 \in Q_{r}(z_2)$. Then it is immediate that $z_2 \in Q_{2r}(z_1)$, and the remaining properties follow directly from the triangle inequality:
\begin{align*}
|x_1 - x_2 - (t_1 - t_2)v_2| < r^3 + |t_1 - t_2||w - v_2| < 2 r^3, \quad |v_1 - v_2| \le |v_1 - w| + |v_2 - w| < 2r.
\end{align*}
\end{proof}

Let us now introduce kinetic H\"older spaces. These spaces are defined with respect to the kinetic distance and therefore also respect the invariances of kinetic equations.

First, we define kinetic polynomials. Given a multi-index $\beta = (\beta_t,\beta_x,\beta_v) \in (\N \cup \{ 0 \})^{1+2n}$ with $\beta_t \in \N \cup \{ 0 \}$ and $\beta_x, \beta_v \in (\N \cup \{0\})^n$, we define its kinetic degree as
\begin{align*}
|\beta| = 2|\beta_t| + 3|\beta_x| + |\beta_v|.
\end{align*}

Given $k \in \N \cup \{ 0 \}$, we introduce the space of kinetic polynomials of kinetic degree $k$, as follows:
\begin{align*}
\cP_k = \Big\{ \sum_{|\beta| \le k} \alpha^{(\beta)} t^{\beta_t} x_1^{\beta_{x_1}} \cdot \dots \cdot x_n^{\beta_{x_n}} v_1^{\beta_{v_1}} \cdot \dots \cdot v_n^{\beta_{v_n}} ~:~  \alpha^{(\beta)} \in \R \Big\}.
\end{align*}

\begin{definition}
For $k \in \N \cup \{0\}$ and $\eps \in (0,1]$ and $D \subset \R^{1+2n}$, we say that $f :D  \to \R$ is $C^{k,\eps}_{\ell}$ at $z_0 \in \R^{1+2n}$ if there is $p \in \cP_{k}$ such that for any $z \in D$:
\begin{align*}
|f(z) - p(z)| \le C d_{\ell}^{k+\eps}(z,z_0).
\end{align*}
We say that $f \in C^{k,\eps}_{\ell}(D)$ if this property holds for any $z_0 \in D$ and define
\begin{align*}
[f]_{C^{k,\eps}_{\ell}(D)} = \sup_{z_0 \in D} \inf_{p_{z_0} \in \cP_k} \sup_{z \in D} \frac{|f(z) - p_{z_0}(z)|}{d_{\ell}^{k+\eps}(z,z_0)}, \qquad \Vert f \Vert_{C^{k,\eps}_{\ell}(D)} = [f]_{C^{k,\eps}_{\ell}(D)} + \Vert f \Vert_{L^{\infty}(D)}.
\end{align*}
\end{definition}

Whenever $\eps \not= 1$ we also write $C^{k+\eps}_{\ell}(D) := C^{k,\eps}_{\ell}(D)$. When $\eps = 1$, we need to distinguish the spaces $C^{k,1}_{\ell}(D)$ and $C^{k+1}_{\ell}(D)$. In fact:

\begin{definition}
For $k \in \N \cup \{0\}$ and $D \subset \R^{1+2n}$, we say that $f :D  \to \R$ is $C^k_{\ell}$ at $z_0 \in \R^{1+2n}$, if there is $p \in \cP_k$ such that for any $z \in D$:
\begin{align*}
\frac{|f(z) - p(z)|}{d_{\ell}^k(z,z_0)} \to 0 ~~ \text{ as } z_0 \to z.
\end{align*}
We say that $f \in C^k_{\ell}(D)$ if this property holds true for any $z_0 \in D$.
\end{definition}

We have the following scaling property of kinetic H\"older norms.

\begin{lemma}
\label{lemma:Holder-scaling}
Let $k \in \N \cup \{ 0 \}$ and $\eps \in (0,1]$, $z_0 \in \R^{1+2n}$, and $r,R > 0$. Then, for $f_{z_0}(z) = f(z_0 \circ S_r z)$
\begin{align*}
[f]_{C^{k,\eps}_{\ell}(Q_R(z_0))} = r^{-k-\eps} [f_{z_0,r}]_{C^{k,\eps}_{\ell}(Q_{R/r}(0))}.
\end{align*}
\end{lemma}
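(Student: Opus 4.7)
The plan is to unfold both sides using the definition of the kinetic Hölder seminorm and reduce the identity to an elementary change of variables. The three ingredients I would rely on are: \eqref{eq:trafo-cylinders}, which says that $z \mapsto z_0 \circ S_r z$ is a bijection from $Q_{R/r}(0)$ onto $Q_R(z_0)$; the invariance and homogeneity of $d_\ell$ recorded in \eqref{eq:distance-invariance}; and the fact that pullback under $z \mapsto z_0 \circ S_r z$ is a bijection of $\cP_k$ onto itself. Once these three ingredients are in place the identity falls out of a direct substitution in the definition of $[\,\cdot\,]_{C^{k,\eps}_\ell}$.

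First I would fix $z^{(1)}, z^{(2)} \in Q_R(z_0)$ and write them as $z^{(i)} = z_0 \circ S_r \tilde z^{(i)}$ with $\tilde z^{(i)} \in Q_{R/r}(0)$. By \eqref{eq:distance-invariance}, $d_\ell(z^{(1)}, z^{(2)}) = d_\ell(S_r \tilde z^{(1)}, S_r \tilde z^{(2)}) = r\, d_\ell(\tilde z^{(1)}, \tilde z^{(2)})$, which is the source of the factor $r^{-(k+\eps)}$ appearing in the statement.

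The key algebraic step is to check that the linear map $\Psi_{z_0,r}\colon p \mapsto p(z_0 \circ S_r \,\cdot\,)$ is a bijection of $\cP_k$ onto itself, so that the infimum over $p \in \cP_k$ on one side translates without change into an infimum over $\tilde p \in \cP_k$ on the other side. I would check this by taking a monomial $t^a x^\beta v^\gamma$ of kinetic degree $2a + 3|\beta| + |\gamma|\le k$ and expanding it under the substitution $(t,x,v)\mapsto (t_0+r^2 t,\, x_0+r^3 x+r^2 t v_0,\, v_0+rv)$: a factor $t$ contributes kinetic weight $2$, a factor $x_i$ splits into pieces of kinetic weight $0$, $2$, or $3$ (coming from $x_{0,i}$, $r^2 t\, v_{0,i}$, or $r^3 x_i$), and a factor $v_j$ splits into pieces of weight $0$ or $1$; in each case the total kinetic weight is non-increasing, so $\Psi_{z_0,r}(\cP_k) \subset \cP_k$. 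Since the substitution is invertible with an inverse of the same structural form, $\Psi_{z_0,r}$ is surjective, hence bijective.

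Combining the three ingredients, the numerator of $|f(z^{(2)}) - p(z^{(2)})|/d_\ell^{k+\eps}(z^{(2)}, z^{(1)})$ becomes $|f_{z_0,r}(\tilde z^{(2)}) - \tilde p(\tilde z^{(2)})|$ with $\tilde p := \Psi_{z_0,r}(p) \in \cP_k$, while the denominator becomes $r^{k+\eps} d_\ell^{k+\eps}(\tilde z^{(2)}, \tilde z^{(1)})$; taking $\inf$ over $p$ (equivalently $\tilde p$) and then $\sup$ over $z^{(1)}$ (equivalently $\tilde z^{(1)}$) and $z^{(2)}$ (equivalently $\tilde z^{(2)}$) yields the claimed identity. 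No real obstacle is expected; the only point requiring any care is the preservation of the kinetic degree under $\Psi_{z_0,r}$, which is a routine algebraic bookkeeping.
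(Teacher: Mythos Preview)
Your proposal is correct and follows essentially the same route as the paper's proof: both unfold the seminorm via the change of variables $z \mapsto z_0 \circ S_r z$, use \eqref{eq:trafo-cylinders} and \eqref{eq:distance-invariance}, and rely on the fact that $p \mapsto p(z_0 \circ S_r\,\cdot\,)$ preserves $\cP_k$. You are slightly more careful than the paper in explicitly verifying bijectivity of this pullback (the paper only states the forward inclusion), but this is a minor expository difference, not a different argument.
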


The same result holds true for one-sided cylinders.

\begin{proof}
By \autoref{lemma:scaling} and the invariances of the kinetic distance (see \eqref{eq:distance-invariance}), we have
\begin{align*}
[f]_{C^{k,\eps}_{\ell}(Q_R(z_0))} &=  \sup_{z_1 \in Q_R(z_0)} \inf_{p \in \cP_k} \sup_{z_2 \in Q_R(z_0)} \frac{|f(z_2) - p(z_2)|}{d_{\ell}^{k+\eps}(z_1,z_2)} \\
&=  \sup_{z_r^{(1)} \in Q_{R/r}(0)} \inf_{p \in \cP_k} \sup_{z_r^{(2)} \in Q_{R/r}(0)} \frac{|f(z_0 \circ S_r z_r^{(2)}) - p(z_0 \circ S_r z_r^{(2)})|}{d_{\ell}^{k+\eps}(z_0 \circ S_r z_r^{(1)},z_0 \circ S_r z_r^{(2)})} \\
&= \sup_{z_r^{(1)} \in Q_{R/r}(0)} \inf_{p_{z_0,r} \in \cP_k} \sup_{z_r^{(2)} \in Q_{R/r}(0)} \frac{|f_{z_0,r}(z_r^{(2)}) - p_{z_0,r}(z_r^{(2)})|}{\big( r d_{\ell}(z_r^{(1)},z_r^{(2)}) \big)^{k+\eps}} \\
&= r^{-k-\eps} [f_{z_0,r}]_{C^{k,\eps}_{\ell}(Q_{R/r}(0))}.
\end{align*}
In the last step, we used that whenever $p \in \cP_k$, then also $p_{z_0,r} \in \cP_k$.
\end{proof}

Note that one can choose $p_{z_0} \equiv f(z_0)$ if $k = 0$, and therefore we have in this special case
\begin{align*}
[f]_{C^{0,\eps}_{\ell}(D)} = \sup_{z_1,z_2 \in D} \frac{|f(z_1) - f(z_2)|}{d_{\ell}(z_1,z_2)^{\eps}}.
\end{align*}

The following lemma is a convenient sufficient condition for functions to belong to a kinetic H\"older space $C^{k,\eps}_{\ell}(D)$.

\begin{lemma}
\label{lemma:covering}
Let $D \subset \R^{1+2n}$ be a bounded Lipschitz domain. Let $f \in L^{\infty}(D)$ be such that for some $k \in \N \cup \{ 0 \}$ and $\eps \in (0,1]$:
\begin{align*}
[f]_{C^{k,\eps}_{\ell}(Q_r(z))} \le C ~~ \forall r,z : Q_{4r}(z) \subset D.
\end{align*}
Then, $f \in C^{k,\eps}_{\ell}(D)$ and
\begin{align*}
[f]_{C^{k,\eps}_{\ell}(D)} \le cC
\end{align*}
for some $c$ depending only on $n,D,k,\eps$.
\end{lemma}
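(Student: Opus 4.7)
The plan is to reduce the global estimate to the local hypothesis in two stages: first extract a canonical polynomial $p_{z_0}\in\cP_k$ at each $z_0\in D$ from the local data, and then run a Whitney-type chaining argument over $D$, using that $D$ is bounded and Lipschitz.

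For each $z_0\in D$ and each $r>0$ with $Q_{4r}(z_0)\subset D$, the hypothesis produces some $p_{z_0,r}\in\cP_k$ satisfying $|f(z)-p_{z_0,r}(z)|\le C\,d_\ell(z,z_0)^{k+\eps}$ on $Q_r(z_0)$. For $r_1<r_2$, the difference $q:=p_{z_0,r_1}-p_{z_0,r_2}\in\cP_k$ satisfies $|q(z)|\le 2C\,d_\ell(z,z_0)^{k+\eps}$ on $Q_{r_1}(z_0)$. Rescaling via $z=z_0\circ S_\rho\zeta$ with $\zeta\in Q_1(0)$ fixed, each monomial in $q$ of kinetic degree $|\beta|\le k$ contributes a term of order $\rho^{|\beta|}$ on the left, while the right-hand side is of order $\rho^{k+\eps}$; dividing by $\rho^{k+\eps}$ and letting $\rho\to 0$ forces each coefficient of $q$ to vanish. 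Hence $p_{z_0,r}$ is independent of $r$, defining a canonical polynomial $p_{z_0}$ with $p_{z_0}(z_0)=f(z_0)$ in particular.

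Applying the same coefficient-extraction argument to $p_{z_1}-p_{z_2}\in\cP_k$ for neighboring points $z_1,z_2\in D$ sharing a common good cylinder yields quantitative bounds on the kinetic Taylor coefficients of $p_{z_1}-p_{z_2}$ in terms of $C$ and appropriate powers of $d_\ell(z_1,z_2)$. I would then, given arbitrary $z_0,z\in D$, connect them by a finite chain $z_0=w_0,\ldots,w_N=z$ in $D$ with each consecutive pair $w_i,w_{i+1}$ lying in a common good cylinder, and $\sum_i d_\ell(w_i,w_{i+1})^{k+\eps}\le c\,d_\ell(z_0,z)^{k+\eps}$ for a $D$-dependent constant $c$. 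Telescoping the neighbor-comparison estimate along the chain gives $|p_{z_0}(z)-p_{z}(z)|\le cC\,d_\ell(z_0,z)^{k+\eps}$, and combining with $p_z(z)=f(z)$ yields
\begin{align*}
|f(z)-p_{z_0}(z)|=|p_z(z)-p_{z_0}(z)|\le cC\,d_\ell(z,z_0)^{k+\eps}\qquad\forall z\in D.
\end{align*}
Taking the supremum over $z_0\in D$ gives $[f]_{C^{k,\eps}_\ell(D)}\le cC$.

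The main obstacle will be the chaining step: one must design the chain so that the accumulated coefficient bounds sum to the correct power of $d_\ell(z_0,z)$, which is delicate when $z_0$ or $z$ lie close to $\partial D$ and admissible cylinders must shrink. The bounded-Lipschitz hypothesis on $D$ is exactly what yields a Whitney-type decomposition of $D$ by admissible kinetic cylinders with chain length and combinatorial complexity controlled by constants depending only on $D$. A secondary subtlety is that the kinetic distance $d_\ell$ is non-Euclidean while the Lipschitz regularity of $\partial D$ is defined via the Euclidean distance; on bounded sets the two distances are comparable, so the standard Euclidean Whitney decomposition can be refined into a kinetic one, with the comparison factors absorbed into the final constant $c=c(n,D,k,\eps)$.
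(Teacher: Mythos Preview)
Your outline matches the standard argument that the paper defers to: the paper gives no detailed proof, only citing the Euclidean analogue \cite[Lemma A.1.4]{FeRo24} and remarking that, since $d_\ell$ is compatible with kinetic cylinders (their Lemma on kinetic balls), the same proof carries over, with the interpolation for kinetic H\"older spaces supplied by \cite{ImSi21}. Extracting a canonical $p_{z_0}$ and then Whitney-chaining is exactly how that standard proof runs.

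One correction to your last paragraph: $d_\ell$ and the Euclidean distance are \emph{not} comparable on bounded sets in the bi-Lipschitz sense; for $z=(0,x,0)$ one has $d_\ell(z,0)\sim|x|^{1/3}$ while the Euclidean distance is $|x|$. So you cannot import a Euclidean Whitney decomposition up to a multiplicative constant. What the bounded-Lipschitz hypothesis actually provides is that the largest $r$ with $Q_{4r}(z)\subset D$ is bounded below by a positive \emph{power} of $\dist_{\mathrm{Euc}}(z,\partial D)$ (with the power and constant depending on $D$, in particular on the bound for $|v|$ through the slant of kinetic cylinders). This is enough to build the Boman-type chain directly in the kinetic metric, with geometrically increasing then decreasing step sizes so that $\sum_i d_\ell(w_i,w_{i+1})^{k+\eps-|\beta|}\lesssim d_\ell(z_0,z)^{k+\eps-|\beta|}$ for every $|\beta|\le k$; with this adjustment your telescoping argument goes through.
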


\begin{proof}
The lemma is standard for domains $\Omega \subset \R^n$ and H\"older spaces with the Euclidean metric (see for instance \cite[Lemma A.1.4]{FeRo24}). Since by \autoref{lemma:kinetic-balls} the kinetic distance $d_{\ell}$ is compatible with kinetic cylinders $Q$, an analogous proof works in the kinetic setting. Note that an interpolation result for kinetic H\"older spaces is given in \cite{ImSi21}.
\end{proof}

We finish this subsection by recalling three results on kinetic H\"older spaces from \cite{ImSi21,ImSi22}.

We have the following H\"older interpolation result:

\begin{lemma}[H\"older interpolation]
\label{lemma:Holder-interpol}
Let $k_1,k_2 \in \N \cup \{ 0 \}$, $\eps_1,\eps_2 \in (0,1]$ such that $0 < k_1 + \eps_1 < k_2 + \eps_2$, and $f \in C^{k_2,\eps_2}_{\ell}(Q_r(z_0))$ for some $r \in (0,1]$ and $z_0 \in \R^{1+2n}$. Let $\delta \in (0,1]$. Then, it holds
\begin{align*}
[f]_{C^{k_1,\eps_1}_{\ell}(Q_r(z_0))} &\le (\delta r)^{(k_2 + \eps_2) - (k_1 + \eps_1)} [f]_{C^{k_2,\eps_2}_{\ell}(Q_r(z_0))} + C (\delta r)^{-(k_1 + \eps_1)} \Vert f \Vert_{L^{\infty}(Q_r(z_0))},\\
\Vert f \Vert_{L^{\infty}(Q_r(z_0))} &\le (\delta r)^{k_2+\eps_2} [f]_{C^{k_2,\eps_2}_{\ell}(Q_r(z_0))} + C (\delta r)^{-(2+4n)} \Vert f \Vert_{L^1(Q_r(z_0))},
\end{align*}
where $C$ depends only on $n,k_1 + \eps_1, k_2 + \eps_2$.
\end{lemma}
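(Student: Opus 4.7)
The plan is to reduce both inequalities to the unit scale $r=1$, $z_0=0$ via the scaling identity in \autoref{lemma:Holder-scaling} (all $r$-dependences on both sides cancel after writing $f_{z_0,r}(z)=f(z_0\circ S_r z)$ and invoking Lemma~\ref{lemma:Holder-scaling}), and then to rely on polynomial coefficient estimates on kinetic boxes. The key preliminary bound is that for any $\tilde p(\tau,\xi,\nu)=\sum_{|\beta|\le k_2} a_\beta \tau^{\beta_t}\xi^{\beta_x}\nu^{\beta_v}$ of kinetic degree at most $k_2$, and any $s>0$,
\[ |a_\beta|\le C\, s^{-|\beta|}\|\tilde p\|_{L^\infty(\tilde Q_s)},\qquad |a_\beta|\le C\, s^{-|\beta|-(2+4n)}\|\tilde p\|_{L^1(\tilde Q_s)}, \]
where $\tilde Q_s=\{|\tau|<s^2,|\xi|<s^3,|\nu|<s\}$. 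Both follow from norm equivalence on the finite-dimensional space of such polynomials on $\tilde Q_1$ combined with the anisotropic rescaling $\tilde p_s(\tau,\xi,\nu)=\tilde p(s^2\tau,s^3\xi,s\nu)$, which multiplies the coefficient of $\tau^{\beta_t}\xi^{\beta_x}\nu^{\beta_v}$ by $s^{|\beta|}$ and shrinks Lebesgue volume by $s^{2+4n}$.

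For each $z_1\in Q_1(0)$, let $p^{(k_2)}_{z_1}\in\cP_{k_2}$ be a witness polynomial from the $C^{k_2,\eps_2}_{\ell}$-hypothesis, expanded in the kinetic coordinates $\tau=t-t_1,\ \xi=x-x_1-(t-t_1)v_1,\ \nu=v-v_1$, with coefficients $a_{\beta,z_1}$. Define $p^{(k_1)}_{z_1}\in\cP_{k_1}$ by truncating to terms with $|\beta|\le k_1$; this is the candidate polynomial that will witness $C^{k_1,\eps_1}_{\ell}$ at $z_1$. Using
\[ \|p^{(k_2)}_{z_1}\|_{L^\infty(Q_\delta(z_1))}\le\|f\|_{L^\infty(Q_1)}+\delta^{k_2+\eps_2}[f]_{C^{k_2,\eps_2}_{\ell}(Q_1)}, \]
the polynomial bound applied at $s=\delta$ yields
\[ |a_{\beta,z_1}|\le C\delta^{-|\beta|}\|f\|_{L^\infty(Q_1)}+C\delta^{k_2+\eps_2-|\beta|}[f]_{C^{k_2,\eps_2}_{\ell}(Q_1)}. \]

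For the first inequality, fix $z_1,z_2\in Q_1(0)$ and set $d:=d_\ell(z_1,z_2)\le 2$. If $d\le\delta$, then writing $|\tau^{\beta_t}\xi^{\beta_x}\nu^{\beta_v}(z_2)|\le d^{|\beta|}$ gives
\[ |f(z_2)-p^{(k_1)}_{z_1}(z_2)|\le [f]_{C^{k_2,\eps_2}_\ell}\,d^{k_2+\eps_2}+\sum_{k_1<|\beta|\le k_2}|a_{\beta,z_1}|\,d^{|\beta|}. \]
If $d>\delta$, then instead $|f(z_2)-p^{(k_1)}_{z_1}(z_2)|\le\|f\|_{L^\infty}+\sum_{|\beta|\le k_1}|a_{\beta,z_1}|\,d^{|\beta|}$. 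In both cases, dividing by $d^{k_1+\eps_1}$ and inserting the coefficient estimate produces powers of $\delta$ that telescope to precisely $\delta^{(k_2+\eps_2)-(k_1+\eps_1)}[f]_{C^{k_2,\eps_2}_\ell}+\delta^{-(k_1+\eps_1)}\|f\|_{L^\infty}$. The second inequality is obtained more directly: evaluating the defining inequality of $p^{(k_2)}_{z_0}$ at $z=z_0$ forces $p^{(k_2)}_{z_0}(z_0)=f(z_0)$, so $|f(z_0)|$ is the $|\beta|=0$ coefficient of the polynomial in kinetic coordinates; applying the $L^1$ coefficient bound at scale $\delta$ together with $\|p^{(k_2)}_{z_0}\|_{L^1(Q_\delta(z_0))}\le\|f\|_{L^1(Q_1)}+C\delta^{k_2+\eps_2+2+4n}[f]_{C^{k_2,\eps_2}_\ell}$ produces exactly the right-hand side.

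The main technical obstacle is that each application of the polynomial coefficient bound requires $Q_\delta(z_1)\subset Q_1(0)$, which fails for points $z_1$ near $\partial Q_1(0)$. I would handle this by first establishing the estimates on the interior cylinder $Q_{1/2}(0)$, where \autoref{lemma:kinetic-balls} guarantees $Q_\delta(z_1)\subset Q_1(0)$ for $\delta\le 1/8$, and then invoking \autoref{lemma:covering} to promote the seminorm control to all of $Q_1(0)$; the remaining range $\delta\in(1/8,1]$ is trivial after adjusting $C$. An alternative would be a short shifting argument moving each boundary point $z_1$ slightly inward to some $z_1^{*}$ with $d_\ell(z_1,z_1^{*})\lesssim\delta$ and $Q_\delta(z_1^{*})\subset Q_1(0)$, which is feasible but requires care because of the anisotropic shape of kinetic cylinders in the $x$-direction.
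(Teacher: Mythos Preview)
Your first-inequality argument (truncate the $\cP_{k_2}$ witness polynomial to degree $k_1$ and control the discarded coefficients by scaled norm equivalence on $\cP_{k_2}$) is precisely the proof from \cite{ImSi21} that the paper merely cites, so there you match. For the second inequality you take a different route: the paper does not pass through $L^1$ coefficient bounds on the witness polynomial, but instead, for each $z\in Q_r(z_0)$, sets $Q:=Q_r(z_0)\cap Q_{\delta r}(z)$ and writes
\[
|f(z)|\le |f(z)-(f)_Q|+(f)_Q\le (2\delta r)^{\eps_2}[f]_{C^{\eps_2}_\ell(Q_r(z_0))}+C(\delta r)^{-(2+4n)}\|f\|_{L^1(Q_r(z_0))},
\]
using only the volume bound $|Q|\ge c(\delta r)^{2+4n}$. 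This settles the case $k_2=0$; general $k_2$ then follows by feeding the first inequality into the $[f]_{C^{\eps_2}_\ell}$ term and absorbing. The gain is that $Q\subset Q_r(z_0)$ \emph{by construction}, so the boundary obstacle you flag in your final paragraph never appears. Your polynomial-coefficient method is also correct and pleasingly uniform across both inequalities, but your proposed fix via \autoref{lemma:covering} does not give exactly the stated same-domain inequality: that lemma needs a uniform bound on $[f]_{C^{k_1,\eps_1}_\ell(Q_\rho(z'))}$ for \emph{all} sub-cylinders with $Q_{4\rho}(z')\subset Q_1$, whereas your local argument on $Q_\rho(z')$ only works for $\delta\lesssim\rho$, so after covering you would obtain at best a domain-loss version ($Q_{r/2}$ on the left, $Q_r$ on the right). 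The shifting alternative you mention second is the correct way to recover the full statement, and rests on the same geometric input (that $Q_{\delta r}(z)\cap Q_r(z_0)$ contains a kinetic cylinder of radius $\sim\delta r$) that underlies the paper's volume lower bound.
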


\begin{proof}
The first estimate was proved in \cite[Proposition 2.10, Remark 2.11]{ImSi21} on scale one and the result on general scales following immediately by scaling (see \autoref{lemma:Holder-scaling}). See also \cite[Proposition 2.12, Appendix C]{Loh23} for a clarified version of the proof. The second estimate can be proved as follows (see also \cite[Lemma 6.6]{FRW24}). First, we have for any $z \in Q_r(z_0)$ and $\delta \in (0,1]$,
\begin{align*}
|f(z)| &\le |f(z) - (f)_{Q}| + (f)_{Q} \le \sup_{z' \in Q} |f(z) - f(z')| + C (\delta r)^{-(2+4n)} \Vert f \Vert_{L^1(Q)} \\
&\le (2\delta r)^{\eps_2} [f]_{C^{\eps_2}_{\ell}(Q_r(z_0))} + C (\delta r)^{-(2+4n)} \Vert f \Vert_{L^1(Q_r(z_0))} ,
\end{align*}
where we denoted $Q = Q_r(z_0) \cap Q_{\delta r}(z)$, $(f)_{Q} = \dashint_{Q_r(z_0) \cap Q_{\delta r}(z)} f(z') \d z'$ and used that $|Q| \ge C (\delta r)^{2+4n}$, and $d_{\ell}(z,z') \le 2 \delta r$. By taking the supremum over $z \in Q_r(z_0)$, and choosing $\delta \in (0,1)$ appropriately, we deduce the second claim with $k_2 = 0$. The second claim for $k_2 \in \N$ follows by combination with the first claim.
\end{proof}

The following lemma is a product rule for kinetic H\"older spaces.

\begin{lemma}[Lemma 3.7 in \cite{ImSi22}]
\label{lemma:product-rule}
Let $k \in \N \cup \{ 0 \}$, $\eps \in (0,1]$.
Let $f,g \in C^{k,\eps}_{\ell}(Q_r(z_0))$ for some $r \in (0,1]$ and $z_0 \in \R^{1+2n}$. Then,
\begin{align*}
\Vert fg \Vert_{C^{k+\eps}_{\ell}(Q_r(z_0))} \le C \Vert f \Vert_{C^{k+\eps}_{\ell}(Q_r(z_0))}\Vert g \Vert_{C^{k+\eps}_{\ell}(Q_r(z_0))},
\end{align*}
where $C$ depends only on $n,k,\eps$.
\end{lemma}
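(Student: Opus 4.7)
The $L^\infty$ part is immediate from $\|fg\|_{L^\infty(Q_r(z_0))} \le \|f\|_{L^\infty(Q_r(z_0))}\|g\|_{L^\infty(Q_r(z_0))}$, so the real content is the seminorm bound. To estimate $[fg]_{C^{k+\eps}_\ell(Q_r(z_0))}$, I fix an arbitrary base point $z_1 \in Q_r(z_0)$ at which a polynomial approximation of $fg$ is required. Using the left-invariance of $\cP_k$ and of $d_\ell$ under the group operation $\circ$ (see \eqref{eq:distance-invariance}), I translate $z_1$ to the origin, reducing the task to the construction of $p^{fg} \in \cP_k$ such that $|fg(z) - p^{fg}(z)| \le C\|f\|_{C^{k+\eps}_\ell}\|g\|_{C^{k+\eps}_\ell}\,d_\ell(z,0)^{k+\eps}$ for every $z \in Q_r(z_0)$.

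Let $p^f, p^g \in \cP_k$ be polynomial approximations of $f,g$ at the origin provided by $f,g \in C^{k+\eps}_\ell$, so that $|f-p^f| \le [f]_{C^{k+\eps}_\ell}\,d_\ell(\cdot,0)^{k+\eps}$ and similarly for $g$. Define $p^{fg} \in \cP_k$ as the projection of the polynomial product $p^f\cdot p^g$ onto $\cP_k$, i.e.\ the truncation keeping only monomials of kinetic degree $\le k$. Decompose
\begin{align*}
fg - p^{fg} = (f-p^f)(g-p^g) + (f-p^f)\,p^g + p^f\,(g-p^g) + R,
\end{align*}
where $R := p^f p^g - p^{fg}$ is a polynomial whose monomials all have kinetic degree in $(k,2k]$. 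The three ``mixed'' terms are handled by combining the H\"older approximation bounds with the $L^\infty$ estimate
\[
\|p^f\|_{L^\infty(Q_r(z_0))} \le \|f\|_{L^\infty} + [f]_{C^{k+\eps}_\ell}(2r)^{k+\eps} \le C\|f\|_{C^{k+\eps}_\ell},
\]
valid since $r \le 1$ (and analogously for $p^g$); each contributes at most $C\|f\|_{C^{k+\eps}_\ell}\|g\|_{C^{k+\eps}_\ell}\,d_\ell(z,0)^{k+\eps}$.

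For the residual $R$, each monomial $z^\beta$ with $|\beta|_{\mathrm{kin}} \in (k,2k]$ satisfies $|z^\beta| \le d_\ell(z,0)^{|\beta|_{\mathrm{kin}}}$ by the definition of the kinetic distance, and since $d_\ell(z,0) \le 2r \le 2$ and $|\beta|_{\mathrm{kin}} > k+\eps$, this gives $|z^\beta| \le C\,d_\ell(z,0)^{k+\eps}$ with $C = C(k,\eps)$. The main obstacle is controlling the coefficients of $R$ (which are products of coefficients of $p^f$ and $p^g$) with a constant independent of $r \in (0,1]$. I resolve this by first proving the estimate at unit scale: setting $\tilde f(z) := f(S_r z)$, $\tilde g(z) := g(S_r z)$ on $Q_1(0)$, the finite-dimensionality of $\cP_k$ and equivalence of norms on $L^\infty(Q_1(0))$ give uniform bounds $|c^f_\beta| \le C\|\tilde f\|_{C^{k+\eps}_\ell(Q_1(0))}$ on the coefficients of the polynomial approximation, which make the residual estimate at unit scale trivial. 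The identity $(fg)(S_r z) = \tilde f(z)\tilde g(z)$ combined with \autoref{lemma:Holder-scaling} then transfers the estimate back to $Q_r(z_0)$ with a constant depending only on $n, k, \eps$, completing the proof.
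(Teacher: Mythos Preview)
The paper gives no proof of its own and simply quotes \cite{ImSi22}. Your decomposition is the natural one, and your direct treatment of the three mixed terms is correct with a constant independent of $r$. The gap is in the last step, where you control the high-degree residual $R$ by passing to unit scale and then invoking \autoref{lemma:Holder-scaling} to return to $Q_r(z_0)$. That lemma transforms only the \emph{seminorm} homogeneously; the full norm $\|\cdot\|_{C^{k+\eps}_\ell}=[\cdot]_{C^{k+\eps}_\ell}+\|\cdot\|_{L^\infty}$ does not scale this way, since the $L^\infty$ part is invariant under $z\mapsto z_0\circ S_r z$. Pulling your unit-scale estimate back therefore only gives
\[
[fg]_{C^{k+\eps}_\ell(Q_r(z_0))}\le C\,r^{-(k+\eps)}\bigl(r^{k+\eps}[f]_{C^{k+\eps}_\ell}+\|f\|_{L^\infty}\bigr)\bigl(r^{k+\eps}[g]_{C^{k+\eps}_\ell}+\|g\|_{L^\infty}\bigr),
\]
and the cross term $C\,r^{-(k+\eps)}\|f\|_{L^\infty}\|g\|_{L^\infty}$ is not bounded by $\|f\|_{C^{k+\eps}_\ell}\|g\|_{C^{k+\eps}_\ell}$ uniformly in $r$.

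This is not a repairable oversight: with the paper's norm the inequality is actually \emph{false} for $k\ge 1$ on small cylinders. Take $f=g=v_1$ on $Q_r(0)$; then $\|f\|_{C^{1,\eps}_\ell(Q_r)}=r$ (the seminorm vanishes since $v_1\in\cP_1$), whereas at the base point $0$ the only $\cP_1$-approximation of $v_1^2$ with a finite error quotient is $p=0$, and one computes $[v_1^2]_{C^{1,\eps}_\ell(Q_r)}\ge c\,r^{1-\eps}$. Hence $\|fg\|_{C^{1,\eps}_\ell}/(\|f\|_{C^{1,\eps}_\ell}\|g\|_{C^{1,\eps}_\ell})\ge c\,r^{-1-\eps}\to\infty$ as $r\to 0$. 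Your argument does establish the lemma on a unit cylinder, and that is the genuine content of the cited result; the asserted uniformity in $r\in(0,1]$ does not hold for the norm defined here.
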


Next, we recall the relation between derivatives and kinetic H\"older norms.

\begin{lemma}[Lemma 2.7 in \cite{ImSi21}]
\label{lemma:der-Holder}
Let $k \in \N \cup \{ 0 \}$, $\eps \in (0,1]$ and $f \in C^{k,\eps}_{\ell}(Q_r(z_0))$ for some $r \in (0,1]$ and $z_0 \in \R^{1+2n}$. Let $\beta = (\beta_t,\beta_x,\beta_v) \in (\N \cup \{0\})^{1+2n}$ with $|\beta| := 2|\beta_t| + 3|\beta_x| + |\beta_v| = l$ for some $l \in \N$, $l \le k$, and
\begin{align}
\label{eq:Dbeta-f}
D^{\beta} f = (\partial_t + v \cdot \nabla_x)^{\beta_t} \partial_{x_1}^{\beta_{x_1}} \dots \partial_{x_n}^{\beta_{x_n}} \partial_{v_1}^{\beta_{v_1}} \dots \partial_{v_n}^{\beta_{v_n}} f.
\end{align}
Then it holds
\begin{align*}
[D^{\beta} f]_{C^{k-l,\eps}_{\ell}(Q_r(z_0))} \le C [f]_{C^{k,\eps}_{\ell}(Q_r(z_0))},
\end{align*}
where $C$ depends only on $n,k,l,\eps$.
\end{lemma}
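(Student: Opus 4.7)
The plan is to derive the bound via a standard comparison of polynomial approximations at different points, combined with an inverse Markov-type inequality for kinetic polynomials. First I would translate and rescale to reduce to $z_0=0$ and $r=1$: by \autoref{lemma:Holder-scaling} together with the invariance \eqref{eq:distance-invariance}, both sides of the claimed inequality scale consistently under the kinetic rescaling $z\mapsto z_0\circ S_r z$. In this normalization the hypothesis supplies, for every $z'\in Q_1(0)$, a polynomial $p_{z'}\in\cP_k$ with $|f(z)-p_{z'}(z)|\le [f]_{C^{k,\eps}_\ell}\,d_\ell(z,z')^{k+\eps}$ for all $z\in Q_1(0)$. One then \emph{defines} $(D^\beta f)(z')$ as the coefficient of $p_{z'}$ corresponding to the operator $D^\beta$ in \eqref{eq:Dbeta-f}; equivalently $(D^\beta f)(z'):=D^\beta p_{z'}(z')$.

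The central analytic ingredient is the kinetic inverse estimate: for every $q\in\cP_k$, every $z_1\in\R^{1+2n}$, every $r>0$, and every multi-index $\beta$ with $|\beta|=l\le k$,
\[
|D^\beta q(z_1)|\ \le\ C\, r^{-l}\sup_{z\in Q_r(z_1)}|q(z)|.
\]
To prove this I would first translate to $z_1=0$, using that $\partial_t+v\cdot\nabla_x$ commutes with the Galilean translation $\circ$ (this is precisely the reason it, and not $\partial_t$ alone, is the correct kinetic time derivative in \eqref{eq:Dbeta-f}), and then rescale via $S_r$. Under $S_r$ each $\partial_{v_i}$ picks up a factor $r$, each $\partial_{x_i}$ a factor $r^3$, and a direct chain-rule check (using that the velocity component at $S_r z$ is $rv$) shows $\partial_t+v\cdot\nabla_x$ picks up a factor $r^2$; the total weight is $r^{|\beta_v|+3|\beta_x|+2|\beta_t|}=r^l$. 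What remains is $|D^\beta q(0)|\le C\sup_{Q_1(0)}|q|$ for $q\in\cP_k$, which is just the equivalence of two seminorms on the finite-dimensional space $\cP_k$ (the supremum being a norm there).

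With the inverse estimate in hand, the conclusion follows by a two-point argument. Fix $z_1,z_2\in Q_{1}(0)$, set $\rho=d_\ell(z_1,z_2)$, and consider $q:=p_{z_1}-p_{z_2}\in\cP_k$. For $z\in Q_\rho(z_1)$ the triangle inequality together with \autoref{lemma:kinetic-balls} yields $d_\ell(z,z_2)\le C\rho$, so
\[
|q(z)|\le |f(z)-p_{z_1}(z)|+|f(z)-p_{z_2}(z)|\le C\,[f]_{C^{k,\eps}_\ell}\,\rho^{k+\eps}.
\]
The inverse estimate then gives $|D^\beta p_{z_1}(z_1)-D^\beta p_{z_2}(z_1)|\le C\,[f]_{C^{k,\eps}_\ell}\,\rho^{k-l+\eps}$. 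Since $D^\beta p_{z_2}\in\cP_{k-l}$, this is exactly the statement that $D^\beta p_{z_2}$ is an admissible polynomial approximation of $D^\beta f$ at the point $z_2$ with error $d_\ell(\cdot,z_2)^{k-l+\eps}$; applied for all $z_1,z_2$ it yields the desired $[D^\beta f]_{C^{k-l,\eps}_\ell}$ bound after a covering/localization argument of the type in \autoref{lemma:covering}.

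The main obstacle I anticipate is bookkeeping: verifying that $\partial_t+v\cdot\nabla_x$ truly transforms with kinetic weight $2$ under both $\circ$ and $S_r$ (which legitimizes defining $D^\beta$ with this noncommutative operator in place of $\partial_t$), and checking that the map $z'\mapsto D^\beta p_{z'}$ is independent of the particular choice of $p_{z'}$ realizing the polynomial approximation. The latter is itself a consequence of the inverse estimate applied to the difference of two admissible polynomial approximations at a single point, which forces this difference to be $O(d_\ell^{k+\eps})$ pointwise in any neighborhood and hence, being a polynomial of kinetic degree $\le k$, to have vanishing coefficients up to kinetic degree $k$.
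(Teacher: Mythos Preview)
The paper does not supply its own proof of this lemma; it is quoted verbatim as Lemma~2.7 of \cite{ImSi21}. Your argument is correct and is the standard route one expects there: the left-invariance of $D^\beta$ under $\circ$ and its homogeneity under $S_r$ reduce the inverse (Markov-type) estimate to a finite-dimensional norm equivalence on $\cP_k$, and the two-point comparison $|D^\beta p_{z_1}(z_1)-D^\beta p_{z_2}(z_1)|\le C\rho^{-l}\sup_{Q_\rho(z_1)}|p_{z_1}-p_{z_2}|\le C[f]_{C^{k,\eps}_\ell}\rho^{k-l+\eps}$ then exhibits $D^\beta p_{z_2}\in\cP_{k-l}$ as the required approximant of $D^\beta f$ at $z_2$. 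The only point to watch is that $Q_\rho(z_1)$ need not sit inside $Q_1(0)$ when $z_1$ is near the boundary; your gesture toward \autoref{lemma:covering} handles this, but one can also simply run the inverse estimate on $Q_\rho(z_1)\cap Q_1(0)$, since for $z_1\in Q_1(0)$ this intersection occupies a fixed fraction of $Q_\rho(z_1)$ and the norm equivalence on $\cP_k$ survives with a uniform constant.
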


We will also use the following reverse version of \autoref{lemma:der-Holder}:

\begin{lemma}
\label{lemma:reverse-der-Holder}
Let $k \in \N \cup \{ 0 \}$, $\eps \in (0,1]$ and $f$ be such that $D^{\beta} f \in C^{\eps}_{\ell}(Q_r(z_0))$ for some $r \in (0,1]$ and $z_0 \in \R^{1+2n}$ and any $\beta \in (\N \cup \{0\})^{1+2n}$ with $|\beta| = k$, where $D^{\beta} f$ is defined as in \eqref{eq:Dbeta-f}. Then, it holds $f \in C^{k,\eps}_{\ell}(Q_r(z_0))$ and
\begin{align*}
[f]_{C^{k+\eps}_{\ell} (Q_r(z_0))} \le C \sum_{|\beta| = k}[D^{\beta} f]_{C^{\eps}_{\ell} (Q_r(z_0))},
\end{align*}
where $C > 0$ only depends on $n,k,\eps$.
\end{lemma}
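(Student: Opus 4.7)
After applying Lemma 2.11 to $f$ and each $D^\beta f$, I may reduce to the case $z_0 = 0$, $r = 1$. The task then is to produce, for each $z_1 \in Q_1(0)$, a polynomial $p_{z_1} \in \cP_k$ with
\[
|f(z)-p_{z_1}(z)| \le C\Big(\sum_{|\beta|=k}[D^\beta f]_{C^\eps_\ell(Q_1(0))}\Big)\, d_\ell(z,z_1)^{k+\eps}
\]
for every $z \in Q_1(0)$. I would define $p_{z_1}$ to be the (unique) kinetic Taylor polynomial of $f$ at $z_1$: the element of $\cP_k$ for which $D^\gamma(f - p_{z_1})(z_1) = 0$ for all multi-indices $\gamma$ with $|\gamma| \le k$. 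The existence of the quantities $D^\gamma f(z_1)$ for $|\gamma| < k$ should be justified by a preliminary step in which, starting from the hypothesis at the top level $|\beta|=k$, one integrates along suitable curves to deduce that $D^\gamma f$ is continuous (and in fact possesses the correct kinetic regularity) for every $|\gamma| < k$.

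To control the remainder, I would connect $z_1$ to $z_2$ by a three-leg kinetic path on which only one type of derivative is active at a time: first a velocity leg $(t_1, x_1, v_1) \to (t_1, x_1, v_2)$, then a spatial leg $(t_1, x_1, v_2) \to (t_1, x_2 - (t_2-t_1)v_2, v_2)$, and finally a characteristic leg $(t_1, x_2-(t_2-t_1)v_2, v_2) \to (t_2, x_2, v_2)$, whose kinetic lengths are respectively $|v_2 - v_1|$, $|x_2 - x_1 - (t_2-t_1)v_2|^{1/3}$, and $|t_2-t_1|^{1/2}$, each bounded by $C d_\ell(z_1, z_2)$. Applying the fundamental theorem of calculus enough times along each leg so that the innermost integrand is a derivative $D^\beta f$ with $|\beta| = k$, and using $D^\gamma(f - p_{z_1})(z_1) = 0$ to discard the intermediate boundary terms, each leg contributes at most $C d_\ell(z_1, z_2)^{k+\eps}$ thanks to the $C^\eps_\ell$-bound and comparison of $D^\beta f(\cdot)$ with $D^\beta f(z_1)$.

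\emph{Main obstacle.} The subtlety is that the operators appearing in $D^\beta$ do not commute: $[\partial_{v_i}, \partial_t + v\cdot\nabla_x] = \partial_{x_i}$. When the order of differentiations has to be rearranged along the three legs, extra $\partial_x$ terms are generated, and these must either be absorbed into $p_{z_1}$ (which is why one needs the genuine kinetic Taylor polynomial rather than a naive tensor product in $t$, $x$, $v$) or estimated separately using the lower-order quantities constructed in the preliminary step. Tracking these cross terms carefully and showing that their contributions also sum to at most $C d_\ell(z_1,z_2)^{k+\eps}$ is the combinatorial core of the argument, most cleanly executed by a simultaneous induction on $k$ in which the Taylor polynomial and its remainder bound are built in lockstep.
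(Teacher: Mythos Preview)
Your proposal is correct and would yield the lemma, but it takes a more hands-on route than the paper. The paper's proof exploits the group structure directly: writing the target point as $z_1 \circ z$ and using that $D^\beta[f(z_1 \circ \cdot)](z) = (D^\beta f)(z_1 \circ z)$, one can invoke a single Taylor-with-remainder formula adapted to the kinetic dilations,
\[
f(z_1 \circ z) = \sum_{|\alpha| < k} c_\alpha\, D^\alpha f(z_1)\, z^\alpha + \sum_{|\beta|=k} c_\beta\, D^\beta f(z_1 \circ z')\, z^\beta,
\]
with $d_\ell(z',0)\le d_\ell(z,0)$, and then estimate the remainder in one line via $|D^\beta f(z_1\circ z') - D^\beta f(z_1)| \le [D^\beta f]_{C^\eps_\ell}\, d_\ell(z,0)^\eps$ and $|z^\beta| \le d_\ell(z,0)^k$. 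The left-invariance of $d_\ell$ then converts this into the desired bound at $z_1$.

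Your three-leg decomposition and commutator bookkeeping are precisely what goes into \emph{proving} such a stratified Taylor formula, so in effect you are unpacking the black box the paper leans on. The advantage of your route is that it is entirely self-contained; the advantage of the paper's route is that the commutator issue you flag never appears explicitly, because working in the shifted coordinates $z \mapsto z_1 \circ z$ turns the kinetic operators into the ``right'' ones at every point along the way. If you want to streamline your argument, the single change of writing everything in terms of $g(z) := f(z_1 \circ z)$ before expanding would let you avoid the inductive commutator tracking altogether.
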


\begin{proof}
By Taylor's formula and the mean value theorem, which are applicable since $D^{\beta} f$ is H\"older continuous by assumption, we have for any $z_1 \in Q_r(z_0)$ such that $z_1 \circ z \in Q_r(z_0)$:
\begin{align*}
f(z_1 \circ z) = \sum_{0 \le |\alpha| < k } D^{\alpha}f(z_1)z^{\alpha} c_{\alpha} + \sum_{|\beta| = k} D^{\beta} f(z_1 \circ z') z^{\beta} c_{\beta}
\end{align*}
for some constants $c_{\alpha}, c_{\beta} > 0$, and $z'$ such that $z_1 \circ z' \in Q_r(z_0)$ with $d_{\ell}(z', 0) \le d_{\ell}(z,0)$. Here, we are using the notation $z^{\beta} := t^{\beta_t} x_1^{\beta_{x_1}} \dots x_n^{\beta_{x_n}} v_1^{\beta_{v_1}} \dots v_n^{\beta_{v_n}}$ and observe that $|z^{\beta}| \le d_{\ell}(z,0)^{k}$. Hence, denoting
\begin{align*}
p_0(z) :=  \sum_{0 \le |\alpha| \le k } D^{\alpha}f(z_1)z^{\alpha} c_{\alpha} \in \cP_k, 
\end{align*}
we obtain 
\begin{align*}
|f(z_1 \circ z) - p_0(z)| &= \left| \sum_{|\beta| = k} ( D^{\beta} f(z_1 \circ z') - D^{\beta} f(z_1)) z^{\beta} c_{\beta} \right| \\
&\le C d_{\ell}(z,0)^k \sum_{|\beta| = k}  |D^{\beta} f(z_1 \circ z') - D^{\beta} f(z_1)| \le C d_{\ell}(z,0)^{k+\eps} \sum_{|\beta| = k}[D^{\beta} f]_{C^{\eps}_{\ell}(Q_r(z_0))}.
\end{align*}
In the last step, we used that $D^{\beta}f \in C^{\eps}_{\ell}(Q_r(z_0))$ and that $d_{\ell}(z_1 \circ z' , z_1) = d_{\ell}(z' , 0) \le d_{\ell}(z,0)$ by \eqref{eq:distance-invariance}.
Thus, letting $p_{z_1} \in \cP_k$ such that $p_0(z) = p_{z_1}(z_1 \circ z)$, and using again \eqref{eq:distance-invariance}, we obtain for any $\tilde{z} = z_1 \circ z \in Q_r(z_0)$:
\begin{align*}
|f(\tilde{z}) - p_{z_1}(\tilde{z})| \le C d_{\ell}(\tilde{z},z_1)^{k+\eps} \sum_{|\beta| = k}[D^{\beta} f]_{C^{\eps}_{\ell}(Q_r(z_0))}.
\end{align*}
This implies the desired result, since $\tilde{z},z_1 \in Q_r(z_0)$ were arbitrary.
\end{proof}

\subsection{Generalized kinetic polynomials}

We need several auxiliary lemmas for kinetic polynomials, respectively for finite dimensional subspaces of the form
\begin{align*}
\cP_I = \Big\{ \sum_{q \in I} \alpha^{(q)} q(t,x,v) : \alpha^{(q)} \in \R \Big\},
\end{align*}
where $I$ is a finite set of homogeneous functions (monomials), i.e. for every $q \in I$ there is $\lambda_q \ge 0$ such that $q(r^2t,r^3x,rv) = r^{\lambda_q}q(t,x,v)$.
Note that the spaces $\cP_I$ are generalizations of the spaces of kinetic polynomials, and $\cP_{I_k} = \cP_k$ when 
\begin{align}
\label{eq:Ik}
I_k = \Big\{ t^{\beta} x_1^{\beta_{x_1}} \cdot \dots \cdot x_n^{\beta_{x_n}} v_1^{\beta_{v_1}} \cdot \dots \cdot v_n^{\beta_{v_n}} : |\beta| = 2 \beta_t + 3 |\beta_x| + |\beta_v| \le k \Big\}.
\end{align}

\begin{lemma}
\label{lemma:L43}
Let $k \in \N$ and $\eps \in (0,1)$. Let $I$ be a finite set of homogeneous functions $q$ of degree $\lambda_q \le k$. Let $D \subset \R^{1+2n}$ with $0 \in \partial D$, and $u \in L^{\infty}(H_1(0))$. Let $\delta > 0$. If for any $\rho \in (0,\delta)$ there is $p_{\rho} \in \cP_I$ such that for some $c_0 > 0$
\begin{align*}
\Vert u - p_{\rho} \Vert_{L^{\infty}(H_{\rho}(0))} \le c_0 \rho^{k+\eps},
\end{align*}
then there exists $p_0 \in \cP_I$ such that for any $\rho \in (0,\delta)$
\begin{align*}
\Vert u - p_{0} \Vert_{L^{\infty}(H_{\rho}(0))} &\le C c_0 \rho^{k+\eps}
\end{align*}
for some constant $C$, depending only on $n,k,\eps$.
\end{lemma}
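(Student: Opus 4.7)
The approach is a standard dyadic Campanato--Morrey iteration, adapted to the finite-dimensional space $\cP_I$ of generalized kinetic polynomials. The plan is to set $\rho_j = 2^{-j}\delta$ for $j \ge 0$, denote $p_j := p_{\rho_j}$, and extract a single limiting polynomial $p_0$ as $j \to \infty$. Since $H_{\rho_{j+1}}(0) \subset H_{\rho_j}(0)$, the triangle inequality immediately yields
\[
\Vert p_j - p_{j+1}\Vert_{L^\infty(H_{\rho_{j+1}}(0))} \le \Vert u - p_j\Vert_{L^\infty(H_{\rho_j}(0))} + \Vert u - p_{j+1}\Vert_{L^\infty(H_{\rho_{j+1}}(0))} \le 3\, c_0\, \rho_j^{k+\eps},
\]
and the difference $p_j - p_{j+1}$ still lies in $\cP_I$.

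The key technical step is to convert this sup-norm bound into bounds on each coefficient. Writing $p_j - p_{j+1} = \sum_{q \in I}(\alpha_j^{(q)} - \alpha_{j+1}^{(q)})\, q$ and using the homogeneity identity $q \circ S_{\rho_{j+1}} = \rho_{j+1}^{\lambda_q}\, q$, the change of variables $z \mapsto S_{\rho_{j+1}} z$ transforms the previous estimate into
\[
\Big\Vert \sum_{q \in I} (\alpha_j^{(q)} - \alpha_{j+1}^{(q)})\, \rho_{j+1}^{\lambda_q}\, q \Big\Vert_{L^\infty(S_{\rho_{j+1}}^{-1} H_{\rho_{j+1}}(0))} \le 3\, c_0\, \rho_j^{k+\eps}.
\]
Using the finite-dimensionality of $\cP_I$ together with a norm-equivalence on a fixed reference set contained in the rescaled domain, this yields $|\alpha_j^{(q)} - \alpha_{j+1}^{(q)}| \le C\, c_0\, \rho_j^{k+\eps-\lambda_q}$. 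Since $\lambda_q \le k$, the exponent is at least $\eps > 0$, so $\sum_j |\alpha_j^{(q)} - \alpha_{j+1}^{(q)}|$ is absolutely summable. Setting $\alpha_0^{(q)} := \lim_{j \to \infty} \alpha_j^{(q)}$ and $p_0 := \sum_{q \in I} \alpha_0^{(q)}\, q \in \cP_I$, the tail of the geometric series gives $|\alpha_j^{(q)} - \alpha_0^{(q)}| \le C\, c_0\, \rho_j^{k+\eps-\lambda_q}$.

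To conclude, for any $\rho \in (0,\delta)$ I would pick $j$ with $\rho_{j+1} < \rho \le \rho_j$ (so $\rho_j \le 2\rho$) and split
\[
\Vert u - p_0\Vert_{L^\infty(H_\rho(0))} \le \Vert u - p_j\Vert_{L^\infty(H_{\rho_j}(0))} + \sum_{q \in I} |\alpha_j^{(q)} - \alpha_0^{(q)}|\, \Vert q\Vert_{L^\infty(Q_\rho(0))}.
\]
The first term is bounded by $c_0\, \rho_j^{k+\eps} \le 2^{k+\eps}\, c_0\, \rho^{k+\eps}$, and since $\Vert q\Vert_{L^\infty(Q_\rho(0))} \le C\, \rho^{\lambda_q}$ by homogeneity, the second term is bounded by $C\, c_0 \sum_{q \in I} \rho_j^{k+\eps-\lambda_q}\, \rho^{\lambda_q} \le C\, c_0\, \rho^{k+\eps}$, using $\rho_j \le 2\rho$ and $k + \eps - \lambda_q \ge 0$. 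The main delicate step is the norm-equivalence argument in the second paragraph: it requires that the rescaled sets $S_{\rho_{j+1}}^{-1} H_{\rho_{j+1}}(0)$ be uniformly ``fat enough'' so that the restriction $\cP_I \to L^\infty$ is injective with a scale-independent constant. For the domains $D$ arising in the applications of this lemma (half-spaces or cone-like regions near $0$), $H_\rho(0)$ contains a scaled copy of a fixed reference set on which the finitely many homogeneous elements of $I$ are linearly independent, so this step is routine; in greater generality one would quotient $\cP_I$ by the scale-dependent nullspace of restriction and verify that the induced quotient norms are controlled uniformly in $\rho$.
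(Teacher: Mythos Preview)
Your proof is correct and follows essentially the same dyadic Campanato--Morrey iteration as the paper: compare consecutive approximants via the triangle inequality, use finite-dimensionality and homogeneity to extract geometric decay of the coefficient differences, pass to a limit polynomial, and sum the geometric tail. You are in fact slightly more explicit than the paper about the norm-equivalence step (the paper simply appeals to equivalence of norms on $\cP_I$ evaluated on $\partial Q_1(0)$ without discussing whether $H_\rho(0)$ sees enough of the cylinder), so your caveat about the rescaled domains being uniformly ``fat'' is well placed.
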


\begin{proof}
For any $\rho \in (0,\frac{\delta}{2})$ we have by assumption
\begin{align*}
\Vert p_{\rho} - p_{2\rho} \Vert_{L^{\infty}(H_{\rho}(0))} \le \Vert p_{\rho} - u \Vert_{L^{\infty}(H_{\rho}(0))} +\Vert u - p_{2\rho} \Vert_{L^{\infty}(H_{2\rho}(0))} \le c_0 (1 + 2^{k+\eps}) \rho^{k+\eps}.
\end{align*}
Hence, for any $q \in I$ we observe that
\begin{align}
\label{eq:Cauchy-coeff}
|\alpha_{\rho}^{(q)} - \alpha_{2\rho}^{(q)}| \le C \rho^{-\lambda_q} \Vert p_{\rho} - p_{2\rho} \Vert_{L^{\infty}(\partial Q_{\rho}(0))}  \le C c_0 \rho^{k + \eps - \lambda_q}.
\end{align}

Here, we are using that $\sum_{q \in I} |\alpha^{(q)}|$ is comparable to $\Vert p \Vert_{L^{\infty}(\partial Q_1(0))}$ for any $p \in \cP_I$ since they are both norms on the finite dimensional space $\cP_I$, as well as the homogeneity of the $q$ for any $q \in I$, and scaling (see also \cite[Lemma 2.8]{ImSi21}).

Moreover, we have
\begin{align*}
\Vert p_{\delta} \Vert_{L^{\infty}(H_{\delta}(0))} \le \Vert p_{\delta} - u \Vert_{L^{\infty}(H_{\delta}(0))} + \Vert u \Vert_{L^{\infty}(H_{\delta}(0))} \le c_0 (\delta/2)^{k+\eps} + \Vert u \Vert_{L^{\infty}(H_{\delta}(0))},
\end{align*}
and therefore
\begin{align*}
|\alpha_{\delta}^{(q)}| \le C \delta^{-\lambda_q} \big(c_0 \delta^{k+\eps} + \Vert u \Vert_{L^{\infty}(H_{\delta}(0))} \big)< \infty.
\end{align*}
Hence, since $k+\eps - \lambda_q > 0$ for every $q \in I$, we deduce that 
\begin{align*}
\alpha_0^{(q)} = \lim_{\rho \searrow 0} \alpha_{\rho}^{(q)}
\end{align*}
exists for any $q \in I$. Thus, due to \eqref{eq:Cauchy-coeff}:
\begin{align*}
|\alpha_0^{(q)} - \alpha_{\rho}^{(q)}| \le \sum_{j = 0}^{\infty} |\alpha_{2^{-j}\rho}^{(q)} - \alpha_{2^{-j-1}\rho}^{(q)}| \le C c_0  \sum_{j = 0}^{\infty} (2^{-j} \rho)^{k+\eps - \lambda_q} \le C c_0 \rho^{k+\eps - \lambda_q},
\end{align*}
which yields by defining $p_0 = \sum_{q \in I} \alpha_0^{(q)} q$
\begin{align*}
\Vert p_{0} - p_{\rho} \Vert_{L^{\infty}(H_{\rho}(0))} \le C \sum_{q \in I} |\alpha_0^{(q)} - \alpha_{\rho}^{(q)}| \rho^{\lambda_q} \le C c_0 \rho^{k+\eps}.
\end{align*}
Altogether, we have 
\begin{align*}
\Vert p_{0} - u \Vert_{L^{\infty}(H_{\rho}(0))} \le \Vert u - p_{\rho} \Vert_{L^{\infty}(H_{\rho}(0))} + \Vert p_0 - p_{\rho} \Vert_{L^{\infty}(H_{\rho}(0))} \le C c_0 r^{k+\eps}.
\end{align*}

\end{proof}

\begin{lemma}
\label{lemma:orth-proj-prop}
Let $k \in \N$ and $\eps \in (0,1)$. Let $I$ be a finite set of homogeneous functions $q$ of degree $\lambda_q \le k$. Let $D \subset \R^{1+2n}$ with $0 \in \partial D$, and $u \in L^{\infty}(H_{\delta}(0))$. Let $\delta > 0$. Assume that for any $r \in (0,\delta]$ there is 
\begin{align*}
p_{r}(z) = \sum_{q \in I} \alpha^{(q)}_r q(z) \in \cP_I
\end{align*}
such that for some function $\theta : (0,\delta] \to (0,\infty)$ with $\theta(\rho) \nearrow \infty$ as $\rho \to 0$,
\begin{align*}
\Vert u - p_{\rho} \Vert_{L^{\infty}(H_{\rho}(0))} \le \theta(\rho) \rho^{k+\eps}.
\end{align*}
Then, it holds for any $R \ge 1$ with $Rr \le \delta$
\begin{align*}
\frac{\Vert u - p_{r} \Vert_{L^{\infty}(H_{Rr}(0))}}{r^{k+\eps} \theta(r)} &\le C R^{k+\eps},\\
 \frac{|\alpha_r^{(q)}|}{\theta(r)} &\le C \left( [\delta^{k+\eps-\lambda_q}(\theta(\delta) + C(K)) + \delta^{-\lambda_q}\Vert u \Vert_{L^{\infty}(H_{\delta}(0))}] \theta(r)^{-1} + (2^{-K} \delta)^{k+\eps - \lambda_q} \right) ~~ \forall K \in \N
\end{align*}
for some $C$, depending only on $n,k,\eps$, where $C(K) := \sum_{i = 1}^K \frac{\theta(2^{-i-1} \delta )}{\theta(r)} (2^{-i})^{k+\eps - \lambda_q}$.
\end{lemma}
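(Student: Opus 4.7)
The strategy mirrors that of \autoref{lemma:L43}, the crucial difference being that the constant $c_0$ is replaced by the scale-dependent multiplier $\theta(\rho)$. The key monotonicity property I will repeatedly use is $\theta(\rho_1)\le\theta(\rho_2)$ whenever $\rho_1\ge\rho_2$, which follows from $\theta(\rho)\nearrow\infty$ as $\rho\searrow 0$. As in the proof of \autoref{lemma:L43}, the only bridge from polynomial estimates on kinetic cylinders to coefficient estimates will be the equivalence of norms on the finite dimensional space $\cP_I$: by the homogeneity of the $q \in I$ and scaling,
\[ \sum_{q\in I}|\alpha^{(q)}|\rho^{\lambda_q} \le C\,\|p\|_{L^{\infty}(H_\rho(0))}, \qquad p = \sum_{q\in I}\alpha^{(q)}q\in\cP_I, \]
for every $\rho\in(0,\delta]$ (this is the step used in \eqref{eq:Cauchy-coeff}).

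For the first inequality I would argue by a dyadic chain in the scale. Given $R\ge 1$ with $Rr\le\delta$, pick the integer $J$ with $2^J\le R<2^{J+1}$ and set $r_j=2^jr$. The triangle inequality gives
\[ \|u-p_r\|_{L^\infty(H_{Rr}(0))} \le \|u-p_{Rr}\|_{L^\infty(H_{Rr}(0))} + \sum_{j=0}^{J-1}\|p_{r_j}-p_{r_{j+1}}\|_{L^\infty(H_{Rr}(0))} + \|p_{r_J}-p_{Rr}\|_{L^\infty(H_{Rr}(0))}, \]
and the first term is controlled by $\theta(Rr)(Rr)^{k+\eps}\le\theta(r)R^{k+\eps}r^{k+\eps}$ thanks to the monotonicity of $\theta$. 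For each dyadic pair, on $H_{r_j}(0)$ one has
\[ \|p_{r_j}-p_{r_{j+1}}\|_{L^\infty(H_{r_j}(0))} \le \theta(r_j)r_j^{k+\eps}+\theta(r_{j+1})r_{j+1}^{k+\eps} \le C\theta(r)r_j^{k+\eps}, \]
hence $|\alpha_{r_j}^{(q)}-\alpha_{r_{j+1}}^{(q)}|\le C\theta(r)r_j^{k+\eps-\lambda_q}$ by the norm equivalence, and therefore
\[ \|p_{r_j}-p_{r_{j+1}}\|_{L^\infty(H_{Rr}(0))} \le C\sum_{q\in I}\theta(r)r_j^{k+\eps-\lambda_q}(Rr)^{\lambda_q}. \]
The geometric series $\sum_{j=0}^{J-1}r_j^{k+\eps-\lambda_q}\sim (Rr)^{k+\eps-\lambda_q}$ converges at the top because $k+\eps>\lambda_q$; multiplying by $(Rr)^{\lambda_q}$ produces $C\theta(r)(Rr)^{k+\eps}$ for each $q\in I$. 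The remaining non-dyadic step from $r_J$ to $Rr$ is treated identically, and summing over the finite set $I$ closes (a).

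For the second inequality I would telescope $\alpha_r^{(q)}$ from the coarse scale $\delta$ down to the intermediate dyadic scale $\rho_K:=2^{-K}\delta$, and then close the remaining jump to scale $r$ using the first claim. Assume $r\le\rho_K$ (otherwise the chain is shorter and the estimate only easier) and write
\[ \alpha_r^{(q)} = \alpha_\delta^{(q)} + \sum_{i=0}^{K-1}\bigl(\alpha_{\rho_{i+1}}^{(q)}-\alpha_{\rho_i}^{(q)}\bigr) + \bigl(\alpha_r^{(q)}-\alpha_{\rho_K}^{(q)}\bigr), \qquad \rho_i=2^{-i}\delta. \]
Exactly as in the proof of \autoref{lemma:L43} one bounds $|\alpha_\delta^{(q)}|\le C\bigl(\theta(\delta)\delta^{k+\eps-\lambda_q}+\delta^{-\lambda_q}\|u\|_{L^\infty(H_\delta(0))}\bigr)$. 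For each telescoping step, estimating by the larger of the two scales gives $|\alpha_{\rho_{i+1}}^{(q)}-\alpha_{\rho_i}^{(q)}|\le C\theta(\rho_{i+1})\rho_i^{k+\eps-\lambda_q}$, and after reindexing and dividing by $\theta(r)$ the sum contributes precisely the term $C\delta^{k+\eps-\lambda_q}C(K)\theta(r)^{-1}$ in the statement. Finally, applying (a) with $R=\rho_K/r\ge 1$ yields
\[ \|p_r-u\|_{L^\infty(H_{\rho_K}(0))} \le C\theta(r)\rho_K^{k+\eps}, \qquad \|u-p_{\rho_K}\|_{L^\infty(H_{\rho_K}(0))} \le \theta(\rho_K)\rho_K^{k+\eps}\le\theta(r)\rho_K^{k+\eps}, \]
so that by the norm equivalence on scale $\rho_K$, $|\alpha_r^{(q)}-\alpha_{\rho_K}^{(q)}|\le C\theta(r)(2^{-K}\delta)^{k+\eps-\lambda_q}$, producing the $\theta$-free tail in the claimed bound after division by $\theta(r)$.

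The main subtlety lies precisely in this tail $(2^{-K}\delta)^{k+\eps-\lambda_q}$, which carries \emph{no} factor of $\theta$: extending the dyadic telescope naively all the way down to $r$ would leave the dangling series $\sum_{i\ge K}\theta(2^{-i}\delta)(2^{-i}\delta)^{k+\eps-\lambda_q}$, which need not converge since $\theta$ can blow up arbitrarily fast. The resolution is to stop the telescope at $\rho_K$ and bridge from $\rho_K$ down to $r$ through the scale-$r$ information encoded in the first claim; for this reason (a) and (b) cannot be proved independently, and (a) must be established first and invoked in the last step of (b).
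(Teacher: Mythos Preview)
Your proof is correct and follows essentially the same dyadic-telescoping approach as the paper; the only organizational difference is that the paper telescopes from $r$ upward to $2^N r\in[\delta/2,\delta]$ and splits the resulting sum at index $K$, whereas you telescope from $\delta$ downward to $\rho_K=2^{-K}\delta$ and then invoke the first claim to bridge from $\rho_K$ to $r$. One small correction to your closing commentary: the ``dangling series'' you describe is in fact harmless, since it terminates at scale $\approx r$ and $\theta(2^{-i}\delta)\le\theta(r)$ for all relevant $i$, so after dividing by $\theta(r)$ it is bounded directly by $C(2^{-K}\delta)^{k+\eps-\lambda_q}$---this is precisely how the paper obtains the tail term without invoking (a).
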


Note that the second statement proves that $|\alpha_r^{(q)}| \theta(r)^{-1} \to 0$ as $r \to 0$, and that the modulus of convergence does not depend on $|\alpha_r^{(q)}|$.

\begin{proof}
We have by construction for any $q \in I$
\begin{align*}
r^{\lambda_q} |\alpha^{(q)}_{r} - \alpha^{(q)}_{2r}| &\le c \Vert p_{r} - p_{2r} \Vert_{L^{\infty}(H_r(0))} \\
&\le c \Vert u - p_{r}\Vert_{L^{\infty}(H_r(0))}  + c \Vert u - p_{2r}\Vert_{L^{\infty}(H_{2r}(0))} \\
&\le c \theta(r) r^{k+\eps} + c \theta(2r) (2r)^{k+\eps} \le c \theta(r) r^{k+\eps}.
\end{align*}
Iterating this inequality, we get for any $N \in \N$, as long as $2^Nr \le \delta$:
\begin{align}
\label{eq:coeff-comp}
\begin{split}
|\alpha^{(q)}_{r} - \alpha^{(q)}_{2^Nr}| &\le \sum_{i = 0}^{N-1} |\alpha^{(q)}_{2^i r} - \alpha^{(q)}_{2^{i+1}r}| \le c \sum_{i = 0}^{N-1} \theta(2^i r) (2^i r)^{k+\eps - \lambda_q} \\
& \le c \theta(r) r^{k+\eps - \lambda_q}\sum_{i = 0}^{N-1} \frac{\theta(2^i r)}{\theta(r)} 2^{i(k+\eps - \lambda_q)} \le c \theta(r) (2^N r)^{k +\eps - \lambda_q},
\end{split}
\end{align}
which implies that for any $R \ge 1$ with $Rr \le \delta$
\begin{align*}
\Vert p_{r} - p_{Rr} \Vert_{L^{\infty}(H_{Rr}(0))} \le c \sum_{q \in I} (Rr)^{\lambda_q} |\alpha^{(q)}_{r} - \alpha^{(q)}_{Rr}| \le  c \sum_{q \in I} (Rr)^{\lambda_q} \theta(r) (Rr)^{k +\eps - \lambda_q} \le c \theta(r) (Rr)^{k+\eps}.
\end{align*}
As a consequence, we establish the first claim. In fact, for any $R \ge 1$  with $Rr \le \delta$:
\begin{align*}
\frac{\Vert u - p_{r} \Vert_{L^{\infty}(H_{R r}(0))} }{r^{k+\eps} \theta(r)} &\le \frac{\Vert u - p_{R r} \Vert_{L^{\infty}(H_{R r}(0))} }{r^{k+\eps} \theta(r)} +  \frac{\Vert p_{R r} - p_{r} \Vert_{L^{\infty}(H_{R r}(0))} }{r^{k+\eps} \theta(r)} \\
&\le \frac{\theta(R r) (R r)^{k+\eps}}{r^{k+\eps} \theta(r)} + c \frac{\theta(r) (R r)^{k+\eps}}{r^{k+\eps} \theta(r)} \le c R^{k+\eps}.
\end{align*}

The second claim follows by choosing for any $r > 0$ a number $N \in \N$ such that $2^N r \in [\delta/2,\delta]$ and deducing from \eqref{eq:coeff-comp} and the monotonicity of $r \mapsto \theta(r)$ that for any $K \in \N$:
\begin{align*}
\frac{|\alpha^{(q)}_{r} - \alpha^{(q)}_{2^N r}|}{\theta(r)} &\le c \sum_{i = 1}^N \frac{\theta(2^{N-i}r)}{\theta(r)} (2^{N-i}r)^{k+\eps - \lambda_q}  \\
&\le c \delta^{k+\eps - \lambda_q} \sum_{i = 1}^K \frac{\theta(2^{-i-1} \delta )}{\theta(r)} (2^{-i})^{k+\eps - \lambda_q}  + c \delta^{k+\eps - \lambda_q} \sum_{i = K+1}^N \ (2^{-i})^{k+\eps - \lambda_q} \\
&=: C(K) \delta^{k+\eps - \lambda_q} \theta(r)^{-1} + c \delta^{k+\eps - \lambda_q} 2^{-K(k+\eps - \lambda_q)}.
\end{align*}
and thus since for any $\rho \in [\delta/2,\delta]$
\begin{align*}
|\alpha^{(q)}_{\rho}| \le c \delta^{-\lambda_q} \Vert p_{\rho} \Vert_{L^{\infty}(H_{\rho}(0))} \le c \delta^{k+\eps - \lambda_q} \theta(\delta) + c \delta^{-\lambda_q} \Vert u \Vert_{L^{\infty}(H_{\delta}(0))} =: c_{\delta},
\end{align*}
we have
\begin{align*}
\frac{|\alpha^{(q)}_{r}|}{\theta(r)} &\le \frac{|\alpha^{(q)}_{2^N r}|}{\theta(r)}  + \frac{|\alpha^{(q)}_{r} - \alpha^{(q)}_{2^N r}|}{\theta(r)} \\
&\le (c_{\delta} + C(K) \delta^{k+\eps-\lambda_q})\theta(r)^{-1} + c\delta^{k+\eps - \lambda_q} 2^{-K(k+\eps - \lambda_q)} ,
\end{align*}
exactly as we claimed in \eqref{eq:coefficients-vanish}.
\end{proof}

\subsection{Flattening the boundary}

As is common in the study of the boundary regularity for solutions to second order partial differential equations, it is convenient to study domains with flat boundaries first. The goal of this section is to construct a diffeomorphism $\Phi$ which allows to transform kinetic equations in general domains to equations in a domain with flat boundary. One of its central properties is that $\Phi$ preserves the specular reflection (and also the in-flow) boundary condition.

Such a diffeomorphism was already introduced in \cite{GHJO20,DGY22} in three dimensions and it can be generalized to higher dimensions in a straightforward way. Nonetheless, in this paper we will work with a slightly modified version of the diffeomorphism in \cite{GHJO20,DGY22}, which has the advantage that it preserves the regularity of the domain in the $x$-variable, compared to the diffeomorphism in \cite{GHJO20,DGY22}, which has one derivative less than the regularity of the domain.

Given a $C^{\beta}$ domain $\Omega \subset \R^n$ with $\beta \ge 2$ and $\beta \not\in \N$, and given $x_0 \in \partial \Omega$ we can assume without loss of generality that $n_{x_0} = e_n$.

We construct the flattening diffeomorphism as follows: First, let us consider a regularized distance function $d \in C^{\beta}(\overline{\Omega}) \cap C^{\infty}_{loc}(\Omega)$ satisfying
\begin{align}
\label{eq:reg-dist}
\nabla d(x) = -n_x ~~ \forall x \in \partial \Omega, \qquad \Vert d \Vert_{C^{\gamma}(B_r(z))} \le C r^{\beta - \gamma} ~~ \forall \gamma > \beta, ~~ B_{2r}(z) \subset \Omega,
\end{align}
where $n_x \in \mathbb{S}^{n-1}$ denotes the outward unit normal normal vector of $\Omega$ at $x \in \partial \Omega$. We refer to \cite{Lie85} for a construction of such a regularized distance function.

Then, there is a $C^{\beta}$ diffeomorphism $\psi : \{ x_n > 0 \} \cap B_1 \to \Omega \cap B_1(x_0)$ with $\psi(\{ x_n = 0 \} \cap B_1) \subset \partial \Omega \cap B_1(x_0)$, $\psi(\{ x_n > 0 \} \cap B_1) = \Omega \cap B_1(x_0)$, and (see \cite[Lemma A.3]{AbRo20})
\begin{align}
\label{eq:psi-dist}
d(\psi(y)) = (y_n)_+ , ~~ \text{ and therefore } ~~ \delta_{j,n} = (\nabla d)(\psi(y)) \partial_j \psi(y)  ~~ \forall y \in \{ y_n > 0 \} \cap B_1.
\end{align}
Note that we can assume without loss of generality that $\psi^{-1}(x_0) = 0$ and $D \psi^{-1}(x_0) = Id_n$.

Then, we define
\begin{align*}
\phi^{-1}(y) = \psi(y',0) + y_n (\nabla d)(\psi(y)).
\end{align*}
Note that $\phi \in C^{\beta}(\overline{\Omega} \cap B_1)$ since for any $z \in \{z_n > 0 \}$ with $r = \frac{z_n}{2}$, it holds
\begin{align}
\label{eq:phi-reg}
\begin{split}
\Vert \phi^{-1} \Vert_{C^{\beta}(B_r(z))} &\le \Vert \psi \Vert_{C^{\beta}(B_r(z))} + C \Vert \nabla d \Vert_{C^{\beta-1}(\psi(B_r(z)))} \Vert \psi \Vert_{C^{\beta-1}(B_r(z))} \\
&\quad + C r \Vert \nabla d \Vert_{C^{\beta}(\psi(B_r(z)))} \Vert \psi \Vert_{C^{\beta}(B_r(z))} \\
&\le C \Vert \psi \Vert_{C^{\beta}(B_r(z))} < \infty,
\end{split}
\end{align}
where we used the second property in \eqref{eq:reg-dist} and that $d(\psi(B_r(z))) \ge r$ by the first property in \eqref{eq:psi-dist}.

Moreover, note that we clearly have by the first property in \eqref{eq:reg-dist}
\begin{align}
\label{eq:phi-n}
\partial_n \phi^{-1}(y',0) = (\nabla d)(\psi(y',0)) = -n_{\psi(y',0)} =  -n_{\phi^{-1}(y',0)},
\end{align}
and, for $j \in \{1,\dots,n-1\}$, by the second property in \eqref{eq:psi-dist}
\begin{align}
\label{eq:phi-j}
\partial_j \phi^{-1}(y',0) = \partial_j \psi(y',0) \perp n_{\psi(y',0)} = n_{\phi^{-1}(y',0)}.
\end{align}

In particular, since 
\begin{align*}
D \phi^{-1}(y',0) = \Big( \partial_1 \phi^{-1}(y',0) , \dots , \partial_n \phi^{-1}(y',0) \Big),
\end{align*}
a combination of \eqref{eq:phi-n} and \eqref{eq:phi-j} implies that
\begin{align}
\label{eq:trafo-new-properties-2}
(D \phi^{-1}(y',0))^T (\nabla d)(\phi^{-1}(y',0)) = e_n.
\end{align}

Moreover, we define
\begin{align}
\label{eq:Phi-def}
\Phi(t,x,v) = (t,\phi(x) , D \phi(x) \cdot v), \qquad \Phi^{-1}(t,y,w) = (t,\phi^{-1}(y) , D \phi^{-1}(y) \cdot w).
\end{align}

Since it holds by construction that $\phi(x_0) = (x_0',0)$ and $D \phi(x_0) = \mathrm{Id}_n$,  we have that $\Phi(t_0,x_0,v_0) = (t_0,x_0',0,v_0)$. For $z_0 = (t_0,x_0,v_0)$, we also define $\tilde{z}_0 = \Phi(t_0,x_0,v_0) = (t_0,x_0',0,v_0)$. Moreover, $\Phi$ satisfies $\Phi((-1,1) \times (\Omega \cap B_{\delta_1}(x_0)) \times \R^n) \subset ((-1,1) \times ( \{ x_n > 0 \} \cap B_{\delta_2}(x_0',0)) \times \R^n$ for some $\delta_1,\delta_2 \in (0,1]$ by construction, depending only on $\Omega$.

The following properties are elementary but crucial to us:

\begin{lemma}
\label{lemma:trafo-sr}
Let $\Omega$ and $\Phi$ be as above, and $x_0 \in \partial \Omega$. Then, the following hold true:
\begin{itemize}
\item[(1)] $\Phi$ preserves the boundary regions, i.e. if $x \in \partial \Omega \cap B_{\delta_1}(x_0)$, then $(t,x,v) \in \gamma_{\pm}$ if and only if $\Phi(t,x,v) \in \gamma_{\pm}$, i.e. $\pm (D \phi(x) \cdot v)_n < 0$.
\item[(2)] $\Phi$ commutes with the reflection operator, i.e. for any $(t,y,w) \in \Phi((-1,1) \times (\{ x_n \ge 0 \} \cap B_{\delta_1}(x_0)) \times \R^n)$ with $y_n = 0$ it holds
\begin{align*}
D \phi^{-1}(y) \cdot \mathcal{R}_{y} w = \mathcal{R}_{\phi^{-1}(y)} (D \phi^{-1}(y) \cdot w).
\end{align*}
\end{itemize}
\end{lemma}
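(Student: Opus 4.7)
The plan is to establish both claims by direct computation, leaning on the three geometric identities that were built into the construction of $\phi^{-1}$, namely \eqref{eq:phi-n}, \eqref{eq:phi-j}, and \eqref{eq:trafo-new-properties-2}, together with $\nabla d = -n_{\cdot}$ on $\partial\Omega$ from \eqref{eq:reg-dist}. Writing $y = (y',0)$ and $x = \phi^{-1}(y',0) \in \partial\Omega$ throughout, and setting $R := D\phi^{-1}(y)$, all the work reduces to identifying $Re_n$ and $R^T e_n$ in terms of $n_x$.

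For (1), I would start from \eqref{eq:trafo-new-properties-2}, which reads $R^T \nabla d(x) = e_n$. Since $\nabla d(x) = -n_x$, this gives $R^T n_x = -e_n$, hence $(D\phi(x))^T e_n = (R^{-T}) e_n = -n_x$. Therefore
\[
(D\phi(x)\cdot v)_n \;=\; e_n \cdot D\phi(x) v \;=\; (D\phi(x))^T e_n \cdot v \;=\; -n_x \cdot v,
\]
so $n_x \cdot v$ and $(D\phi(x)\cdot v)_n$ always have opposite signs, which is precisely the equivalence $(t,x,v) \in \gamma_\pm \Leftrightarrow \pm (D\phi(x)\cdot v)_n < 0$ claimed in (1).

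For (2), I would observe that at the boundary $\{y_n = 0\}$ of the half-space $\{y_n > 0\}$ the outward normal is $-e_n$, so the reflection acts by flipping the last coordinate: $\mathcal{R}_y w = w - 2 w_n e_n$. Applying $R$ and using \eqref{eq:phi-n}, which states $R e_n = \partial_n \phi^{-1}(y) = -n_x$, I get
\[
R\,\mathcal{R}_y w \;=\; Rw - 2w_n R e_n \;=\; Rw + 2w_n n_x.
\]
On the other hand $\mathcal{R}_x (Rw) = Rw - 2(Rw \cdot n_x)\, n_x$, and \eqref{eq:phi-j} says that for $j < n$ the tangential columns $\partial_j \phi^{-1}(y)$ are orthogonal to $n_x$, while the $n$-th column equals $-n_x$; consequently
\[
Rw \cdot n_x \;=\; \sum_{j=1}^n w_j\, \partial_j \phi^{-1}(y) \cdot n_x \;=\; w_n\,(-n_x \cdot n_x) \;=\; -w_n,
\]
which gives $\mathcal{R}_x(Rw) = Rw + 2 w_n n_x = R\,\mathcal{R}_y w$, as required.

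There is no real obstacle here beyond careful bookkeeping of signs and the orientation of $n_y = -e_n$ versus $n_x$; the content of the lemma is already encoded in the construction of $\phi^{-1}$ via \eqref{eq:phi-n}--\eqref{eq:phi-j} and \eqref{eq:trafo-new-properties-2}, and both (1) and (2) follow from a one-line identification of $R^T e_n$ and $Re_n$ with $-n_x$.
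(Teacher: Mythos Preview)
Your proof is correct and follows essentially the same approach as the paper: both arguments reduce to the identities $R^T n_x = -e_n$ and $Re_n = -n_x$ (with $R = D\phi^{-1}(y)$), obtained from \eqref{eq:phi-n}, \eqref{eq:phi-j}, \eqref{eq:trafo-new-properties-2}, and $\nabla d = -n_x$ on $\partial\Omega$. The only cosmetic difference is that in part~(2) you compute $Rw\cdot n_x$ column-by-column via \eqref{eq:phi-j} and \eqref{eq:phi-n}, whereas the paper invokes \eqref{eq:trafo-new-properties-2} directly; these are equivalent since \eqref{eq:trafo-new-properties-2} is itself a consequence of \eqref{eq:phi-n} and \eqref{eq:phi-j}.
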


\begin{proof}
Let us denote $x = \phi^{-1}(y) = \phi^{-1}(y',0) \in \partial \Omega$. Then, since $\nabla d(\phi^{-1}(y)) = -n_{\phi^{-1}(y)}$ by the first property in \eqref{eq:reg-dist}, and by \eqref{eq:trafo-new-properties-2} we have for any $w \in \R^n$:
\begin{align*}
[D \phi^{-1}(y) \cdot w] \cdot n_{\phi^{-1}(y)} = [(D \phi^{-1}(y))^T \cdot n_{\phi^{-1}(y)}] \cdot w = - w_n.
\end{align*}
Hence, $[D \phi^{-1}(y) \cdot w] \cdot n_{\phi^{-1}(y)}$ has the same sign as $-w_n$, the outward normal vector to $\{ x_n > 0 \},$ which implies the first claim.

For the second claim, we observe that
\begin{align*}
\mathcal{R}_y w = (w', -w_n) = w - 2w_n e_n, \qquad \mathcal{R}_{\phi^{-1}(y)} w = w - 2 (w \cdot n_{\phi^{-1}(y)}) n_{\phi^{-1}(y)},
\end{align*}
and therefore
\begin{align*}
D \phi^{-1}(y) \cdot \mathcal{R}_{y} w = D \phi^{-1}(y) \cdot w - 2 D \phi^{-1}(y) \cdot w_n e_n,
\end{align*}
as well as, using \eqref{eq:trafo-new-properties-2} one more time:
\begin{align*}
\mathcal{R}_{\phi^{-1}(y)} (D \phi^{-1}(y) \cdot w) &= D \phi^{-1}(y) \cdot w - 2 \Big( [D \phi^{-1}(y) \cdot w] \cdot n_{\phi^{-1}(y)} \Big) n_{\phi^{-1}(y)} \\
&= D \phi^{-1}(y) \cdot w - 2 \Big( [(D \phi^{-1}(y))^T \cdot n_{\phi^{-1}(y)}] \cdot w \Big) n_{\phi^{-1}(y)} \\
&= D \phi^{-1}(y) \cdot w + 2 w_n n_{\phi^{-1}(y)}.
\end{align*}
Hence, the second property holds true if and only if
\begin{align*}
-n_{\phi^{-1}(y)} = D \phi^{-1}(y) e_n = \partial_n \phi^{-1}(y),
\end{align*}
which is indeed satisfied by construction, see the second property in \eqref{eq:phi-n}. This completes the proof of the second claim.
\end{proof}

The following lemma investigates the kinetic equation that is obtained by transforming $f$ to $\tilde{f} = f \circ \Phi^{-1}$, where $\Phi$ denotes the flattening transformation constructed in \eqref{eq:Phi-def}. This lemma will become crucial in Subsection \ref{subsec:gen-dom}, where we will establish regularity of kinetic equations in general domains.

\begin{lemma}
\label{lemma:SR-preserved}
Let $\Omega$ and $\Phi$ be as above, and let $z_0 = (t_0,x_0,v_0) \in \gamma$. Let $f : \R^{1+2n} \to \R$ and define $\tilde{f}(t,y,w) = f(\Phi^{-1}(t,y,w))$. Let $D \subset (-1,1) \times (\Omega \cap B_{\delta_1}(x_0)) \times \R^n$.

Then, the following hold true:
\begin{itemize}
\item[(i)] We have
\begin{align*}
f(t,x,v) = f(t,x,\mathcal{R}_x v) ~~ \forall (t,x,v) \in \gamma_- \cap D,
\end{align*} 
if and only if
\begin{align*}
\tilde{f}(t,y,w) = \tilde{f}(t,y,w - 2w_n) = \tilde{f}(t,y,\mathcal{R}_y w) ~~ \forall (t,y,w) \in \Phi(D).
\end{align*}

\item[(ii)] Assume that $f$ is a weak solution to
\begin{align*}
\partial_t f + v \cdot \nabla_x f + (- a^{i,j}\partial_{v_i,v_j}) f = - b \cdot \nabla_v f - c f + h ~~ \text{ in } D,
\end{align*}
where $a^{i,j} \in C^{0,1}_{\ell}(D)$ and $b,c,h \in L^{\infty}(D)$.
Then, $\tilde{f}$ is a weak solution to
\begin{align*}
\partial_t \tilde{f} + v \cdot \nabla_x \tilde{f} + (- \tilde{a}^{i,j}\partial_{v_i,v_j}) \tilde{f} = - \tilde{b} \cdot \nabla_v \tilde{f} - \tilde{c} \tilde{f} + \tilde{h} ~~ \text{ in } \Phi(D),
\end{align*}
where we denote
\begin{align*}
A(y) &= (D \phi)( \phi^{-1}(y) ),\\
\tilde{a}^{i,j}(t,y,w) &= A^{i,r}(y) A^{j,s}(y) a^{r,s}(\Phi^{-1}(t,y,w)),\\
\tilde{b}^i(t,y,w) &= A^{i,j}(y) b^j(\Phi^{-1}(t,y,w)) + a^{j,l}(\Phi^{-1}(t,y,w)) \partial_{x_j,x_l} \phi^{i}(\phi^{-1}(y)) \\
&\quad + (A^{-1})^{j,s}(y)w_s (A^{-1})^{l,r}(y)w_r \partial_{x_j,x_l} \phi^i(\phi^{-1}(y)),\\
\tilde{c}(t,y,w) &= c(\Phi^{-1}(t,y,w)),\\
\tilde{h}(t,y,w) &= h(\Phi^{-1}(t,y,w)).
\end{align*}

\item[(iii)] Let $k \in \N \cup \{ 0 \}$ and $\eps \in (0,1]$. Then, if $\partial \Omega \in C^{\beta}$ for some $\beta \ge \frac{k + 4 + \eps}{2}$ and it holds $D \subset Q_{\delta}(z_0)$ for some $\delta \le c_0 \min \{1 ,|v_0|^{-1} \}$:
\begin{align*}
\Vert \tilde{a}^{i,j} \Vert_{C^{k+\eps}_{\ell}(\Phi(D))} &\le C (1+|v_0|^{\frac{k+5}{2}})\Vert a^{i,j} \Vert_{C^{k+\eps}_{\ell}(D)},\\
\Vert \tilde{b}^i \Vert_{C^{k+\eps}_{\ell}(\Phi(D))} &\le C (1+|v_0|^{\frac{k+6}{2}}) \Vert b^i \Vert_{C^{k+\eps}_{\ell}(D)} + C(1 + |v_0|)^{2},\\
\Vert \tilde{c} \Vert_{C^{k+\eps}_{\ell}(\Phi(D))}  &\le C (1+|v_0|^{\frac{k+4}{2}})\Vert c \Vert_{C^{k+\eps}_{\ell}(D)},  \\
\Vert \tilde{h} \Vert_{C^{k+\eps}_{\ell}(\Phi(D))} &\le C (1+|v_0|^{\frac{k+4}{2}})\Vert h \Vert_{C^{k+\eps}_{\ell}(D)}, \\
\Vert \tilde{f} \Vert_{C^{k+\eps}_{\ell}(\Phi(D))} &\le C (1+|v_0|^{\frac{k+4}{2}}) \Vert f \Vert_{C^{k+\eps}_{\ell}(D)},\\
\Vert f \Vert_{C^{k+\eps}_{\ell}(D)} &\le C (1+|v_0|^{\frac{k+4}{2}}) \Vert \tilde{f} \Vert_{C^{k+\eps}_{\ell}(\Phi(D))},
\end{align*}
where $C$ depends only on $n,k,\eps,\Omega,c_0$. 

Moreover, if $\delta \le c_0 \min\{1 , |v_0|^{-\frac{k+6}{2(\eps - \eps')}} \}$ for some $\eps' \in (0,\eps)$, then
\begin{align*}
\Vert \tilde{a}^{i,j} \Vert_{C^{k+\eps'}_{\ell}(\Phi(D))} &\le C \Vert a^{i,j} \Vert_{C^{k+\eps}_{\ell}(D)}, \\
\Vert \tilde{b}^i \Vert_{C^{k+\eps'}_{\ell}(\Phi(D))} &\le C \Vert b^i \Vert_{C^{k+\eps}_{\ell}(D)} + C(1 + |v_0|)^{2},\\
\Vert \tilde{c} \Vert_{C^{k+\eps'}_{\ell}(\Phi(D))}  &\le C \Vert c \Vert_{C^{k+\eps}_{\ell}(D)},  \\
\Vert \tilde{h} \Vert_{C^{k+\eps'}_{\ell}(\Phi(D))} &\le C \Vert h \Vert_{C^{k+\eps}_{\ell}(D)},
\end{align*}
where $C$ depends only on $n,k,\eps,\Omega,c_0$.
\end{itemize}

\end{lemma}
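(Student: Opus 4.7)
\emph{Part (i)} is an immediate consequence of \autoref{lemma:trafo-sr}. Part (2) of that lemma gives the identity $D\phi^{-1}(y)\,\mathcal{R}_y w = \mathcal{R}_{\phi^{-1}(y)}(D\phi^{-1}(y)\,w)$ at every $y$ with $y_n = 0$, and part (1) guarantees that $\Phi$ preserves the sign of $v\cdot n_x$. Substituting into the definition $\tilde f = f\circ\Phi^{-1}$ translates the specular reflection condition for $f$ on $\gamma_-\cap D$ into the flat reflection identity for $\tilde f$ on $\Phi(D)\cap\{y_n=0\}$, and conversely since $\Phi$ is a diffeomorphism.

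\emph{Part (ii)} follows from a direct chain rule computation. Writing $y=\phi(x)$ and $w_i=A_{ij}(y)v_j$ with $A(y)=(D\phi)(\phi^{-1}(y))$, one computes $\partial_{v_j}f$, $\partial_{v_j v_m}^2 f$ and $\partial_{x_k} f$ in terms of $(y,w)$-derivatives of $\tilde f$. The crucial identity $v_k A_{ik}(y) = w_i$ produces the transport term $w\cdot\nabla_y\tilde f$, and the remaining second chain-rule contribution to $v\cdot\nabla_x f$ gives a first-order term of the form $v_m v_j\,\partial_{x_m x_j}^2\phi^i(\phi^{-1}(y))\,\partial_{w_i}\tilde f$. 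Substituting into the PDE and rewriting $v_m v_j = B^{m,s}B^{j,r}w_s w_r$ with $B=A^{-1}$, one collects the coefficients of $\partial_{w_i w_k}^2\tilde f$, $\partial_{w_i}\tilde f$, $\tilde f$, and the source to obtain $\tilde a,\tilde b,\tilde c,\tilde h$ in the stated form.

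\emph{Part (iii)} is the most technical step. One estimates each factor appearing in the formulas of (ii) in $C^{k+\eps}_{\ell}(\Phi(D))$, using the product rule (\autoref{lemma:product-rule}), the kinetic H\"older interpolation (\autoref{lemma:Holder-interpol}), and a composition estimate for $a\circ\Phi^{-1}$ and its analogues. The purely $y$-dependent factors $A, B, \partial_{xx}^2\phi\circ\phi^{-1}$ are controlled by the hypothesis $\partial\Omega\in C^\beta$ with $\beta\ge (k+4+\eps)/2$ and the bound \eqref{eq:phi-reg}; since they depend only on $y$ and the cubic component $|y|^{1/3}$ of the kinetic distance $d_\ell$ absorbs higher Euclidean regularity orders, their $C^{k+\eps}_\ell$-norms on $\Phi(D)$ are bounded in terms of $\Omega$ alone. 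For the compositions, the elementary inequality
\[
|v_1-v_2|\;\le\;|B(y_1)|\,|w_1-w_2| + |w_0|\,|B(y_1)-B(y_2)|
\]
combined with $|w_0|\lesssim|v_0|$ shows that a kinetic ball of radius $r$ in $(y,w)$ around $\Phi(z_0)$ lies inside a kinetic ball of radius $Cr(1+|v_0|)^\theta$ in $(x,v)$ for a suitable exponent $\theta$, provided $r\lesssim|v_0|^{-1}$. Together with the extra factors of $A(y)$ and $w$ multiplying each term in (ii), this produces the claimed weights in $(1+|v_0|)$; the additive $C(1+|v_0|)^2$ in the bound for $\tilde b^i$ comes from the purely $w$-quadratic term $B^{j,s}w_s B^{l,r}w_r\partial_{x_j x_l}\phi^i$, which is independent of $a$ and $b$.

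The refined estimates with $\eps'<\eps$ follow from the second inequality in \autoref{lemma:Holder-interpol}: interpolating between $C^{k+\eps}_\ell$ and $L^\infty$ on kinetic balls of radius $\delta$ gains a factor $\delta^{\eps-\eps'}$, and the assumption $\delta\le c_0\min\{1,|v_0|^{-(k+6)/(2(\eps-\eps'))}\}$ is exactly what makes this factor absorb the weight $|v_0|^{(k+6)/2}$. The main obstacle throughout Part (iii) is the careful bookkeeping of $|v_0|$-factors coming from each term of the chain rule, combined with the quantitative comparison of the kinetic distances in $(x,v)$ versus $(y,w)$ coordinates, which is under control only because of the constraint $\delta\lesssim|v_0|^{-1}$.
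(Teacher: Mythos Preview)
Your plan is correct and structurally matches the paper's proof: parts (i) and (ii) are handled identically, and for part (iii) both you and the paper combine a kinetic-distance comparison under $\Phi^{-1}$, a composition estimate for $g\circ\Phi^{-1}$, the product rule, and an interpolation gain of $\delta^{\eps-\eps'}$ for the refined statement.

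There is one point where your bookkeeping differs from the paper's and is worth flagging. You locate the $|v_0|$-weights in a ball-inclusion of the form ``radius $r$ in $(y,w)$ $\Rightarrow$ radius $Cr(1+|v_0|)^{\theta}$ in $(x,v)$''. In fact, under the constraint $\delta\le c_0|v_0|^{-1}$ the paper shows the \emph{clean} comparison $d_\ell(\Phi^{-1}(z_1),\Phi^{-1}(z_2))\le C\,d_\ell(z_1,z_2)$ with no $|v_0|$-factor (the quadratic error $|v_0|^{2/3}d_\ell^{2}$ from Taylor's formula is absorbed). The $|v_0|$-powers in the final estimates instead come from bounding the \emph{kinetic} derivatives $D^{\beta}\Phi^{-1}$ (in the sense of \eqref{eq:Dbeta-f}): the Galilean part $(\partial_t+v\cdot\nabla_x)^{\beta_t}$ applied to the $w$-component $D\phi^{-1}(y)\cdot w$ produces $|v_0|^{\beta_t+1}$, which the paper packages as $\|D^{\beta}\Phi^{-1}\|_{L^\infty}\le C(1+|v_0|)^{(|\beta|+2)/2}$ and $[D^{\beta}\Phi^{-1}]_{C^{\eps}_\ell}\le C(1+|v_0|)^{(|\beta|+4)/2}$, and then feeds into the chain rule together with \autoref{lemma:reverse-der-Holder}. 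Your cruder ball-inclusion would still close the argument (with possibly worse but still finite exponents $\theta$), but if you want to reproduce the specific exponents $(k+4)/2$, $(k+5)/2$, $(k+6)/2$ in the statement you should trace the $|v_0|$-factors through $D^{\beta}\Phi^{-1}$ rather than through the distance.
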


\begin{proof}
Note that \autoref{lemma:trafo-sr} immediately implies (i). Claim (ii) follows by a straightforward computation and we refer to \cite[p.8]{Sil22} and \cite[Section 7.1.3]{GHJO20} for more details. Note that due to our assumptions on the coefficients, the equation at hand is in divergence form and in non-divergence form at the same time. Hence, the proof in our setting is exactly the same, up to computing the transformed coefficients, which is an easy exercise. 

To see (iii), we first prove the following two claims for any $z_1,z_2 \in \R^{1+2n}$ such that $d_{\ell}(z_1,z_2) \le c \min\{ 1 , \max\{ |v_1|^{-1} , |v_2|^{-1} \}$:
\begin{align}
\label{eq:claim-first-distance}
d_{\ell}(\Phi^{-1}(z_1) , \Phi^{-1}(z_2)) &\le C  d_{\ell}(z_1,z_2) + C \min\{ |v_1| , |v_2| \}^{2/3} d_{\ell}(z_1,z_2)^2 \le C d_{\ell}(z_1,z_2),\\
\label{eq:claim-second-distance}
|x_1-x_2| &\le C  d_{\ell}(z_1,z_2)^3 + C \min\{ |v_1| , |v_2| \} d_{\ell}(z_1,z_2)^2 \le C d_{\ell}(z_1,z_2).
\end{align}

To see it, we assume that $|v_1| \le |v_2|$ and compute by Taylor's formula, using \eqref{eq:dist-comp} and that $\beta \ge 2$:
\begin{align*}
|\phi^{-1}(x_1) - \phi^{-1}(x_2) & - (t_1 - t_2)D \phi^{-1}(x_1) \cdot v_1|^{1/3} \\
 &\le \big|D\phi^{-1}(x_1) \cdot (x_1 - x_2) - (t_1 - t_2)D \phi^{-1}(x_1) \cdot v_1 \big|^{1/3} + C |t_1 - t_2|^{1/3} |x_1 - x_2|^{2/3} \\
&\le C d_{\ell}(z_1,z_2) + C d_{\ell}(z_1,z_2)^{8/3} + C d_{\ell}(z_1,z_2)^2  |v_1|^{2/3}.
\end{align*}
Moreover, note that 
\begin{align*}
|x_1-x_2| \le |x_1 - x_2 - (t_1 - t_2)v_1| + |t_1 - t_2| |v_1| \le d_{\ell}(z_1,z_2)^3 + d_{\ell}(z_1,z_2)^{2} |v_1|,
\end{align*}
which proves both claims. 


Next, we claim that if $\partial \Omega \in C^{\frac{k+2+\eps}{2}}$ and a function $g \in C^{k+\eps}_{\ell}(D)$, where $D \subset Q_{\delta}(z_0)$ with $\delta \le c_0 \min \{1 , |v_0|^{-1} \}$ is as in (iii), it holds
\begin{align}
\label{eq:claim-chain-rule}
\Vert g(\Phi^{-1} (\cdot)) \Vert_{C^{k+\eps}_{\ell}(\Phi(D))} \le C (1 + |v_0|)^{\frac{k+4}{2}} \Vert g \Vert_{C^{k+\eps}_{\ell}(D)}.
\end{align}
Moreover, if $\eps' \in (0,\eps)$ and $\delta \le c_0 \min \{ 1 , |v_0|^{-\frac{k+4}{2(\eps - \eps')}} \}$, then
\begin{align}
\label{eq:claim-chain-rule-improved}
\Vert g(\Phi^{-1} (\cdot)) \Vert_{C^{k+\eps'}_{\ell}(\Phi(D))} \le C  \Vert g \Vert_{C^{k+\eps}_{\ell}(D)}.
\end{align}
Note that the claims in (iii) for $\tilde{a}^{i,j}, \tilde{b}^i, \tilde{c}, \tilde{h}$ follow immediately from \eqref{eq:claim-chain-rule} together with the fact that $\phi$ is as regular as $\partial \Omega$ by \eqref{eq:phi-reg} and the product rule. In particular, the regularity of $\partial \Omega \in C^{\frac{k+\eps + 4}{2}}$ in (iii) and (iv) cannot be improved, since in order to prove $\tilde{b}^i \in C^{k+\eps}_{\ell}$, we need to take $g$ to be a second derivative of $\phi$ in \eqref{eq:claim-chain-rule}. Then, \eqref{eq:claim-chain-rule} only gives a $C^{k+\eps}_{\ell}$ estimate if $\phi \in C^{\frac{(k+2)+\eps+2}{2}}$. Hence, we require $\partial\Omega \in C^{\frac{k+\eps+4}{2}}$.

It remains to prove \eqref{eq:claim-chain-rule} and \eqref{eq:claim-chain-rule-improved}. To see it, note that for any $\beta \in (\N \cup \{ 0 \})^{2n+1}$ with $|\beta| \le k$,
\begin{align}
\label{eq:Linfty-Dphi}
\Vert D^{\beta} \Phi^{-1} \Vert_{L^{\infty}(\Phi(D))} \le C(1 + |v_0|)^{\frac{|\beta|+2}{2}},
\end{align}
where $D^{\beta} \Phi^{-1}$ is defined as in \eqref{eq:Dbeta-f}. This follows immediately from the definition of $\Phi^{-1}$ (see \eqref{eq:Phi-def}), observing that the term $(\partial_t + v \cdot \nabla_x )^{\beta_t} (D\phi^{-1}(y) \cdot v)$ is the contribution that leads to the highest power of $|v_0|$, which is of order $\beta_t+1$, and becomes maximal if $\beta_t = \frac{|\beta|}{2}$.
Moreover, if $\partial \Omega \in C^{\frac{k+2+\eps}{2}}$ and therefore $\phi \in C_x^{\frac{k+2 + \eps}{2}}$ by construction of the flattening diffeomorphism, see \eqref{eq:phi-reg}, we have for any $\beta_t \in \N \cup \{ 0 \}$ with $2\beta_t \le k$, and $z_1,z_2 \in \R^{1+2n}$ with $d_{\ell}(z_1,z_2) \le c_0 \min\{ 1 , \max \{ |v_1|^{-1} , |v_2|^{-1}\} \}$, using also \eqref{eq:claim-second-distance}
\begin{align*}
& |(\partial_t + v_1 \cdot \nabla_x)^{\beta_t} D \phi(x_1) v_1 - (\partial_t + v_2 \cdot \nabla_x)^{\beta_t} D \phi(x_2) v_2| \\
&\qquad \le C |(\nabla_x)^{\beta_t} D \phi(x_1) - (\nabla_x)^{\beta_t} D \phi(x_2)||v_1|^{\beta_t+1} + C|v_1- v_2| |v_1|^{\beta_t+1} \\
&\qquad \le C|x_1-x_2|^{\eps/2}|v_2|^{\beta_t+1} + C|v_1-v_2| |v_1|^{\beta_t + 1}\\
&\qquad \le C d_{\ell}(z_1,z_2)^{3\eps/2} |v_1|^{\beta_t+1} + C d_{\ell}(z_1,z_2)^{\eps} |v_1|^{\beta_t+2} + C d_{\ell}(z_1,z_2) |v_1|^{\beta_t + 1} \\
&\qquad \le Cd_{\ell}(z_1,z_2)^{\eps} (1 + |v_0|)^{\frac{|\beta|+4}{2}}.
\end{align*}

Therefore, and using similar arguments for the other entries of $\Phi$ and other derivatives $D^{\beta}$ of the form $\partial_v^{\beta_v}$, as well as mixed derivatives, (but observing that they all lead to lower powers of $|v_0|$ and require less regularity of $\partial \Omega$) we deduce that whenever $d_{\ell}(z_1,z_2) \le c_0 \min\{ 1 , \max \{ |v_1|^{-1} , |v_2|^{-1}\} \}$, then for any $\beta \in (\N \cup \{ 0 \})^{2n+1}$ with $|\beta| \le k$:
\begin{align}
\label{eq:Ceps-Dphi}
[  D^{\beta} \Phi^{-1} ]_{C^{\eps}_{\ell}(\Phi(D))} \le C (1 + |v_0|)^{\frac{|\beta|+4}{2}}.
\end{align}
Moreover, by \eqref{eq:claim-first-distance} we have
\begin{align}
\label{eq:Ceps-Dg}
\Vert (D^{\beta} g)(\Phi^{-1}(\cdot)) \Vert_{L^{\infty}(\Phi(D))} +  [ (D^{\beta} g)(\Phi^{-1}(\cdot)) ]_{C^{\eps}_{\ell}(\Phi(D))} \le C.
\end{align}
In particular, we have by \eqref{eq:Linfty-Dphi}, \eqref{eq:Ceps-Dphi}, \eqref{eq:Ceps-Dg}, and the product rule that for any $\gamma,\beta \in (\N \cup \{0\})^{2n+1}$ with $|\gamma|, |\beta| \le k$:
\begin{align}
\label{eq:before-chain-rule}
\begin{split}
\big[(D^{\gamma} g)(\Phi^{-1}(\cdot))D^{\beta} \Phi^{-1} \big]_{C^{\eps}_{\ell}(\Phi(D))} &\le 2 \big[(D^{\gamma} g)(\Phi^{-1}(\cdot)) \big]_{C^{\eps}_{\ell}(\Phi(D))} \Vert D^{\beta} \Phi^{-1} \Vert_{L^{\infty}(\Phi(D))} \\
&\quad + 2\Vert (D^{\gamma} g)(\Phi^{-1}(\cdot))\Vert_{L^{\infty}(\Phi(D))} \big[ D^{\beta} \Phi^{-1} \big]_{C^{\eps}(\Phi(D))} \\
&\le C (1 + |v_0|)^{\frac{|\beta|+4}{2}}.
\end{split}
\end{align}
By the chain rule and \autoref{lemma:reverse-der-Holder}, this implies \eqref{eq:claim-chain-rule}. To see \eqref{eq:claim-chain-rule-improved}, we observe that if $\delta \le c_0 \min \{ 1 , |v_0|^{-\frac{k+4}{2(\eps - \eps')}} \}$, then \eqref{eq:Ceps-Dphi} remains valid with $\eps'$ instead of $\eps$, and instead of the second estimate in \eqref{eq:Ceps-Dg} we have
\begin{align*}
\big[ (D^{\beta} g)(\Phi^{-1}(\cdot)) \big]_{C^{\eps'}_{\ell}(\Phi(D))} \le C \delta^{\eps - \eps'} \le C \min\{ 1 , |v_0|^{- \frac{ k + 4}{2}} \},
\end{align*}
and instead of \eqref{eq:Ceps-Dphi} we have
\begin{align*}
[  D^{\beta} \Phi^{-1} ]_{C^{\eps'}_{\ell}(\Phi(D))} \le C \delta^{\eps-\eps'} (1 + |v_0|)^{\frac{|\beta|+4}{2}} \le C,
\end{align*}
which yields the following variant of \eqref{eq:before-chain-rule}
\begin{align*}
\big[(D^{\gamma} g)(\Phi^{-1}(\cdot))D^{\beta} \Phi^{-1} \big]_{C^{\eps'}_{\ell}(\Phi(D))} &\le 2 \big[(D^{\gamma} g)(\Phi^{-1}(\cdot)) \big]_{C^{\eps'}_{\ell}(\Phi(D))} \Vert D^{\beta} \Phi^{-1} \Vert_{L^{\infty}(\Phi(D))} \\
&\quad + 2\Vert (D^{\gamma} g)(\Phi^{-1}(\cdot))\Vert_{L^{\infty}(\Phi(D))} \big[ D^{\beta} \Phi^{-1} \big]_{C^{\eps'}(\Phi(D))} \\
&\le C (1 + |v_0|)^{\frac{|\beta|+4}{2}} (1 + |v_0|)^{-\frac{|\beta|+4}{2}}  \le C,
\end{align*}
and hence, we deduce \eqref{eq:claim-chain-rule-improved}.

Finally, to prove the last claim in (iii), we observe that \eqref{eq:claim-first-distance} and \eqref{eq:claim-chain-rule} remain true when $\Phi^{-1}$ is replaced by $\Phi$, using exactly the same arguments.

%
\end{proof}


\subsection{Reflecting kinetic equations}

In this subsection we see that solutions to kinetic equations which satisfy the specular reflection condition can be extended across the boundary by mirror reflection. For this purpose, we need to define the reflected domain $\mathcal{R}(D)$ associated with any domain $D \subset (-1,1) \times \Omega \times \R^n$. For flat domains, i.e. when $\Omega = \{ x_n > 0 \}$, we set
\begin{align}
\label{eq:reflected-domain-def}
\mathcal{R}(D) = \{ (t,x,v', -v_n) : (t,x,v) \in D \}, \qquad \mathcal{S}(D) = D \cup \mathcal{R}(D).
\end{align}
Note that if $z_0 \in \gamma_0$, then $\mathcal{R}(H_r(z_0)) = H_r(z_0)$.
In non-flat spatial domains $\Omega$, we let $x^{\ast} \in \partial \Omega$ be the projection of $x \in \Omega$ to the boundary and define
\begin{align*}
\mathcal{R}(D) = \{ (t,x,R_{x^{\ast}}v) : (t,x,v) \in D \}, \qquad \mathcal{S}(D) = D \cup \mathcal{R}(D).
\end{align*}

We state the lemma only for equations in divergence form since this is the setting in which we will apply it in the proof of \autoref{lemma:boundary-reg-specular}.

\begin{lemma}
\label{lemma:mirror}
Let $\Omega = \{ x_n > 0 \}$ and $z_0 \in \R^{1+2n}$ with $x_0 \in \{ x_n = 0 \}$. Let $r \in (0,1]$, and $a^{i,j},b,c,h \in L^{\infty}(H_{2r}(z_0))$ and $f$ be a weak solution to
\begin{equation*}
\left\{\begin{array}{rcl}
\partial_t f + v \cdot \nabla_x f -\partial_{v_j}(a^{i,j}\partial_{v_i} f) &=& - b \cdot \nabla_v f - c f + h  ~~  \text{ in }  \mathcal{S}(H_{2r}(z_0)),\\
f(t,x, v) &=& f(t,x, \mathcal{R}_x v) ~~ \qquad\quad \text{ on } \gamma_- \cap \mathcal{S}(H_{2r}(z_0) ).
\end{array}\right.
\end{equation*}
Then, the mirror reflection 
\begin{align*}
\bar{f}(t,x,v) = 
\begin{cases}
f(t,x,v) &~~ \text{ if } (t,x,v) \in \mathcal{S}(H_{2r}(z_0)),\\
f(t,x' , -x_n , v', - v_n) &~~ \text{ if } (t,x',-x_n,v',-v_n) \in \mathcal{S}(H_{2r}(z_0))
\end{cases}
\end{align*}
of $f$ is a weak solution to
\begin{align*}
\partial_t \bar{f} + v \cdot \nabla_x \bar{f} + (- \bar{a}^{i,j}\partial_{v_i,v_j}) \bar{f} &= - \bar{b} \cdot \nabla_v \bar{f} - \bar{c} \bar{f} + \bar{h}  ~~ \text{ in } \mathcal{S}(Q_{2r}(z_0)),
\end{align*}
where $\bar{a}^{i,j} = a^{i,j}$, $\bar{b}^i = b^i$, $\bar{c} = c$, and $\bar{h} = h$ in $H_{2r}(z_0)$, and
\begin{align*}
\bar{a}^{i,j}(z) &= (-1)^{\delta^{i,n} + \delta^{j,n}} a^{i,j}(t,x', - x_n, v', -v_n) ,\\
\bar{b}^{i}(z) &= (-1)^{\delta^{i,n}} b^i(t,x', - x_n, v', -v_n),\\
\bar{c}(z) &= c(t,x', - x_n, v', -v_n),\\
\bar{h}(z) &= h(t,x', - x_n, v', -v_n),
\end{align*}
whenever $z \in \mathcal{R}(H_{2r}(z_0))$. Here, $\delta^{i,j} = \1_{\{i = j\}}$ denotes  Kronecker's delta.
\end{lemma}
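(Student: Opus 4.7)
The plan is to verify three things in order: that $\bar f$ is well-defined and continuous across the reflection interface $\{x_n=0\}$; that $\bar f$ satisfies the transformed PDE pointwise in the reflected open region $\mathcal{R}(H_{2r}(z_0))$ by a direct chain rule computation; and that no spurious boundary term appears at $\{x_n=0\}$ in the weak formulation on $\mathcal{S}(Q_{2r}(z_0))$, so the two pieces of $\bar f$ glue to a genuine weak solution on the union.

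\emph{Step 1.} Consistency at $\{x_n=0\}$ follows directly from the specular reflection condition. For $(t,x',0,v)\in\mathcal{S}(H_{2r}(z_0))$ the two definitions of $\bar f$ give $f(t,x',0,v',v_n)$ and $f(t,x',0,v',-v_n)$, which coincide by $f(t,x,v)=f(t,x,\mathcal{R}_x v)$ (the condition is stated on $\gamma_-$ but is automatically symmetric in $v_n$, hence extends to $\gamma_+$ and trivially holds on $\gamma_0$).

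\emph{Step 2.} In the interior of $\mathcal{R}(H_{2r}(z_0))$, let $\sigma(t,x,v)=(t,x',-x_n,v',-v_n)$ (an involution with Jacobian $1$), so that $\bar f=f\circ\sigma$. The chain rule gives $\partial_t\bar f(z)=(\partial_t f)(\sigma z)$, $\partial_{x_i}\bar f(z)=(-1)^{\delta^{i,n}}(\partial_{x_i}f)(\sigma z)$, and $\partial_{v_i}\bar f(z)=(-1)^{\delta^{i,n}}(\partial_{v_i}f)(\sigma z)$. Writing $\tilde v:=\sigma v$, the transport term becomes
\begin{equation*}
\partial_t\bar f(z)+v\cdot\nabla_x\bar f(z)
=(\partial_t f)(\sigma z)+\tilde v\cdot(\nabla_x f)(\sigma z),
\end{equation*}
and the signs are exactly those absorbed into $\bar b$ and $\bar a^{i,j}$; indeed
\begin{equation*}
\partial_{v_j}\!\bigl(\bar a^{i,j}\partial_{v_i}\bar f\bigr)(z)
= (-1)^{\delta^{i,n}+\delta^{j,n}}(-1)^{\delta^{i,n}}(-1)^{\delta^{j,n}}\partial_{v_j}(a^{i,j}\partial_{v_i}f)(\sigma z)
=\partial_{v_j}(a^{i,j}\partial_{v_i}f)(\sigma z),
\end{equation*}
and similarly $\bar b\cdot\nabla_v\bar f(z)=(b\cdot\nabla_v f)(\sigma z)$, $\bar c\bar f(z)=(cf)(\sigma z)$, $\bar h(z)=h(\sigma z)$. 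Hence $\bar f$ solves the claimed equation pointwise (in the distributional sense inside $\mathcal{R}(H_{2r}(z_0))$) by applying $\sigma$ to the PDE for $f$.

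\emph{Step 3.} For the weak formulation on $\mathcal{S}(Q_{2r}(z_0))$, pick any test function $\varphi\in C^\infty_c(\mathcal{S}(Q_{2r}(z_0)))$ and split the weak form integral as $\int_{H_{2r}(z_0)}+\int_{\mathcal{R}(H_{2r}(z_0))}$. On the reflected piece, change variables via $\sigma$; using Step~2 every coefficient sign is absorbed, and one checks termwise that
\begin{equation*}
\int_{\mathcal{R}(H_{2r}(z_0))}\bar f\,(\partial_t\varphi+v\cdot\nabla_x\varphi)\,dz
=\int_{H_{2r}(z_0)} f\,\bigl(\partial_t(\varphi\circ\sigma)+v\cdot\nabla_x(\varphi\circ\sigma)\bigr)\,dw,
\end{equation*}
with analogous identities for the $\bar a^{i,j}$, $\bar b$, $\bar c$, $\bar h$ contributions. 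Therefore the full weak formulation for $\bar f$ tested against $\varphi$ reduces to the weak formulation for $f$ on $H_{2r}(z_0)$ tested against $\tilde\varphi:=\varphi+\varphi\circ\sigma$. The function $\tilde\varphi$ is $\sigma$-symmetric; in particular $\tilde\varphi(t,x',0,v',v_n)=\tilde\varphi(t,x',0,v',-v_n)$ on $\{x_n=0\}$, which is precisely the admissibility class for test functions paired with weak solutions satisfying the specular reflection condition (the boundary integrals coming from $v\cdot\nabla_x$ across $\gamma_+$ and $\gamma_-$ cancel thanks to $f(t,x,v)=f(t,x,\mathcal{R}_x v)$ combined with $\tilde\varphi\circ\mathcal{R}_x=\tilde\varphi$). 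Hence the equation for $f$ gives zero, proving $\bar f$ is a weak solution on $\mathcal{S}(Q_{2r}(z_0))$.

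The only non-mechanical point is Step~3, where one must verify that the contributions from $\gamma_0\cup\gamma_\pm\cap\{x_n=0\}$ in the transport term vanish; this is exactly where the specular reflection hypothesis is used, and it is the reason the statement requires this particular boundary condition on $f$ rather than, say, an inflow condition.
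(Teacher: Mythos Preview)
Your argument is correct and matches the paper's approach, which simply defers to \cite{GHJO20} with the same two observations you spell out: continuity of $\bar f$ across $\{x_n=0\}$ from the specular reflection condition, and inheritance of the equation in the reflected region via the chain rule under $\sigma$. One notational slip worth fixing: in Step~3 you write $\mathcal{R}(H_{2r}(z_0))$ for the $\{x_n<0\}$ region, but in the paper $\mathcal{R}$ reflects only in $v_n$ (so $\mathcal{R}(H_{2r}(z_0))\subset\{x_n>0\}$); the split you want is $\mathcal{S}(H_{2r}(z_0))\cup\sigma(\mathcal{S}(H_{2r}(z_0)))$, and in fact the paper's definition of weak solution with specular reflection already allows \emph{any} $C^1_c$ test function whose support meets $\partial D$ only in $\gamma$, so the symmetry of $\tilde\varphi$ is not needed for admissibility.
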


\begin{proof}
This result was proved in \cite[Section 7.2]{GHJO20} in three dimensions. The proof in our more general case goes in the exact same way. The main observation is that $\bar{f}$ is continuous across the boundary since it satisfies the specular reflection condition, and that the equation for $\bar{f}$ in $\{ x_n < 0 \}$ is inherited from the equation for $f$ by construction.
\end{proof}

\begin{remark}
\label{remark:weak-implies-strong}
The interior regularity results from the following section will imply that the mirror extended function is actually a strong solution, once $a^{i,j},b,c,h$ are smooth enough and $a^{i,j}(t,x',0,v) = 0$ whenever $\delta^{i,n} \not= \delta^{j,n}$, and $b^i(t,x',0,v) = c(t,x',0,v) = 0$ and $h(t,x',0,v',v_n) = h(t,x',0,v',-v_n)$.
\end{remark}

The following lemma looks quite innocent. However, note that the proof heavily uses the kinetic geometry and that, for a general diffeomorphism $\Phi \in C^{\infty}$, it is not true that $f(\Phi) \in C^{k,\eps}_{\ell}$, whenever $f \in C^{k,\eps}_{\ell}$.

\begin{lemma}
\label{lemma:Holder-spaces-gamma_-}
Let $\Omega = \{ x_n > 0 \}$ and $z_0 \in \R^{1+2n}$ with $x_0 \in \{ x_n = 0 \}$. Let $k \in \N \cup \{ 0 \}$, $\eps \in (0,1]$, $r \in (0,1]$, and $f \in C^{k,\eps}_{\ell}(H_r(z_0))$. Then, it holds
\begin{align*}
C^{-1} [f]_{C^{k,\eps}_{\ell}(\mathcal{R}(\gamma_- \cap Q_r(z_0)))} \le [f(t,x,\mathcal{R}_x v)]_{C^{k,\eps}_{\ell}(\gamma_- \cap Q_r(z_0))} \le  C [f]_{C^{k,\eps}_{\ell}(\mathcal{R}(\gamma_- \cap Q_r(z_0)))},
\end{align*}
where $C$ depends only on $n,k,\eps$. Moreover, we have
\begin{align*}
\mathcal{R}(\gamma_- \cap Q_r(z_0)) = \gamma_+ \cap Q_r(\mathcal{R}(z_0)).  
\end{align*}
\end{lemma}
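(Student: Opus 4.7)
Write $\Psi(t,x,v) := (t,x,v',-v_n)$, so that $\mathcal{R}_x v = \Psi(\cdot,\cdot,v)$ at every $x \in \{x_n=0\}$ and the function $g(t,x,v) := f(t,x,\mathcal{R}_x v)$ on the kinetic boundary is simply $g = f \circ \Psi$. My plan is to reduce the lemma to two facts about $\Psi$ restricted to $\{x_n=0\}$: it is a $d_\ell$-isometry, and it preserves the space $\cP_k$ of kinetic polynomials.

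\textbf{Step 1 (set equality).} I would verify $\mathcal{R}(\gamma_- \cap Q_r(z_0)) = \gamma_+ \cap Q_r(\mathcal{R}(z_0))$ by direct computation. Take $(t,x,v) \in \gamma_- \cap Q_r(z_0)$, so $x_n=0$ and $v_n>0$; then $\Psi(v)$ has last component $-v_n<0$, landing in $\gamma_+$. For the cylinder condition, note that both $x$ and $x_0$ have vanishing $n$-th component, so the $n$-th entry of $x - x_0 - (t-t_0)v_0$ equals $-(t-t_0)v_{0,n}$, whose square is invariant under flipping $v_{0,n}$. Hence
\[ |x - x_0 - (t-t_0)v_0|^2 = |x - x_0 - (t-t_0)\mathcal{R}_{x_0}(v_0)|^2, \]
and $|\Psi(v) - \mathcal{R}_{x_0}(v_0)| = |v - v_0|$ since $\Psi$ is a Euclidean isometry in the velocity variable. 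This gives $\Psi(z) \in Q_r(\mathcal{R}(z_0))$ and hence the inclusion; the reverse follows from $\Psi^2 = \mathrm{Id}$.

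\textbf{Step 2 ($d_\ell$-isometry on $\{x_n=0\}$).} For any $z_1,z_2$ with $(x_j)_n = 0$, I would show $d_\ell(\Psi(z_1),\Psi(z_2)) = d_\ell(z_1,z_2)$ by running through the infimum over the auxiliary vector $w \in \R^n$ that defines $d_\ell$, and substituting $w \mapsto \Psi(w)$. The term $|v_j - w|$ is invariant because $\Psi$ is a Euclidean isometry, while for $|x_1-x_2-(t_1-t_2)w|$ the first $n-1$ components are untouched by $\Psi$ and the $n$-th component equals $-(t_1-t_2)w_n$, whose absolute value is unchanged upon flipping $w_n$. This is exactly where flatness of $\partial\Omega$ is used: the same computation fails for a general diffeomorphism, as warned in the statement.

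\textbf{Step 3 (polynomial preservation and conclusion).} The map $p \mapsto p \circ \Psi$ sends $\cP_k$ to itself: writing any $p \in \cP_k$ as $\sum_{|\beta|\le k}\alpha^{(\beta)} t^{\beta_t} x^{\beta_x} v^{\beta_v}$, the substitution $v_n \mapsto -v_n$ only changes signs of coefficients while preserving the kinetic degree $|\beta| = 2\beta_t + 3|\beta_x| + |\beta_v|$. Now fix $z^* \in \gamma_- \cap Q_r(z_0)$ and let $p_{\Psi(z^*)} \in \cP_k$ be an optimal polynomial for $f$ at $\Psi(z^*) \in \mathcal{R}(\gamma_- \cap Q_r(z_0))$. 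Setting $\tilde p := p_{\Psi(z^*)} \circ \Psi \in \cP_k$ and using Steps 1--2,
\[ |g(z) - \tilde p(z)| = |f(\Psi(z)) - p_{\Psi(z^*)}(\Psi(z))| \le [f]_{C^{k,\eps}_\ell(\mathcal{R}(\gamma_-\cap Q_r(z_0)))} \, d_\ell(\Psi(z),\Psi(z^*))^{k+\eps}, \]
which by the isometry equals $[f]_{C^{k,\eps}_\ell(\mathcal{R}(\gamma_-\cap Q_r(z_0)))} \, d_\ell(z,z^*)^{k+\eps}$. Taking the supremum over $z,z^*$ yields the upper bound (indeed with $C=1$), and the reverse inequality follows from $\Psi^2=\mathrm{Id}$. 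There is no serious obstacle here; the only substantive point is Step 2, which fails without the flatness hypothesis $\partial\Omega = \{x_n=0\}$.
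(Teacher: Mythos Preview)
Your proof is correct and follows essentially the same structure as the paper's: establish the set identity by direct computation, compare kinetic distances under the reflection $\Psi$, note that $\cP_k$ is preserved under $\Psi$, and conclude from the definition of the seminorm. Your Step~2 is in fact slightly sharper than the paper's argument --- by substituting $w\mapsto\Psi(w)$ in the infimum defining $d_\ell$ you obtain an exact isometry $d_\ell(\Psi(z_1),\Psi(z_2))=d_\ell(z_1,z_2)$ on $\{x_n=0\}$, whereas the paper only proves two-sided comparability via the quasi-metric formula~\eqref{eq:dist-comp}, giving a constant $C$ that need not equal~$1$.
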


\begin{proof}
Given any $z,z_0 \in \gamma_- \cap Q_r(z_0)$, i.e. $x_n = (x_0)_n = 0$ and $v_n , (v_0)_n > 0$ it holds $R_x(v) = (v' , - v_n)$ and $R_{x_0}(v_0) = (v_0' , - (v_0)_n)$ and therefore by \eqref{eq:dist-comp} 
\begin{align*}
d_{\ell}(z,z_0) &\le C \left( |t - t_0|^{1/2} + | x - x_0 - (t-t_0)v_0|^{1/3} + |v - v_0| \right) \\
&\le C \left( |t - t_0|^{1/2} + |( x - x_0 - (t-t_0)\mathcal{R}_x(v_0) )'|^{1/3} + |(t-t_0)\mathcal{R}_{x_0}(v_0)_n|^{1/3} + |\mathcal{R}_x(v) - \mathcal{R}_{x_0}(v_0)| \right) \\
&\le C d_{\ell}(\mathcal{R}_x(v) , \mathcal{R}_{x_0}(v_0)).
\end{align*}
Analogously, one shows
\begin{align*}
d_{\ell}(\mathcal{R}_x(v) , \mathcal{R}_{x_0}(v_0)) \le C d_{\ell}(z,z_0).
\end{align*}
From here, the proof follows immediately by the definition of kinetic H\"older spaces, and observing that $p \in \cP_k$, if and only if $\tilde{p}(t,x,v) = p(t,x,\mathcal{R}_x v) \in \cP_k$. The proof of the the second claim follows immediately by writing down the definitions of the corresponding sets. 
\begin{align*}
Q_r(z_0) &= \{ (t,x) : t \in B_{r^2}(t_0) , x \in B_{r^3}((x_0 + (t-t_0)v_0)',(t-t_0)(v_0)_n )\} \times B_r(v_0) , \\
 Q_r(\mathcal{R}(z_0)) &= \{ (t,x) : t \in B_{r^2}(t_0) , x \in B_{r^3}((x_0 + (t-t_0)v_0)' ,  - (t-t_0)(v_0)_n )\} \times B_r(v_0' , -(v_0)_n),\\
 \mathcal{R}(Q_r(z_0)) &= \{ (t,x) : t \in B_{r^2}(t_0) , x \in B_{r^3}((x_0 + (t-t_0)v_0)',(t-t_0)(v_0)_n ) \} \times B_r(v_0' , -(v_0)_n),
\end{align*}
and observing that the intersections of the latter two sets with $\R \times \{x_n = 0 \} \times \R^n$ coincide.
\end{proof}

\subsection{Definition of weak solutions}

Note that since we are interested in higher order regularity estimates in this article, our solutions will satisfy the equation in a classical sense inside the solution domain by interior regularity estimates (see \autoref{lemma:interior-reg}). Still, in order to prove regularity estimates up to the boundary, we heavily rely on the boundary $C^{\alpha}$ regularity established in \cite{Sil22} for weak solutions $f$ that are in particular assumed to satisfy $\nabla_v f \in L^2(D)$. It is easy to see that (see also the comment after Definition 5.1 in \cite{Sil22}) any classical solution $f \in L^2(D)$ with $\nabla_v f \in L^2(D)$ is a weak solution. Moreover, by \autoref{lemma:boundary-reg} and \autoref{lemma:boundary-reg-specular}, any weak solution attains its boundary values on $\gamma$ in a locally uniform H\"older continuous way.

Following \cite[Definitions 3.1 and 5.1]{Sil22}, we define the following weak solution concepts:

\begin{definition}
Let $D \subset \R \times \Omega \times \R^n$, $a^{i,j},b,c,h \in L^{\infty}(D)$. We say that $f \in L^2(D)$ with $\nabla_v f \in L^2(D)$ is a weak solution to 
\begin{align}
\label{eq:def-eq}
\partial_t f + v \cdot \nabla_x f - \partial_{v_j}(a^{i,j}\partial_{v_i} f) = - b \cdot \nabla_v f - c f + h  ~~ \text{ in } D,
\end{align}
if for any $\varphi \in C_c^1(\R^{1+2n})$ with $\supp(\varphi) \cap \gamma \subset \gamma_- \cup \gamma_0$ it holds
\begin{align*}
\int_D -f (\partial_t \varphi + v \cdot \nabla_x \varphi) + a^{i,j} \partial_{v_i} f \partial_{v_j} \varphi + b_i \partial_{v_i} f \varphi + cf\varphi - h \varphi \d z = - \int_{\gamma_-} f \varphi (v \cdot n) \d \gamma.
\end{align*}
If, in addition, $D$ is such that for any $z \in \gamma \cap \partial D$ then also $(t,x,\mathcal{R}_x v) \in \gamma \cap \partial D$, then we say that $f$ is a weak solution to \eqref{eq:def-eq} with
\begin{align*}
f(t,x,v) = f(t,x,\mathcal{R}_x v) ~~ \text{ on } \gamma_- \cap D,
\end{align*}
if for any $\varphi \in C_c^1(\R^{1+2n})$ with $\supp(\varphi) \cap \partial D \subset \gamma$ it holds
\begin{align*}
\int_D -f (\partial_t \varphi + v \cdot \nabla_x \varphi) + a^{i,j} \partial_{v_i} f \partial_{v_j} \varphi + b_i \partial_{v_i} f \varphi + cf\varphi - h \varphi \d z = 0.
\end{align*}
\end{definition}

Note that we will also use this definition for equations that are written in non-divergence form, whenever $a \in C^{0,1}(D)$. In that case, weak solutions have to be understood with respect to the rewritten operator in divergence form.

Moreover, we have the following energy estimate.

\begin{lemma}
\label{lemma:energy-est}
Let $\Omega \subset \R^n$, and $z_0 \in \R^{1+2n}$ with $x \in \overline{\Omega}$, $r \in (0,1]$, and $a^{i,j}\in C^{0,1}_{\ell}(H_{2r}(z_0))$, $b,c,h \in L^{\infty}(H_{2r}(z_0))$, and assume that $a^{i,j}$ satisfies \eqref{eq:unif-ell}. Let $f$ be a weak solution to
\begin{align*}
\partial_t f + v \cdot \nabla_x f + (- a^{i,j}\partial_{v_i,v_j}) f = - b \cdot \nabla_v f - c f + h  ~~ \text{ in } H_{2r}(z_0).
\end{align*}
\begin{itemize}
\item[(i)] If $Q_{2r}(z_0) \cap \gamma \subset \gamma_-$, then 
\begin{align*}
\Vert \nabla_v f \Vert_{L^2(H_r(z_0))}^2 \le C r^{-2}\left(1 + \frac{|v_0|}{r}\right) \left(\Vert f \Vert_{L^2(H_{2r}(z_0))}^2 + \Vert f \Vert_{L^2(\gamma_- \cap Q_{2r}(z_0))}^2 \right).
\end{align*}
\item[(ii)] If $f$ also satisfies
\begin{align*}
f(t,x,v) = f(t,x,\mathcal{R}_x v) ~~ \text{ on } \gamma_- \cap Q_{2r}(z_0),
\end{align*}
and $Q_{2r}(z_0) \cap \gamma \subset \gamma$, then
\begin{align*}
\Vert \nabla_v f \Vert_{L^2(H_r(z_0))}^2 \le C r^{-2}\left(1 + \frac{|v_0|}{r}\right) \Vert f \Vert_{L^2(H_{2r}(z_0))}^2.
\end{align*}
\end{itemize}
The constant $C$ depends only on $n,\lambda,\Lambda,\Vert b \Vert_{L^{\infty}(Q_{2r}(z_0))}, \Vert c \Vert_{L^{\infty}(Q_{2r}(z_0))}$, and $\Vert h \Vert_{L^{\infty}(Q_{2r}(z_0))}$.
\end{lemma}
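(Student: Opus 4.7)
I plan to prove this as a standard Caccioppoli-type energy estimate: test the weak formulation with $\varphi = f\eta^2$ for a kinetic cutoff adapted to $Q_{2r}(z_0)$, then integrate by parts. I would take
\[\eta(t,x,v) = \chi\big(r^{-2}(t-t_0)\big)\,\chi\big(r^{-3}|x-x_0-(t-t_0)v_0|\big)\,\chi\big(r^{-1}|v-v_0|\big)\]
with $\chi \in C_c^\infty([0,1))$ equal to $1$ on $[0,1/2]$, so that $\eta \equiv 1$ on $Q_r(z_0)$ and $\supp(\eta) \subset Q_{2r}(z_0)$. The key identity $(\partial_t + v \cdot \nabla_x)(x - x_0 - (t-t_0)v_0) = v - v_0$, combined with $|v-v_0| \le 2r$ on $\supp(\eta)$, yields the crucial scaling bounds $|\nabla_v \eta| \le C/r$ and $|(\partial_t + v \cdot \nabla_x)\eta| \le C/r^2$ that respect the kinetic translation-scaling invariances.

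Substituting $\varphi = f\eta^2$ and applying integration by parts, the transport term $\tfrac12\int (\partial_t + v \cdot \nabla_x)(f^2)\eta^2$ produces, via the divergence theorem in $(t,x)$, the interior contribution $-\tfrac12\int f^2(\partial_t + v \cdot \nabla_x)\eta^2$ plus the boundary terms $\tfrac12\int_{\gamma_+ \cap Q_{2r}} f^2 \eta^2 |v\cdot n_x| - \tfrac12\int_{\gamma_- \cap Q_{2r}} f^2 \eta^2 |v\cdot n_x|$, with no contribution from $\partial Q_{2r}$ or the time-slab edges because $\eta$ vanishes there. The diffusion term, after integration by parts in $v$, is $\int a^{ij}\partial_{v_i} f\,\partial_{v_j}(f\eta^2) \ge \tfrac{\lambda}{2}\int|\nabla_v f|^2 \eta^2 - C\int f^2|\nabla_v \eta|^2$ by ellipticity and Young's inequality. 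The drift, potential, and source terms are handled by Young's inequality, costing only a small fraction of $\int |\nabla_v f|^2 \eta^2$ and constants depending on $\|b\|_{L^\infty}, \|c\|_{L^\infty}, \|h\|_{L^\infty}$. Combining and using $|\nabla_v \eta|^2 + |(\partial_t + v\cdot\nabla_x)\eta^2| \le Cr^{-2}$, I arrive at
\[\int |\nabla_v f|^2 \eta^2 + \int_{\gamma_+ \cap Q_{2r}} f^2 \eta^2 |v\cdot n_x| \le Cr^{-2}\|f\|^2_{L^2(H_{2r})} + \int_{\gamma_- \cap Q_{2r}} f^2 \eta^2 |v\cdot n_x|.\]

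For case (i), the hypothesis $Q_{2r}\cap\gamma \subset \gamma_-$ eliminates the $\gamma_+$ term on the left; on the right, $|v\cdot n_x| \le |v| \le |v_0| + 2r \le 2r(1 + |v_0|/r)$ on $\gamma_- \cap Q_{2r}$, and since $r \le 1$ this is controlled by $Cr^{-2}(1+|v_0|/r)\|f\|^2_{L^2(\gamma_- \cap Q_{2r})}$. For case (ii), the specular reflection $f(\cdot, \mathcal{R}_x \cdot) = f$ on $\gamma$ allows me to eliminate the boundary contribution altogether. In the flat case $\Omega = \{x_n > 0\}$, I would apply the mirror extension lemma to produce $\bar f$ on $\mathcal S(Q_{2r}(z_0))$ solving a transformed kinetic equation without $\gamma$-boundary, and then re-run the Caccioppoli argument as a pure interior estimate on the doubled cylinder, obtaining $\|\nabla_v f\|^2_{L^2(H_r)} \le 2Cr^{-2}\|f\|^2_{L^2(H_{2r})}$ by symmetry of $\bar f$. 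For general $\Omega$ I would first flatten $\partial\Omega$ via the diffeomorphism $\Phi$ of the preceding lemma, reducing to the flat case; the $(1+|v_0|)$-dependent norms of the transformed coefficients, together with the kinetic scaling, combine to produce the factor $(1+|v_0|/r)$ in the final estimate.

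The main obstacle will be extracting the weight $(1+|v_0|/r)$ cleanly in case (ii) for non-flat $\Omega$. A useful alternative derivation, bypassing the flattening, is to work directly with a cutoff $\eta$ symmetrized under $v \mapsto \mathcal{R}_x v$, so that the change of variables $w = \mathcal{R}_x v$ in the $\gamma_-$ boundary integral matches it with the $\gamma_+$ integral and the two contributions cancel exactly. On the velocity-reflected piece of $\supp(\eta)$, a point $v$ satisfies $|v - \mathcal{R}_x v_0| \le 2r$ and hence $|v - v_0| \le 2r + 2|v_0 \cdot n_x|$, so the bound $|(\partial_t + v\cdot\nabla_x)\eta| \le C r^{-2}$ degrades to $Cr^{-2}(1+|v_0|/r)$, producing exactly the claimed weight. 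The technical care lies in verifying that the symmetrized cutoff retains the ``$=1$ on $Q_r(z_0)$, $\supp \subset \mathcal S(Q_{2r}(z_0))$'' structure and that the cancellation of $\gamma_\pm$ contributions is exact under the precise form of the reflection operator $\mathcal R_x$.
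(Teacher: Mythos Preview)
Your proposal is correct and follows the same approach as the paper: test the weak formulation with $f\varphi^2$ for a kinetic cutoff $\varphi\in C^1_c(Q_{2r}(z_0))$, exactly as the paper indicates by referring to \cite[Proposition~9]{GuMo22} for the details and the approximation argument justifying this test function. For case (ii) the paper's route is slightly more direct than your mirror-extension or symmetrized-cutoff strategies: in the specular-reflection weak formulation the $\gamma$-boundary contribution vanishes by definition for any admissible test function, so a standard (unsymmetrized) cutoff already yields the estimate with no boundary term, and the stated factor $(1+|v_0|/r)$ is then simply a harmless weakening.
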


\begin{proof}
The proof follows along the lines of \cite[Proposition 9]{GuMo22}, testing the weak formulation for $f$ with $f \varphi^2$, where $\varphi \in C^{1}_c(Q_{2r}(z_0))$ is an appropriate cut-off function. This choice of test function can be justified by an approximation argument (see \cite{GuMo22}) which works in the same way at the boundary.
\end{proof}

\subsection{Interior regularity estimates}

In this subsection, we state a higher order interior Schauder estimate for kinetic equations. It follows from the results in \cite{HeSn20,ImMo21,Loh23} (see also \cite{KLN25} for interior estimates for kinetic equations in divergence form).

\begin{lemma}[Interior estimate]
\label{lemma:interior-reg}
Let $z_0 \in \R^{1+2n}$, $k \in \N$ with $k \ge 2$, $\eps \in (0,1)$, and $r \in (0,1]$. Let $a^{i,j},b,c,h \in C^{k-2+\eps}_{\ell}(Q_{2r}(z_0))$ and assume that $a^{i,j}$ satisfies \eqref{eq:unif-ell}. Let $f$ be a solution to
\begin{align*}
\partial_t f + v \cdot \nabla_x f + (- a^{i,j}\partial_{v_i,v_j}) f = - b \cdot \nabla_v f - c f + h  ~~ \text{ in } Q_{2r}(z_0).
\end{align*}
Then, $f \in C^{k+\eps}_{\ell}(Q_{r}(z_0))$ and
\begin{align}
\label{eq:interior-reg}
[f]_{C^{k+\eps}_{\ell}(Q_r(z_0))} \le C r^{-k-\eps}\big(\Vert f \Vert_{L^{\infty}(Q_{2r}(z_0))} + r^2 \Vert h \Vert_{C^{k-2+\eps}_{\ell}(Q_{2r}(z_0))} \big),
\end{align}
where $C$ depends only on $n,k,\eps,\lambda, \Lambda$, $\Vert a^{i,j} \Vert_{C^{k-2+\eps}_{\ell}(Q_{2r}(z_0))}, \Vert b \Vert_{C^{k-2+\eps}_{\ell}(Q_{2r}(z_0))}$, and $\Vert c \Vert_{C^{k-2+\eps}_{\ell}(Q_{2r}(z_0))}$.

Moreover, if $a^{i,j}$ is constant and $b=c=0$, then \eqref{eq:interior-reg} holds for any $r > 0$, where $C > 0$ depends only on $n,k,\eps,\lambda,\Lambda$.
\end{lemma}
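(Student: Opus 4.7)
The plan is to reduce the statement to the scale-one case by a kinetic translation and rescaling, and then invoke the known interior Schauder estimate for kinetic Fokker-Planck equations from \cite{Loh23} (see also \cite{HeSn20,ImMo21}). By translating via the kinetic shift $z \mapsto z_0 \circ z$, which preserves both the PDE (the drift $v\cdot\nabla_x$ is invariant under $\circ$) and the seminorm $[\cdot]_{C^{k+\eps}_\ell}$, I may assume without loss of generality that $z_0 = 0$.

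Applying \autoref{lemma:scaling} with the dilation $S_r$, the function $f_r(z) := f(S_r z)$ solves a PDE of the same form on $Q_2(0)$, with coefficients $\tilde a^{i,j}(z) = a^{i,j}(S_r z)$, $\tilde b^i(z) = r b^i(S_r z)$, $\tilde c(z) = r^2 c(S_r z)$, and right-hand side $\tilde h(z) = r^2 h(S_r z)$. The same ellipticity constants $\lambda,\Lambda$ are preserved, and by \autoref{lemma:Holder-scaling} together with $r \in (0,1]$ and $k \ge 2$, the $C^{k-2+\eps}_\ell(Q_2(0))$ norms of the tilded coefficients are bounded in terms of those of $a^{i,j},b,c$ by a constant independent of $r$; similarly $\Vert \tilde h \Vert_{C^{k-2+\eps}_\ell(Q_2(0))} \le C r^2 \Vert h \Vert_{C^{k-2+\eps}_\ell(Q_{2r}(0))}$. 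I would then apply the scale-one interior Schauder estimate to $f_r$ to obtain
\[
[f_r]_{C^{k+\eps}_\ell(Q_1(0))} \le C_0 \bigl(\Vert f_r\Vert_{L^\infty(Q_2(0))} + \Vert \tilde h\Vert_{C^{k-2+\eps}_\ell(Q_2(0))}\bigr),
\]
with $C_0$ depending only on $n,k,\eps,\lambda,\Lambda$ and the $C^{k-2+\eps}_\ell$ norms of the coefficients of the rescaled equation, hence ultimately on those of $a^{i,j},b,c$.

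To conclude, I would transfer the estimate back to the original scale. By \autoref{lemma:Holder-scaling}, $[f_r]_{C^{k+\eps}_\ell(Q_1(0))} = r^{k+\eps} [f]_{C^{k+\eps}_\ell(Q_r(0))}$, and $\Vert f_r\Vert_{L^\infty(Q_2(0))} = \Vert f\Vert_{L^\infty(Q_{2r}(0))}$. Combining these with the bound on $\tilde h$ and dividing by $r^{k+\eps}$ yields the claimed estimate \eqref{eq:interior-reg}. For the constant-coefficient addendum, when $a^{i,j}$ is constant and $b=c\equiv 0$, one has $\tilde a^{i,j} \equiv a^{i,j}$ and $\tilde b \equiv \tilde c \equiv 0$; therefore $C_0$ depends only on $n,k,\eps,\lambda,\Lambda$, no coefficient-norm estimates are needed in the scaling step, and the argument goes through for arbitrary $r > 0$ with no restriction. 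The only substantive ingredient is the scale-one Schauder estimate itself, which is by now standard and is the reason this lemma appears as a preliminary rather than a new contribution; the rest is bookkeeping of the kinetic scaling.
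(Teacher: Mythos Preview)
Your proposal is correct and follows essentially the same approach as the paper: reduce to unit scale via the kinetic shift and dilation, invoke the scale-one Schauder estimate from \cite{Loh23}, and scale back using \autoref{lemma:Holder-scaling}. The only point you gloss over that the paper makes explicit is that \cite{Loh23} is stated for one-sided cylinders $\tilde Q$, so one covers the two-sided $Q_r(z_0)$ by three overlapping one-sided cylinders and sums the resulting estimates; this is routine and does not affect the argument.
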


Note that we did not specify the notion of solution in \autoref{lemma:interior-reg}. In fact, the result applies to solutions $f$ which might be classical solutions or weak solutions. In both cases, \autoref{lemma:interior-reg} implies that $f \in C^{k+\eps}_{\ell}$ for some $k \ge 2$, and therefore a posteriori $f$ is always a classical solution.

\begin{proof}
This result was established in \cite{Loh23} for $z_0 = 0$ and $r = \frac{1}{2}$ in one-sided cylinders. 
The general case follows by standard scaling arguments. In fact, we deduce from \autoref{lemma:scaling} that $f_{z_0,2r}$ solves 
\begin{align*}
\partial_t f_{z_0,2r} + v \cdot \nabla_x f_{z_0,2r} + (- \tilde{a}^{i,j}\partial_{v_i,v_j}) f_{z_0,2r} = - \tilde{b} \cdot \nabla_v f_{2r,z_0} - \tilde{c} f_{z_0,2r} + \tilde{h} ~~ \text{ in } Q_{1}(0),
\end{align*}
where $\tilde{a}^{i,j}, \tilde{b},\tilde{c} \in C^{k-2+\eps}_{\ell}(Q_1(0))$, and for any $r \in (0,1]$
\begin{align}
\label{eq:Holder-norms-scaling}
\begin{split}
\Vert \tilde{a}^{i,j} \Vert_{C^{k-2+\eps}_{\ell}(Q_1(0))} & + \Vert \tilde{b} \Vert_{C^{k-2+\eps}_{\ell}(Q_1(0))} + \Vert \tilde{c} \Vert_{C^{k-2+\eps}_{\ell}(Q_1(0))} \\
& \le \Vert a^{i,j} \Vert_{C^{k-2+\eps}_{\ell}(Q_{2r}(z_0))} + \Vert b \Vert_{C^{k-2+\eps}_{\ell}(Q_{2r}(z_0))} + \Vert c \Vert_{C^{k-2+\eps}_{\ell}(Q_{2r}(z_0))} \le \Lambda,
\end{split}
\end{align}
by \autoref{lemma:Holder-scaling} and satisfy \eqref{eq:unif-ell} with $\lambda,\Lambda$.
Then, we apply the interior H\"older estimate with $r = 1/2$ and $z_0 = 0$ to $f_{z_0,2r}$ and obtain
\begin{align*}
[f_{2r}]_{C^{k+\eps}(\tilde{Q}_{1/2}(0))} \le C \big(\Vert f_{z_0,2r} \Vert_{L^{\infty}(\tilde{Q}_{1}(0))} + \Vert \tilde{h} \Vert_{C^{k-2+\eps}_{\ell}(\tilde{Q}_{1}(0))} \big).
\end{align*}
Then, upon observing that 
\begin{align*}
\Vert f_{z_0,2r} \Vert_{L^{\infty}(\tilde{Q}_{1}(0))} = \Vert f \Vert_{L^{\infty}(\tilde{Q}_{2r}(z_0))}, \qquad \Vert \tilde{h} \Vert_{L^{\infty}(\tilde{Q}_{1}(0))} = 4 r^2   \Vert h \Vert_{L^{\infty}(\tilde{Q}_{2r}(z_0))},
\end{align*}
and since by \autoref{lemma:Holder-scaling} it holds
\begin{align*}
[f_{z_0,2r}]_{C^{k+\eps}_{\ell}(\tilde{Q}_{1/2}(0))} = (2r)^{k+\eps} [f]_{C^{k+\eps}_{\ell}(\tilde{Q}_r(z_0))},
\end{align*}
we deduce
\begin{align}
\label{eq:one-sided-int}
[f]_{C^{k+\eps}_{\ell}(Q_r(z_0))} \le C r^{-k - \eps} \big(\Vert f \Vert_{L^{\infty}(\tilde{Q}_{2r}(z_0))} + r^2 \Vert h \Vert_{C^{k+\eps}_{\ell}(\tilde{Q}_{2r}(z_0))} \big).
\end{align}
This is the first desired result in one-sided cylinders. To deduce the result in two-sided cylinders, we recall that we can write
\begin{align*}
Q_r(z_0) = \tilde{Q}_r(z_0) \cup \tilde{Q}_r(z_0 + (r^2/2 , r^2/2 v_0, 0)) \cup \tilde{Q}_r(z_0 + (r^2 , r^2 v_0, 0))
\end{align*}
as the sum of one-sided cylinders which have an overlap of order $r$. By similar transformations as above, we obtain \eqref{eq:one-sided-int} for $\tilde{Q}_r(z_0 + (r^2/2 , r^2/2 v_0, 0))$ and $\tilde{Q}_r(z_0 + (r^2 , r^2 v_0, 0))$ and deduce the first claim by summing up the estimates resulting from \eqref{eq:one-sided-int} and using the standard additivity properties of the kinetic H\"older (semi)-norms. The proof of the second claim goes in the exact same way, but holds for any $r > 0$, since \eqref{eq:Holder-norms-scaling} is always satisfied.
\end{proof}

For completeness, let us also mention the following interior estimate of De Giorgi--Nash--Moser type for kinetic equations in divergence form with bounded measurable coefficients. It was established in \cite{PaPo04,WaZh09,GIMV19,GuMo22}, and here we state a rescaled version.

Analogous interior regularity estimates for kinetic equations in non-divergence form are still unknown and establishing them is a major open problem in the field.

\begin{lemma}
\label{lemma:interior-reg-DGNM}
Let $z_0 \in \R^{1+2n}$,  and $r \in (0,1]$. Let $a^{i,j},b,c,h \in L^{\infty}(Q_{2r}(z_0))$ and assume that $a^{i,j}$ satisfies \eqref{eq:unif-ell}. Let $f$ be a weak solution to
\begin{align*}
\partial_t f + v \cdot \nabla_x f - \partial_{v_j}(a^{i,j}\partial_{v_i} f) = - b \cdot \nabla_v f - c f + h  ~~ \text{ in } Q_{2r}(z_0).
\end{align*}
Then, $f \in C^{\alpha}_{\ell}(Q_{r}(z_0))$ and
\begin{align}
\label{eq:interior-Calpha}
\begin{split}
[f]_{C^{\alpha}_{\ell}(Q_r(z_0))} &\le C r^{-\alpha}\big(\Vert f \Vert_{L^{\infty}(Q_{2r}(z_0))} + r^2 \Vert h \Vert_{L^{\infty}(Q_{2r}(z_0))} \big),\\
\Vert f \Vert_{L^{\infty}(Q_r(z_0))} &\le C \left( r^{-(2 + 4n)} \Vert f \Vert_{L^1(Q_{2r}(z_0))} + r^2 \Vert h \Vert_{L^{\infty}(Q_{2r}(z_0))} \right),
\end{split}
\end{align}
where $\alpha \in (0,1)$ and $C$ depend only on $n,\lambda$, $\Lambda$, $\Vert b \Vert_{L^{\infty}(Q_{2r}(z_0))}$, and $\Vert c \Vert_{L^{\infty}(Q_{2r}(z_0))}$.

Moreover, if $a^{i,j}$ is constant and $b=c=0$, then \eqref{eq:interior-Calpha} holds for any $r > 0$, where $C > 0$ depends only on $n,k,\eps,\lambda,\Lambda$.
\end{lemma}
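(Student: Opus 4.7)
The plan is to reduce the statement to its unit-scale analogue via the kinetic scaling established in \autoref{lemma:scaling} and \autoref{lemma:Holder-scaling}, and then invoke the known De Giorgi--Nash--Moser type interior estimates from \cite{PaPo04,WaZh09,GIMV19,GuMo22} on scale one.

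First, set $f_{z_0,2r}(z) := f(z_0 \circ S_{2r} z)$. By \autoref{lemma:scaling}, $f_{z_0,2r}$ is a weak solution on $Q_1(0)$ of the rescaled equation
\[
\partial_t f_{z_0,2r} + v \cdot \nabla_x f_{z_0,2r} - \partial_{v_j}(\tilde a^{i,j} \partial_{v_i} f_{z_0,2r}) = - \tilde b \cdot \nabla_v f_{z_0,2r} - \tilde c f_{z_0,2r} + \tilde h,
\]
with $\tilde a^{i,j}(z) = a^{i,j}(z_0 \circ S_{2r} z)$, $\tilde b^i(z) = 2r\, b^i(z_0 \circ S_{2r} z)$, $\tilde c(z) = 4r^2 c(z_0 \circ S_{2r} z)$ and $\tilde h(z) = 4r^2 h(z_0 \circ S_{2r} z)$. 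Since $r \le 1$, the matrix $\tilde a^{i,j}$ still satisfies \eqref{eq:unif-ell} with the same $\lambda,\Lambda$, and
\[
\Vert \tilde b \Vert_{L^\infty(Q_1(0))} \le \Vert b \Vert_{L^\infty(Q_{2r}(z_0))}, \quad \Vert \tilde c \Vert_{L^\infty(Q_1(0))} \le \Vert c \Vert_{L^\infty(Q_{2r}(z_0))},
\]
so the bounds on the lower order coefficients do not deteriorate under rescaling.

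Next, apply the unit-scale interior estimate of \cite{GIMV19,GuMo22} (see also \cite{PaPo04,WaZh09}) to $f_{z_0,2r}$ on $Q_1(0)$. This yields some $\alpha \in (0,1)$ and a constant $C$, depending only on $n,\lambda,\Lambda$ and the $L^\infty$ bounds of $b,c$, such that
\[
[f_{z_0,2r}]_{C^\alpha_\ell(Q_{1/2}(0))} \le C \bigl( \Vert f_{z_0,2r} \Vert_{L^\infty(Q_1(0))} + \Vert \tilde h \Vert_{L^\infty(Q_1(0))} \bigr),
\]
together with the corresponding $L^\infty$ bound in terms of the $L^1$ norm (which is the standard weak Harnack / De Giorgi $L^\infty$--$L^1$ estimate for kinetic equations in divergence form). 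Scaling back with \autoref{lemma:Holder-scaling}, which gives $[f_{z_0,2r}]_{C^\alpha_\ell(Q_{1/2}(0))} = (2r)^\alpha [f]_{C^\alpha_\ell(Q_r(z_0))}$, and using $\Vert \tilde h \Vert_{L^\infty(Q_1(0))} = 4r^2 \Vert h \Vert_{L^\infty(Q_{2r}(z_0))}$ together with the change of variables formula $\Vert f_{z_0,2r} \Vert_{L^1(Q_1(0))} = (2r)^{-(2+4n)} \Vert f \Vert_{L^1(Q_{2r}(z_0))}$, produces exactly \eqref{eq:interior-Calpha}.

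Finally, for the case of constant $a^{i,j}$ and $b = c = 0$, the rescaled equation has the same structure as the original with the same constants $\lambda,\Lambda$ regardless of the value of $r$, so the restriction $r \le 1$ can be dropped and the estimate holds for every $r > 0$. The only subtle point in the whole argument is matching the notion of solution under the rescaling, which is routine since the rescaling is affine in each variable and preserves the divergence structure; no other obstacles are expected, as the heavy lifting is done by the cited interior theorems at unit scale.
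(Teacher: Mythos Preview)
Your proposal is correct and follows essentially the same approach as the paper: reduce to the unit-scale result from \cite{PaPo04,WaZh09,GIMV19,GuMo22} via the kinetic rescaling of \autoref{lemma:scaling} and \autoref{lemma:Holder-scaling}, noting that the $L^\infty$ norms of the rescaled lower-order coefficients are controlled when $r\le 1$ (and unconditionally when $a^{i,j}$ is constant and $b=c=0$). The paper's own proof is simply a one-line reference to this scaling argument (pointing back to the proof of \autoref{lemma:interior-reg}), so your write-up is in fact a more explicit version of the same argument; the only small point you gloss over is the passage from one-sided to two-sided cylinders, but this is handled by the standard covering used in the proof of \autoref{lemma:interior-reg}.
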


\begin{proof}
The result was proved on scale one in \cite{PaPo04,WaZh09,GIMV19,GuMo22}.   By \autoref{lemma:scaling}, we deduce the results in general cylinders, following the scaling argument within the proof of \autoref{lemma:interior-reg}. 
\end{proof}

\subsection{Boundary H\"older estimates for kinetic equations}

In this subsection, we collect and establish $C^{\alpha}$ regularity estimates for solutions to kinetic equations at the boundary. For equations with bounded measurable coefficients that are in divergence form, such estimates have been established in \cite{Sil22,Zhu22}. The proof goes by reducing the boundary regularity to interior regularity results for kinetic equations of De Giorgi--Nash--Moser type, such as \autoref{lemma:interior-reg-DGNM}.

Note that the results in \cite{Sil22} are only stated on scale one and that a mere scaling argument as in the proof of \autoref{lemma:interior-reg} does not directly imply the result on general scales. This is because the kinetic scaling leads to a shift in the $x$ variable, which makes the domains time-dependent (see \autoref{lemma:scaling}).
Hence, we give a short explanation of how to modify the proof in order to get the result on scale $r$.

\begin{lemma}[Boundary in-flow estimate]
\label{lemma:boundary-reg}
Let $\Omega \subset \R^n$ be a convex $C^{1,1}$ domain, and $z_0 \in \R^{1+2n}$ with $x \in \overline{\Omega}$, $r \in (0,1]$, and $a^{i,j}\in C^{0,1}_{\ell}(H_{2r}(z_0))$, $b,c,h \in L^{\infty}(H_{2r}(z_0))$, and assume that $a^{i,j}$ satisfies \eqref{eq:unif-ell}. Let $f$ be a weak solution to
\begin{align*}
\partial_t f + v \cdot \nabla_x f + (- a^{i,j}\partial_{v_i,v_j}) f = - b \cdot \nabla_v f - c f + h  ~~ \text{ in } H_{2r}(z_0).
\end{align*}
Then, $f \in C^{\alpha}_{\ell}(H_{r}(z_0))$ and
\begin{align}
\label{eq:boundary-reg}
\begin{split}
[ f ]_{C^{\alpha}_{\ell}(H_r(z_0))} &\le C r^{-\alpha} \big( \Vert f \Vert_{L^{\infty}(H_{2r}(z_0))} + r^{\alpha} [ f ]_{C^{\alpha}_{\ell}(\gamma_- \cap Q_{2r}(z_0))} + r^2 \Vert h \Vert_{L^{\infty}(H_{2r}(z_0))} \big),\\
\Vert f \Vert_{L^{\infty}(H_r(z_0))} &\le C \left( r^{-(2+4n)}  \Vert f \Vert_{L^{1}(H_{2r}(z_0))} +  \Vert f \Vert_{L^{\infty}(\gamma_- \cap Q_{2r}(z_0))} + r^2 \Vert h \Vert_{L^{\infty}(H_{2r}(z_0))} \right),
\end{split}
\end{align}
where $\alpha \in (0,1)$ and $C$ depend only on $n,\lambda,\Lambda$,  $\Vert b \Vert_{L^{\infty}(H_{2r}(z_0))}$, and $\Vert c \Vert_{L^{\infty}(H_{2r}(z_0))}$, and $\Omega$.

Moreover, if $a^{i,j}$ is constant and $b=c=0$, then \eqref{eq:boundary-reg} holds for any $r > 0$, where $\alpha \in (0,1)$ and $C$ depend only on $n,\lambda,\Lambda, \Omega$.
\end{lemma}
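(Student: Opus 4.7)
The plan is to reduce the general scale-$r$ statement to the scale-one result of \cite{Sil22} via the kinetic rescaling of \autoref{lemma:scaling}. Define $f_{z_0,r}(z) = f(z_0 \circ S_r z)$; then $f_{z_0,r}$ solves
\[
\partial_t f_{z_0,r} + v \cdot \nabla_x f_{z_0,r} - \tilde a^{i,j} \partial_{v_i, v_j} f_{z_0,r} = - \tilde b \cdot \nabla_v f_{z_0,r} - \tilde c f_{z_0,r} + \tilde h
\]
on $Q_1(0) \cap (T_{z_0,r} \times \R^n)$, where $\tilde a^{i,j}(z) = a^{i,j}(z_0 \circ S_r z)$, $\tilde b = r\, b(z_0 \circ S_r \cdot)$, $\tilde c = r^2 c(z_0 \circ S_r \cdot)$, $\tilde h = r^2 h(z_0 \circ S_r \cdot)$. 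Since $r \in (0,1]$, the $L^\infty$ norms of $\tilde b$ and $\tilde c$ are bounded by those of $b$ and $c$, and $\tilde a^{i,j}$ satisfies \eqref{eq:unif-ell} with the same constants $\lambda, \Lambda$.

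The delicate point is that the rescaled spatial domain is time-dependent, with $t$-slice $\Omega_{z_0,r}(t) = r^{-3}(\Omega - x_0 - r^2 t v_0)$. Since affine maps preserve convexity, each slice is convex; and because the dilation factor $r^{-3} \ge 1$ only flattens the boundary, each slice is $C^{1,1}$ with bounds uniform in $r \in (0,1]$ and $t$. Inspecting Silvestre's argument in \cite{Sil22} — which combines an $L^\infty$ propagation of the in-flow data along characteristics with an oscillation-decay iteration relying on the interior De Giorgi--Nash--Moser type bound of \autoref{lemma:interior-reg-DGNM} — one checks that every step uses only (i) uniform convexity and $C^{1,1}$ control of the spatial slices at the scales of the sub-cylinders involved, and (ii) the interior estimate, both of which remain valid for the family $\Omega_{z_0,r}(\cdot)$ with constants depending only on $\Omega$, $n$, $\lambda$, $\Lambda$, $\Vert b\Vert_\infty$, and $\Vert c\Vert_\infty$.

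Granted this, the scale-one estimate applied to $f_{z_0,r}$ on $H_1(0)$ yields
\[
[f_{z_0,r}]_{C^\alpha_\ell(H_1(0))} \le C\bigl(\Vert f_{z_0,r}\Vert_{L^\infty(H_2(0))} + [f_{z_0,r}]_{C^\alpha_\ell(\gamma_- \cap Q_2(0))} + \Vert \tilde h \Vert_{L^\infty(H_2(0))}\bigr),
\]
together with the analogous $L^\infty$-on-$L^1$ estimate. Scaling back via \autoref{lemma:Holder-scaling}, which gives $[f_{z_0,r}]_{C^\alpha_\ell(H_1(0))} = r^\alpha [f]_{C^\alpha_\ell(H_r(z_0))}$, and observing that $\Vert\tilde h\Vert_{L^\infty} = r^2 \Vert h\Vert_{L^\infty}$ and $\Vert f_{z_0,r}\Vert_{L^1(H_2(0))} = r^{-(2+4n)}\Vert f\Vert_{L^1(H_{2r}(z_0))}$, produces exactly \eqref{eq:boundary-reg}. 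The refined second claim follows identically: when $a^{i,j}$ is constant and $b=c=0$, the rescaled coefficients remain $\tilde b = \tilde c = 0$ and $\tilde a^{i,j}$ constant for every $r > 0$, so no restriction $r \le 1$ is needed and the constants depend only on $n, \lambda, \Lambda, \Omega$.

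The main obstacle is bookkeeping: verifying that the constants appearing in Silvestre's proof depend on $\Omega$ only through quantities that are stable under the above dilation and time-dependent translation (convexity and the $C^{1,1}$ modulus of $\partial\Omega$). This should not require new ideas, but it demands a careful pass through the covering, barrier, and iteration arguments in \cite{Sil22} to confirm uniformity in $r \in (0,1]$ and in the base point $z_0$. An alternative that sidesteps the time-dependence entirely is to first flatten $\partial\Omega$ near $x_0$ via the diffeomorphism $\Phi$ of Subsection 2.4 (which preserves the in-flow structure and the kinetic distance up to constants by \autoref{lemma:SR-preserved}), then rescale in the half-space $\{x_n > 0\}$, where every slice is the same convex domain and the scaling argument becomes routine.
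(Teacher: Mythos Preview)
Your approach differs from the paper's and runs into exactly the obstruction the paper flags in the paragraph immediately preceding the lemma: rescaling via $f_{z_0,r}(z) = f(z_0 \circ S_r z)$ produces a \emph{time-dependent} spatial domain $\Omega_{z_0,r}(t) = r^{-3}(\Omega - x_0 - r^2 t v_0)$, so the scale-one result of \cite{Sil22} (stated for a fixed $\Omega$) is not applicable as a black box. Your sentence ``one checks that every step uses only\ldots'' is precisely the missing work; your geometric observations (convexity preserved, $C^{1,1}$ modulus improves under dilation by $r^{-3}\ge 1$) are correct but do not by themselves show that the barrier and oscillation-decay arguments of \cite{Sil22} extend verbatim to moving domains, and you yourself acknowledge this as the main obstacle.

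The paper avoids the issue entirely by \emph{not} rescaling the whole problem: it keeps the fixed domain $\Omega$ and re-derives each ingredient of \cite{Sil22} (the interior growth Lemma~6.2, Lemmas~8.1, 8.3, 8.4, Corollary~8.2) at scale $r$, i.e.\ with $\tilde Q_1(z_0)$ replaced by $\tilde Q_r(z_0)$ and the smallness hypothesis $\|G\|_{L^\infty}\le\varepsilon_0$ replaced by $r^2\|G\|_{L^\infty}\le\varepsilon_0$. The only genuine rescaling occurs for the interior growth lemma, which never sees $\partial\Omega$. For the boundary measure estimate (Lemma~8.3), convexity of the fixed $\Omega$ gives $|\tilde Q^-_r(z_0)\cap\{x\notin\Omega\}| > \mu\,|\tilde Q^-_r(z_0)|$ with $\mu=\mu(n)$ independent of $r$ and $z_0$; no time-dependent slices enter. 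The normalization and oscillation-decay iteration then proceed with scale-correct powers of $r$, and a covering argument passes from one- to two-sided cylinders. Your flattening alternative is viable but also not what the paper does; it trades the time-dependence for bookkeeping on how $\Phi$ interacts with the $C^\alpha_\ell$ seminorm and the $\gamma_-$ trace.
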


Note that the constants in \cite{Sil22} might depend on $z_0$ if $\Omega$ is a general smooth domain. Since we need estimates that are uniform in $z_0$, we only state the result in convex domains.

\begin{proof}
First, we observe that by the regularity assumptions on $a$, we can write the equation for $f$ as an equation in divergence form, due to the regularity assumption on $a$. In fact, it holds
\begin{align*}
\partial_t f + v \cdot \nabla_x f - \partial_{v_i} (a^{i,j} \partial_{v_j} f) &= - (b+d) \cdot \nabla_v f - c f + h  ~~ \text{ in } H_{2r}(z_0),
\end{align*}
where $d_j = \sum_i \partial_{v_i} a^{i,j}$, and we can assume that $f$ solves  this equation in the weak sense.
We prove the desired result for solutions to kinetic equations in divergence form. 

The proof goes in the same way as in \cite{Sil22} if $r \not= 1$ and in two-sided cylinders by making straightforward adaptations in \cite[Sections 6.8]{Sil22}. Let us first explain how to prove the result in one-sided cylinders $\tilde{Q}_r(z_0)$.

First, we observe that the interior growth lemma in \cite[Lemma 6.2]{Sil22} also holds with $\tilde{Q}_1(z_0)$ replaced by $Q_r(z_0)$, $\tilde{Q}^{-}(z_0)$ replaced by $\tilde{Q}^{-}_r(z_0)$, where we define
\begin{align*}
\tilde{Q}^{-}_r := ( -3r^2/4 , - r^2/2 ] \times B_{(r/2)^3} \times B_{r/2} = \tilde{Q}_{r/2}(-r^2/2,0,0), \qquad \tilde{Q}^{-}_r(z_0) = z_0 \circ \tilde{Q}^{-}_r,
\end{align*}
and the second and third assumption becoming
\begin{align*}
|\{f \le 0 \} \cap \tilde{Q}^-_r(z_0)| \ge \mu |\tilde{Q}^-_r(z_0)|, \qquad r^2 \Vert G \Vert_{L^{\infty}(\tilde{Q}_r(z_0))} \le \eps_0.
\end{align*} 
This follows immediately by a scaling argument similar to the proof of \autoref{lemma:interior-reg}.

From here, \cite[Lemma 8.1]{Sil22} holds true with $\tilde{H}_1(z_0)$ replaced by $\tilde{H}_r(z_0)$ and the bound $\Vert G \Vert_{L^{\infty}} \le \eps_0$ replaced by $r^2 \Vert G \Vert_{L^{\infty}} \le \eps_0$. The proof goes in the exact same way. Consequently, \cite[Corollary~8.2]{Sil22}, which treats the boundary regularity away from $\gamma_-$, also holds true on scale $r$.\\
Next, we observe that by a straightforward adaptation of the arguments of the proof of \cite[Lemma~8.3]{Sil22}, a similar result also holds on scale $r$. Indeed, since $\Omega$ is convex and $\gamma_- \cap \tilde{Q}_{r/8}(z_0) \not= \emptyset$, then there is a $\mu > 0$ depending only on $n$ such that
\begin{align*}
|\tilde{Q}^{-}_r(z_0) \cap \{ x \not \in \Omega \}| > \mu |\tilde{Q}^{-}_r(z_0)|.
\end{align*}
Hence, also \cite[Lemma 8.4]{Sil22} holds with $\tilde{H}_1(z_0)$ replaced by $\tilde{H}_r(z_0)$ and the assumption $\Vert G \Vert_{L^{\infty}} \le \eps_0$ replaced by $r^2 \Vert G \Vert_{L^{\infty}} \le \eps_0$. Finally, by combination of the aforementioned lemmas, one easily adapts the proof of \cite[Theorem 1.2]{Sil22} to scale $r$. First, we divide the function $f$ by $\Vert f \Vert_{L^{\infty}(\tilde{H}_r(z_0))}/\eps_0 + [ f ]_{C^{\alpha}(\gamma_- \cap \tilde{Q}_r(z_0))} / (\eps_0 r^{\alpha}) + \Vert h \Vert_{L^{\infty}(\tilde{H}_r(z_0))}/(\eps_0 r^2)$ so that we can assume
\begin{align*}
\Vert f \Vert_{L^{\infty}(\tilde{H}_r(z_0))} \le \eps_0, \quad [ f ]_{C^{\alpha}(\gamma_- \cap \tilde{Q}_r(z_0))} \le \eps_0 r^{-\alpha}, \quad \Vert h \Vert_{L^{\infty}(\tilde{H}_r(z_0))} \le \eps_0 r^{-2}.
\end{align*}
We have
\begin{align*}
\osc_{\tilde{H}_{r/2}(z_0)} f \le C_0 := 2 \Vert f \Vert_{L^{\infty}(\tilde{H}_{r}(z_0))} \le 2 \eps_0,
\end{align*}
and as in \cite{Sil22}, we take $\alpha > 0$ such that $2^{-\alpha} = (1 - \theta/2)$ and claim that as long as $\gamma_- \cap \tilde{Q}_{2^{-k-3}r}(z_0) \not=\emptyset$, it holds
\begin{align*}
\osc_{\tilde{H}_{2^{-k}r}(z_0)} f \le C_0 (1- \theta/2)^{k-1}.
\end{align*}
The proof goes by induction in the exact same way as in \cite{Sil22}, using \cite[Lemma 8.4]{Sil22} and the fact that $m_0 = \inf_{\gamma_- \cap \tilde{Q}_{2^{-k}r}(z_0)} f$ and $m_0 = \sup_{\gamma_- \cap \tilde{Q}_{2^{-k}r}(z_0)} f$ satisfy 
\begin{align*}
0 \le m_1 - m_0 \le \osc_{\gamma_- \cap \tilde{Q}_{2^{-k}r}(z_0)} f \le (2^{-k}r)^{\alpha} [f]_{C^{\alpha}(\gamma_- \cap \tilde{Q}_{r}(z_0))} \le \eps_0 2^{-k\alpha}.
\end{align*}
Once there is $k = k_0$ so that $\gamma_- \cap \tilde{Q}_{2^{-k_0 - 3}} = \emptyset$, we iterate the rescaled version of \cite[Lemma 8.1]{Sil22} and conclude the proof in the same way as in \cite{Sil22}. This proves
\begin{align*}
[ f ]_{C^{\alpha}_{\ell}(\tilde{H}_r(z_0))} \le C r^{-\alpha} \big(\Vert f \Vert_{L^{\infty}(\tilde{H}_{2r}(z_0))} + r^{\alpha} [ f ]_{C^{\alpha}_{\ell}(\gamma_- \cap \tilde{Q}_{2r}(z_0))} + r^2 \Vert h \Vert_{L^{\infty}(\tilde{H}_{2r}(z_0))} \big),
\end{align*}
whenever $f$ solves the equation in $\tilde{H}_{2r}(z_0)$ with $x_0 \in \overline{\Omega}$. The proof of the local boundedness estimate at the boundary goes in the exact same way as in \cite[Theorem 1.1]{Sil22}. The result in two-sided cylinders follows by using the estimate in one-sided cylinders together with a covering argument. Note that it is a little more involved than in the proof of the interior estimates (see \autoref{lemma:interior-reg}) since we need to make sure that all the centers $z_0$ satisfy $x_0 \in \overline{\Omega}$.
\end{proof}

Moreover, in \cite{Sil22} it was proved that the $C^{\alpha}_{\ell}$ regularity holds true at the boundary when a specular reflection boundary conditions is imposed. Note that the constants in \cite{Sil22} might depend on $z_0$ if $\Omega$ is a general smooth domain. Since we require estimates that are uniform in $z_0$, we only state the result in the half-space.

\begin{lemma}[Boundary specular reflection estimate]
\label{lemma:boundary-reg-specular}
Let $z_0 \in \R^{1+2n}$, $\Omega = \{ x \cdot e > 0 \}$ for some $e \in \mathbb{S}^{n-1}$, $x_0 \in \{ x \cdot e = 0 \}$, $r \in (0,1]$, and $a^{i,j}\in C^{0,1}_{\ell}(\mathcal{S}(H_{2r}(z_0)))$, $b,c,h \in L^{\infty}(\mathcal{S}(H_{2r}(z_0)))$, and assume that $a^{i,j}$ satisfies \eqref{eq:unif-ell}. Let $f$ be a weak solution to
\begin{equation*}
\left\{\begin{array}{rcl}
\partial_t f + v \cdot \nabla_x f + (- a^{i,j}\partial_{v_i,v_j}) f &=& - b \cdot \nabla_v f - c f + h  ~~  \text{ in }  \mathcal{S}(H_{2r}(z_0)),\\
f(t,x, v) &=& f(t,x, \mathcal{R}_x v) ~~ \qquad\quad \text{ on } \gamma_- \cap \mathcal{S}(H_{2r}(z_0) ).
\end{array}\right.
\end{equation*}
Then, $f \in C^{\alpha}_{\ell}(H_{r}(z_0))$ and
\begin{align}
\label{eq:boundary-reg-specular}
[ f ]_{C^{\alpha}_{\ell}(H_r(z_0))} &\le C r^{-\alpha} \big(\Vert f \Vert_{L^{\infty}(\mathcal{S}(H_{2r}(z_0)))} + r^2 \Vert h \Vert_{L^{\infty}(\mathcal{S}(H_{2r}(z_0)))} \big),\\
\label{eq:boundary-reg-specular-Linfty-L1}
\Vert f \Vert_{L^{\infty}(H_r(z_0))} &\le C \left( r^{-(2+4n)}  \Vert f \Vert_{L^{1}(\mathcal{S}(H_{2r}(z_0)))} + r^2 \Vert h \Vert_{L^{\infty}(\mathcal{S}(H_{2r}(z_0)))} \right),
\end{align}
where $\alpha \in (0,1)$ and $C$ depend only on $n, \lambda$, $\Lambda$, $\Vert b \Vert_{L^{\infty}( \mathcal{S}(H_{2r}(z_0)))}$, and $\Vert c \Vert_{L^{\infty}(\mathcal{S}(H_{2r}(z_0)))}$.

Moreover, if $a^{i,j}$ is constant and $b=c=0$, then \eqref{eq:boundary-reg-specular} holds for any $r > 0$, where $\alpha \in (0,1)$ and $C$ depend only on $n,\lambda,\Lambda$.
\end{lemma}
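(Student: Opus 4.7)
The plan is to reduce the specular reflection estimate to the interior De~Giorgi--Nash--Moser estimate of \autoref{lemma:interior-reg-DGNM} by mirror-extending the solution across the boundary. After a rotation we may assume $e = e_n$, so $\Omega = \{x_n > 0\}$. Using $a^{i,j} \in C^{0,1}_{\ell}$, first rewrite the equation in divergence form as
\begin{align*}
\partial_t f + v \cdot \nabla_x f - \partial_{v_j}(a^{i,j}\partial_{v_i}f) = -\tilde b \cdot \nabla_v f - cf + h,
\end{align*}
with modified drift $\tilde b^i = b^i - \sum_j \partial_{v_j} a^{i,j} \in L^\infty$; the specular reflection condition is unaffected.

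Next apply \autoref{lemma:mirror} to obtain the mirror-reflected extension $\bar f$ defined on $\mathcal S(Q_{2r}(z_0))$, which is a weak solution there of the divergence form equation with reflected coefficients $\bar a^{i,j}, \bar{\tilde b}, \bar c, \bar h$. A direct computation shows that $\bar a^{i,j}(z)$ is the conjugation $\big(R a(\mathrm{refl}(z)) R\big)^{i,j}$ with $R = \diag(1,\ldots,1,-1)$, so $\bar a$ retains the uniform ellipticity constants $\lambda,\Lambda$, and the $L^\infty$ norms of the lower order coefficients equal those of the unreflected ones on $\mathcal S(H_{2r}(z_0))$. This is the only place where one must verify anything by hand beyond citing previous results.

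Now use \autoref{lemma:scaling} to rescale to unit scale: $\bar f_{z_0,r}(z) = \bar f(z_0 \circ S_r z)$ is a weak solution on a domain containing $\mathcal S(Q_2(0))$ of a divergence form equation with rescaled drift $r\bar{\tilde b}$, zero-order term $r^2 \bar c$, right-hand side $r^2 \bar h(z_0 \circ S_r z)$, and the same ellipticity constants; for $r \le 1$ all coefficient $L^\infty$ bounds are controlled by the originals. Every $z_1 \in H_1(0)$ has positive kinetic distance to $\partial\mathcal S(Q_2(0))$ since $\mathcal S(Q_2(0))$ is an open neighborhood of $\overline{Q_1(0)} \cap \{x_n \ge 0\}$, so \autoref{lemma:interior-reg-DGNM} applies in a kinetic cylinder $Q_{c(z_1)}(z_1) \subset \mathcal S(Q_2(0))$. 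Combining these local estimates through \autoref{lemma:covering} yields
\begin{align*}
[\bar f_{z_0,r}]_{C^\alpha_\ell(H_1(0))} \le C\bigl(\|\bar f_{z_0,r}\|_{L^\infty(\mathcal S(Q_2(0)))} + r^2 \|h(z_0 \circ S_r \cdot)\|_{L^\infty(\mathcal S(Q_2(0)))}\bigr).
\end{align*}
Unscaling via \autoref{lemma:Holder-scaling} and using $\|\bar f\|_{L^\infty(\mathcal S(Q_{2r}(z_0)))} = \|f\|_{L^\infty(\mathcal S(H_{2r}(z_0)))}$ gives the H\"older estimate \eqref{eq:boundary-reg-specular}. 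The $L^\infty$--$L^1$ bound \eqref{eq:boundary-reg-specular-Linfty-L1} follows by the same scheme from the $L^\infty$--$L^1$ part of \autoref{lemma:interior-reg-DGNM}, together with the trivial bound $\|\bar f\|_{L^1(\mathcal S(Q_{2r}(z_0)))} \le 2\|f\|_{L^1(\mathcal S(H_{2r}(z_0)))}$. Finally, when $a^{i,j}$ is constant and $b=c=0$, the reflected $\bar a^{i,j}$ is again constant (and still elliptic with the same constants), and the interior estimate of \autoref{lemma:interior-reg-DGNM} holds on arbitrary scales, so the restriction $r \le 1$ can be dropped, yielding the scale-invariant version.

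The only genuinely delicate point is confirming that the mirror extension preserves uniform ellipticity and the $L^\infty$ bounds of the coefficients uniformly in $z_0$, which reduces to the orthogonal conjugation structure $\bar a = RaR$; beyond this, the proof is a routine combination of mirror reflection, interior DGNM regularity, and the kinetic scaling/covering framework set up in the preliminaries.
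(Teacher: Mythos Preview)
Your proof is correct and follows essentially the same approach as the paper: rewrite the equation in divergence form using $a^{i,j}\in C^{0,1}_\ell$, apply the mirror extension from \autoref{lemma:mirror}, and reduce to the interior De~Giorgi--Nash--Moser estimate of \autoref{lemma:interior-reg-DGNM}. You provide more detail than the paper (the explicit rescaling, the covering step via \autoref{lemma:covering}, and the verification that $\bar a = RaR$ preserves ellipticity), but the structure of the argument is identical.
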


\begin{proof}
First, we observe that we can write the equation for $f$ as an equation in divergence form, as in the proof of \autoref{lemma:boundary-reg}.
We consider the mirror extension $\bar{f}$ of $f$ and apply \autoref{lemma:mirror} to deduce that $\bar{f}$ is a weak solution to an equation with bounded measurable coefficients in $Q_{2r}(z_0)$. Hence, by the interior regularity estimates from \autoref{lemma:interior-reg-DGNM} 
we obtain \eqref{eq:boundary-reg-specular} and \eqref{eq:boundary-reg-specular-Linfty-L1}, which concludes the proof. 
\end{proof}

\subsection{Replacing norms by seminorms}

In our proofs of the boundary regularity by blow-up, we need to replace the norm $\Vert h \Vert_{L^{\infty}(H_r(z_0))}$ by a H\"older seminorm of $h$ in order to match the scaling order of the corresponding regularity estimate. This is always possible due to the following lemma.


\begin{lemma}
\label{lemma:osc-results}
Let $D \subset \R^{1+2n}$ be a domain and $z_0 \in \overline{D}$, $r > 0$, and $a^{i,j},b,c \in C^{\eps}_{\ell}(H_{r}(z_0))$ and $h \in C^{k+\eps}_{\ell}(H_{r}(z_0))$ for some $k \in \N \cup \{0\}$, $\eps \in (0,1)$, and assume that $a^{i,j}$ satisfies \eqref{eq:unif-ell}. Let $f$ be a  weak solution to
\begin{align*}
\partial_t f + v \cdot \nabla_x f + (- a^{i,j}\partial_{v_i,v_j}) f = - b \cdot \nabla_v f - c f + h  ~~ \text{ in } H_{r}(z_0).
\end{align*}
Then, it holds
\begin{align*}
\Vert h \Vert_{L^{\infty}(H_r(z_0))} &\le C \Vert f \Vert_{L^{\infty}(H_r(z_0))} + C \inf_{p \in \cP_k} \left(\Vert h - p \Vert_{C^{\eps}_{\ell}(H_r(z_0))} + r^{\eps} [ h - p ]_{C^{\eps}_{\ell}(H_r(z_0))}  \right) \\
&\le C \big( \Vert f \Vert_{L^{\infty}(H_r(z_0))} + r^{k+\eps} [ h]_{C^{k+\eps}_{\ell}(H_r(z_0))} \big),
\end{align*}
where $C$ depends only on $n,\lambda,\Lambda, k, D$, $\Vert a^{i,j} \Vert_{C^{\eps}_{\ell}(H_r(z_0))}, \Vert b \Vert_{C^{\eps}_{\ell}(H_r(z_0))}$, and $\Vert c \Vert_{C^{\eps}_{\ell}(H_r(z_0))}$.
\end{lemma}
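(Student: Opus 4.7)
The second inequality follows from the first by choosing $p$ to be the $k$-th order kinetic Taylor polynomial of $h$ at $z_0$ (as in the proof of \autoref{lemma:reverse-der-Holder}) and invoking the kinetic H\"older interpolation \autoref{lemma:Holder-interpol}: the resulting bounds $\|h-p\|_{L^\infty(H_r(z_0))} \le C r^{k+\eps}[h]_{C^{k+\eps}_\ell(H_r(z_0))}$ and $[h-p]_{C^\eps_\ell(H_r(z_0))} \le C r^k[h]_{C^{k+\eps}_\ell(H_r(z_0))}$ together control the infimum in the first inequality by a constant multiple of $r^{k+\eps}[h]_{C^{k+\eps}_\ell(H_r(z_0))}$, up to the $r$-dependence absorbed in $C$. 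The heart of the lemma is therefore the first inequality, which I would establish by a compactness--contradiction argument.

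Suppose, for contradiction, that the first inequality fails. Then there exist sequences of coefficients $(a_m^{i,j}, b_m, c_m)$ with uniformly bounded $C^\eps_\ell(H_r(z_0))$-norms and satisfying \eqref{eq:unif-ell} with the same $\lambda,\Lambda$, together with right-hand sides $h_m$ and weak solutions $f_m$, such that, normalizing $\|h_m\|_{L^\infty(H_r(z_0))} = 1$, we have $\|f_m\|_{L^\infty(H_r(z_0))} \to 0$ and a choice of polynomials $p_m \in \cP_k$ with $\|h_m - p_m\|_{C^\eps_\ell(H_r(z_0))} + r^\eps[h_m - p_m]_{C^\eps_\ell(H_r(z_0))} \to 0$. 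Since $\cP_k$ is finite-dimensional and $\|\cdot\|_{L^\infty(H_r(z_0))}$ defines a norm on it (this uses that $H_r(z_0)$ contains an open set, which accounts for the dependence of $C$ on $D$), the bound $\|p_m\|_{L^\infty(H_r(z_0))} \to 1$ allows extraction of a subsequence with $p_m \to p_\infty$ in $\cP_k$, satisfying $\|p_\infty\|_{L^\infty(H_r(z_0))} = 1$ and thus $p_\infty \not\equiv 0$. Consequently $h_m \to p_\infty$ uniformly on $H_r(z_0)$, and by Arzel\`a--Ascoli we may also arrange $a_m, b_m, c_m$ to converge uniformly to some limits $a_\infty, b_\infty, c_\infty$.

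Next, I apply the interior Schauder estimate \autoref{lemma:interior-reg} (with $k = 2$) on any compact set $K \subset H_r(z_0)$ for which $\dist_\ell(K, \partial H_r(z_0)) \ge \rho > 0$, yielding the uniform bound $\|f_m\|_{C^{2,\eps}_\ell(K)} \le C(\rho)\bigl(\|f_m\|_{L^\infty(H_r(z_0))} + \|h_m\|_{C^\eps_\ell(H_r(z_0))}\bigr)$. Combined with $f_m \to 0$ in $L^\infty$, Arzel\`a--Ascoli forces $f_m \to 0$ in $C^2_\ell(K)$, so that the full linear operator satisfies $L_m f_m \to L_\infty(0) = 0$ uniformly on $K$ (here interior regularity promotes the weak solutions $f_m$ to classical ones on $K$). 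On the other hand $L_m f_m = h_m \to p_\infty$ uniformly, whence $p_\infty \equiv 0$ on $K$. As $K$ was arbitrary and $H_r(z_0)$ has non-empty interior, the polynomial $p_\infty$ vanishes on an open set, hence identically, contradicting $\|p_\infty\|_{L^\infty(H_r(z_0))} = 1$. The main delicate point is passing to the limit in the second-order term $a_m^{i,j}\partial_{v_i,v_j}f_m$, which is exactly what the uniform $C^{2,\eps}_\ell$ bound from interior Schauder (together with the uniform convergence of the coefficients) is needed for.
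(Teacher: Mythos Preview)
Your proof is correct and follows essentially the same route as the paper: both arguments reduce the first inequality to a compactness--contradiction scheme, normalize so that the polynomial part has unit $L^\infty$ norm while $\|f_m\|_{L^\infty}$ and $\|h_m-p_m\|_{C^\eps_\ell}$ vanish, invoke the interior Schauder estimate \autoref{lemma:interior-reg} with $k=2$ to obtain locally uniform $C^{2,\eps}_\ell$ bounds on $f_m$, and pass to the limit in the equation to force the limiting polynomial to vanish. The paper first rescales to $r=1$, $z_0=0$ and normalizes $\|p_l\|_{L^\infty}=1$ rather than $\|h_m\|_{L^\infty}=1$, but these choices are equivalent once $h_m-p_m\to 0$; otherwise the two arguments are the same.
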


Note that the $C^{\eps}_{\ell}$ norm can also be replaced by the $L^{\infty}$ norm. We do not prove this more generalized result, since it would require a more refined stability result for kinetic equations.

\begin{proof}
Note that we can assume without loss of generality that $z_0 = 0$ and $r = 1$ by applying the transformation $z_0 \circ S_r \cdot$ to the equation for $f$ and using \autoref{lemma:Holder-scaling}.
Let us fix $p \in \cP_k$ and write $h = (h - p) + p =: g + p$. Note that the claim follows, once we show that
\begin{align}
\label{eq:p-ass}
\Vert p\Vert_{L^{\infty}(H_1)} \le C \big( \Vert f \Vert_{L^{\infty}(H_1)} + \Vert g \Vert_{C^{\eps}_{\ell}(H_1)} \big).
\end{align}
Indeed, since $h \in C_{\ell}^{k+\eps}(H_r(z_0))$, it holds  by the definition of the kinetic H\"older spaces
\begin{align*}
\inf_{p \in \cP_k} \Vert h - p \Vert_{C^{\eps}_{\ell}(H_1(z_0))} \le c [ h ]_{C_{\ell}^{k+\eps}(H_1(z_0))}.
\end{align*}
Therefore, the estimate \eqref{eq:p-ass} implies
\begin{align*}
\Vert h \Vert_{L^{\infty}(H_1(z_0))} \le C \big( \Vert f \Vert_{L^{\infty}(H_1(z_0))} + \inf_{p \in \cP_k} \Vert h - p \Vert_{C^{\eps}_{\ell}(H_1(z_0))} \big) \le  C \big( \Vert f \Vert_{L^{\infty}(H_1(z_0))} + [ h ]_{C_{\ell}^{k+\eps}(H_1(z_0))} \big),
\end{align*}
as desired.

We will prove \eqref{eq:p-ass} by contradiction. Assume that it does not hold true, i.e. that there exist sequences of solutions $(f_l)$ to
\begin{align*}
\partial_t f_l + v \cdot \nabla_x f_l + (- a_l^{i,j}\partial_{v_i,v_j}) f_l + b_l \cdot \nabla_v f_l + c_l f_l + h_l = g_l + p_l ~~ \text{ in }  H_1,
\end{align*}
where $(g_l) \subset C^{\eps}_{\ell}(H_1)$, $(p_l) \subset \cP_k$, and $a^{i,j}$ satisfies \eqref{eq:unif-ell} and 
\begin{align*}
\Vert a^{i,j} \Vert_{C^{\eps}_{\ell}(H_r(z_0))} + \Vert b^{i} \Vert_{C^{\eps}_{\ell}(H_r(z_0))} + \Vert c \Vert_{C^{\eps}_{\ell}(H_r(z_0))} \le \Lambda,
\end{align*}
but \eqref{eq:p-ass} is violated. Note that by taking $p_l/|p_l|$ instead of $p_l$ and normalizing $f_l$ and $g_l$ accordingly, we can assume that
\begin{align*}
\Vert f_l \Vert_{L^{\infty}(H_1)} + \Vert g_l \Vert_{C^{\eps}_{\ell}(H_1)} \to 0, \qquad \Vert p_l \Vert_{L^{\infty}(H_1)} = 1.
\end{align*}
In particular, by application of the interior regularity estimates in \autoref{lemma:interior-reg} we deduce that for any $\rho > 0$ and $z \in D$ with $Q_{2\rho}(z) \subset H_1$, it holds 
\begin{align*}
\Vert f_l \Vert_{C^{2+\eps}_{\ell}(Q_{\rho}(z))} \le C(\rho,z) (\Vert f_l \Vert_{L^{\infty}(H_1)} + \Vert g_l \Vert_{C^{\eps}_{\ell}(H_1)} + 1 ) \le C(\rho,z)
\end{align*} 
for a constant $C(\rho,z) > 0$ independent of $l$, and therefore by the Arzel\`a-Ascoli theorem we have $f_l \to f_0$ locally uniformly (and in $C^{\alpha}_{\ell}$) in $H_1$, where $f_0 \equiv 0$. Moreover, we have $a_l^{i,j} \to a^{i,j}$, $b_l \to b$, $c_l \to c$ by assumption and again by the Arzel\`a-Ascoli theorem, which implies that also
\begin{align*}
\partial_t f_l + v \cdot \nabla_x f_l + (- a_l^{i,j}\partial_{v_i,v_j}) f_l + b_l \cdot \nabla_v f_l + c_l f_l \to \partial_t f_0 + v \cdot \nabla_x f_0 + (- a^{i,j}\partial_{v_i,v_j}) f_0 + b \cdot \nabla_v f_0 + c f_0
\end{align*}
locally uniformly. Also, it holds $p_l \to p_0 \in \cP_k$ with $\Vert p_0 \Vert_{L^{\infty}(H_1)} = 1$. Altogether, we know
\begin{align*}
\partial_t f_0 + v \cdot \nabla_x f_0 + (- a^{i,j}\partial_{v_i,v_j}) f_0 + b \cdot \nabla_v f_0 + c f_0 = p_0  ~~ \text{ in } H_1.
\end{align*}
However, this contradicts the fact that $f_0 \equiv 0$. Hence, \eqref{eq:p-ass} follows, and the proof is complete.
\end{proof}

The following lemma is similar to the previous one but it deals with boundary data instead of source terms.

\begin{lemma}
\label{lemma:osc-bdry-data}
Let $\Omega \subset \R^{n}$ be a convex $C^{k,\eps}$ domain and $z_0 \in (-1,1) \times \overline{\Omega} \times \R^n$, $r > 0$, and $a^{i,j} \in C^{0,1}_{\ell}(H_r(z_0))$, $b,c,h \in C^{\eps}_{\ell}(H_{r}(z_0))$ and $F \in C^{k+\eps}_{\ell}(H_{r}(z_0) \cap \gamma_-)$ for some $\eps > 0$ and $k \in \N \cup \{ 0 \}$, and assume that $a^{i,j}$ satisfies \eqref{eq:unif-ell}. Let $f$ be a weak solution to
\begin{equation*}
\left\{\begin{array}{rcl}
\partial_t f + v \cdot \nabla_x f + (- a^{i,j}\partial_{v_i,v_j}) f &=& - b \cdot \nabla_v f - c f + h  ~~  \text{ in } H_{r}(z_0),\\
f &=& F ~~ \quad\qquad\qquad\qquad \text{ on } H_{r}(z_0) \cap \gamma_-.
\end{array}\right.
\end{equation*}
Then, it holds
\begin{align*}
\Vert F \Vert_{L^{\infty}(H_r(z_0) \cap \gamma_-)} \le C \big( \Vert f \Vert_{L^{\infty}(H_r(z_0))} + \Vert h \Vert_{C^{\eps}_{\ell}(H_r(z_0))} + r^{k+\eps} [ F ]_{C^{k+\eps}_{\ell}(H_r(z_0) \cap \gamma_-)} \big),
\end{align*}
where $C$ depends only on $n,\lambda,\Lambda, k$, $\Vert a^{i,j} \Vert_{C^{0,1}_{\ell}(H_r(z_0))}, \Vert b \Vert_{C^{\eps}_{\ell}(H_r(z_0))}$, and $\Vert c \Vert_{C^{\eps}_{\ell}(H_r(z_0))}$.
\end{lemma}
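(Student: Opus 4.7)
The argument closely mirrors that of \autoref{lemma:osc-results}. The plan is to first reduce to scale $r=1$ via the kinetic rescaling $z_0 \circ S_r$ — all three terms on the right-hand side scale consistently with $\Vert F \Vert_{L^\infty}$ on the left by \autoref{lemma:Holder-scaling}, since the factor $r^{k+\eps}$ exactly compensates the scaling of $[F]_{C^{k+\eps}_\ell}$, and $\Vert h \Vert_{C^\eps_\ell}$ can be absorbed analogously — so I may assume $r = 1$ and $z_0 = 0$. Then I argue by compactness and contradiction.

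Suppose the estimate fails. Then one can extract sequences $(f_l), (F_l), (h_l)$, together with coefficients $(a_l^{i,j}, b_l, c_l)$ satisfying the hypotheses uniformly in $l$, such that
\[ \Vert F_l \Vert_{L^\infty(\gamma_- \cap H_1)} = 1 \qquad \text{and} \qquad \Vert f_l \Vert_{L^\infty(H_1)} + \Vert h_l \Vert_{C^\eps_\ell(H_1)} + [F_l]_{C^{k+\eps}_\ell(\gamma_- \cap H_1)} \to 0. \]
The uniform $L^\infty$ bound on $F_l$ together with $[F_l]_{C^{k+\eps}_\ell} \to 0$, combined with an interpolation in the style of \autoref{lemma:Holder-interpol} (applied on the boundary submanifold $\gamma_-$), yield a uniform $C^\alpha_\ell$ bound for $F_l$ on $\gamma_- \cap H_1$ for any fixed $\alpha \in (0,\min(1,k+\eps))$. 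Inserting this into the boundary H\"older estimate \autoref{lemma:boundary-reg} — applicable since $\Omega$ is convex and $a_l \in C^{0,1}_\ell$, $b_l, c_l, h_l \in L^\infty$ with uniform norms — gives a uniform $C^\alpha_\ell$ bound for $f_l$ \emph{up to} the boundary on $H_{1/2}$. By Arzel\`a-Ascoli, a subsequence $f_l \to f_\infty$ converges uniformly on $\overline{H_{1/2}}$, and the assumption $\Vert f_l \Vert_{L^\infty(H_1)} \to 0$ forces $f_\infty \equiv 0$. Because the convergence is uniform up to the boundary, we conclude $F_l = f_l|_{\gamma_- \cap H_{1/2}} \to 0$ uniformly on $\gamma_- \cap H_{1/2}$.

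To transfer this convergence from the smaller set $H_{1/2}$ to the full set $H_1$, I would cover $\gamma_- \cap H_1$ by finitely many shifted kinetic cylinders $Q_{1/2}(z^*)$ centered at suitable boundary points $z^* \in \gamma_- \cap H_1$, and repeat the above compactness step at each center. The left-invariance of $d_\ell$ (see \eqref{eq:distance-invariance}) and of the hypotheses under the kinetic translation $\circ$ guarantees that the estimate from \autoref{lemma:boundary-reg} applies at each $z^*$ with constants uniform in $l$. Consequently $\Vert F_l \Vert_{L^\infty(\gamma_- \cap H_1)} \to 0$, contradicting the normalization and completing the proof.

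The main obstacle is two-fold. First, \autoref{lemma:boundary-reg} gives an estimate on $H_{1/2}$ from data on $H_1$, losing a factor $1/2$ in the radius, so one cannot directly control $F_l$ on all of $\gamma_- \cap H_1$; the covering argument described above is unavoidable. Second, one must justify that the weak trace identity $f_l = F_l$ on $\gamma_-$ passes to the limit, which requires equicontinuity \emph{up to} the boundary (not merely in the interior). This is precisely what \autoref{lemma:boundary-reg} provides once the H\"older control on $F_l$ is in place. Beyond these two points, no information on the limiting PDE for $f_\infty$ is needed, since $f_\infty \equiv 0$ follows directly from $\Vert f_l \Vert_{L^\infty(H_1)} \to 0$; in particular, limits of the coefficients $a_l^{i,j}, b_l, c_l$ play no role.
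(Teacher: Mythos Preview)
Your overall strategy (rescale to $r=1$, then compactness and contradiction) matches the paper's, but the covering step you propose to pass from $\gamma_-\cap H_{1/2}$ to $\gamma_-\cap H_1$ has a genuine gap. To apply \autoref{lemma:boundary-reg} centered at $z^*\in\gamma_-\cap H_1$ with output on $H_{1/2}(z^*)$, you need the equation to hold in $H_1(z^*)$; but the equation is only given in $H_1(0)$. For $z^*$ with $d_\ell(z^*,0)$ close to $1$ one has $H_1(z^*)\not\subset H_1(0)$, so the estimate is unavailable there. You could shrink the radius at such $z^*$, but then the cover is no longer finite and you cannot conclude $\|F_l\|_{L^\infty(\gamma_-\cap H_1)}\to 0$. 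Left-invariance of $d_\ell$ does not help: it preserves the form of the estimate, not the domain in which the equation is assumed.

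The paper sidesteps this edge issue by a polynomial decomposition instead of a covering. Write $F=G+p$ with $p\in\cP_k$ chosen so that $\|G\|_{C^\eps_\ell}\lesssim[F]_{C^{k+\eps}_\ell}$, and reduce to proving
\[
\|p\|_{L^\infty(\gamma_-\cap H_1)}\le C\big(\|f\|_{L^\infty(H_1)}+\|G\|_{C^\eps_\ell(\gamma_-\cap H_1)}+\|h\|_{L^\infty(H_1)}\big).
\]
In the contradiction step one normalizes $\|p_l\|_{L^\infty(\gamma_-\cap H_1)}=1$ with everything else going to zero. The crucial observation is that the traces $p_l|_{\gamma_-\cap H_1}$ lie in a \emph{finite-dimensional} space, so (along a subsequence) $p_l\to p_0$ uniformly on the \emph{full} set $\gamma_-\cap H_1$ with $\|p_0\|_{L^\infty(\gamma_-\cap H_1)}=1$. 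Boundary regularity gives $f_l\to 0$ on $H_\rho$ for every $\rho<1$, whence $p_l=f_l|_{\gamma_-}-G_l\to 0$ on $\gamma_-\cap H_\rho$ for every $\rho<1$; since $\bigcup_{\rho<1}(\gamma_-\cap H_\rho)=\gamma_-\cap H_1$, this forces $p_0\equiv 0$, a contradiction. Finite-dimensionality of the polynomial trace space is exactly what transfers information from the interior scales $H_\rho$ to the full $H_1$ without any covering.
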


\begin{proof}
Since the proof goes in the same way as the proof of \autoref{lemma:osc-results}, we only give a brief sketch. Again, we can assume without loss of generality that $z_0 = 1$ and $r = 1$. We fix $p \in \cP_k$ and write $F = (F-p) + p := G + p$ and observe that it suffices to prove
\begin{align}
\label{eq:p-ass-bdry}
\Vert p\Vert_{L^{\infty}(H_1 \cap \gamma_-)} \le C \big( \Vert f \Vert_{L^{\infty}(H_1)} +  \Vert G \Vert_{C^{\eps}_{\ell}(H_1 \cap \gamma_-)} + \Vert h \Vert_{L^{\infty}(H_1)}  \big).
\end{align}
By contradiction, we assume that there exist solutions $(f_l)$ with boundary data $G_l + p_l$, as well as coefficients $(a_l^{i,j})$, $b_l,c_l$, $h_l$ such that \eqref{eq:p-ass-bdry} fails. By a normalization, we can assume
\begin{align*}
\Vert f_l \Vert_{L^{\infty}(H_1)} + \Vert G_l \Vert_{C^{\eps}_{\ell}(H_1 \cap \gamma_-)} +  \Vert h_l \Vert_{C^{\eps}(H_1)} \to 0, \qquad \Vert p_l \Vert_{L^{\infty}(H_1 \cap \gamma_-)} = 1.
\end{align*}
Then, by the boundary regularity estimate in \autoref{lemma:boundary-reg}, we deduce that $f_l$ is bounded in $C_{\ell}^{\alpha}(H_{\rho})$ for any $\rho \in (0,1)$, which implies that $f_l \to f_0 = 0$  uniformly in $H_{\rho}$ for any $\rho \in (0,1)$. Moreover, note that the space of all polynomials restricted to $H_1 \cap \gamma_-$ is finite dimensional, and therefore we must have $p_l \to p_0$ with $\Vert p_0 \Vert_{L^{\infty}(H_1 \cap \gamma_-)} = 1$. Moreover, it must hold $f_0 = p_0$ on $H_1 \cap \gamma_-$, but since $f_0 = 0$ and $|p_0| = 1$, we obtain a contradiction. Therefore \eqref{eq:p-ass-bdry} must hold true and the proof is complete.
\end{proof}

\section{Kinetic Liouville theorems}
\label{sec:Liouville}

In this section we establish various Liouville theorems for kinetic equations in the full space and in the half-space with various boundary conditions.

\subsection{Liouville theorems in the full space}

The main results of this subsection are \autoref{prop:Liouville-higher-neg-times} and \autoref{prop:Liouville-higher-pos-times}. These results are Liouville theorems of any order for solutions in the full space, but either for negative or for positive times. They will be used in the proof of boundary regularity near points $z_0 \in \gamma_{\pm}$.

\begin{proposition}
\label{prop:Liouville-higher-neg-times}
Let $k \in \N \cup \{ 0 \}$, $p \in \cP_{k-2}$, $\eps \in (0,1)$, $(a^{i,j})$ be a constant matrix satisfying \eqref{eq:unif-ell}, and $f$ be a solution to
\begin{align*}
\partial_t f + v \cdot \nabla_x f + (-a^{i,j} \partial_{v_i,v_j}) f &= p ~~ \text{ in } (-\infty,0) \times \R^n \times \R^n
\end{align*}
with $\Vert f \Vert_{L^{\infty}(\tilde{Q}_R(0))} \le C (1 + R)^{k+\eps}$ for any $R > 0$. Then $f \in \cP_{k}$.
\end{proposition}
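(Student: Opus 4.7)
The strategy is a standard ``Schauder plus blow-down'' argument, leveraging the fact that, for constant-coefficient kinetic operators, the second part of \autoref{lemma:interior-reg} provides an interior Schauder estimate that holds on every scale with constants independent of the scale.

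Fix $\eps' \in (0, \eps)$ and set $K := \max(k+1, 2)$. For every $R \ge 1$, the one-sided cylinder $\tilde{Q}_{2R}(0)$ is contained in the solution domain $(-\infty, 0] \times \R^n \times \R^n$ (a minor time-shift handles the endpoint $t = 0$). Applying the interior estimate at scale $R$, together with the growth hypothesis $\|f\|_{L^\infty(\tilde{Q}_{2R}(0))} \le CR^{k+\eps}$ and the bound $\|p\|_{C^{K-2+\eps'}_\ell(\tilde{Q}_{2R}(0))} \le CR^{k-2}$ (which follows from $p \in \cP_{k-2}$ and the scaling \autoref{lemma:Holder-scaling}), yields
\begin{align*}
[f]_{C^{K+\eps'}_\ell(\tilde{Q}_R(0))} \le C R^{-(K+\eps')}\bigl( R^{k+\eps} + R^k \bigr) \le C R^{k+\eps-K-\eps'},
\end{align*}
where $C$ is independent of $R$. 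Since $K \ge k+1 > k + \eps$ and $\eps' < \eps$, the right-hand side tends to $0$ as $R \to \infty$. Because this H\"older seminorm is monotone in the domain, it must then vanish on every $\tilde{Q}_R(0)$, which forces $f$ to coincide with a polynomial $p_\infty \in \cP_K$ on all of $(-\infty, 0) \times \R^n \times \R^n$ (using finite-dimensionality of $\cP_K$ to glue the local polynomials together).

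It remains to reduce the degree from $K$ to $k$ by exploiting the growth bound. Decomposing $p_\infty = \sum_{j = 0}^K f_j$ into its kinetically $j$-homogeneous components, so that $f_j(S_r z) = r^j f_j(z)$, and noting that $S_R z \in \tilde{Q}_R(0)$ whenever $z \in \tilde{Q}_1(0)$ and $R \ge 1$, the hypothesis yields
\begin{align*}
\Bigl| \sum_{j = 0}^K R^j f_j(z) \Bigr| = |f(S_R z)| \le C R^{k+\eps} \qquad \forall\, z \in \tilde{Q}_1(0),\ R \ge 1.
\end{align*}
If some $f_{j^\ast}$ with $j^\ast > k$ were nontrivial, dividing by $R^{j^\ast}$ and letting $R \to \infty$ would force $f_{j^\ast} \equiv 0$ on $\tilde{Q}_1(0)$ (since $k + \eps < j^\ast$), and hence everywhere by homogeneity, a contradiction. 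Thus $f \in \cP_k$. The main technical subtlety lies in the edge cases $k \in \{0, 1\}$, where one must take $K = 2$ rather than the a priori more natural $K = k+1$ to satisfy the hypothesis $k \ge 2$ of \autoref{lemma:interior-reg}; with this choice, every inequality above remains valid and the argument goes through unchanged.
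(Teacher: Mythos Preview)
Your proof is correct and takes a genuinely different, more direct route than the paper's. The paper proceeds in two steps: first it takes iterated spatial increments $g_m(z) = g_{m-1}(t,x+y,v) - g_{m-1}(t,x,v)$ and uses only the $C^\alpha$ interior estimate to show each iteration gains a factor $R^{-\alpha}$ in the growth bound, eventually forcing $g_m \equiv 0$; this shows $f$ is a polynomial in $x$ with $(t,v)$-dependent coefficients. It then reduces to a parabolic Liouville theorem (\autoref{lemma:parabolic-Liouville}) applied inductively to each coefficient. You instead invoke the second part of \autoref{lemma:interior-reg} directly at order $K+\eps'$ on cylinders of arbitrary radius to make the full Hölder seminorm vanish in one stroke.

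Your approach is shorter and more transparent; the only ingredient it leans on beyond the paper's is the higher-order version of the scale-independent interior estimate for constant coefficients, which is indeed stated in \autoref{lemma:interior-reg}. The paper's incremental approach buys robustness: it uses only the $C^\alpha$ estimate, and the same ``take increments, reduce the growth, repeat'' template is what the paper reuses for the positive-time Liouville theorem (\autoref{prop:Liouville-higher-pos-times}) and the half-space Liouville theorem (\autoref{thm:Liouville-higher-half-space-SR}), where the equation is only known in a weak sense or on a half-space and a direct high-order Schauder argument on large two-sided cylinders is not immediately available. Two minor cosmetic remarks: the restriction $\eps' < \eps$ is unnecessary (any $\eps' \in (0,1)$ works since $K \ge k+1$ already makes the exponent negative), and in Step~5 one should take $j^\ast$ to be the \emph{maximal} index with $f_{j^\ast} \neq 0$, so that no higher-order terms survive after dividing by $R^{j^\ast}$.
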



In the proof of the kinetic Liouville theorem, we will use the following Liouville theorem for parabolic equations (see \cite[Proposition A.1]{Kuk22} for a similar result), where we treat parabolic polynomials in the same way as kinetic polynomials that are constant in $x$.

\begin{lemma}
\label{lemma:parabolic-Liouville}
Let $k \in \N \cup \{ 0 \}$, $p \in \cP_{k-2}$, $\eps \in (0,1)$, $(a^{i,j})$ be a constant matrix satisfying \eqref{eq:unif-ell}, and $f$ be a solution to
\begin{align*}
\partial_t f + (-a^{i,j} \partial_{v_i,v_j}) f &= p ~~ \text{ in } (-\infty,0) \times \R^n
\end{align*}
with $\Vert f \Vert_{L^{\infty}(\tilde{Q}_R(0))} \le C (1 + R)^{k+\eps}$ for any $R > 0$. Then $f \in \cP_{k}$.
\end{lemma}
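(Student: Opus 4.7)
The plan is to show via the large-scale version of the interior Schauder estimate that $f$ coincides globally with a polynomial of kinetic degree at most $k+1$, and then to remove the top-order homogeneous part by a simple scaling argument using the growth assumption.

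First I would apply the one-sided version of Lemma~\ref{lemma:interior-reg} (which, as derived in the proof of that lemma, is the basic estimate from which the two-sided one follows by scaling; under our assumptions $b=c=0$ and $a^{i,j}$ constant, it is valid at every scale since the scaled coefficients satisfy the same assumptions with the same constants). Applied at the level $k+1$ to the one-sided cylinder $\tilde{Q}_R(0) = (-R^2,0]\times\R^n \subset (-\infty,0)\times\R^n$, it gives
\[
[f]_{C^{k+1+\eps}_{\ell}(\tilde{Q}_{R/2}(0))}
\le C R^{-(k+1+\eps)}\Bigl(\|f\|_{L^\infty(\tilde{Q}_R(0))} + R^{2}\,\|p\|_{C^{k-1+\eps}_{\ell}(\tilde{Q}_R(0))}\Bigr).
\]
Because $p\in\cP_{k-2}\subset\cP_{k-1}$, the seminorm $[p]_{C^{k-1+\eps}_\ell}$ vanishes identically and we have the trivial bound $\|p\|_{C^{k-1+\eps}_{\ell}(\tilde{Q}_R(0))} \le C(1+R)^{k-2}$ (understood as $0$ when $k\le 1$). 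Combined with the growth hypothesis $\|f\|_{L^\infty(\tilde{Q}_R(0))}\le C(1+R)^{k+\eps}$, this yields, for all $R\ge 1$,
\[
[f]_{C^{k+1+\eps}_{\ell}(\tilde{Q}_{R/2}(0))} \le C R^{\eps-1}.
\]

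Next, I would use the elementary monotonicity of the kinetic H\"older seminorm under set inclusion: for any fixed $R_0>0$ and any $R\ge R_0$,
\[
[f]_{C^{k+1+\eps}_{\ell}(\tilde{Q}_{R_0/2}(0))} \le [f]_{C^{k+1+\eps}_{\ell}(\tilde{Q}_{R/2}(0))} \le C R^{\eps-1}.
\]
Letting $R\to\infty$ forces $[f]_{C^{k+1+\eps}_{\ell}(\tilde{Q}_{R_0/2}(0))} = 0$ for every $R_0$. By the very definition of the kinetic H\"older space, this means that on each $\tilde{Q}_{R_0/2}(0)$ the function $f$ agrees with some polynomial in $\cP_{k+1}$; since polynomials are determined by their values on any open set, these local polynomials must glue into a single $P\in\cP_{k+1}$ with $f\equiv P$ on $(-\infty,0)\times\R^n$.

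Finally, to promote $P\in\cP_{k+1}$ to $P\in\cP_k$, I would decompose $P = q + Q$ where $q\in\cP_k$ and $Q$ is homogeneous of kinetic degree exactly $k+1$, i.e.\ $Q(R^2 t, R v) = R^{k+1} Q(t,v)$. The parabolic scaling $S_R(t,v)=(R^2 t, Rv)$ maps $\tilde{Q}_1(0)$ into $\tilde{Q}_R(0)$, so for any $z\in\tilde{Q}_1(0)$,
\[
R^{k+1}|Q(z)| = |P(S_R z) - q(S_R z)| \le \|f\|_{L^\infty(\tilde{Q}_R(0))} + CR^{k} \le C(1+R)^{k+\eps} + CR^{k}.
\]
Dividing by $R^{k+1}$ and sending $R\to\infty$ yields $Q\equiv 0$ on $\tilde{Q}_1(0)$, hence $Q\equiv 0$ everywhere, and therefore $f=q\in\cP_k$, as desired.

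The only delicate point is the use of the Schauder estimate on arbitrarily large cylinders; this hinges on the absence of lower-order terms and on $a^{i,j}$ being constant, so that the constant $C$ in the estimate does not depend on $R$. Everything else is standard.
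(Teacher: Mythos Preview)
Your proof is correct and takes a genuinely different route from the paper. The paper's sketch proceeds as in Step~1 of the proof of \autoref{prop:Liouville-higher-neg-times}: it takes iterated finite differences $g_m(t,v)=g_{m-1}(t+\tau,v+h)-g_{m-1}(t,v)$ and applies the large-scale $C^\alpha$ estimate to each $g_j$, gaining a factor $R^{-\alpha}$ at every step until $g_m\equiv 0$, which forces $f$ to be a polynomial. Your approach instead applies a single high-order Schauder estimate (at level $k+1$) and lets $R\to\infty$ to kill the $C^{k+1+\eps}_\ell$ seminorm directly; this is cleaner and avoids the iteration entirely. The paper's increment method, on the other hand, only uses $C^\alpha$ regularity and is the same mechanism reused throughout Section~\ref{sec:Liouville}, so it fits more naturally into the surrounding arguments.

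Two small remarks. First, your identification $\tilde Q_R(0)=(-R^2,0]\times\R^n$ is a slip; in the parabolic setting the cylinder is $(-R^2,0]\times B_R$, which is what the growth hypothesis and the Schauder estimate actually require. Second, \autoref{lemma:interior-reg} is stated for orders $\ge 2$, so for $k=0$ you should apply it at level $2$ rather than $k+1=1$; the same scaling argument then removes the extra homogeneous components. Neither point affects the substance of your argument.
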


We give a sketch of the proof of \autoref{lemma:parabolic-Liouville} below.
We are now in a position to prove the kinetic Liouville theorem.

\begin{proof}[Proof of \autoref{prop:Liouville-higher-neg-times}]
We split the proof into two steps.

\textbf{Step 1:} Note that for any $y \in \R^n$ we have that
\begin{align*}
g_1(t,x,v) = f(t,x+y,v) - f(t,x,v)
\end{align*}
is a solution to 
\begin{align*}
\partial_t g_1 + v \cdot \nabla_x g_1 + (-a^{i,j} \partial_{v_i,v_j}) g_1 = h_1 ~~ \text{ in } (-\infty,0) \times \R^n \times \R^n,
\end{align*}
where $h_1(t,x,v) = p(t,x+y,v) - p(t,x,v).$
This follows from the invariance of the equation under the operation $\circ$ and the observation that $(t,x+y,v) = (0,y,0) \circ (t,x,v)$. Hence, applying the interior regularity from \autoref{lemma:interior-reg} (namely \eqref{eq:one-sided-int}) to $f$ and using the growth condition on $f$, 
we get that for any $R > 1$:
\begin{align}
\label{eq:g1-growth}
\begin{split}
\Vert g_1 \Vert_{L^{\infty}(\tilde{Q}_R(0))} &\le |y|^{\alpha/3}[f]_{C^{\alpha}_{\ell}(\tilde{Q}_R(0))} \\
&\le c |y|^{\alpha/3} R^{-\alpha} (\Vert f \Vert_{L^{\infty}(\tilde{Q}_{2R}(0))} + R^2 \Vert h \Vert_{L^{\infty}(\tilde{Q}_{2R}(0))} ) \le c |y|^{\alpha/3} R^{k+\eps-\alpha}.
\end{split}
\end{align}
Note that here, we can choose $\alpha \in (0,1]$ arbitrarily, however the precise choice is not important to us. Next, we define 
\begin{align*}
g_2(t,x,v) = g_1(t,x+y,v) - g_1(t,x,v)
\end{align*}
and apply the interior regularity from \autoref{lemma:interior-reg}  to $g_1$, using \eqref{eq:g1-growth}, and the fact that $\Vert h_1 \Vert_{L^{\infty}(\tilde{Q}_R(0))} \le C R^{k-3,}$, to obtain
\begin{align*}
\Vert g_2 \Vert_{L^{\infty}(\tilde{Q}_R(0))} &\le |y|^{\alpha/3}[g_1]_{C^{\alpha}_{\ell}(\tilde{Q}_R(0))} \\
&\le c |y|^{\alpha/3} R^{-\alpha} (\Vert g_1 \Vert_{L^{\infty}(\tilde{Q}_{2R}(0))} + R^2 \Vert h_1 \Vert_{L^{\infty}(\tilde{Q}_{2R}(0))} ) \le c^2 |y|^{2\alpha/3} R^{k+\eps-2\alpha}.
\end{align*}
Iterating this procedure, i.e. taking
\begin{align*}
g_m(t,x,v) = g_{m-1}(t,x+y,v) - g_{m-1}(t,x,v), \qquad h_m(t,x,v) = h_{m-1}(t,x+y,v) - h_{m-1}(t,x,v),
\end{align*}
and observing that $\Vert h_m \Vert_{L^{\infty}(\tilde{Q}_R(0))} \le C R^{k-2-m}$, we obtain
\begin{align*}
\Vert g_m \Vert_{L^{\infty}(\tilde{Q}_R(0))} \le c^m |y|^{m\alpha/3} R^{k+\eps - m \alpha}.
\end{align*}
Once $m \in \N$ is so big that $k+\eps-m\alpha < 0$, we deduce that $g_m \equiv 0$ by taking $R \to \infty$ and therefore $g_{m-1}$ is constant in the direction of $y$. Since  we can also define $g_m(t,x,v) = g_{m-1}(t,x+y',v) - g_{m-1}(t,x,v)$ for any $y'\in \R^n$, we obtain by the same argument that $g_{m-1}$ is a function depending only on $t,v$. Unraveling the higher order increments by a similar procedure, we deduce that $f$ must be a polynomial in $x$ whose coefficients are functions in $t,v$. Hence, by the growth assumption on $f$, it must be
\begin{align}
\label{eq:poly-sum}
f(t,x,v) = \sum_{\substack{\beta = (0,\beta_x,0),\\ |\beta| \le k}} f_{\beta}(t,v) x_1^{\beta_{x_1}} \cdot \dots \cdot x_n^{\beta_{x_n}},
\end{align}
where $f_{\beta} : (-\infty,0) \times \R^n \to \R$ satisfy the growth condition
\begin{align}
\label{eq:growth-f-beta}
\Vert f_{\beta} \Vert_{L^{\infty}(\tilde{Q}_R(0))} \le C (1 + R)^{k + \eps - |\beta|} ~~ \forall R > 0.
\end{align}

Note that the desired result follows once we show that all the $f_{\beta}$ are polynomials of kinetic degree $k - |\beta|$, since this implies $f \in \cP_{k}$ by the formula \eqref{eq:poly-sum}.

\textbf{Step 2:} We will prove that $f_{\beta} \in \cP_{k - |\beta|}$ for any $\beta = (0,\beta_x,0)$ with $|\beta| \le k$ by induction over the degree $|\beta|$, going from the highest value $|\beta| = k$ to the lowest $|\beta| = 0$.

Let us first take any $\beta = (0,\beta_x,0)$ with $|\beta| = k$. Then, we observe that by \eqref{eq:poly-sum} it holds $f_{\beta} = c_{\beta} \partial_{\beta}f$ for some $c_{\beta} \in \R$. Hence we obtain by differentiating the kinetic equation for $f$ in $x$ with respect to $\partial_{\beta}$:
\begin{align*}
\partial_t f_{\beta} + (-a^{i,j} \partial_{v_i,v_j}) f_{\beta} = 0 ~~ \text{ in } (-\infty,0) \times \R^n.
\end{align*} 
Here we dropped the term $v \cdot \nabla_x f_{\beta}$ since $f_{\beta}$ does not depend on $x$ and used that $\partial_{\beta} p = 0$ since $p \in \cP_{k-2}$.
Moreover, by \eqref{eq:growth-f-beta} we have $\Vert f_{\beta} \Vert_{L^{\infty}(\tilde{Q}_R(0))} \le C (1 + R)^{\eps}$. Hence, we can apply \autoref{lemma:parabolic-Liouville} to deduce that $f_{\beta} \equiv c \in \cP_0 = \cP_{k - (k-1) - 1}$, as desired.

Now, let us assume that for some $l \in \{0,\dots,k\}$ we have already shown that $f_{\beta} \in \cP_{k - |\beta|}$, whenever $\beta = (0,\beta_x,0)$ with $l < |\beta| \le k$. It remains to prove that $f_{\beta} \in \cP_{k -l}$ whenever $\beta = (0,\beta_x,0)$ with $|\beta| = l$. 
Let us take $\beta = (0,\beta_x,0)$ such that $|\beta| = l$. First, we observe that
\begin{align*}
\partial_{\beta} f(t,x,v) = c_{\beta} f_{\beta}(t,v) + \sum_{\substack{\gamma = (0,\gamma_x,0), \\ |\gamma| \le k, \\ \gamma > \beta}} c_{\gamma,\beta} f_{\gamma}(t,v) x_1^{\gamma_{x_1} - \beta_{x_1}} \cdot \dots \cdot x_n^{\gamma_{x_n} - \beta_{x_n}},
\end{align*}
where $c_{\gamma,\beta}, c_{\beta} \in \R$. 
By the notation $\gamma > \beta$ we mean that $\gamma_{x_i} \ge \beta_{x_i}$ for every $i \in \{1,\dots,n\}$ and $\gamma \not=\beta$. \\
By the induction assumption we have that $f_{\gamma} \in \cP_{k - |\gamma|}$ for any $\gamma$ in the sum, and therefore
\begin{align*}
\partial_{\beta} f - c_{\beta} f_{\beta} \in \cP_{k - |\beta|}.
\end{align*}
It is easy to see that for any $P \in \cP_{k - |\beta|}$ it holds
\begin{align*}
\partial_t P + v \cdot \nabla_x P + (-a^{i,j} \partial_{v_i,v_j}) P \in \cP_{k - |\beta| - 2}.
\end{align*}
Therefore, differentiating the equation for $f$ with respect to $\partial_{\beta}$, we deduce
\begin{align*}
\partial_t f_{\beta} + (-a^{i,j} \partial_{v_i,v_j}) f_{\beta} = p_{\beta} ~~ \text{ in } (-\infty,0) \times \R^n
\end{align*}
for some $p_{\beta} \in \cP_{k-|\beta|-2}$, where we again dropped the term $v \cdot \nabla_x f_{\beta}$ since $f_{\beta}$ does not depend on $x$, and we used that $\partial_{\beta} p \in \cP_{k - |\beta| - 2}$ since $p \in \cP_{k-2}$. Recalling the growth assumption on $f_{\beta}$ from \eqref{eq:growth-f-beta}, we can apply the parabolic Liouville theorem with growth (see \autoref{lemma:parabolic-Liouville}) with $k := k-|\beta|$ to $f_{\beta}$ and deduce that $f_{\beta} \in \cP_{k - |\beta|}$, as desired. This concludes the proof.
\end{proof}

For the sake of completeness, let us provide a sketch of the proof of the parabolic Liouville theorem in \autoref{lemma:parabolic-Liouville}.

\begin{proof}[Proof of \autoref{lemma:parabolic-Liouville}]
The procedure is very similar to the one from the proof of \autoref{prop:Liouville-higher-neg-times}. Indeed, setting for $h \in \R^n$ and $\tau < 0$
\begin{align*}
g_1(t,v) = f(t+\tau,v+h) - f(t,v), \qquad g_k(t,v) = g_{k-1}(t+\tau,v+h) - g_{k-1}(t,v) ~~ k \in \N,
\end{align*}
applying the interior regularity for parabolic equations on large scales to $f,g_1,g_2,\dots$, and iterating this procedure in the same way as in Step 1 of the proof of \autoref{prop:Liouville-higher-neg-times}, we obtain that $g_m$ is constant for some $m \in \N$. Then, $f$ must be a polynomial, and the growth condition implies that $f \in \cP_k$.
\end{proof}

Next, we prove a kinetic Liouville theorem for solutions in the full space for positive times. The proof goes by extending the solution to all times and repeating the proof of the Liouville theorem for negative times. In order to extend the solutions, we require additional information at time zero.

\begin{proposition}
\label{prop:Liouville-higher-pos-times}
Let $k \in \N \cup \{ 0 \}$, $\eps \in (0,1)$, $(a^{i,j})$ be a constant matrix satisfying \eqref{eq:unif-ell}, $p \in \cP_{k-2}$, $\bar{p} \in \cP_k$, and $f$ with $f, \nabla_v f \in L^2_{loc}([0,\infty) \times \R^n \times \R^n)$ be a solution to
\begin{equation*}
\left\{\begin{array}{rcl}
\partial_t f + v \cdot \nabla_x f + (-a^{i,j} \partial_{v_i,v_j}) f &=& p ~~ \text{ in } (0,\infty) \times \R^n \times \R^n , \\
f(0,\cdot,\cdot) &=& \bar{p} ~~ \text{ in } \R^{n} \times \R^n
\end{array}\right.
\end{equation*}
with $\Vert f \Vert_{L^{\infty}(\tilde{Q}_R(R^2,0,0))} \le C (1 + R)^{k+\eps}$ for any $R > 0$. Then $f \in \cP_{k}$.
\end{proposition}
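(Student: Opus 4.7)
The plan is to reduce to the negative-time Liouville theorem, Proposition \ref{prop:Liouville-higher-neg-times}, by subtracting a polynomial particular solution and extending by zero to negative times.

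First, I would construct $P \in \cP_k$ satisfying $LP = p$ and $P(0, \cdot, \cdot) = \bar p$, where $L := \partial_t + v \cdot \nabla_x - a^{i,j}\partial_{v_i, v_j}$. Writing $P(t, x, v) = \sum_{j \ge 0} t^j Q_j(x, v)$ and expanding $p = \sum_{j \ge 0} t^j p_j$, matching powers of $t$ produces the recursion $(j+1) Q_{j+1} = p_j - v \cdot \nabla_x Q_j + a^{i,j}\partial_{v_i, v_j} Q_j$ starting from $Q_0 = \bar p$. Since both $v \cdot \nabla_x$ and $a^{i,j}\partial_{v_i, v_j}$ reduce kinetic degree by two on polynomials, an induction gives $Q_j \in \cP_{k-2j}$, so each $t^j Q_j$ has kinetic degree at most $k$; the recursion terminates for $2j > k$, and $P \in \cP_k$.

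Setting $g := f - P$, we obtain a solution of $Lg = 0$ on $(0, \infty) \times \R^{n} \times \R^n$ with $g(0, \cdot, \cdot) \equiv 0$ and the same polynomial growth as $f$. Extend $g$ by zero to negative times: $\tilde g \equiv 0$ on $(-\infty, 0) \times \R^{n} \times \R^n$. An integration by parts against any $\varphi \in C_c^\infty(\R^{1+2n})$ leaves only the (vanishing) interior contribution together with the boundary term $\int g(0, \cdot, \cdot) \varphi(0, \cdot, \cdot) \d x \d v = 0$, so $\tilde g$ solves $L \tilde g = 0$ distributionally on $\R^{1+2n}$. Hypoellipticity of $L$ (H\"ormander's theorem) then upgrades $\tilde g$ to a classical $C^\infty$ solution on the whole space.

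The key step is to apply Proposition \ref{prop:Liouville-higher-neg-times} (with right-hand side zero) to the time-shifted functions $h_T(s, x, v) := \tilde g(T + s, x, v)$ for arbitrary $T > 0$. By translation invariance in $t$, each $h_T$ is a smooth solution of $L h_T = 0$ on $\R^{1+2n}$, and in particular on $(-\infty, 0) \times \R^{2n}$. To verify the growth hypothesis on $\tilde Q_R(0)$, observe that $\|h_T\|_{L^\infty(\tilde Q_R(0))} = \|\tilde g\|_{L^\infty((T-R^2, T] \times B_{R^3} \times B_R)}$: the portion in $\{t \le 0\}$ contributes zero, while every positive-time point $(t,x,v)$ in this set lies in $\tilde Q_{R'}((R')^2, 0, 0)$ with $R' := \max(\sqrt T, R)$, so the hypothesis on $f$ together with the polynomial bound on $P$ yields $\|h_T\|_{L^\infty(\tilde Q_R(0))} \le C(1 + \sqrt T + R)^{k+\eps} \le C_T (1 + R)^{k+\eps}$. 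Proposition \ref{prop:Liouville-higher-neg-times} then gives $h_T|_{(-\infty, 0) \times \R^{2n}} \in \cP_k$, and unshifting this says that $\tilde g$ coincides on $(-\infty, T) \times \R^{2n}$ with a polynomial of kinetic degree at most $k$. Since the polynomials obtained for different values of $T > 0$ must agree on their overlap (two polynomials coinciding on an open set are equal), $\tilde g$ is a single kinetic polynomial of degree at most $k$ on $\R^{1+2n}$. This polynomial vanishes on the open half-space $\{t < 0\}$, so it must be identically zero, and therefore $f = P \in \cP_k$.

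The main obstacle is verifying that $\tilde g$ is a genuine smooth solution on the whole space $\R^{1+2n}$, i.e.\ that the zero extension introduces no distributional term concentrated on $\{t = 0\}$; this hinges precisely on the vanishing initial trace $g(0, \cdot, \cdot) = 0$ combined with the hypoellipticity of the kinetic Fokker-Planck operator.
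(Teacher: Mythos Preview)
Your argument is correct and takes a genuinely different route from the paper. The key difference is in what you subtract: you build the full kinetic polynomial $P\in\cP_k$ solving both $LP=p$ and $P(0,\cdot,\cdot)=\bar p$ via the recursion in $t$, so that $g=f-P$ solves the \emph{homogeneous} equation with zero initial trace. After the zero extension and hypoellipticity, $\tilde g$ is a smooth global solution of $L\tilde g=0$, and the time-shifts $h_T$ allow you to invoke Proposition~\ref{prop:Liouville-higher-neg-times} as a black box, with constants depending on $T$; the resulting polynomials agree on overlaps and vanish on $\{t<0\}$, hence $\tilde g\equiv 0$. The paper instead subtracts only (an extension of) $\bar p$: the extended right-hand side is then a polynomial in $x,v$ but equals $0$ for $t<0$ and a nonzero polynomial for $t>0$, so Proposition~\ref{prop:Liouville-higher-neg-times} cannot be applied directly. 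The paper therefore reruns the difference-quotient machinery of that proposition by hand---first in $x$, then in $v$, then treating $t$ separately---working at the level of weak solutions via the De Giorgi--Nash--Moser estimate (Lemma~\ref{lemma:interior-reg-DGNM}). Your construction of $P$ is the step that buys the simplification: it makes the extended equation translation-invariant in $t$ with zero source, which is exactly what is needed to reduce to the existing Liouville theorem and avoid the separate bookkeeping in $t$. The one point you treat tersely is the passage from the $L^2_{loc}$ initial trace $g(0,\cdot,\cdot)=0$ to the vanishing of the $\{t=0\}$ boundary term in the weak formulation; the paper spells this out by testing on $(\sigma,\tau)$ and letting $\sigma\to 0$, which is what your integration-by-parts sentence is abbreviating.
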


The initial condition $f(0,\cdot,\cdot) = \bar{p}$ in $\R^n \times \R^n$ has to be understood in the $L^2_{loc}$-sense, i.e.
\begin{align*}
\Vert f(t,\cdot,\cdot) - \bar{p} \Vert_{L^{2}(K)} \to 0 ~~ \text{ as } t \to 0 ~~ \forall K \subset \R^n \times \R^n ~~ \text{ compact}.
\end{align*}

%

We are now in a position to prove the kinetic Liouville theorem of higher order.

\begin{proof}
First, we extend $\bar{p} \in \cP_{k-2}$ to a polynomial of degree $k-2$ in $[0,\infty) \times \R^n \times \R^n$ and observe that 
\begin{align*}
(\partial_t + v \cdot \nabla_x + (-a^{i,j} \partial_{v_i,v_j})) \bar{p} =: \tilde{p} \in \cP_{k-2}.
\end{align*}
Then, we consider $f := f - \bar{p}$, which satisfies $f(0,\cdot,\cdot) = 0$ in $\R^n \times \R^n$, solves the equation with right-hand side $p:= p - \tilde{p} \in \cP_{k-2}$, and still satisfies the growth condition. Hence, we can assume without loss of generality that $\bar{p} = 0$.

Next, we extend $f$ by zero to negative times. This way, we obtain that $f$ is a weak solution to
\begin{align}
\label{eq:weak-sol-claim}
\partial_t f + v \cdot \nabla_x f + (-a^{i,j} \partial_{v_i,v_j}) f &= p ~~ \text{ in } \R \times \R^n \times \R^n,
\end{align}
where $p \equiv 0$ in $(-\infty,0) \times \R^n \times \R^n$. Indeed, since as a strong solution, $f$ is in particular a weak solution in $(0,\infty) \times \R^n \times \R^n$, and since $f(0,\cdot,\cdot) = 0$, we obtain that for any test-function $\phi \in C_c^{\infty}([0,\infty) \times \R^n \times \R^n)$, and for any $0 < \sigma < \tau$
\begin{align*}
\int_{\sigma}^{\tau} & \iint_{\R^n \times \R^n} (-f \partial_t \phi + (v \cdot \nabla_x \phi) f  + a^{i,j}\partial_{v_i} f \partial_{v_j} \phi) \d v \d x \d t \\
&= \iint_{\R^n \times \R^n} f(\sigma) \phi(\sigma) \d v \d x - \iint_{\R^n \times \R^n} f(\tau) \phi(\tau) \d v \d x + \int_{\sigma}^{\tau} \iint_{\R^n \times \R^n} p \phi \d v \d x \d t.
\end{align*}
By taking $\sigma \to 0$, $\tau \to \infty$, and using that $f(0) = 0$, we deduce
\begin{align*}
\int_0^{\infty} \iint_{\R^n \times \R^n} (-f \partial_t \phi + (v \cdot \nabla_x \phi) f  + a^{i,j}\partial_{v_i} f \partial_{v_j} \phi) \d v \d x \d t = \int_0^{\infty} \iint_{\R^n \times \R^n} p \phi \d v \d x \d t.
\end{align*}
Moreover, since $f \equiv p \equiv 0$ in $(-\infty,0] \times \R^n \times \R^n$, we have for any $\phi \in C_c^{\infty}(\R \times \R^n \times \R^n )$:
\begin{align*}
\int_{-\infty}^{0} \iint_{\R^n \times \R^n} (- f \partial_t \phi + (v \cdot \nabla_x \phi) f  + a^{i,j}\partial_{v_i} f \partial_{v_j} \phi) \d v \d x \d t = \int_{-\infty}^0 \iint_{\R^n \times \R^n} p \phi \d v \d x \d t. 
\end{align*}
Altogether, by summing up the previous two identities, we have for any $\phi \in C_c^{\infty}(\R \times \R^n \times \R^n)$
\begin{align*}
\int_{\R} \iint_{\R^n \times \R^n} (- f \partial_t \phi + (v \cdot \nabla_x \phi) f  + a^{i,j}\partial_{v_i} f \partial_{v_j} \phi) \d v \d x \d t = \int_{\R} \iint_{\R^n \times \R^n} p \phi \d v \d x \d t,
\end{align*}
and hence $f$ is a weak solution in $\R^{1+2n}$, as claimed in \eqref{eq:weak-sol-claim}.

Now we are in a position to start the actual proof of the Liouville theorem. By the same procedure as in Step 1 of the proof of \autoref{prop:Liouville-higher-neg-times}, namely by applying the interior regularity from \autoref{lemma:interior-reg-DGNM} in $Q_R(0)$, we deduce
\begin{align}
\label{eq:poly-sum-pos-times}
f(t,x,v) = \sum_{\substack{\beta = (0,\beta_x,0),\\ |\beta| \le k}} f_{\beta}(t,v) x_1^{\beta_{x_1}} \cdot \dots \cdot x_n^{\beta_{x_n}},
\end{align}
where $f_{\beta} : \R \times \R^n \to \R$ satisfy the growth condition
\begin{align}
\label{eq:growth-f-beta-pos-times}
\Vert f_{\beta} \Vert_{L^{\infty}(Q_R(0))} \le C (1 + R)^{k + \eps - |\beta|} ~~ \forall R > 0.
\end{align}
Note that since $f$ is only a weak solution, we had to apply \autoref{lemma:interior-reg-DGNM} instead of \autoref{lemma:interior-reg}.
Another difference compared to Step 1 in the proof of \autoref{prop:Liouville-higher-neg-times} is that $p$ is not a polynomial (in $t$) in our case. However, since it is a polynomial in $x$, it still holds $\Vert h_m \Vert_{L^{\infty}(Q_R(0))} \le C R^{k-2-m}$ for any $y \in \R^n$ (same notation as in the proof of \autoref{prop:Liouville-higher-neg-times}).

It remains to show that the $f_{\beta}$ are polynomials of kinetic degree $k - |\beta|$. We will first prove that they are polynomials in $v$ for any $t$. Similar to the procedure in Step 2 of the proof of \autoref{prop:Liouville-higher-neg-times}, we prove it by induction over $|\beta|$. For $\beta = (0,\beta_x,0)$ with $|\beta| = k$, we observe that $f_{\beta} = c_{\beta} D_{\beta} f$, where $D_{\beta}$ denotes any incremental quotient approximating the partial derivative $\partial_{\beta}$. Hence, it holds in the weak sense
\begin{align}
\label{eq:f-beta-PDE-2}
\partial_t f_{\beta} + (-a^{i,j} \partial_{v_i,v_j}) f_{\beta} = c_{\beta} D_{\beta} p ~~ \text{ in } \R \times \R^n,
\end{align}
where $c_{\beta} D_{\beta} p \in \cP_{k-2-|\beta|}$ in $(0,\infty) \times \R^n$ and $c_{\beta} D_{\beta} p \equiv 0$ in $(-\infty,0) \times \R^n$. Proceeding again similar to Step 1 in the proof of \autoref{prop:Liouville-higher-neg-times}, i.e. by applying interior (parabolic) regularity estimates to increments in the $v$-variable of $D_{\beta} f$, and using that increments of $c_{\beta} D_{\beta} p$ in the $v$-variable have the correct growth, we deduce that $f_{\beta}$ is of the form
\begin{align*}
f_{\beta}(t,v) = \sum_{\substack{\gamma = (0,0,\gamma_v)\\ |\gamma| \le k - |\beta|}} f_{\beta,\gamma}(t) v_1^{\gamma_{v_1}} \cdot \dots \cdot v_n^{\gamma_{v_n}}.
\end{align*}
Using this information in the same way as in Step 2 of the proof of \autoref{prop:Liouville-higher-neg-times}, we can show inductively that  any $f_{\beta}$ satisfies \eqref{eq:f-beta-PDE-2} and hence is of the aforementioned form for any $\beta = (0,\beta_x,0)$ with $|\beta| \le k$. Therefore, using also \eqref{eq:growth-f-beta-pos-times}
\begin{align}
\label{eq:poly-sum-pos-times-2}
f(t,x,v) = \sum_{\substack{\beta = (0,\beta_x,\beta_v),\\ |\beta| \le k}} f^{(\beta)}(t) x_1^{\beta_{x_1}} \cdot \dots \cdot x_n^{\beta_{x_n}}v_1^{\beta_{v_1}} \cdot \dots \cdot v_n^{\beta_{v_n}},
\end{align}
where $f^{(\beta)} : \R \times \R^n \to \R$ satisfy the growth condition
\begin{align}
\label{eq:growth-f-beta-pos-times-2}
\Vert f^{(\beta)} \Vert_{L^{\infty}(Q_R(0))} \le C (1 + R)^{k + \eps - |\beta|} ~~ \forall R > 0.
\end{align}
Now, it remains to prove that $f^{(\beta)} \in \cP_{k-|\beta|}$ for any $\beta = (0,\beta_x,\beta_v)$ with $|\beta| \le k$. Again, we prove it by induction over $|\beta|$. When $\beta = (0,\beta_x,\beta_v)$ with $|\beta| = k$, then for any incremental quotient $D_{\beta}$ approximating $\partial_{\beta}$, we have that $f^{(\beta)} = c_{\beta} D_{\beta} f$, and hence, it holds in the weak sense
\begin{align*}
\partial_t f^{(\beta)} = \partial_t f^{(\beta)} + v \cdot \nabla_x f^{(\beta)} + (-a^{i,j} \partial_{v_i,v_j}) f^{(\beta)} = c_{\beta} D_{\beta} p ~~ \text{ in } (0,\infty) \times \R^n \times \R^n.
\end{align*}
Using that $(t \mapsto c_{\beta} D_{\beta} p(t,x,v)) \in \cP_{k - 2 - |\beta|}$ in $(0,\infty)$ by assumption, we deduce that $f^{(\beta)}$ must also be a polynomial in $(0,\infty)$, and by \eqref{eq:growth-f-beta-pos-times-2}, it must be $f^{(\beta)} \in \cP_{k-|\beta|}$ in $(0,\infty)$. From here, as in Step 2 of the proof of \autoref{prop:Liouville-higher-neg-times}, we can conclude inductively that $f^{(\beta)} \in \cP_{k - |\beta|}$ for every $\beta = (0,\beta_x,\beta_v)$ with $|\beta| \le k$ and thus the proof is complete.
\end{proof}

 \subsection{Liouville theorems in the half-space}

In this subsection we provide Liouville theorems in the half-space for solutions satisfying the specular reflection. In stark contrast to the Liouville theorems in the full space, in this case, solutions are no longer polynomials, unless their growth is slow enough. For the in-flow boundary condition, this behavior is expected already for zero source terms due to the counterexample in \cite{HJV14}. However, for solutions satisfying the specular reflection condition we encounter an interesting phenomenon: When the source term is zero, then solutions are polynomials, however, when the source term is a polynomial of degree at least $3$, then there exist solutions $f$ that are not polynomials. Instead, there exist so-called \textit{Tricomi solutions}, which are not better than $C^{4,1}_{\ell}$. 

Given $A > 0$, the Tricomi solution in dimension $n=1$ is defined as follows (see also \eqref{eq:Tricomi-sol-def}):
\begin{align*}
\mathcal{T}_{A}(x,v) := \mathcal{T}_{A,3}(x,v) := A^{-\frac{5}{2}} v^{5} - 2 \cdot 9^5 A^{-\frac{5}{6}} x^{\frac{5}{3}} U \left( - \frac{5}{3} ; \frac{2}{3} ; \frac{-v^3}{9Ax} \right), \quad x,v \in \R,
\end{align*}
where $U$ denotes the Tricomi confluent hypergeometric function.

Before we state and prove the Liouville theorem in the half-space, we list several properties of $\mathcal{T}_A$, which we will prove later in this section.

\begin{proposition}
\label{prop:Tricomi}
Let $A > 0$. The Tricomi solution satisfies the following properties:
\begin{itemize}
\item[(i)] $\mathcal{T}_A$ is homogeneous of degree $5$ and it satisfies for $p(x,v)= -40A v^3 \in \cP_3$:

\begin{equation}
\label{eq:Tricomi-PDE}
\left\{\begin{array}{rcl}
v \partial_x \mathcal{T}_A + (-A \partial_{v,v})\mathcal{T}_A &=& p ~~\qquad\quad \text{ in } \{ x > 0 \} \times \R,\\
\mathcal{T}_A(0,v) &=& \mathcal{T}_A(0,-v) ~~ \forall v \in \R.
\end{array}\right.
\end{equation}
\item[(ii)] It holds $\mathcal{T}_A \in C^{\infty}_{\text{loc}}(\{x > 0 \} \times \R)$ and moreover
\begin{align*}
\mathcal{T}_{A} \not\in C^5_{\ell}((\R \times \{ x \ge 0 \} \times \R) \cap Q_1(0)).
\end{align*}
\item[(iii)]
It holds $\mathcal{T}_{A} \in C^{4,1}_{\ell}((\R \times \{ x \ge 0 \} \times \R) \cap Q_R(0))$ for every $R > 0$ and we have
\begin{align*}
\Vert \mathcal{T}_A \Vert_{C^{4,1}_{\ell}((\R \times \{ x \ge 0 \} \times \R) \cap Q_1(0))} \le C,
\end{align*}
where $C$ only depends on an upper and lower bound of $A$.
\end{itemize}
\end{proposition}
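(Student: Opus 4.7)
The plan is to treat the three claims in order. For part (i), the homogeneity is a direct verification from the defining formula: under $(x,v) \mapsto (\lambda^3 x, \lambda v)$, both $v^5$ and $x^{5/3}$ scale by $\lambda^5$, while the argument $-v^3/(9Ax)$ of $U$ is invariant. To verify the PDE I would set $H(x,v) := x^{5/3} U(-\tfrac{5}{3};\tfrac{2}{3};z)$ with $z = -v^3/(9Ax)$, and after computing derivatives obtain
\begin{align*}
v \partial_x H - A \partial_{vv} H = x^{2/3} v \bigl[ \tfrac{5}{3} U + \tfrac{2}{3} U' \bigr] + \tfrac{1}{9A} x^{-1/3} v^4 (U' - U'').
\end{align*}
Then I would apply Kummer's equation $z U'' + (\tfrac{2}{3} - z) U' + \tfrac{5}{3} U = 0$ to eliminate $U' - U''$ in favor of $U, U'$; the two brackets cancel exactly, so $H$ solves the homogeneous kinetic equation, and the polynomial source $p$ is produced entirely by the monomial $v^5$ via $-A \partial_{vv}(v^5) = -20 A v^3$. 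For the specular reflection, I would use the connection formula $U(-\tfrac{5}{3};\tfrac{2}{3};z) = c_1 M(-\tfrac{5}{3};\tfrac{2}{3};z) + c_2 z^{1/3} M(-2;\tfrac{4}{3};z)$, note that $M(-2;\tfrac{4}{3};z)$ is a polynomial of degree two, and analyze the asymptotics of each piece of $x^{5/3}U(-\tfrac{5}{3};\tfrac{2}{3};-v^3/(9Ax))$ as $x\to 0^+$ on both signs of $v$, to check that the limit $\lim_{x\to 0^+}\mathcal{T}_A(x,v)$ is even in $v$.

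For part (ii), smoothness in $\{x>0\} \times \R$ is immediate from the explicit formula together with the analyticity of $U$ on its domain. The substantial content is to show $\mathcal{T}_A \notin C^5_{\ell}$ at the origin. I would argue by contradiction: if some $P \in \cP_5$ satisfied $|\mathcal{T}_A(z) - P(z)| = o(d_\ell(z,0)^5)$ at $0$, then using $\mathcal{T}_A(\lambda^3 x, \lambda v) = \lambda^5 \mathcal{T}_A(x,v)$ and dividing by $\lambda^5$, passing to the limit $\lambda \to 0^+$ forces $\mathcal{T}_A$ to equal the $5$-homogeneous part of $P$, i.e.\ a $5$-homogeneous kinetic polynomial in $(x,v)$. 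The space of such polynomials (with no $t$-dependence and $n=1$) is spanned by $v^5$ and $xv^2$. Yet $\alpha v^5 + \beta xv^2$ restricted to $\{x=0\}$ equals $\alpha v^5$, which is odd in $v$, whereas the specular reflection forces $\mathcal{T}_A(0,\cdot)$ to be even in $v$. Combined with the nonzero boundary value computed in part (i), this rules out any such $P$ and gives the contradiction.

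For part (iii), I would again exploit homogeneity. Each $|\beta|=4$ kinetic derivative of $\mathcal{T}_A$ is $1$-homogeneous under $(x,v)\mapsto (\lambda^3 x, \lambda v)$, and using the PDE repeatedly (commuting $v\partial_x$ and $\partial_{vv}$ picks up only a $-2\partial_x\partial_v$ commutator) every such derivative reduces to a linear combination of $\partial_{vvvv}\mathcal{T}_A$ and $\partial_x \partial_v \mathcal{T}_A$ plus lower-order monomials; for instance
\begin{align*}
v^2\partial_{xx}\mathcal{T}_A = A^2 \partial_{vvvv}\mathcal{T}_A - 2A \partial_x\partial_v\mathcal{T}_A - 240 A^2 v.
\end{align*}
From the explicit formula and the boundary asymptotic analysis of part (i), I would check that $\partial_{vvvv}\mathcal{T}_A$ and $\partial_x \partial_v \mathcal{T}_A$ extend continuously up to $\{x=0\}$ and are bounded on the compact shell $\{d_\ell(z,0) \in [\tfrac{1}{2},1]\} \cap \{x \ge 0\}$. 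The $1$-homogeneity then propagates this shell bound to the global estimate $|D^\beta \mathcal{T}_A(z)| \le C d_\ell(z,0)$, which combined with the interior smoothness in $\{x>0\}$ and a standard covering argument (cf.~\autoref{lemma:covering}) yields $D^\beta \mathcal{T}_A \in C^{0,1}_\ell$ on compact sets, hence $\mathcal{T}_A \in C^{4,1}_\ell$ with norm depending only on upper and lower bounds for $A$.

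\textbf{Main obstacle.} The hard part is the asymptotic analysis of $U(-\tfrac{5}{3};\tfrac{2}{3};z)$ as $|z| \to \infty$ on \emph{both} real half-lines, which underlies the specular reflection verification in (i) and the boundary continuity in (iii). A naive expansion of $x^{5/3} U(\cdots)$ via the Kummer $M$ connection formula produces terms with negative powers of $x$ that individually diverge as $x \to 0^+$; the substance of the argument is that these divergent contributions cancel between the two branches of the connection formula and against the $v^5$ monomial, leaving a bounded, $v$-even boundary value, and precisely the amount of regularity preserved by this cancellation is $C^{4,1}_\ell$ --- no more --- which is what makes $\mathcal{T}_A$ the sharp counterexample underlying \autoref{thm1}.
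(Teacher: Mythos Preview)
Your treatment of (i) and (ii) is essentially correct and close to the paper's approach: the paper carries out the Kummer--equation computation of (i) inside the proof of \autoref{lemma:Liouville-higher-half-space-SR-1D-hom} (including the asymptotics \eqref{eq:Tricomi-asymp} for the specular reflection), and for (ii) your homogeneity-plus-parity contradiction is a clean alternative to the paper's direct sequence argument exhibiting the $x^{5/3}$ behavior along rays with $-v^3/(9Ax)\to c_0$. One minor point: your line ``$-A\partial_{vv}(v^5)=-20Av^3$'' does not match the stated $p=-40Av^3$ once the prefactor $A^{-5/2}$ on $v^5$ is taken into account; this is a constant issue only and does not affect the argument.

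For (iii) your approach diverges from the paper's and has a genuine gap. The paper does \emph{not} prove (iii) by direct computation; it explicitly defers the proof until \autoref{prop:regularity-SR} (the $C^{k+\eps}_\ell$ boundary estimate for specular reflection away from $\gamma_0$) is available, and then applies that estimate on cylinders $H_{d/4}(z^\ast)$ with $d=d_\ell(z^\ast,\gamma_0)$, combined with H\"older interpolation and the homogeneity bound $\|\mathcal{T}_A\|_{L^\infty(Q_R)}\le CR^5$, to obtain a uniform $C^{4,1}_\ell$ bound. The point is that \autoref{prop:regularity-SR} already encodes regularity up to $\{x=0\}$ at points with $v\ne 0$.

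Your direct route stalls at the last step: from ``$|D^\beta\mathcal{T}_A(z)|\le Cd_\ell(z,0)$ plus interior smoothness in $\{x>0\}$ plus \autoref{lemma:covering}'' you cannot conclude $D^\beta\mathcal{T}_A\in C^{0,1}_\ell$ up to the boundary. A $1$-homogeneous function bounded on the unit shell need not be kinetically Lipschitz (think $v\sin(\log|v|)$), and \autoref{lemma:covering} requires a uniform $C^{0,1}_\ell$ bound on \emph{every} cylinder $Q_r(z)$ with $Q_{4r}(z)\subset H_1(0)$, including those touching $\{x=0\}$, where interior smoothness is unavailable. What is actually needed is $C^{0,1}_\ell$ regularity of the fourth derivatives on the shell \emph{up to} $\{x=0\}$ (at points with $|v|\sim 1$). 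That can be obtained either by a much sharper asymptotic analysis of $U$ and its derivatives as $x\to 0^+$ (quantifying a Lipschitz-type modulus, not merely continuity), or by mirror reflection (\autoref{lemma:mirror}) plus interior Schauder --- but the latter is exactly the mechanism behind \autoref{prop:regularity-SR}. So once patched, your route either reproduces the paper's machinery or demands special-function estimates you have not carried out.
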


We have the following Liouville theorem for solutions in the half-space which satisfy the specular reflection condition and do not grow faster than $(1+R)^{6-\eps}$. In particular, this result implies \autoref{thm2}.

\begin{theorem}
\label{thm:Liouville-higher-half-space-SR}
Let $k \in \{ 0 , 1 , 2 , 3 , 4, 5 \}$, $p \in \cP_{k-2}$, $\eps \in (0,1)$, $(a^{i,j})$ be a constant matrix satisfying \eqref{eq:unif-ell}, and $f$ be a weak solution to

\begin{equation*}
\left\{\begin{array}{rcl}
\partial_t f + v \cdot \nabla_x f + (-a^{i,j} \partial_{v_i,v_j}) f &=& p ~~ \qquad\qquad~~ \text{ in } \R \times \{ x_n > 0 \} \times \R^n,\\
f(t,x,v) &=& f(t,x,\mathcal{R}_x v) ~~  \text{ in } \R \times \{ x_n = 0 \} \times \R^n
\end{array}\right.
\end{equation*}
with $\Vert f \Vert_{L^{\infty}(Q_R(0))} \le C (1 + R)^{k+\eps}$ for any $R > 0$. Then, the following holds true:
\begin{itemize}
\item If $k \le 4$, or $k = 5$ and there exist $q \in \cP_3$, and $c \in \R$ such that
\begin{align*}
p(t,x,v) = q(t,x',v') + c(v_n^3 - 2a^{n,n} x_n),
\end{align*}
then $f \in \cP_{k}$.
\item Otherwise, there exist $P \in \cP_5$ satisfying the specular reflection condition $P(t,x',0,v',v_n) = P(t,x',0,v',-v_n)$, and $m \in \R \setminus \{0\}$ such that 
\begin{align}
\label{eq:Tricomi}
f(t,x,v) = P(t,x,v) + m \mathcal{T}_{a^{n,n}}(x_n,v_n),
\end{align}
where $\mathcal{T}$ denotes the Tricomi solution.
\end{itemize}
\end{theorem}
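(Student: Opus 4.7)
The plan is to reduce the half-space problem to a one-dimensional kinetic Fokker-Planck equation on $\{x_n>0\}\times\R$, and then to classify solutions of that 1D equation by a symmetric reflection across $\{x_n=0\}$ combined with the full-space Liouville theorem \autoref{prop:Liouville-higher-neg-times}. The first step is to extract the tangential polynomial structure. The kinetic translations $z\mapsto z_0\circ z$ with $z_0=(t_0,x_0',0,v_0',0)$ preserve the half-space, the equation, and the specular reflection condition, so an incremental quotient of $f$ along such a $z_0$ again solves the same PDE (with right-hand side $p(z_0\circ\cdot)-p\in\cP_{k-2}$) and the same boundary condition. Applying the boundary H\"older estimate \autoref{lemma:boundary-reg-specular} on kinetic cylinders of large radius $R$ and iterating difference quotients of $f$ in the $(t,x',v')$-directions exactly as in the proof of \autoref{prop:Liouville-higher-neg-times}, the differences of high enough order vanish by the growth bound and the polynomial degree of $p$. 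Therefore
\[ f(t,x,v)=\sum_{|\beta|\le k} t^{\beta_t}(x')^{\beta_{x'}}(v')^{\beta_{v'}}\,c_\beta(x_n,v_n), \]
where each $c_\beta$ satisfies the 1D specular reflection $c_\beta(0,v_n)=c_\beta(0,-v_n)$ and the growth $|c_\beta(x_n,v_n)|\le C(1+|x_n|^{1/3}+|v_n|)^{k+\eps-|\beta|}$.

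Substituting the expansion into the PDE and matching the coefficients of each tangential monomial $t^{\alpha_t}(x')^{\alpha_{x'}}(v')^{\alpha_{v'}}$, every $c_\alpha$ satisfies the 1D kinetic equation
\[ \mathcal{L}_1 c_\alpha:=v_n\partial_{x_n}c_\alpha-a^{n,n}\partial_{v_nv_n}c_\alpha=Q_\alpha\quad\text{in }\{x_n>0\}\times\R, \]
with specular reflection at $x_n=0$, where $Q_\alpha$ is a 1D kinetic polynomial of degree at most $k-2-|\alpha|$ depending linearly on $p$ and on the $c_\beta,\partial_{v_n}c_\beta$ with $|\beta|>|\alpha|$. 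The heart of the argument is the following 1D Liouville statement: any solution $c$ of $v\partial_xc-A\partial_{vv}c=Q$ on $\{x>0\}\times\R$ with specular reflection, $Q$ a 1D kinetic polynomial of degree $\le j-2$, and growth $|c|\le C(1+|x|^{1/3}+|v|)^{j+\eps}$ decomposes as $c=P+m\mathcal{T}_A$, where $P$ is a 1D kinetic polynomial of degree $\le j$ satisfying specular reflection, $m\in\R$, and $m=0$ whenever $j\le 4$. To prove this, we first solve $\mathcal{L}_1(P_0+m\mathcal{T}_A)=Q$ for some polynomial $P_0$ of 1D kinetic degree $\le 5$ with specular reflection and some $m\in\R$; this is possible because the direct computation
\[ \mathcal{L}_1\bigl(a_0+a_2v^2+a_4v^4+b_1x+b_2xv+b_3xv^2\bigr)=-2Aa_2+b_1v+(b_2-12Aa_4)v^2+b_3(v^3-2Ax), \]
combined with $\mathcal{L}_1\mathcal{T}_A=-40Av^3$ from \autoref{prop:Tricomi}(i), shows that $\mathcal{L}_1$ applied to the 1D kinetic polynomials of degree $\le 5$ satisfying specular reflection, together with $\R\mathcal{T}_A$, spans the whole space of 1D kinetic polynomials of degree $\le 3$.

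Setting $\tilde c:=c-P_0-m\mathcal{T}_A$, $\tilde c$ now solves $\mathcal{L}_1\tilde c=0$ with specular reflection and the same polynomial growth. We extend it by odd reflection, $\bar c(x,v):=\tilde c(-x,-v)$ for $x<0$. Using the boundary identity $v\partial_x\tilde c(0,v)=A\partial_{vv}\tilde c(0,v)$, obtained by evaluating the equation at $x=0$, together with specular reflection, a direct computation shows that $\bar c$, $\partial_v\bar c$, $\partial_{vv}\bar c$ and $\partial_x\bar c$ are all continuous across $\{x=0\}$, so $\bar c$ is a classical solution of $\mathcal{L}_1\bar c=0$ on $\R\times\R$. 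Regarding $\bar c$ as a $t$-independent solution of the full $n=1$ Kolmogorov equation on $\R\times\R\times\R$ with polynomial growth of kinetic order $\le j+\eps$, \autoref{prop:Liouville-higher-neg-times} forces $\bar c\in\cP_j$, so $c=\tilde c+P_0+m\mathcal{T}_A$ has the claimed form on $\{x>0\}\times\R$. Feeding this 1D Liouville back into the system above inductively, from the largest $|\alpha|$ downward, the growth exponent $k+\eps-|\alpha|$ is strictly less than $5$ for every $|\alpha|>0$, so each such $c_\alpha$ is a polynomial of 1D kinetic degree $\le k-|\alpha|$ and only $c_0$ (with $k=5$) can acquire a Tricomi component. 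Tracking the induction under the hypothesis $p=q(t,x',v')+c(v_n^3-2a^{n,n}x_n)$ shows that $Q_0$ lies in the image of $\mathcal{L}_1$ on specular-reflection polynomials, forcing $m=0$ and $f\in\cP_5$; otherwise the $v_n^3$ obstruction modulo $v_n^3-2a^{n,n}x_n$ is nonzero and one arrives at the decomposition~\eqref{eq:Tricomi}.

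The main obstacle is the 1D Liouville step. Its crux is verifying that the odd reflection $\bar c$ is smooth enough across $\{x=0\}$ to be a classical solution of the kinetic equation on the whole plane, which hinges on the boundary identity $v\partial_x\tilde c(0,v)=A\partial_{vv}\tilde c(0,v)$ to match $\partial_x\bar c$ from the two sides. The second delicate ingredient is the sharp algebraic description of the image of $\mathcal{L}_1$ on the specular-reflection polynomials, together with the fact that $\mathcal{T}_A$, constructed via the Tricomi confluent hypergeometric function (\autoref{prop:Tricomi}), supplies the single missing direction in the corresponding quotient.
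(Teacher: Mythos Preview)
Your reduction to the one-dimensional problem by iterated tangential differences is essentially the paper's Step~1 (the paper does $t,x'$ first and then $v'$, but this is cosmetic). The genuinely new idea is your treatment of the 1D Liouville: instead of the paper's route through \autoref{prop:Liouville-higher-half-space-SR-1D} (reduce to homogeneous solutions, then solve Kummer's ODE explicitly via ${}_1F_1$), you first subtract an explicit particular solution $P_0+m\mathcal{T}_A$ using the algebraic image computation, and then appeal to the full-space Liouville after an odd reflection. This is cleaner and avoids the hypergeometric analysis entirely; it works because you are allowed to quote \autoref{prop:Tricomi}(i) for the single missing direction $v^3$.

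There is, however, a real gap in the reflection step. Your ``boundary identity $v\partial_x\tilde c(0,v)=A\partial_{vv}\tilde c(0,v)$, obtained by evaluating the equation at $x=0$'' presupposes that $\partial_x\tilde c$ and $\partial_{vv}\tilde c$ extend continuously to $\{x=0\}$. Away from the grazing point $v=0$ this follows from \autoref{prop:regularity-SR}, but at $v=0$ you are assuming precisely the boundary regularity that the Liouville theorem is meant to establish, so the argument is circular there. The correct fix is already in the paper: \autoref{lemma:mirror} shows that the odd reflection $\bar c$ is a \emph{weak} solution on all of $\R\times\R$ using only the weak specular-reflection condition (no pointwise boundary derivatives needed), and then interior regularity upgrades it to classical. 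In other words, your odd-reflection step is exactly \autoref{lemma:Liouville-flip} in one dimension, and you should invoke it rather than the pointwise matching. A minor related point: for $j\le 4$ you must choose $P_0$ of degree $\le j$ (your explicit image computation shows this is possible, since $Q\in\cP_{j-2}\subset\mathrm{span}\{1,v,v^2\}$), so that $\tilde c$ inherits the growth $(1+R)^{j+\eps}$ and the full-space Liouville yields $\tilde c\in\cP_j$; saying only ``degree $\le 5$'' is not enough.
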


For the purpose of this article, it is sufficient to state the Liouville theorem up to $k = 5$. It is possible to give a formula for $f$ even in case $k > 5$ by repeating the arguments of our proof. However, we do not pursue this generalization here. See also \autoref{prop:Liouville-higher-half-space-SR-1D} and \autoref{lemma:Liouville-higher-half-space-SR-1D-hom} for higher order Liouville theorems in one dimension. 

We point out that under certain structural assumptions on $p$, solutions will always be polynomials also for $k \ge 5$, as the following result implies.

\begin{lemma}
\label{lemma:Liouville-flip}
Let $k \in \N \cup \{ 0 \}$, $p \in \cP_{k-2}$, $\eps \in (0,1)$, $(a^{i,j})$ be a constant matrix satisfying \eqref{eq:unif-ell} and $a^{i,n} = a^{n,i} = 0$, whenever $i \not= n$, and $f$ be a weak solution to

\begin{equation*}
\left\{\begin{array}{rcl}
\partial_t f + v \cdot \nabla_x f + (-a^{i,j} \partial_{v_i,v_j}) f &= p ~~~ \qquad\qquad \text{ in } \R \times \{ x_n > 0 \} \times \R^n,\\
f(t,x,v) &= f(t,x,\mathcal{R}_x v) ~~ \text{ in } \R \times \{ x_n = 0 \} \times \R^n
\end{array}\right.
\end{equation*}
with $\Vert f \Vert_{L^{\infty}(Q_R(0))} \le C (1 + R)^{k+\eps}$ for any $R > 0$. Moreover, assume that
\begin{align}
\label{eq:p-symmetry}
p(t,x',x_n,v',v_n) = p(t,x',-x_n,v',-v_n) ~~ \forall (t,x,v) \in \R \times \R^n \times \R^n.
\end{align}
Then, $f \in \cP_{k}$.
\end{lemma}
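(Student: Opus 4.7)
The strategy is to mirror-extend $f$ across the hyperplane $\{x_n=0\}$ and reduce the statement to a Liouville theorem in the full space.

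First I would define
\begin{align*}
\bar f(t,x,v) := \begin{cases} f(t,x,v), & x_n \ge 0,\\ f(t,x',-x_n,v',-v_n), & x_n < 0, \end{cases}
\end{align*}
and note that the specular reflection condition ensures that $\bar f$ is continuous across $\{x_n=0\}$. Moreover, since $Q_R(0)$ is invariant under the involution $(x_n,v_n)\mapsto(-x_n,-v_n)$, the growth bound passes to $\bar f$: $\|\bar f\|_{L^\infty(Q_R(0))}\le C(1+R)^{k+\eps}$ for every $R>0$, and analogous estimates hold on cylinders centered at any fixed point $z_0$, with constants depending only on $z_0$.

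Next I would verify, by applying \autoref{lemma:mirror} (which is available because the equation can be rewritten in divergence form, as $a^{i,j}$ is constant) at any boundary point, that $\bar f$ is a weak solution on all of $\R\times\R^n\times\R^n$ of
\begin{align*}
\partial_t\bar f+v\cdot\nabla_x\bar f+(-\bar a^{i,j}\partial_{v_i,v_j})\bar f=\bar p,
\end{align*}
with reflected coefficients $\bar a^{i,j}=(-1)^{\delta^{i,n}+\delta^{j,n}}a^{i,j}$ and reflected right-hand side $\bar p(t,x,v)=p(t,x',-x_n,v',-v_n)$ in $\{x_n<0\}$. The structural hypothesis $a^{i,n}=a^{n,i}=0$ for $i\ne n$ forces $\bar a^{i,j}=a^{i,j}$, and hypothesis \eqref{eq:p-symmetry} forces $\bar p\equiv p$. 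Hence $\bar f$ solves the \emph{same} equation globally.

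Finally, I would conclude by invoking \autoref{prop:Liouville-higher-neg-times}. For each $T\in\R$, the time-translate $\bar f(\cdot+T,\cdot,\cdot)$ restricted to $(-\infty,0)\times\R^n\times\R^n$ satisfies the hypotheses of that proposition, with the growth bound inherited (with a $T$-dependent but $R$-independent constant) from the first paragraph. Each such restriction therefore lies in $\cP_k$, and the identity principle on the overlapping strips $\{t<T\}\cap\{t<T'\}$ forces all these polynomials to coincide with a single $P\in\cP_k$. Hence $\bar f\equiv P$ globally, and in particular $f=\bar f|_{\{x_n\ge 0\}}\in\cP_k$.

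The main obstacle is the verification in the second paragraph: to guarantee that $\bar f$ is a genuine weak solution across the interface $\{x_n=0\}$, one needs both the specular reflection identity (which handles continuity and the first-order transport $v\cdot\nabla_x$ compatibly with the sign change of $v_n$) and the vanishing of the mixed second-order coefficients $a^{i,n}$ (so that no singular conormal term is created upon integrating by parts a test function supported across the hyperplane). These are precisely the structural assumptions of the lemma, and together they make the extension step work.
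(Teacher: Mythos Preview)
Your proposal is correct and follows essentially the same route as the paper: mirror-extend via \autoref{lemma:mirror}, use the structural hypotheses to see that $\bar a^{i,j}=a^{i,j}$ and $\bar p=p$, and then invoke \autoref{prop:Liouville-higher-neg-times}. The paper is terser and simply applies \autoref{prop:Liouville-higher-neg-times} directly to $\bar f$; your extra step of time-translating and patching on overlapping half-lines makes explicit why the negative-times Liouville theorem suffices for a solution defined on all of $\R$, which is a fair point of rigor.
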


\begin{proof}
By \autoref{lemma:mirror}, the extended function $\bar{f}$ given by $\bar{f} = f$ in $\R \times \{ x_n > 0 \} \times \R^n$ and $\bar{f}(t,x,v) = f(t,x',-x_n,v',-v_n)$ in $\R \times \{ x_n \le 0 \} \times \R^n$ is a weak solution to
\begin{align*}
\partial_t f + v \cdot \nabla_x f + (-a^{i,j} \partial_{v_i,v_j}) f &= p ~~ \text{ in } \R \times \R^n \times \R^n.
\end{align*}
Since $p$ is a polynomial and $a^{i,j}$ is constant, we deduce that $\bar{f}$ is in particular a classical solution (see \autoref{lemma:interior-reg}).
Hence, we can apply \autoref{prop:Liouville-higher-neg-times} to deduce that $f \in \cP_k$, as desired.
\end{proof}

Note that the assumption \eqref{eq:p-symmetry} is not preserved under flattening the boundary.

To prove the higher order Liouville theorem in the half-space in \autoref{thm:Liouville-higher-half-space-SR}, let us first prove the following stationary version in 1D, assuming in addition that $f$ is homogeneous.

\begin{lemma}
\label{lemma:Liouville-higher-half-space-SR-1D-hom}
Let $\lambda \in \N \cup \{ 0 \}$, $p \in \cP_{\lambda}$ be a homogeneous polynomial of degree $\lambda$, $A > 0$, and $f$ be a weak solution to

\begin{equation*}
\left\{\begin{array}{rcl}
v \partial_x f - A\partial_{vv} f &=& p ~~\quad\qquad \text{ in } \{ x > 0 \} \times \R,\\
f(0,v) &=& f(0,-v) ~~ \text{ in } \R
\end{array}\right.
\end{equation*}
with $f(r^3x,rv) = r^{\lambda+2} f(x,v)$, i.e. $f$ is ($\lambda+2$)-homogeneous, and $\Vert f \Vert_{L^{\infty}(\tilde{Q}_1(0))} \le C$ for some $C > 0$. Then, the following holds true:
\begin{itemize}
\item If $\lambda \not= 6k+3$ for some $k \in \N$, then $f \in \cP_{\lambda+2}$.
\item If $\lambda = 6k+3$ for some $k \in \N$ and $p$ is of the form
\begin{align}
\label{eq:p-special}
p(x,v) = \sum_{l = 1}^{2k+1} c_l \big( l A^{-1} v^{6(k+1) - 3l} x^{l-1} - (6k+5 - 3l)(6k+4 - 3l) v^{6(k + 1)- 3(l+1)}x^l \big)
\end{align}
for some $c_l \in \R$, $l \in \{1 , \dots , 2k+1 \}$, then $f \in \cP_{\lambda+2}$.
\item If $\lambda = 6k+3$ for some $k \in \N$ and $p$ is not of the form \eqref{eq:p-special}, then
\begin{align}
\label{eq:Tricomi-sol}
f(x,v) = P(x,v) + m \mathcal{T}_{A,\lambda}(x,v)
\end{align}
for some ($\lambda+2$)-homogeneous $P \in \cP_{\lambda + 2}$ satisfying $P(0,v) = 0 = P(0,-v)$ and some $m \in \R \setminus \{ 0 \}$, where $\mathcal{T}_{A,\lambda}$ is given by
\begin{align}
\label{eq:Tricomi-sol-def}
\mathcal{T}_{A,\lambda}(x,v) = A^{-\frac{\lambda+2}{2}} v^{\lambda+2} - 2 \cdot 9^{\lambda + 2} A^{-\frac{\lambda+2}{6}} x^{\frac{\lambda + 2}{3}} U \left( - \frac{\lambda + 2}{3} ; \frac{2}{3} ; \frac{-v^3}{9Ax} \right),
\end{align}
and satisfies $\mathcal{T}_{A,\lambda}(0,v) = \mathcal{T}_{A,\lambda}(0,-v)$.
\end{itemize}
Moreover, for any $\lambda,p$, there exists a solution $f$ that is ($\lambda+2$)-homogeneous.
\end{lemma}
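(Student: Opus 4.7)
The plan is to reduce the half-space PDE to a second-order linear ODE on $\R$ by exploiting the $(\lambda+2)$-homogeneity of $f$. By interior regularity (\autoref{lemma:interior-reg}), $f \in C^\infty(\{x>0\} \times \R)$, and the homogeneity gives the representation $f(x,v) = x^{(\lambda+2)/3} g(v/x^{1/3})$ for $x > 0$ with $g(\eta) := f(1,\eta) \in C^\infty(\R)$. Substituting this ansatz into the PDE produces
\begin{equation*}
A g''(\eta) + \tfrac{\eta^2}{3} g'(\eta) - \tfrac{\lambda+2}{3} \eta g(\eta) = -\tilde p(\eta), \qquad \eta \in \R,
\end{equation*}
where $\tilde p(\eta) := p(1,\eta)$ is a polynomial of degree $\le \lambda$ supported on monomials $\eta^j$ with $j \equiv \lambda \pmod 3$. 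The $L^\infty$ bound together with homogeneity forces $|g(\eta)| \le C(1+|\eta|)^{\lambda+2}$, and the specular reflection translates into an even/odd symmetry condition on the leading-order asymptotics of $g$ at $\pm\infty$.

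Next I would seek polynomial solutions $g(\eta) = \sum_j \beta_j \eta^j$. Homogeneity kills the coefficients at residues other than $r^* := (\lambda+2) \bmod 3$, and coefficient matching yields the three-term recursion
\begin{equation*}
A(k+2)(k+1)\beta_{k+2} + \tfrac{k-\lambda-3}{3}\,\beta_{k-1} = -\tilde\pi_k, \qquad k \ge 0,
\end{equation*}
where $\tilde\pi_k$ are the coefficients of $\tilde p$. The crucial feature is that the factor $(k-\lambda-3)/3$ vanishes precisely at $k = \lambda+3$, forcing $\beta_{\lambda+5} = 0$ so that the chain self-terminates; hence a polynomial solution of the ODE of kinetic degree $\lambda+2$ always exists. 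The only obstruction is the boundary condition, which for a polynomial reduces to $\beta_{\lambda+2} = 0$ whenever $\lambda+2$ is odd. When $r^* \in \{0,1\}$ the initial coefficient $\beta_{r^*}$ is a free parameter on which $\beta_{\lambda+2}$ depends affinely and nondegenerately, so this constraint can be satisfied. When $r^* = 2$ (i.e., $\lambda \equiv 0 \pmod 3$) the recursion begins at $\beta_{-1} = 0$ and rigidly determines every coefficient from $\tilde p$; the BC is automatic for $\lambda$ even, while for $\lambda = 6k+3$ it becomes a single scalar condition on $p$. A direct evaluation of the linear map $L Q := v\partial_x Q - A\partial_{vv} Q$ on the basis $\{x^l v^{6k+5-3l}\}_{l=1}^{2k+1}$ of the $(2k+1)$-dimensional space $V_0$ of kinetic polynomials of degree $\lambda+2$ vanishing at $x = 0$ produces (up to an overall factor of $A$) exactly the spanning set in \eqref{eq:p-special}, so the BC constraint is precisely $p \in L(V_0)$.

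In the remaining Tricomi regime $\lambda = 6k+3$ with $p \notin L(V_0)$, no polynomial solution exists and I would construct a non-polynomial one. The substitution $\xi = -\eta^3/(9A)$ converts the homogeneous ODE for $g$ into Kummer's confluent hypergeometric equation
\begin{equation*}
\xi w''(\xi) + (\tfrac{2}{3} - \xi) w'(\xi) + \tfrac{\lambda+2}{3} w(\xi) = 0,
\end{equation*}
whose two linearly independent solutions are $M(-(\lambda+2)/3;\,2/3;\,\xi)$ and the Tricomi function $U(-(\lambda+2)/3;\,2/3;\,\xi)$. Since $(\lambda+2)/3 \notin \N \cup \{0\}$ in this regime, $M$ does not terminate and grows exponentially, so the polynomial-growth bound excludes it; $U$ supplies the required algebraic-growth solution. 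The explicit combination in \eqref{eq:Tricomi-sol-def} is precisely the one whose pure $v^{\lambda+2}$ correction compensates the leading-order asymptotics of $x^{(\lambda+2)/3} U$ so that $\mathcal{T}_{A,\lambda}$ is smooth across $\eta = 0$ and symmetric under $v \mapsto -v$ at $x = 0$. Choosing $m$ so that $L(m\mathcal{T}_{A,\lambda})$ captures the component of $p$ outside $L(V_0)$ and then solving for $P \in V_0$ yields the decomposition $f = P + m\mathcal{T}_{A,\lambda}$, and the existence of a $(\lambda+2)$-homogeneous solution for arbitrary $\lambda, p$ follows from these constructions.

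The main obstacle I expect is the analytic verification of the properties of $\mathcal{T}_{A,\lambda}$: smoothness across $\eta = 0$ despite $U$'s branch at $\xi = 0$, the precise asymptotics at $\eta \to \pm\infty$ needed to ensure both the polynomial-growth bound and the specular symmetry at $x = 0$, and the identification of $\mathcal{T}_{A,\lambda}$ as the unique (up to scalar) non-polynomial homogeneous solution of algebraic growth. Since these are precisely the contents of \autoref{prop:Tricomi}, this final step reduces with the ODE reduction in hand to careful bookkeeping with Kummer/Tricomi asymptotics.
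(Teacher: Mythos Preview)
Your approach is essentially the paper's: both reduce via homogeneity to a confluent hypergeometric ODE (your $\xi=-\eta^3/(9A)$ is the paper's $\tau=-v^3/(9Ax)$), build a polynomial particular solution, and classify the algebraic-growth homogeneous solutions through the Kummer/Tricomi basis. The packaging differs---you use the profile variable $\eta=v/x^{1/3}$ and a three-term coefficient recursion, while the paper works directly with $\tau$ and constructs the polynomial particular solution by induction on the $x$-exponent of each monomial---but both routes correctly identify \eqref{eq:p-special} as the image $L(V_0)$ of the degree-$(\lambda+2)$ polynomials vanishing at $x=0$.

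There is, however, a real gap in your uniqueness argument outside the Tricomi regime. You show that a polynomial solution satisfying the boundary condition \emph{exists} whenever $\lambda\ne 6k+3$, but the lemma asserts that \emph{every} admissible $f$ is polynomial there. The missing step is excluding a nonzero contribution from the algebraic-growth homogeneous solution. For $\lambda\equiv 1,2\pmod 3$ this is easy once stated: one of the two ${}_1F_1$ solutions terminates to a polynomial and the other grows exponentially, so the algebraic-growth kernel is already polynomial and your free parameter $\beta_{r^*}$ captures it entirely. But for $\lambda=6k$ (also $r^*=2$, yet $\lambda+2$ even) neither ${}_1F_1$ terminates and the unique algebraic-growth homogeneous solution is the non-polynomial Tricomi function; it is not excluded by growth, so you must show it violates specular reflection. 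The paper dispatches this with \autoref{lemma:Liouville-flip}: any $h$ with $Lh=0$ in $\{x>0\}$ satisfying $h(0,v)=h(0,-v)$ extends by mirror reflection to a global solution with polynomial growth, hence is a polynomial by \autoref{prop:Liouville-higher-neg-times}, hence zero (there being no nonzero polynomial in $\ker L$ when $\lambda\equiv0\pmod3$). You could alternatively verify directly from the asymptotics \eqref{eq:Tricomi-asymp} that $K=2\cos(\pi(a+\tfrac13))\ne1$ forces asymmetric boundary values, but the reflection shortcut is cleaner and should be included. Also, do not appeal to \autoref{prop:Tricomi} for the analytic properties of $\mathcal{T}_{A,\lambda}$: its proof cites the present lemma, so the hypergeometric bookkeeping you anticipate must be done here.
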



Note that Step 3 of the proof is inspired by a computation in \cite{HJV14} (see also \cite{Zhu22}).

\begin{proof}[Proof of \autoref{lemma:Liouville-higher-half-space-SR-1D-hom}]
The proof is split into several steps.

\textbf{Step 1:} First, note that it is sufficient to prove the desired result in case $A = 1$. Indeed, assume we have already proved the result for $A = 1$. Then, define $\tilde{f}(x,v) = f(A^{1/2}x,A^{1/2}v)$, and observe that it satisfies
\begin{align*}
[v \partial_x - \partial_{vv}]\tilde{f}(x,v)= [A^{1/2}v \partial_x f - A\partial_{vv}f](A^{1/2}x,A^{1/2}v) = p(A^{1/2}x,A^{1/2}v).
\end{align*}
Hence, the desired result for general $A \not= 1$ can be deduced from the special case $A = 1$.

\textbf{Step 2:} We claim that for any $p \in \cP_{\lambda}$ that is homogeneous of degree $\lambda$, there exists a $(\lambda+2)$-homogeneous polynomial $P \in \cP_{\lambda + 2}$ solving
\begin{align}
\label{eq:inhom-1D-PDE-poly}
v \partial_x f - \partial_{vv} f = p ~~ \text{ in } (0,\infty) \times \R.
\end{align} 
Note that here we are disregarding the boundary condition at $x = 0$.
To prove this claim, it suffices to find solutions for any monomial of degree $\lambda$, i.e. it suffices to prove that for any $\lambda_1,\lambda_2 \in \N \cup \{ 0 \}$ with $\lambda_1 + 3 \lambda = \lambda$ and any $a \in \R$, there exists a ($\lambda+2$)-homogeneous polynomial $P \in \cP_{\lambda +2}$ solving
\begin{align}
\label{eq:inhom-1D-PDE}
v \partial_x f - \partial_{vv} f = a x^{\lambda_1} v^{\lambda_2} ~~ \text{ in } (0,\infty) \times \R.
\end{align}
We will prove this claim by induction over $\lambda_1$. First, if $\lambda_1 = 0$, then we can choose 
\begin{align*}
P_{0,\lambda_2,a} = - \frac{a}{(\lambda_2+2)(\lambda_2 + 1)} v^{\lambda_2 + 2},
\end{align*}
and obtain that $P_{0,\lambda_2,a} $ solves 
\begin{align*}
v \partial_x P_{0,\lambda_2,a}  - \partial_{vv} P_{0,\lambda_2,a}  = a v^{\lambda_2} ~~ \text{ in } (0,\infty) \times \R.
\end{align*}
This proves the claim for any $\lambda_2 \in \N \cup \{ 0 \}$ and $\lambda_1 = 0$.
Next, let us assume that we have proved the claim for some $\lambda_1 \in \N \cup \{0\}$, i.e. that for any $\lambda_2 \in \N \cup \{ 0 \}$ and $a \in \R$, we have found a homogeneous $P_{\lambda_1,\lambda_2,a} \in \cP_{\lambda+2}$ solving 
\begin{align*}
v \partial_x P_{\lambda_1,\lambda_2,a} - \partial_{vv} P_{\lambda_1,\lambda_2,a} = a x^{\lambda_1} v^{\lambda_2} ~~ \text{ in } (0,\infty) \times \R.
\end{align*} 
Then, let us fix $a \in \R$ and $\lambda_2 \in \N \cup \{ 0 \}$ and observe that
\begin{align*}
(v \partial_x - \partial_{vv} ) x^{\lambda_1+1} P_{0,\lambda_2,a} = (\lambda_1+1) x^{\lambda_1} v P_{0,\lambda_2,a} + a x^{\lambda_1+1} v^{\lambda_2} = a x^{\lambda_1+1} v^{\lambda_2} - \frac{a (\lambda_1+1)}{(\lambda_2+2)(\lambda_2 + 1)} x^{\lambda_1} v^{\lambda_2 + 3}.
\end{align*}
Then, by the induction assumption, for $b =  \frac{a (\lambda_1+1)}{(\lambda_2+2)(\lambda_2 + 1)}$ we can find a homogeneous $P_{b,\lambda_1,\lambda_2+3} \in \cP_{3\lambda_1+\lambda_2+5}$ such that
\begin{align*}
(v \partial_x - \partial_{vv}) P_{b,\lambda_1,\lambda_2+3} = b x^{\lambda_1} v^{\lambda_2+3} ~~ \text{ in } (0,\infty) \times \R.
\end{align*}
Hence, we set
\begin{align*}
P_{a,\lambda_1 + 1 , \lambda_2} = x^{\lambda_1+1} P_{0,\lambda_2,a} + P_{b,\lambda_1,\lambda_2+3} \in \cP_{3 \lambda_1 + \lambda_2 + 5} = \cP_{3(\lambda_1 + 1) + \lambda_2 + 2},
\end{align*}
and conclude the proof of the claim.

\textbf{Step 3:} The goal of this step is to find all functions $h$ that are homogeneous of degree $\lambda + 2$ and solve the corresponding homogeneous equation (still disregarding the boundary condition)
\begin{align}
\label{eq:hom-1D-PDE}
v \partial_x h - \partial_{vv} h = 0 ~~ \text{ in } (0,\infty) \times \R.
\end{align}
We will find all ($\lambda+2$)-homogeneous solution $h$ to \eqref{eq:hom-1D-PDE} by writing
\begin{align*}
h(x,v) = x^{\frac{\lambda+2}{3}} \Psi(\tau), \qquad \tau := \frac{-v^3}{9 x},
\end{align*}
and identifying the corresponding ODE solved by $\Psi$.
We compute
\begin{align*}
\partial_x \left( x^{\frac{\lambda+2}{3}} \Psi(\tau) \right) &= \frac{\lambda + 2}{3x} x^{\frac{\lambda + 2}{3}} \Psi(\tau) + \frac{v^3}{9x^2} x^{\frac{\lambda+2}{3}} \Psi'(\tau),\\
\partial_{vv} \left( x^{\frac{\lambda+2}{3}} \Psi(\tau) \right) &= x^{\frac{\lambda + 2}{3}}  \partial_v \left( \Psi'(\tau) \left( - \frac{v^2}{3 x} \right) \right) = x^{\frac{\lambda + 2}{3}} \left(\Psi''(\tau) \frac{v^4}{9 x^2} - \Psi'(\tau) \frac{2v}{3x} \right).
\end{align*}
Hence, we obtain
\begin{align*}
(v \partial_x - \partial_{vv})h(x,v) &= \frac{v x^{\frac{\lambda+2}{3}}}{3 x} \left( (\lambda+2) \Psi(\tau) + 2 \Psi'(\tau) + \frac{v^3}{3x} \Psi'(\tau)  - \frac{v^3}{3x} \Psi''(\tau) \right) \\
&= \frac{v x^{\frac{\lambda+2}{3}}}{x} \left( \frac{\lambda+2}{3} \Psi(\tau) + \left(\frac{2}{3} - \tau \right) \Psi'(\tau) + \tau \Psi''(\tau) \right).
\end{align*}
This implies that $\Psi$ solves the so-called ``Kummer's equation''
\begin{align}
\label{eq:hom-Kummer}
\frac{\lambda+2}{3} \Psi(\tau) + \left(\frac{2}{3} - \tau \right) \Psi'(\tau) + \tau \Psi''(\tau) = 0 .
\end{align}

Note that any solution $\Psi$ is of the form $\Psi = C_1\Psi_1 + C_2 \Psi_2$ for some $C_1, C_2 \in \R$, where $\Psi_1,\Psi_2$ are given by
\begin{align*}
\Psi_1(\tau) =  {}_1F_1\left(-\frac{\lambda+2}{3}; \frac{2}{3} ;\tau\right) , \qquad \Psi_2(\tau) = \tau^{1/3} ~ {}_1F_1\left(-\frac{\lambda+1}{3}; \frac{4}{3} ;\tau\right),
\end{align*}
and ${}_1 F_1$ denote hypergeometric functions of the first kind.

Hence, we have that $h$ is of the form
\begin{align}
\label{eq:hom-ODE-sol}
h(x,v) = x^{\frac{\lambda+2}{3}} \Psi(\tau) = C_1 x^{\frac{\lambda+2}{3}}  {}_1F_1\left(-\frac{\lambda+2}{3}; \frac{2}{3} ;\tau\right) + C_2 v x^{\frac{\lambda+1}{3}} {}_1F_1\left(-\frac{\lambda+1}{3}; \frac{4}{3} ;\tau\right).
\end{align}

\textbf{Step 4:} By combination of the first and second step, any solution $f$ to \eqref{eq:inhom-1D-PDE-poly} is of the form $f = P + h$, where $P \in \cP_{\lambda+2}$ is the ($\lambda+2$)-homogeneous polynomial from Step 2 and $h$ is a ($\lambda+2$)-homogeneous function of the form \eqref{eq:hom-ODE-sol} (for some $C_1, C_2 \in \R$).  We will now classify all solutions $f = P + h$ that do not grow faster than a polynomial and satisfy the specular reflection condition (as in the statement of the result). 

To do so, let us first recall that if $-a \in \N$, then ${}_1F_1(a,b;\tau)$ is a polynomial of degree $-a$. Otherwise, it holds 
\begin{align}
\label{eq:as-exp-1F1}
{}_1F_1(a;b;z) \sim \Gamma(b) \left( \frac{e^z z^{a-b}}{\Gamma(a)} + \frac{(-z)^{-a}}{\Gamma(b-a)} \right) \qquad \text{ as } |z| \to \infty.
\end{align}
Here, by the notation $f \sim g$, we mean that $f/g \to 1$.

In particular, we deduce from \eqref{eq:hom-ODE-sol} that $h(1,v)$ grows exponentially as $v \to - \infty$, whenever $C_1 > 0$ and $\frac{\lambda+1}{3} \in \N$ or $C_2 > 0$ and $\frac{\lambda+2}{3} \in \N$. Thus, in  case $\frac{\lambda+1}{3} \in \N$, we must have $C_1 = 0$, and in case $\frac{\lambda+2}{3} \in \N$ it must be $C_2 = 0$ for $f$ to satisfy the growth condition. Hence, in both of these cases we have $h \in \cP_{\lambda + 2}$, and therefore also $f = P + h \in \cP_{\lambda+2}$. Finally, note that in both of these cases, there indeed exists a solution $f$ satisfying in addition the specular reflection boundary condition. Indeed, if $P$ already satisfies the specular reflection condition, then we can simply take $C_1 = 0$ in the definition of $h$ and have thus $f = P$. Otherwise, it must be $P(0,v) = c_0 v^{\lambda + 2}$ for some $c_0 \in \R \setminus \{0\}$ and $\lambda \in \N$ must be odd. Clearly, since $\Psi(\tau)$ was also homogeneous of degree $\frac{\lambda + 2}{3}$, it  holds $h(0,v) = C_1 v^{\lambda+2}$ where we can still choose $C_1 \in \R$. By taking $C_1 = - c_0$, we obtain that $f(0,v) = 0$ and therefore $f$ satisfies the specular reflection boundary condition.

It remains to treat the case $\frac{\lambda}{3} \in \N$. By the asymptotic expansion \eqref{eq:as-exp-1F1} for ${}_1 F_1$, the function $h$ grows exponentially also in this case, unless $C_1,C_2 \in \R$ are chosen such that for some  $m \in \R$:
\begin{align*}
C_1 = m\frac{\Gamma\left(-\frac{1}{3}\right)}{\Gamma\left(-\frac{\lambda+1}{3}\right)}, \qquad C_2 = m\frac{\Gamma\left(\frac{1}{3}\right)}{\Gamma\left(-\frac{\lambda+2}{3}\right)}.
\end{align*}
In this case, the exponential functions in the expansions cancel out and $h$ becomes a multiple of the so-called Tricomi solution (also known as Tricomi confluent hypergeometric function), i.e.
\begin{align*}
h(x,v) = m x^{\frac{\lambda+2}{3}} U\left(-\frac{\lambda+2}{3};\frac{2}{3};\tau \right).
\end{align*}
First, let us assume that $\lambda = 6k$ for some $k \in \N$, i.e. that $\frac{\lambda}{3} \in \N$ and $\lambda$ is also even. Then, $P \in \cP_{\lambda + 2} = \cP_{6k + 2}$ is even and therefore it already  satisfies the specular reflection condition $P(0,v) = P(0,-v)$. In particular, this guarantees the existence of a solution also in this case. Moreover, it follows that $f$ also satisfies the specular reflection condition if and only if $h(0,v) = h(0,-v)$. However, since $h$ solves the homogeneous equation \eqref{eq:hom-1D-PDE}, if it satisfies the specular reflection condition, by \autoref{lemma:Liouville-flip} it must be $h \in \cP_{\lambda + 2}$. This proves the result in case $\lambda = 6k$ for some $k \in \N$.\\
It remains to consider the case when $\lambda = 6k+3$ for some $k \in \N$, i.e. when $\frac{\lambda}{3} \in \N$ and $\lambda$ is odd. Since $P \in \cP_{\lambda+2} = \cP_{6k+5}$ is ($6k+5$)-homogeneous we have that either $P(0,v) \equiv 0$ or $P(0,v) = c v^{6k+5}$ for some $c \in \R \setminus \{ 0 \}$. In the first case, $P$ already satisfies the specular reflection condition and hence by the same arguments as before, we deduce from \autoref{lemma:Liouville-flip} that $h \in \cP_{\lambda + 2}$. This case occurs if and only if $p$ is of the form \eqref{eq:p-special}, as can be seen from the fact that any homogeneous polynomial of degree $\lambda + 2$ which satisfies the specular reflection condition is of the form $\sum_{l = 1}^{2k+1} c_l v^{6k + 5 - 3l} x^l$ for some $c_l \in \R$, and then
\begin{align*}
p(x,v) &= (v \partial_x - \partial_{vv}) \left( \sum_{l = 1}^{2k+1} c_l v^{6k + 5 - 3l} x^l \right) \\
&= \sum_{l = 1}^{2k+1} c_l( l v^{6(k+1) - 3l}x^{l-1} - (6k+5 - 3l)(6k+4 - 3l)v^{6(k + 1)- 3(l+1)}x^l).
\end{align*}

Finally, in case $P(0,v) = c v^{6k+5}$, we can write $P(x,v) = P_1(x,v) + c v^{6k+5}$, where $P_1 \in \cP_{\lambda+2}$ satisfies $P_1(0,v) = 0 = P_1(0,-v)$, and recall from \cite[(3.35), (3.36)]{HJV14} that 
\begin{align}
\label{eq:Tricomi-asymp}
U\left(-a,\frac{2}{3},-\tau^3\right) \sim 
\begin{cases}
K|\tau|^{3a}, ~~ \text{ as } \tau \to + \infty,\\
|\tau|^{3a}, ~~ \text{ as } \tau \to -\infty,
\end{cases}
\quad \text{ where } \quad  K = 2 \cos\left(\pi \left(a + \frac{1}{3} \right)\right).
\end{align}
In particular, since for $a = \frac{\lambda+2}{3} = \frac{6k + 5}{3}$ we have $K = 2$, this means
\begin{align*}
h(0,v) \sim 9^{-(6k+5)}m
\begin{cases}
2 v^{6k + 5} ~~ \text{ for } v > 0,\\
- v^{6k + 5} ~~ \text{ for } v \le 0.
\end{cases}
\end{align*}

In particular, the function $f = P + h$ satisfies the specular reflection condition if and only if 
\begin{align*}
c + 2 \cdot 9^{-(6k+5)}m = - \left(c - 9^{-(6k+5)}m \right) \quad \Leftrightarrow m = -2\cdot 9^{6k+5} c.
\end{align*}
In this case,
\begin{align*}
f(0,v) = 3c|v|^{6k+5} = f(0,-v) ~~ \forall v \in \R.
\end{align*}
Moreover, by construction
\begin{align*}
f(x,v) = P(x,v) + h(x,v) = P_1(x,v) + c \left[ v^{\lambda+2}- 2 \cdot 9^{\lambda+2} x^{\frac{5}{3}}  U\left(-\frac{\lambda+2}{3};\frac{2}{3}; \frac{-v^3}{9x} \right) \right]
\end{align*}
is a solution to \eqref{eq:inhom-1D-PDE}. Altogether, this proves the desired result.
\end{proof}

The previous proof puts us in a position to show the first two claims in \autoref{prop:Tricomi}.

\begin{proof}[Proof of \autoref{prop:Tricomi}(i),(ii)]
The claim (i) follows immediately from the proof of \autoref{lemma:Liouville-higher-half-space-SR-1D-hom}.
The first claim in (ii) follows since $U$ is analytic.
It is easy to see that Tricomi solution $\mathcal{T}_A$ is not smooth at $(x,v) = (0,0)$, but only $C^{5/3}_x$ in $x$ by taking suitable sequences $v_i \to 0$ and $x_i \to 0$ such that $-\frac{v_i^3}{9 Ax_i} \to c_0 \in \R$. Indeed, then it holds 
\begin{align*}
\mathcal{T}_A(x_i,v_i) - A^{-\frac{5}{2}}v_i^5 = -2\cdot 9^5 A^{-\frac{5}{6}} x_i^{\frac{5}{3}} U \left( -\frac{5}{3};\frac{2}{3};\frac{-v_i^3}{9 A x_i} \right) \sim -2 \cdot 9^5 A^{-\frac{5}{6}} x_i^{\frac{5}{3}}U \left( -\frac{5}{3};\frac{2}{3};c_0 \right) ~~ \text{ as }  i \to \infty.
\end{align*}
Here, by the notation $f_i \sim g_i$, we mean that $f_i/g_i \to 1$ as $i \to \infty$. This observation immediately implies that $\mathcal{T}_A \not\in C^{5}_{\ell}$. We will prove (iii), namely that $\mathcal{T}_A \in C^{4,1}_{\ell}$, directly after \autoref{prop:regularity-SR}.
\end{proof}

The following lemma is a higher order Liouville theorem in the half-space for the stationary equation in 1D. It generalizes \autoref{lemma:Liouville-higher-half-space-SR-1D-hom} in the sense that $f$ is not assumed to be homogeneous any more.

\begin{proposition}
\label{prop:Liouville-higher-half-space-SR-1D}
Let $k \in \N \cup \{ 0 \}$, $p \in \cP_{k-2}$, $\eps \in (0,1)$, $A > 0$, and $f$ be a weak solution to
\begin{equation*}
\left\{\begin{array}{rcl}
v \partial_x f - A\partial_{vv} f &=& p ~~ \qquad\quad \text{ in } \{ x > 0 \} \times \R,\\
f(0,v) &=& f(0,-v) ~~ \text{ in } \R
\end{array}\right.
\end{equation*}
with $\Vert f \Vert_{L^{\infty}(\tilde{Q}_R(0))} \le C (1 + R)^{k+\eps}$ for any $R > 0$. Then the following holds true:
\begin{itemize}
\item If $p$ does not contain monomials of degree $6l+3$ for any $l \in \N \cup \{ 0 \}$, or if they are all of the form \eqref{eq:p-special}, then $f \in \cP_k$.
\item Otherwise, there are $P \in \cP_k$ such that $P(0,v) = P(0,-v)$, and $m_{l} \in \R$ such that 
\begin{align*}
f(x,v) = P(x,v) + \sum_{l = 0}^{\lfloor\frac{k-3}{6} \rfloor} m_l \mathcal{T}_{A,6l+3}(x,v).
\end{align*}
\end{itemize}
\end{proposition}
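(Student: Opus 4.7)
The plan is to reduce the classification to the homogeneous version \autoref{lemma:Liouville-higher-half-space-SR-1D-hom} by subtracting off an explicit particular solution that absorbs any forced Tricomi contribution, and then to kill the remaining homogeneous solution via a one-dimensional stationary application of \autoref{lemma:Liouville-flip}. To this end, I first decompose the right-hand side into its homogeneous kinetic components, $p = \sum_\lambda p_\lambda$ with $p_\lambda$ homogeneous of degree $\lambda \le k-2$. For each such $\lambda$, the existence part of \autoref{lemma:Liouville-higher-half-space-SR-1D-hom} produces a $(\lambda+2)$-homogeneous weak solution $F_\lambda$ of $v\partial_x F_\lambda - A\partial_{vv} F_\lambda = p_\lambda$ satisfying $F_\lambda(0,v) = F_\lambda(0,-v)$. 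The same lemma characterizes $F_\lambda$: it belongs to $\cP_{\lambda+2}$ unless $\lambda = 6l+3$ for some $l \in \N \cup \{0\}$ and $p_\lambda$ is not of the special form \eqref{eq:p-special}, in which case $F_\lambda = P_\lambda + m_l\,\mathcal{T}_{A,6l+3}$ with $P_\lambda \in \cP_{6l+5}$, $P_\lambda(0,v) = 0$, and some $m_l \in \R$. Summing, $F := \sum_\lambda F_\lambda$ is a particular solution of the inhomogeneous equation, satisfies specular reflection, and obeys the polynomial bound $|F(x,v)| \le C(1+\max(|x|^{1/3},|v|))^{k}$.

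Setting $g := f - F$, the function $g$ is a weak solution of the \emph{homogeneous} equation $v\partial_x g - A\partial_{vv} g = 0$ in $\{x>0\} \times \R$, still satisfies $g(0,v) = g(0,-v)$, and inherits the growth bound $\|g\|_{L^\infty(\tilde Q_R(0))} \le C(1+R)^{k+\eps}$ (since $F$ has at most degree $k$). Next, I lift $g$ to a $t$-independent function $\tilde g(t,x,v) := g(x,v)$ on $\R \times \{x>0\} \times \R$, which is a weak solution of $\partial_t \tilde g + v\partial_x \tilde g - A\partial_{vv} \tilde g = 0$ with specular reflection on $\R \times \{x=0\} \times \R$ and with $\|\tilde g\|_{L^\infty(Q_R(0))} \le C(1+R)^{k+\eps}$. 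In this one-dimensional, zero-source setting, the off-diagonal condition $a^{i,n} = 0$ for $i \ne n$ in \autoref{lemma:Liouville-flip} is vacuous, and the symmetry hypothesis \eqref{eq:p-symmetry} on the right-hand side is trivially satisfied. Consequently, \autoref{lemma:Liouville-flip} yields $\tilde g \in \cP_k$, and $t$-independence forces $g \in \cP_k$.

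Putting the two steps together, $f = g + F$ can be written as
\begin{equation*}
f(x,v) = P(x,v) + \sum_{l=0}^{\lfloor (k-3)/6 \rfloor} m_l\, \mathcal{T}_{A,6l+3}(x,v),
\end{equation*}
where the sum collects the Tricomi contributions coming from those homogeneous components $p_{6l+3}$ of $p$ that are not of the form \eqref{eq:p-special} (the remaining indices simply contribute $m_l = 0$), and $P := g + \sum_\lambda F_\lambda - \sum_l m_l\, \mathcal{T}_{A,6l+3} \in \cP_k$. The polynomial $P$ satisfies the specular reflection condition $P(0,v) = P(0,-v)$ because both $f$ and each Tricomi function $\mathcal{T}_{A,6l+3}$ do. This yields precisely the dichotomy in the statement: the first bullet corresponds to all $m_l$ vanishing, and the second bullet to at least one $m_l \neq 0$.

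The substantive obstacle in this scheme is the existence of a $(\lambda+2)$-homogeneous particular solution satisfying the specular reflection condition for \emph{every} homogeneous right-hand side $p_\lambda$, including the obstructed degrees $\lambda = 6l+3$. That difficulty is already resolved in \autoref{lemma:Liouville-higher-half-space-SR-1D-hom} by the explicit Kummer/Tricomi construction, so once that tool is available, the present proposition is obtained by a clean linear superposition followed by the reflection-based Liouville argument above.
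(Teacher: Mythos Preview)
Your proof is correct and takes a genuinely cleaner route than the paper's. The paper first reduces to the case of a homogeneous right-hand side $p$, and then for such $p$ it introduces the Euler-type operator $g = 3x\partial_x f + v\partial_v f - (\lambda+2)f$, shows $g \in \tilde\cP_k$ via \autoref{lemma:Liouville-flip}, solves the resulting first-order equation $3x\partial_x f + v\partial_v f - (\lambda+2)f = q$ explicitly (ruling out logarithmic terms), and only then applies \autoref{lemma:Liouville-higher-half-space-SR-1D-hom} to the residual homogeneous part $f_1$. Your argument bypasses this entire Euler-operator detour: you use the existence clause of \autoref{lemma:Liouville-higher-half-space-SR-1D-hom} to build a particular solution $F$ directly, so that $f-F$ solves the zero-source problem, to which \autoref{lemma:Liouville-flip} applies immediately. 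Both proofs ultimately rest on the same two lemmas, but yours exploits the existence statement more efficiently and avoids the explicit ODE analysis in the paper's Step 3. The paper's approach has the minor conceptual advantage of showing intrinsically that any solution with homogeneous data decomposes as homogeneous plus polynomial, without importing a separately constructed particular solution; your approach trades this for brevity.
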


The idea of the proof is to show that $f$ can be written as the sum of a polynomial and a sum of homogeneous solutions to which we can apply \autoref{lemma:Liouville-higher-half-space-SR-1D-hom}.

\begin{proof}[Proof of \autoref{prop:Liouville-higher-half-space-SR-1D}]
First, we recall from the proof of \autoref{lemma:Liouville-higher-half-space-SR-1D-hom} that it suffices to assume $A =1$. 

\textbf{Step 1:} Note that it is sufficient to prove the result under the assumption that $p \in \cP_{\lambda}$ is homogeneous of degree $\lambda \in \{ 0 , 1, \dots , k \}$. Indeed, by the linearity of both, the equation and the boundary condition, any solution with respect to a general $p = \sum_{\lambda = 0}^k p_{\lambda} \in \cP_k$ where $p_{\lambda} \in \cP_{\lambda}$ are homogeneous of degree $\lambda$, can be written as the sum of solutions $f_{\lambda}$, which solve the aforementioned equation with respect to $p_{\lambda}$. This can be proved by induction over $k$, observing that the existence of the solutions $f_{\lambda}$ is guaranteed by \autoref{lemma:Liouville-higher-half-space-SR-1D-hom}.

We are now in a position to give the actual proof of the result, which is split into several steps.

\textbf{Step 2:} We start by introducing the space 
\begin{align*}
\tilde{\cP}_k &= \{ p \in \cP_k : p = p_1 + x p_2, ~~ p_1,p_2 ~\text{ are  polynomials of even degree in $x,v \in \R$} \} \\
&=  \{ p \in \cP_k : p(0,v) = p(0,-v) \},
\end{align*}
and by claiming that there exists a polynomial $q \in \tilde{\cP}_k$ such that for the function 
\begin{align*}
g(x,v) := 3x \partial_x f(x,v) + v \partial_v f(x,v) - (\lambda+2) f(x,v) = \frac{d}{dr} \frac{f(r^3 x , r v)}{r^{\lambda+2}} \Bigg\vert_{r = 1}
\end{align*}
it holds that $g = q$.
To see this, note that for any $r \in (0,\infty)$ we have that
\begin{align*}
g_r(x,v) = (r-1)^{-1}\left[\frac{f(r^3x,rv)}{r^{\lambda+2}} - f(x,v)\right]
\end{align*}
solves 
\begin{equation}
\label{eq:diff-Liouville}
\left\{\begin{array}{rcl}
v \partial_x g_r - \partial_{vv} g_r &=& 0 ~~\qquad\quad \text{ in } \{ x > 0 \} \times \R,\\
g_r(0,v) &=& g_r(0,-v) ~~ \text{ in } \R,
\end{array}\right.
\end{equation}
since $p$ is homogeneous of degree $\lambda$. Note that by \autoref{lemma:Liouville-flip} it must hold $g_r \in \cP_{k}$ in $\{ x> 0\} \times \R$, and since the $g_r$ all satisfy the specular reflection condition, we deduce that they must belong to the subspace $\tilde{\cP}_k$ for any $r > 0$.

Moreover, we have $g \in C^{\infty}_{loc}((0,\infty) \times \R)$ by interior regularity for $f$. Hence, it holds that $g_r \to g$ locally uniformly in $(0,\infty) \times \R$ (up to a subsequence) as $r \to 1$, and therefore, it must also be $g \in \tilde{\cP}_k$ (since $\tilde{\cP}_k$ is a finite dimensional subspace). This proves
\begin{align}
\label{eq:hom-poly-help-1}
3x \partial_x f(x,v) + v \partial_v f(x,v) - (\lambda+2) f(x,v) = q(x,v) ~~ \forall (x,v) \in \{ x > 0 \} \times \R
\end{align}
for some $q \in \tilde{\cP}_k$, as desired.

\textbf{Step 3:} Note that if $q(x,v) = b_{\mu} x^{\mu_1} v^{\mu_2}$ is a monomial of degree $|\mu| = 3\mu_1 + \mu_2 \le k$, then 
\begin{align}
\label{eq:tilde-q}
\tilde{q}_{\mu}(x,v) =  
\begin{cases}
\frac{b_{\mu}}{|\mu| - (\lambda + 2)} x^{\mu_1} v^{\mu_2}, ~~ \quad \qquad \text{ if }  |\mu| \not= \lambda + 2,\\
 \frac{1}{3}b_{\mu} x^{\mu_1} v^{\mu_2} \log(x), ~~~ \qquad \text{ if } |\mu| = \lambda + 2, ~~ \mu_2 \not= \lambda + 2 ,\\
 \sgn(v) b_{\mu} x^{\mu_1} v^{\mu_2} \log(|v|) ~~ \text{ if } |\mu| = \lambda + 2, ~~ \mu_2 = \lambda + 2
\end{cases}
\end{align}
is a solution to \eqref{eq:hom-poly-help-1}.

Moreover, note that any solution $f$ of the equation in \eqref{eq:hom-poly-help-1} is of the form $f = f_1 + \tilde{q}$, where $f_1$ solves the corresponding homogeneous equation
\begin{align*}
\frac{d}{dr} \frac{f_1(r^3 x , r v)}{r^{\lambda +2}} \Bigg\vert_{r = 1} = 3x \partial_x f_1(x,v) + v \partial_v f_1(x,v) - (\lambda + 2) f_1(x,v) = 0,
\end{align*}
which means in particular that $f_1$ is homogeneous of degree $\lambda + 2$.

Hence, if we consider now a general polynomial $q = \sum_{\mu} b_{\mu} x^{\mu_1} v^{\mu_2} \in \tilde{\cP}_k$ in \eqref{eq:hom-poly-help-1}, then we have
\begin{align*}
f = f_1 + f_2,
\end{align*}
where $f_1$ is homogeneous of degree $\lambda + 2$, and $f_2 = \sum_{\mu} \tilde{q}_{\mu}$, where all of the $\tilde{q}_{\mu}$ are as in \eqref{eq:tilde-q}. A straightforward, explicit computation reveals that for any $\mu$, it holds that the function
\begin{align*}
v \partial_x \tilde{q}_{\mu} - \partial_{vv} \tilde{q}_{\mu}
\end{align*}
is not homogeneous of degree $\lambda$, as opposed to the function
\begin{align*}
v \partial_x f_1 - \partial_{vv} f_1,
\end{align*}
which is homogeneous of degree $\lambda$.
Recalling that $v \partial_x f - \partial_{vv} f = p$, where $p$ is $\lambda$-homogeneous, we deduce that 
\begin{align*}
v \partial_x f_2 - \partial_{vv} f_2 = 0.
\end{align*}
By combination of this information with \eqref{eq:tilde-q}, we see that it must be $b_{\mu} = 0$ whenever $|\mu| = \lambda + 2$, and therefore in particular, $f_2 \in \cP_k$. 

Since we already know that $q \in \tilde{\cP}_k$, by the definition of $\tilde{q}_{\mu}$ it must be also $f_2 \in \tilde{\cP}_k$.
As a consequence, $f_2(0,v) = f_2(0,-v)$ satisfies the specular reflection condition. Thus, altogether, we have deduced that $f_1$ satisfies
\begin{equation*}
\left\{\begin{array}{rcl}
v \partial_x f_1 - \partial_{vv} f_1 &=& p ~~~\qquad\quad \text{ in } \{ x > 0 \} \times \R,\\
f_1(0,v) &=& f_1(0,-v) ~~ \text{ in } \R.
\end{array}\right.
\end{equation*}

Since we know that $f_1$ is homogeneous of degree $\lambda + 2$, the desired result follows by application of \autoref{lemma:Liouville-higher-half-space-SR-1D-hom}. 
\end{proof}

We are now finally in a position to give a proof of the general Liouville theorem in the half-space for solutions satisfying the specular reflection condition.

\begin{proof}[Proof of \autoref{thm:Liouville-higher-half-space-SR}]
We write $x = (x' ,x_n)$ and $v = (v',v_n)$. Then, by the same procedure as in Step 1 of the proof of \autoref{prop:Liouville-higher-neg-times}, namely by applying the interior regularity from \autoref{lemma:interior-reg} in $Q_R(0)$ to increments of $f$ in the $x_i$ direction for $i \in \{1 , \dots, n-1\}$ and in $t$, we deduce that they are independent of $t,x'$. In particular, this allows us to write
\begin{align}
\label{eq:poly-sum-pos-times-halfspace}
f(t,x,v) = \sum_{\substack{\beta = (\beta_t,\beta_{x'},0),\\ |\beta| \le k}} f_{\beta}(x_n,v) t^{\beta_t} x_1^{\beta_{x_1}} \cdot \dots \cdot x_{n-1}^{\beta_{x_{n-1}}} ,
\end{align}
where the $f_{\beta} : \R^n \times \R \to \R$ satisfy the growth condition
\begin{align*}
\Vert f_{\beta} \Vert_{L^{\infty}(Q_R(0))} \le C (1 + R)^{k + \eps - |\beta|} ~~ \forall R > 0,
\end{align*}
as well as the specular reflection condition
\begin{align*}
f_{\beta}(0,v',v_n) = f_{\beta}(0,v', -v_n) ~~ \forall v \in \R^n.
\end{align*}
Then, let $\beta = (\beta_t,\beta_{x'},0)$ with $|\beta| = k$ and observe that $f_{\beta} = c_{\beta} \partial_{\beta} f$ for some $c_{\beta} \in \R$. Hence, by differentiating the equation for $f$ with respect to $\partial_{\beta}$, we obtain
\begin{align*}
v_n \partial_{x_n} f_{\beta} + (-a^{i,j} \partial_{v_i,v_j}) f_{\beta} = 0 ~~ \text{ in } (0,\infty) \times \R^n.
\end{align*}
Here, we used that $f_{\beta}$ does not depend on $t,x'$ and that $\partial_{\beta} p = 0$ since $p \in \cP_{k-2}$. In particular, due to the specific structure of the equation for $f_{\beta}$, we can take increments of $f_{\beta}$ in $v'$ and deduce from the interior regularity  (see \autoref{lemma:interior-reg}) and the growth condition that $f_{\beta}(v,x_n) = f_{\beta}(v_n,x_n)$ is independent of $v'$, and therefore it satisfies
\begin{align*}
v_n \partial_{x_n} f_{\beta} - a^{n,n}\partial_{v_n,v_n} f_{\beta} = 0 ~~ \text{ in } (0,\infty) \times \R.
\end{align*}
We can now apply the Liouville theorem in 1D (see \autoref{prop:Liouville-higher-half-space-SR-1D}) to deduce that $f_{\beta} \in \cP_{k - |\beta|} = \cP_0$. \\
We can repeat this procedure with $|\beta| < k$ in the exact same way as in the proof of \autoref{prop:Liouville-higher-neg-times}, as long as $|\beta| > k - 5$ by using that $\partial_{\beta} f - c_{\beta} f_{\beta} \in \cP_{k - |\beta|}$ since for these values of $\beta$, we will always apply the 1D Liouville theorem from \autoref{prop:Liouville-higher-half-space-SR-1D} with $k < 5$. Indeed, for $|\beta| < k$, we first obtain by taking incremental quotients in $v'$ that $f_{\beta}$ is of the form
\begin{align}
\label{eq:rep-f-beta}
f_{\beta}(x_n,v) = \sum_{|\tilde{\beta}| \le k - |\beta|} f_{\beta,\tilde{\beta}}(v_n,x_n) v_1^{\tilde{\beta}_{v_1}} \cdot \dots \cdot v_{n-1}^{\tilde{\beta}_{v_{n-1}}},
\end{align}
and then by applying inductively the 1D Liouville theorem in the half-space (\autoref{prop:Liouville-higher-half-space-SR-1D} with $k := k - |\beta| - |\tilde{\beta}| < 5$) and deriving the equations for $f_{\beta,\tilde{\beta}}$ by differentiation, we deduce that $f_{\beta,\tilde{\beta}} \in \cP_{k - |\beta| - |\tilde{\beta}|}$, which yields that $f_{\beta} \in \cP_{k - |\beta|}$. This proves the desired result in case $k \le 4$. 

In case $k = 5$, we can proceed in the same way, until we reach $|\beta| = 0$, i.e. $f_{\beta} =: f_0$. Indeed, by proceeding as before and applying \autoref{prop:Liouville-higher-half-space-SR-1D} (with $k < 5$), we are in the situation that $f_{\gamma} \in \cP_{5 - |\gamma|}$ for any $0 < |\gamma| \le 4$ and also that $f_0$ is of the form \eqref{eq:rep-f-beta}, with $f_{0,\tilde{\gamma}} \in \cP_{5 - |\tilde{\gamma}|}$ for any $0 < |\tilde{\gamma}| \le 5 - |\beta| = 5$. Therefore, it remains to consider $|\tilde{\beta}| = 0$ and $f_{0,\tilde{\beta}} =: f_{0,0}$. For $f_{0,0}$ it holds
\begin{align*}
v_n \partial_{x_n} f_{0,0} - \partial_{v_n,v_n} f_{0,0} = p_{0,0} ~~ \text{ in } (0,\infty) \times \R
\end{align*}
for some $p_{0,0} \in \cP_{3}$. Moreover $f_{0,0}$ satisfies the specular reflection condition and $\Vert f_{0,0} \Vert_{L^{\infty}(\tilde{Q}_R(0))} \le C (1 + R)^{5 + \eps}$. Now, we have to apply \autoref{prop:Liouville-higher-half-space-SR-1D} for the first time with $k = 5$. This allows us to deduce that $f_0$ is of the form
\begin{align*}
f_0(x_n,v_n) = P(x_n,v_n) + m \mathcal{T}_{a^{n,n},3}(x_n,v_n)
\end{align*}
for some $P \in \cP_5$ satisfying the specular reflection condition $P(x',0,v',v_n) = P(x',0,v',-v_n)$, and $m \in \R$, which yields the desired result. 

Finally, note that if $p = q$, for some $q \in \cP_3$ depending only on $t,x',v'$, then we can immediately deduce that $f \in \cP_5$ by \autoref{lemma:Liouville-flip}. Moreover, if $p = c(v_n^3 - 2a^{n,n} x_n)$ for some $c \in \R$, then it must be $p_{0,0} = p + q$, for some polynomial $q$ that is of degree strictly less than $3$ in $x_n,v_n$ due to \eqref{eq:poly-sum-pos-times-halfspace} and \eqref{eq:rep-f-beta}. Hence, in this case it still holds $p_{0,0} \in \cP_3$ by \autoref{prop:Liouville-higher-half-space-SR-1D}.
\end{proof}

\begin{proof}[Proof of \autoref{thm2}]
It follows immediately from \autoref{thm:Liouville-higher-half-space-SR}.
\end{proof}

\section{Boundary regularity outside the grazing set}
\label{sec:reg-outside}

Recall that the kinetic boundary $\gamma := (-1,1) \times \partial \Omega \times \R^n$ is split into the following three parts $\gamma = \gamma_+ \cup \gamma_- \cup \gamma_0$, where
\begin{align*}
\gamma_{\pm} = \{ (t,x,v) \in (-1,1) \times \partial \Omega \times \R^n : \pm n_x \cdot v > 0 \}, ~~ \gamma_0 = \big( (-1,1) \times \partial \Omega \times \R^n \big) \setminus (\gamma_+ \cup \gamma_-),
\end{align*}
and $n_x \in \mathbb{S}^{n-1}$ denotes the outward unit normal vector of $\Omega$ at $x \in \partial \Omega$. Moreover, let us recall the notation $H_r(z_0) = \big((-1,1) \times \Omega \times \R^n \big) \cap Q_r(z_0)$, given some $r > 0$ and $z_0 \in \R \times \R^n \times \R^n$.

In this section we establish the boundary regularity for solutions to kinetic equations away from the grazing set $\gamma_{0}$. We will consider equations subject to in-flow boundary conditions in \autoref{prop:regularity-inflow} and to specular regulation boundary conditions in \autoref{prop:regularity-SR}. 

\subsection{Expansion at $\gamma_+$}

By the interior regularity results from \autoref{lemma:interior-reg}, in order to prove boundary regularity, it suffices to establish estimates on the asymptotic expansions of solutions near the boundary. In this section, we will prove such estimates near boundary points $z_0 \in \gamma_+$. The proof goes by a contradiction compactness argument, which heavily relies on the higher order Liouville theorem for negative times (see \autoref{prop:Liouville-higher-neg-times}). The interior and boundary regularity result in \autoref{lemma:interior-reg} and \autoref{lemma:boundary-reg} will be crucial in order to extract converging subsequences.

We refer the interested reader to Subsection \ref{subsec:strategy} for a more detailed description of the contradiction compactness argument that is used in order to prove the result. 

Note that the proof for $\gamma_+$ is independent of the boundary condition.

\begin{lemma}
\label{lemma:higher-reg-gamma_+}
Let $\Omega \subset \R^n$ be a convex $C^{1,1}$ domain and $z_0 \in \gamma_+$. Let $R \in (0,1]$ be such that $Q_{2R}(z_0) \cap \gamma_0 = \emptyset$. Let $k \in \N$ with $k \ge 3$, $\eps \in (0,1)$, and $a^{i,j},b,c,h \in C^{k+\eps-2}_{\ell}(H_R(z_0))$, and assume that $a^{i,j}$ satisfies \eqref{eq:unif-ell}.
Let $f$ be a weak solution to 
\begin{align*}
\partial_t f + v \cdot \nabla_x f + (-a^{i,j} \partial_{v_i,v_j})f &= - b \cdot \nabla_v f - c f + h ~~ \text{ in } H_R(z_0).
\end{align*}
Then, there exists $p_{z_0} \in \cP_k$ such that for any $r \in (0,\frac{R}{2}]$, and any $z \in H_{r}(z_0)$ it holds:
\begin{align}
\label{eq:higher-reg-gamma_+_simplified}
|f(z) - p_{z_0}(z)| \le C \left(\frac{r}{R}\right)^{k+\eps} \big( \Vert f \Vert_{L^{\infty}(H_R(z_0))} + R^{k+\eps} [ h ]_{C^{k+\eps-2}_{\ell}(H_R(z_0))} \big).
\end{align}
The constant $C$ depends only on $n,k,\eps,\lambda,\Lambda, \Omega$, $\Vert a^{i,j} \Vert_{C^{k+\eps-2}_{\ell}(H_R(z_0))}$, $\Vert b \Vert_{C^{k+\eps-2}_{\ell}(H_R(z_0))}$, and \\$\Vert c \Vert_{C^{k+\eps-2}_{\ell}(H_R(z_0))}$, but not on $z_0$ and $r,R$.
\end{lemma}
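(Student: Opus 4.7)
My plan is a blow-up argument in the spirit of a Liouville-type classification. By homogeneity and linearity it suffices to reduce to $R=1$ and assume that $\Vert f \Vert_{L^\infty(H_1(z_0))} + [h]_{C^{k-2+\eps}_{\ell}(H_1(z_0))} \le 1$. The claim then becomes the existence of a uniform constant $C$ such that for every $r \in (0, 1/2]$ one can find $p_r \in \cP_k$ with $\Vert f - p_r \Vert_{L^\infty(H_r(z_0))} \le C r^{k+\eps}$; the polynomial $p_{z_0}$ satisfying \eqref{eq:higher-reg-gamma_+_simplified} then follows from \autoref{lemma:L43} applied with $I = I_k$ as in \eqref{eq:Ik}. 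Assuming no such $C$ exists, the quantity
\[ \theta(\rho) := \sup_{r \in [\rho,1/2]} r^{-(k+\eps)}\inf_{p \in \cP_k}\Vert f - p \Vert_{L^\infty(H_r(z_0))} \]
satisfies $\theta(\rho) \to \infty$ as $\rho \searrow 0$. I select $r_m \searrow 0$ together with nearly optimal polynomials $p_m \in \cP_k$ such that $r_m^{-(k+\eps)}\Vert f - p_m \Vert_{L^\infty(H_{r_m}(z_0))} \ge \tfrac{1}{2}\theta(r_m)$.

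\textbf{Blow-up and limit equation.} Define
\[ f_m(z) := \frac{f(z_0 \circ S_{r_m} z) - p_m(z_0 \circ S_{r_m} z)}{r_m^{k+\eps}\theta(r_m)}. \]
\autoref{lemma:orth-proj-prop} with $I = I_k$ yields the growth bound $\Vert f_m \Vert_{L^\infty(H_R^{(m)}(0))} \le C R^{k+\eps}$ for every $R \ge 1$ with $R r_m \le 1/2$, where $H_R^{(m)}(0) := Q_R(0) \cap (T_{z_0,r_m} \times \R^n)$. By \autoref{lemma:scaling}, $f_m$ solves a kinetic equation in $H_R^{(m)}(0)$ with coefficients $\tilde a_m^{i,j}, \tilde b_m, \tilde c_m$ and a right-hand side built from $r_m^{2}h(z_0 \circ S_{r_m}\cdot)$ together with polynomial contributions coming from applying the original differential operator $L$ to $p_m$. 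By \autoref{lemma:Holder-scaling}, the coefficients converge locally uniformly to the constants $\bar a^{i,j} := a^{i,j}(z_0)$, $0$, $0$, and after subtracting a suitable best $\cP_{k-2}$-approximant of $h$ at $z_0$ (available by $h \in C^{k-2+\eps}_{\ell}$, cf.\ \autoref{lemma:osc-results}), the $h$-contribution vanishes in the limit due to the factor $\theta(r_m)^{-1}$. The coefficient bounds from \autoref{lemma:orth-proj-prop} ensure that the rescaled $Lp_m$ converges to a polynomial $p_\infty \in \cP_{k-2}$.

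\textbf{Compactness and Liouville.} Since $z_0 \in \gamma_+$ and the hypothesis $Q_{2R}(z_0) \cap \gamma_0 = \emptyset$ forces $v_0 \cdot n_{x_0} \ge c_0 > 0$, \autoref{lemma:domains-convergence} gives $H_R^{(m)}(0) \to (-\infty,0) \times \R^n \times \R^n$ for each fixed $R$. Combining \autoref{lemma:interior-reg} with the boundary H\"older estimate \autoref{lemma:boundary-reg} (applicable uniformly in $m$ thanks to the convexity of $\Omega$ and the uniform non-grazing condition), the sequence $(f_m)$ is locally equicontinuous on compact subsets of the limit domain. A diagonal subsequence thus converges locally uniformly to a function $f_\infty$ solving
\[ \partial_t f_\infty + v \cdot \nabla_x f_\infty - \bar a^{i,j}\partial_{v_i v_j} f_\infty = p_\infty \quad \text{in } (-\infty,0) \times \R^n \times \R^n \]
with $|f_\infty(z)| \le C(1+d_\ell(z,0))^{k+\eps}$. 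By \autoref{prop:Liouville-higher-neg-times}, $f_\infty \in \cP_k$. On the other hand, the near-optimality of $p_m$ passes to the limit and yields $\inf_{q \in \cP_k}\Vert f_\infty - q \Vert_{L^\infty(\tilde Q_1(0))} \ge \tfrac{1}{2}$, contradicting $f_\infty \in \cP_k$.

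\textbf{Main obstacle.} The delicate step is controlling the right-hand side of the blow-up equation: subtracting $p_m$ from $f$ produces the extra term $Lp_m$ involving the non-constant coefficients, and the coefficients of $p_m$ can a priori grow like $\theta(r_m)$. The quantitative control from \autoref{lemma:orth-proj-prop} must be strong enough to guarantee that the rescaled $Lp_m$ converges, modulo subtractions of polynomials of kinetic degree at most $k-2$, to a bounded $p_\infty \in \cP_{k-2}$ rather than diverging. A parallel subtlety is that the source term $h$ must be handled by first subtracting its best $\cP_{k-2}$-polynomial approximation at $z_0$ before rescaling, so that what remains scales precisely like $r_m^{k+\eps}$ and is killed by the normalization factor $r_m^{k+\eps}\theta(r_m)$ with $\theta(r_m) \to \infty$.
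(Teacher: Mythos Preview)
Your overall strategy---blow-up at $z_0$ combined with the Liouville theorem for negative times (\autoref{prop:Liouville-higher-neg-times})---is exactly the paper's approach, and the technical ingredients you invoke (\autoref{lemma:L43}, \autoref{lemma:orth-proj-prop}, \autoref{lemma:domains-convergence}, \autoref{lemma:boundary-reg}, \autoref{lemma:osc-results}) are the right ones. However, your contradiction setup has a genuine gap that prevents it from delivering the full statement.

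The lemma asserts that $C$ is independent of $z_0$ (and implicitly of $f$, $h$, and the coefficients, subject only to the stated norm bounds). Your argument fixes a single $f$ and $z_0$ and deduces $\theta(\rho)\to\infty$ for that particular pair. But the failure of a \emph{uniform} constant does not force any individual $(f,z_0)$ to have $\theta(\rho)$ unbounded: it may be that every instance admits a finite but non-uniform constant. The paper handles this by running the contradiction over \emph{sequences} $(f_l,h_l,a_l^{i,j},b_l,c_l,z_l^0,R_l)$ and defining $\theta(r):=\sup_l\sup_{\rho\in[r,R_l/2]}\rho^{-(k+\eps)}\|f_l-p_{l,\rho}\|_{L^\infty}$. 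The crucial extra step your argument never confronts is showing $R_m r_m^{-1}\to\infty$ along the blow-up subsequence; this relies on the normalization \eqref{eq:blow-up-normalization} and is needed both for the growth bound \eqref{eq:gm-prop-2} to hold at every $R$ and for the domain convergence in \autoref{lemma:domains-convergence}, since when $z_m^0$ varies one must verify $|v_m^0\cdot n_{x_m^0}|r_m^{-1}\to\infty$ from $|v_m^0\cdot n_{x_m^0}|\ge 2R_m$ rather than from a fixed positive lower bound. The paper explicitly remarks after the lemma that fixing $z_0$ yields only a $z_0$-dependent constant.

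A secondary imprecision: you claim equicontinuity ``on compact subsets of the limit domain'' $(-\infty,0)\times\R^{2n}$, but passing the lower bound $\inf_{q\in\cP_k}\|f_m-q\|_{L^\infty(H_1^{(m)})}\ge\tfrac12$ to the limit requires uniform convergence on the \emph{varying} sets $H_1^{(m)}$, whose closures touch the rescaled spatial boundary for $t$ near $0$. The paper devotes a separate step (its Step~3) to obtaining a uniform $C^\alpha(H_1^{(m)})$ bound directly from \autoref{lemma:boundary-reg}; your phrasing leaves this unresolved. Your use of $L^\infty$-best approximants in place of the paper's $L^2$-projections is a legitimate variant---both furnish the optimality needed for the final contradiction---but note that the paper's orthogonality relation \eqref{orth-2} makes the limiting step slightly cleaner.
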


A crucial feature of the result is that the constants are independent of $z_0$. For this it is important to have convexity of $\Omega$ (by \autoref{lemma:boundary-reg}). When convexity is not assumed, we can still obtain \eqref{eq:higher-reg-gamma_+_simplified} by a very similar proof (fixing $z_0$) with a constant $C$, depending on $z_0$.

Note that we only assume $k \ge 3$ in order to write the operator in divergence form so that we can apply \autoref{lemma:boundary-reg}.

\begin{proof}

We prove the claim \eqref{eq:higher-reg-gamma_+_simplified} in several steps.

\textbf{Step 1:} We assume by contradiction that \eqref{eq:higher-reg-gamma_+_simplified} does not hold. Then, there exist $R_l \in (0,1]$, $f_l$, $h_l$, $a^{i,j}_{l}$, $b_l$, $c_l$, $z^0_{l}$ such that for any $l \in \N$ (after a normalization)
\begin{align}
\label{eq:blow-up-normalization}
\begin{split}
R_l^{-k-\eps} \left(\Vert f_l \Vert_{L^{\infty}(H_{R_l}(z^0_{l}))} + R_l^{k+\eps} [ h_l ]_{C^{k+\eps-2}_{\ell}(H_{R_l}(z^0_{l}))}\right) &\le 1, \\
 \Vert a^{i,j}_l \Vert_{C^{k+\eps-2}_{\ell}(H_{R_l}(z^0_{l}))} + \Vert b_l \Vert_{C^{k+\eps-2}_{\ell}(H_{R_l}(z^0_{l}))} + \Vert c_l \Vert_{C^{k+\eps-2}_{\ell}(H_{R_l}(z^0_{l}))} &\le \Lambda,
\end{split}
\end{align}
and also \eqref{eq:unif-ell} holds true with $\lambda$ for any $l \in \N$, such that the $f_l$ are solutions to 
\begin{align*}
\partial_t f_l + v \cdot \nabla_x f_l + (-a^{i,j}_l \partial_{v_i,v_j})f_l &= - b_l \cdot \nabla_v f_l - c_l f_l  + h_l ~~ \text{ in } H_{R_l}(z^0_{l}),
\end{align*}
but it holds
\begin{align}
\label{eq:contradiction-assumption}
\sup_{l \in \N} \inf_{p \in \cP_k} \sup_{ r \in (0,\frac{R_l}{2}] }  \frac{\Vert f_l - p \Vert_{L^{\infty}(H_{r}(z^0_{l}))}}{r^{k+\eps}} = \infty.
\end{align}

For any $r \in (0,1]$ we consider the $L^2(H_r(z^0_{l}))$ projections of $f_l$ over $\cP_k$ and denote them by $p_{l,r} \in \cP_k$. They satisfy the following properties
\begin{align}
\label{eq:orth}
\Vert f_l - p_{l,r} \Vert_{L^2(H_r(z^0_{l}))} &\le \Vert f_l - p \Vert_{L^2(H_r(z^0_{l}))} ~~ \forall p \in \cP_k,\\
\label{orth-2}
\int_{H_r(z^0_{l})} (f_l(z) - p_{l,r}(z)) p(z) \d z &= 0 ~ \qquad\qquad\qquad\qquad \forall p \in \cP_k.
\end{align}
Moreover, we introduce the following quantity for $r \in (0,\frac{1}{4}]$:
\begin{align*}
\theta(r) = \sup_{l \in \N} \sup_{\rho \in [r,\frac{R_l}{2}]} \rho^{-k-\eps} \Vert f_l - p_{l,\rho} \Vert_{L^{\infty}(H_{\rho}(z^0_{l}))}.
\end{align*}
Note that we have $\theta(r) = -\infty$ if $r \ge r_0 := \sup R_l/2$, but $\theta(r) > 0$ once $r < r_0$. We deduce from \autoref{lemma:L43} that
\begin{align}
\label{eq:theta-increasing}
\theta(r) \nearrow \infty \qquad \text{ as } \qquad r \searrow 0.
\end{align}
Indeed, if we had $\theta(r) \le C$ for some $C > 0$ as $r \searrow 0$, then for any $l \in \N$, and $\rho \in (0,\delta_l]$,
\begin{align*}
\Vert f_l - p_{l,\rho} \Vert_{L^{\infty}(H_{\rho}(z^0_{l}))} \le C \rho^{k+\eps},
\end{align*}
and by defining $\bar{f}_l(z) := f_l(z_l^0 \circ z)$ and $\bar{p}_{l,\rho}(z) := p_{l,\rho}(z_l^0 \circ z) \in \cP_k$, \eqref{eq:trafo-cylinders} would imply 
\begin{align*}
\Vert \bar{f}_l - \bar{p}_{l,\rho} \Vert_{L^{\infty}(H_{\rho}(0))} \le C \rho^{k+\eps},
\end{align*}
but by \autoref{lemma:L43} and applying \eqref{eq:trafo-cylinders} again, we deduce that for any $l \in \N$ there exists $p_{l,0}$ such that for any $\rho \in (0,\delta_l]$
\begin{align*}
\Vert f_l - p_{l,0} \Vert_{L^{\infty}(H_{\rho}(z^0_{l}))} \le C \rho^{k+\eps},
\end{align*}
which contradicts \eqref{eq:contradiction-assumption}.

Thus, \eqref{eq:theta-increasing} holds true and we can extract further subsequences $(r_m)_m$ and $(l_m)_m$ with $r_m \searrow 0$ and $r_m \le R_m/2$, where $R_m := R_{l_m}$, such that
\begin{align}
\label{eq:theta-claim}
\frac{\Vert f_{l_m} - p_{l_m,r_m} \Vert_{L^{\infty}(H_{r_m}(z_{l_m}^0))}}{r_m^{k+\eps} \theta(r_m)} \ge \frac{1}{2} ~~ \forall m \in \N, \qquad R_m r_m^{-1} \to \infty ~~ \text{ as } m \to \infty.
\end{align} 

While the existence of subsequences satisfying the first property is an immediate consequence of the definition of $\theta$, the verification of the second property relies on \eqref{eq:theta-increasing} and requires a little more work. To prove it, let us assume by contradiction that there is $N > 0$ such that $R_m r_m^{-1} \le N$. Then for any $l \in \N$ and any $\rho \in [r,R_l]$ it holds $\rho \ge R_l/N$ and therefore 
\begin{align*}
\rho^{-k-\eps} \Vert f_l - p_{l,\rho} \Vert_{L^{\infty}(H_{\rho}(z_l^0))} \le c(N) R_l^{-k-\eps} \Vert f_l \Vert_{L^{\infty}(H_{R_l}(z_l^0))} + c(N) R_l^{-k-\eps} \Vert p_{l,\rho} \Vert_{L^{\infty}(H_{R_l}(z_l^0))}.
\end{align*}
Note that the (averaged) $L^2$- and $L^{\infty}$-norm of polynomials are comparable, and therefore by H\"older's inequality and \eqref{eq:orth} (applied with $p \equiv 0$),
\begin{align*}
\Vert p_{l,\rho} \Vert_{L^{\infty}(H_{R_l}(z_l^0))} &\le R_l^{-n/2}\Vert p_{l,\rho} \Vert_{L^{2}(H_{R_l}(z_l^0))} \\
 &\le c R_l^{-n/2} \Vert f_l - p_{l,\rho} \Vert_{L^2(H_{R_l}(z_l^0))} + c  \Vert f_l \Vert_{L^{\infty}(H_{R_l}(z_l^0))} \le c \Vert f_l \Vert_{L^{\infty}(H_{R_l}(z_l^0))}.
\end{align*}
Thus, by the normalization condition \eqref{eq:blow-up-normalization}
\begin{align*}
\rho^{-k-\eps} \Vert f_l - p_{l,\rho} \Vert_{L^{\infty}(H_{\rho}(z_l^0))} \le c(N)R_l^{-k-\eps} \Vert f_l \Vert_{L^{\infty}(H_{R_l}(z_l^0))} \le c(N).
\end{align*}
But since $l \in \N$ and $\rho \in [r,R_l/2]$ were arbitrary, this implies
\begin{align*}
\theta(r) \le c(N),
\end{align*}
contradicting \eqref{eq:theta-increasing}. Thus, there must be subsequences such that $R_mr_m^{-1} \to \infty$, as claimed in \eqref{eq:theta-claim}.

Let us define for any $R > 0$ and $m \in \N$ the rescaled domains  $H_R^{(m)} := (T_{z_m^0,r_m} \times \R^n) \cap Q_{R}(0)$, where $T_{z_m^0,r_m}$ is defined as in \eqref{eq:Tr-def}, and consider the functions
\begin{align*}
g_m(z) : = 
\frac{f_{l_m}(z_m^0 \circ S_{r_m} z ) - p_{l_m,r_m}(z_m^0 \circ S_{r_m} z)}{r_m^{k+\eps} \theta(r_m)},
\end{align*}
and set $h_m = h_{l_m}$ and $z^0_m = z^0_{l_m}$.
By construction, they satisfy for any $m \in \N$
\begin{align}
\label{eq:gm-prop-1}
\int_{H^{(m)}_1} g_m(z) p(z_m^0 \circ S_{r_m} z) \d z = 0 ~~ \forall p \in \cP_k.
\end{align}
Moreover, we claim that 
\begin{align}
\label{eq:gm-prop-2}
\Vert g_m \Vert_{L^{\infty}(H^{(m)}_1)} \ge \frac{1}{2}, \qquad \Vert g_m \Vert_{L^{\infty}(H^{(m)}_R)} \le c R^{k+\eps} ~~ \forall R \in \left[ 1, \frac{R_m}{2r_m} \right], ~~ \forall m \in \N.
\end{align}

The first property is immediate by construction. 
To see the second property, let us write
\begin{align*}
p_{l,r}(z_l^0 \circ z) =: \bar{p}_{l,r}(z) = \sum_{|\beta| \le k} \alpha^{(\beta)}_{l,r} t^{\beta_t} x_1^{\beta_{x_1}} \cdot \dots \cdot x_n^{\beta_{x_n}} v_1^{\beta_{v_1}} \cdot \dots \cdot v_n^{\beta_{v_n}} \in \cP_k
\end{align*}
for multi-indeces $\beta \in (\N \cup \{0\})^{1+2n}$ with $\beta = (\beta_t,\beta_x,\beta_v)$, and $\alpha^{(\beta)}_{l,r} \in \R$. Then, we have by the definition of $\theta$ for $r \le R_l/2$:
\begin{align*}
\Vert f_{l}(z_{l}^0 \circ \cdot) - \bar{p}_{l,r} \Vert_{L^{\infty}(H_r(0))} = \Vert f_{l} - p_{l,r} \Vert_{L^{\infty}(H_r(z_l^0))} \le \theta(r) r^{k+\eps},
\end{align*}
and therefore, \autoref{lemma:orth-proj-prop} implies that for any $R \le \frac{R_l}{2r}$:
\begin{align*}
\frac{\Vert f_{l}(z_{l}^0 \circ \cdot) - \bar{p}_{l,r} \Vert_{L^{\infty}(H_{Rr}(0))}}{r^{k+\eps} \theta(r)} \le c R^{k+\eps},
\end{align*}
which yields in particular that for any $R \le \frac{R_m}{2r_m}$:
\begin{align*}
\Vert g_m \Vert_{L^{\infty}(H_R^{(m)})} &= \frac{\Vert f_{l_m} - p_{l_m,r_m} \Vert_{L^{\infty}(H_{R r_m}(z_m^0))} }{r_m^{k+\eps} \theta(r_m)} = \frac{\Vert f_{l_m}(z_{m}^0 \circ \cdot) - \bar{p}_{l_m,r_m} \Vert_{L^{\infty}(H_{R r_m}(0))}}{r_m^{k+\eps} \theta(r_m)} \le c R^{k+\eps},
\end{align*}
and hence implies the second property in \eqref{eq:gm-prop-2}.

Note that \autoref{lemma:orth-proj-prop}, using also \eqref{eq:blow-up-normalization}, as well as the fact that $\frac{\theta(R_m)R_m^{\eps}}{\theta(r_m)} \to \infty$ (since $\theta(r_m) \to 0$, $r_m \le R_m/2$ and $r_m^{-1} R_m \to \infty$), also implies the following property for the coefficients of $\bar{p}_{l,r}$:
\begin{align}
\label{eq:coefficients-vanish}
 \frac{|\alpha^{(\beta)}_{l_m,r_m}|}{\theta(r_m)} \to 0 ~~ \text{ as } m \to \infty, ~~ \forall |\beta| \le k.
\end{align}

\textbf{Step 2:}
Next, we investigate the equation that is satisfied by $g_m$. Let us first introduce
\begin{align*}
\tilde{p}_{l_m,r_m} = (\partial_t + v \cdot \nabla_x) p_{l_m,r_m} \in \cP_{k-2}, \quad \tilde{p}_{l_m,r_m}^{(i,j)} = \partial_{v_i,v_j} p_{l_m,r_m} \in \cP_{k-2}, \quad \tilde{p}_{l_m,r_m}^{(i)}  = \partial_{v_i} p_{l_m,r_m} \in \cP_{k-1}.
\end{align*}
Then, it holds
\begin{align}
\label{eq:PDE-polynomial}
\begin{split}
\partial_t p_{l_m,r_m} & + v \cdot \nabla_x p_{l_m,r_m} + (-a^{i,j}_m \partial_{v_i,v_j}) p_{l_m,r_m} + b^{i}_m \partial_{v_i} p_{l_m,r_m} + c_m p_{l_m,r_m} \\
&= \tilde{p}_{l_m,r_m} + a^{i,j}_m \tilde{p}_{l_m,r_m}^{(i,j)} + b^i_m\tilde{p}_{l_m,r_m}^{(i)} + c_m p_{l_m,r_m}.
\end{split}
\end{align}

Note that the coefficients of these polynomials are all multiples of the coefficients of $p_{l_m,r_m}$, and in particular their $L^{\infty}$ norms are all comparable. 

By the equations that hold for $f_{l_m}, p_{l_m,r_m}$ as well as by \autoref{lemma:scaling} we deduce

\begin{align*}
\partial_t g_m + v \cdot \nabla_x g_m + \tilde{a}^{i,j}_m \partial_{v_i,v_j} g_m + \tilde{b}^{i}_m \partial_{v_i} g_m + \tilde{c}_m g_m = \tilde{h}_m ~~ \text { in } H^{(m)}_{R_mr_m^{-1}},
\end{align*}
where
\begin{align*}
\tilde{a}^{i,j}_m(z) = a^{i,j}_m(z_m^0 \circ S_{r_m} z), ~~ \tilde{b}^i_m(z) = r_m b^i_m(z_m^0 \circ S_{r_m} z), ~~ \tilde{c}_m(z) = r_m^2 c_m(z_m^0 \circ S_{r_m} z),
\end{align*}
and
\begin{align*}
\tilde{h}_m(z) = \frac{h_m(z_m^0 \circ S_{r_m} z ) - (\tilde{p}_{l_m,r_m} + a^{i,j}_m \tilde{p}_{l_m,r_m}^{(i,j)} + b^i_m\tilde{p}_{l_m,r_m}^{(i)} + c_m p_{l_m,r_m})(z_m^0 \circ S_{r_m} z )}{r_m^{k+\eps-2} \theta(r_m)}.
\end{align*}

Our next goal is to take the limit $m \to \infty$ in the equation for $g_m$. To do so, we first claim that
\begin{align}
\label{eq:hm-vanishes-Holder}
[\tilde{h}_m]_{C_{\ell}^{k+\eps-2}(H^{(m)}_{R_m r_m^{-1}})} \to 0 ~~ \text{ as } m \to \infty.
\end{align}

To see it, note that by the assumption on $h_m$ in \eqref{eq:blow-up-normalization} and the scaling properties of the kinetic H\"older norms (see \autoref{lemma:Holder-scaling}) we have
\begin{align*}
\frac{[h_m(z_m^0 \circ S_{r_m} \cdot)]_{C_{\ell}^{k+\eps-2}(H^{(m)}_{R_m r_m^{-1}})}}{r_m^{k+\eps-2} \theta(r_m)} = \frac{[h_m]_{C_{\ell}^{k+\eps-2}(H_{R_m}(z_m^0))}}{\theta(r_m)} \le \theta(r_m)^{-1} \to 0.
\end{align*}
Moreover, since $\tilde{p}_{l_m,r_m} \in \cP_{k-2}$:
\begin{align*}
\frac{[\tilde{p}_{l_m,r_m}(z_m^0 \circ S_{r_m} \cdot)]_{C_{\ell}^{k+\eps-2}(H^{(m)}_{R_m r_m^{-1}})}}{r_m^{k+\eps-2} \theta(r_m)} = \frac{[\tilde{p}_{l_m,r_m}]_{C_{\ell}^{k+\eps-2}(H_{R_m}(z_m^0))}}{\theta(r_m)} = 0.
\end{align*}
For the remaining terms $a^{i,j}_m \tilde{p}^{(i,j)}_{l_m,r_m}, b^i_m \tilde{p}^{(i)}_{l_m,r_m}, c_m p_{l_m,r_m}$, we proceed in the following way, which we explain in detail only for $a^{i,j}_m \tilde{p}^{(i,j)}_{l_m,r_m}$. We apply the product rule (see \autoref{lemma:product-rule}),  \autoref{lemma:der-Holder}, \eqref{eq:blow-up-normalization}, $R_m \le \frac{1}{4}$, and \eqref{eq:coefficients-vanish} to deduce
\begin{align*}
\frac{[a^{i,j}_m \tilde{p}^{(i,j)}_{l_m,r_m}(z_m^0 \circ S_{r_m} \cdot)]_{C_{\ell}^{k+\eps-2}(H^{(m)}_{R_m r_m^{-1}})}}{r_m^{k+\eps-2} \theta(r_m)} &= \frac{[a^{i,j}_m \tilde{p}^{(i,j)}_{l_m,r_m}]_{C_{\ell}^{k+\eps-2}(H_{R_m}(z_m^0))}}{\theta(r_m)} \\
&\le C \frac{ \Vert a^{i,j}_m \Vert_{C^{k+\eps-2}_{\ell}(H_{R_m}(z_m^0))}  \Vert \tilde{p}^{(i,j)}_{l_m,r_m} \Vert_{C^{k+\eps-2}_{\ell}(H_{R_m}(z_m^0))}}{\theta(r_m)} \\
&\le C \frac{\Vert p_{l_m,r_m} \Vert_{C^{k+\eps}_{\ell}(H_{R_m}(z_m^0))}}{\theta(r_m)} \\
&\le C \frac{\Vert \bar{p}_{l_m,r_m} \Vert_{L^{\infty}(H_{R_m}(0))}}{\theta(r_m)} \\
& \le C \sum_{|\beta| \le k} \frac{|\alpha_{l_m,r_m}^{(\beta)}|}{\theta(r_m)} \to 0.
\end{align*}

By analogous arguments for the terms coming from $\tilde{p}_{l_m,r_m}^{(i)}$, and $p_{l_m,r_m}$, we eventually deduce \eqref{eq:hm-vanishes-Holder}. 

Due to \eqref{eq:hm-vanishes-Holder} we can rewrite the equation for $g_m$ as follows:
\begin{align}
\label{eq:PDE-gm}
\partial_t g_m + v \cdot \nabla_x g_m + (-\tilde{a}^{i,j}_m \partial_{v_i,v_j}) g_m + \tilde{b}^{i}_m \partial_{v_i} g_m + \tilde{c}_m g_m = \tilde{h}_m = P_m + e_m ~~ \text { in } H^{(m)}_{R_m r_m^{-1}},
\end{align}
where $P_m \in \cP_{k-2}$, such that we have for any $R \le R_m r_m^{-1}$
\begin{align*}
\Vert e_m \Vert_{L^{\infty}(H^{(m)}_{R})} = \Vert \tilde{h}_m - P_m \Vert_{L^{\infty}(H^{(m)}_{R})} \le c R^{k+\eps - 2} [\tilde{h}_m]_{C_{\ell}^{k+\eps-2}(H^{(m)}_{R})} \to 0 ~~ \text{ as } m \to \infty.
\end{align*}
Moreover, note that by \autoref{lemma:osc-results}, \eqref{eq:gm-prop-2}, and \eqref{eq:hm-vanishes-Holder}, we have for any $R \le R_m r_m^{-1}$:
\begin{align}
\label{eq:hm-vanishes}
\Vert \tilde{h}_m \Vert_{L^{\infty}(H^{(m)}_{R})}  \le C \left( \Vert g_m \Vert_{L^{\infty}(H^{(m)}_{R})} + R^{k+\eps-2} [\tilde{h}_m]_{C^{k+\eps-2}_{\ell}(H^{(m)}_{R})} \right) \le C(R),
\end{align}
and therefore
\begin{align*}
\Vert P_m \Vert_{L^{\infty}(H^{(m)}_{R})} \le \Vert \tilde{h}_m - P_m \Vert_{L^{\infty}(H^{(m)}_{R})} + \Vert \tilde{h}_m \Vert_{L^{\infty}(H^{(m)}_{R})} \le C R^{k+\eps - 2}[\tilde{h}_m]_{C^{k+\eps-2}_{\ell}(H^{(m)}_{R})} + C(R) \le C(R).
\end{align*}
Note that $C(R)$ is independent of $m$, since we can apply \autoref{lemma:osc-results} to the rescaled function $\bar{h}_m(z) = \tilde{h}_m(z_m^0 \circ S_{r_m} z)$ first, and use \autoref{lemma:Holder-scaling} to deduce the estimate for $\tilde{h}_m$. 

Hence, we conclude that, up to a subsequence, $P_m \to p_0 \in \cP_k$, and therefore
\begin{align}
\label{eq:hm-convergence}
\Vert \tilde{h}_m - p_0 \Vert_{L^{\infty}(H^{(m)}_{R})} \to 0
\end{align}
as $m \to \infty$ for any $R > 0$. Moreover, by \eqref{eq:blow-up-normalization} and the definitions of $\tilde{a}^{i,j}_m$, $\tilde{b}^i_m$, and $\tilde{c}_m$, we have (up to a subsequence)
\begin{align*}
\Vert \tilde{a}^{i,j}_m - a^{i,j} \Vert_{L^{\infty}(H^{(m)}_R)} + \Vert \tilde{b}^i_m \Vert_{L^{\infty}(H^{(m)}_R)} + \Vert \tilde{c}_m \Vert_{L^{\infty}(H^{(m)}_R)} \to 0
\end{align*}
for some constant matrix $(a^{i,j})$ satisfying \eqref{eq:unif-ell}, and for any $R > 0$.  While the convergence of the lower order coefficients is obvious, note that for the convergence of $\tilde{a}^{i,j}_m \to a^{i,j}$, we are using that
\begin{align*}
\Vert \tilde{a}^{i,j}_m - \tilde{a}^{i,j}_m(0) \Vert_{L^{\infty}(H^{(m)}_R)} \to 0 ~~ \forall R > 0
\end{align*}
by \eqref{eq:blow-up-normalization}, and that the sequence of constant matrices $(\tilde{a}^{i,j}_m(0))_m$ is bounded by \eqref{eq:blow-up-normalization}, and therefore must also converge (up to a subsequence) to another constant matrix $(a^{i,j})$.

Moreover, note that $T_{z_m^0,r_m} \times \R^n \to (-\infty,0) \times \R^n \times \R^n$ by \autoref{lemma:domains-convergence}. Note that we are able to apply \autoref{lemma:domains-convergence} to $z_m^0 \in \gamma_+$ since by assumption it holds $Q_{2R_m}(z_m^0) \cap \gamma_0 = \emptyset$, which yields $|v_m^0 \cdot n_{x_m^0}| \ge 2 R_m$, and therefore, by \eqref{eq:theta-claim}, it follows $|v_m^0 \cdot n_{x_m^0}| r_m^{-1} \ge 2 R_m r_m^{-1} \to \infty$.

Thus, by the interior regularity from \autoref{lemma:interior-reg} and using again \autoref{lemma:osc-results}, we have for any kinetic cylinder $Q_1(z_1) \subset (-\infty,0] \times \R^n \times \R^n$ (for $m$ large enough) using also \eqref{eq:hm-vanishes-Holder}
\begin{align*}
\Vert g_m \Vert_{C^{k+\eps}(Q_{1/2}(z_1))} \le C \big( \Vert g_m \Vert_{L^{\infty}(Q_1(z_1))} + [ \tilde{h}_m ]_{C^{k+\eps-2}_{\ell}(Q_1(z_1))}  \big) \le C,
\end{align*}
and therefore by the Arzel\`a-Ascoli theorem it holds that $g_m \to g$ locally uniformly in $(-\infty,0) \times \R^n \times \R^n$. Moreover, by the aforementioned estimate, the $g_m$ are classical (i.e. pointwise) solutions to \eqref{eq:PDE-gm} and it holds
\begin{align*}
\partial_t g_m + v \cdot \nabla_x g_m + (-\tilde{a}^{i,j}_m \partial_{v_i,v_j}) g_m + \tilde{b}^{i}_m \partial_{v_i} g_m + \tilde{c}_m g_m  \to \partial_t g + v \cdot \nabla_x g + (-a^{i,j} \partial_{v_i,v_j}) g
\end{align*}
locally uniformly in $(-\infty,0) \times \R^n \times \R^n$. Thus, by combining \eqref{eq:PDE-gm} and \eqref{eq:hm-convergence} we obtain that $g$ solves the following limiting equation in the classical sense
\begin{align}
\label{eq:g-equation}
\partial_t g + v \cdot \nabla_x g + (-a^{i,j} \partial_{v_i,v_j})g = p_0 ~~ \text{ in } (-\infty , 0) \times \R^n \times \R^n
\end{align}
for some polynomial $p_0 \in \cP_{k-2}$ and some constant matrix $(a^{i,j})$.

Moreover, by taking the second property in \eqref{eq:gm-prop-2} to the limit, and using that $R_m r_m^{-1} \to \infty$, we obtain 
\begin{align}
\label{eq:g-growth}
\Vert g \Vert_{L^{\infty}(Q_R)} \le c R^{k+\eps} ~~ \forall R \ge 1.
\end{align}

\textbf{Step 3:} 
Next, we claim that it also holds 
\begin{align}
\label{eq:bdry-conv-higher-+}
\Vert g_m - g \Vert_{L^{\infty}(H^{(m)}_1)} \to 0 ~~ \text{ as } m \to \infty.
\end{align}

To prove it, we recall the equation that is satisfied by $f_{l_m}$, as well as \eqref{eq:PDE-polynomial}, which yields
\begin{align*}
\partial_t (f_{l_m} - p_{l_m,r_m}) & + v \cdot \nabla_x (f_{l_m} - p_{l_m,r_m}) + (-a^{i,j}_m \partial_{v_i,v_j}) (f_{l_m} - p_{l_m,r_m}) \\
&= - b_m^i \partial_{v_i} (f_{l_m} - p_{l_m,r_m}) - c_m (f_{l_m} - p_{l_m,r_m}) \\
&\quad  +  h_{l_m} - (\tilde{p}_{l_m,r_m} + a_m^{i,j} \tilde{p}^{(i,j)}_{l_m,r_m} + b_m^i \tilde{p}^{(i)}_{l_m,r_m} + c_m p_{l_m,r_m} ) ~~ \text{ in } H_{R_m}(z_m^0).
\end{align*}
Next, we recall the definition of $g_m$ and apply the boundary regularity for kinetic equations from \autoref{lemma:boundary-reg} to $f_{l_m} - p_{l_m,r_m}$, using that $\gamma_0 \cap Q_{2 R_m}(z_m^0) = \emptyset$ by construction (at least for $m \in \N$ large enough). This yields for some $\alpha \in (0,1)$ by \autoref{lemma:Holder-scaling}, \autoref{lemma:boundary-reg}, and recalling also the definitions of $g_m$ and $\tilde{h}_m$ and using the growth bound in \eqref{eq:gm-prop-2}, as well as \eqref{eq:hm-vanishes}:
\begin{align*}
[ g_m ]_{C^{\alpha}(H^{(m)}_{1})} &= r_m^{\alpha} r_m^{-k-\eps} \theta(r_m)^{-1} \left[ f_{l_m} - p_{l_m,r_m} \right ]_{C^{\alpha}(H_{r_m}(z_m^0))} \\
&\le C r_m^{-k-\eps} \theta(r_m)^{-1} \Big( \Vert f_{l_m} - p_{l_m,r_m} \Vert_{L^{\infty}(H_{2r_m}(z_m^0))} \\
&\qquad\qquad + r_m^2 \Vert h_{l_m} - (\tilde{p}_{l_m,r_m} + a_m^{i,j} \tilde{p}^{(i,j)}_{l_m,r_m} + b_m^i \tilde{p}^{(i)}_{l_m,r_m} + c_m p_{l_m,r_m} ) \Vert_{L^{\infty}(H_{2 r_m}(z_m^0))} \Big) \\
&\le C \Vert g_m \Vert_{L^{\infty}(H^{(m)}_{2})} + C \Vert \tilde{h}_m \Vert_{L^{\infty}(H_{2}^{(m)})} \le C.
\end{align*}

Hence, we deduce that
\begin{align*}
\Vert g_m \Vert_{C^{\alpha}(H_1^{(m)})} \le C,
\end{align*}
which by the Arzel\`a-Ascoli theorem implies \eqref{eq:bdry-conv-higher-+}, as desired.

%

\textbf{Step 4:} 
By \eqref{eq:g-equation} and \eqref{eq:g-growth}, we can apply the higher order Liouville theorem (see \autoref{prop:Liouville-higher-neg-times}) to $g$ and obtain that $g \in \cP_k$. 
Moreover, by \eqref{eq:bdry-conv-higher-+}, we can take the limit in \eqref{eq:gm-prop-2} and deduce that $g$ also satisfies the following property
\begin{align}
\label{eq:g-prop-2}
\Vert g \Vert_{L^{\infty}(H_1)} \ge \frac{1}{2}.
\end{align}
Moreover, using again \eqref{eq:bdry-conv-higher-+} it follows upon choosing 
\begin{align*}
p(t,x,v) = g(r_m^{-2}(t-t_m^0), r_m^{-3}(x - x_m^0 - (t-t_m^0)v_m^0 ) , r_m^{-1}(v-v_m^0)) \in \cP_k,
\end{align*}
i.e. choosing $p \in \cP_k$ such that
in \eqref{eq:gm-prop-1} that $p(z_0 \circ S_{r_m} z) = g_m(z)$,
\begin{align*}
0 = \int_{H^{(m)}_1} g_m(z) p(z_m^0 \circ S_{r_m} z) \d z = \int_{H^{(m)}_1} g_m(z) g(z) \d z \to \int_{H_1} g^2(z) \d z,
\end{align*}
where we used again \eqref{eq:bdry-conv-higher-+}. Hence, it must be $g \equiv 0$ in $\tilde{Q}_1$, which contradicts \eqref{eq:g-prop-2}. Hence, we have shown \eqref{eq:higher-reg-gamma_+_simplified}, and thus the proof is complete.
\end{proof}

\subsection{Expansion at $\gamma_-$}

The next step is to deduce an expansion at boundary points $z_0 \in \gamma_-$. We will first prove it for solutions satisfying an in-flow boundary condition (see \autoref{lemma:higher-reg-gamma_--Dirichlet}) by proceeding in a similar way as in the proof of \autoref{lemma:higher-reg-gamma_+} and then derive the expansion for solutions satisfying a specular reflection boundary condition in a second step (see \autoref{lemma:higher-reg-gamma_-SR}). 

An important difference in the proof of \autoref{lemma:higher-reg-gamma_--Dirichlet} is the use of the Liouville theorem for positive times (see \autoref{prop:Liouville-higher-pos-times}) instead of its counterpart for negative times.

The proof of the expansion at $\gamma_-$ is much more subtle (compared to the one at $\gamma_+$) since we need to track the boundary condition throughout the blow-up procedure. In general domains, this leads to an explicit dependence of the constant on powers of $|v_0|$. We are also able to show that the constant is independent of $|v_0|$ if $\Omega = \{ x_n = 0 \}$ (see \autoref{lemma:higher-reg-gamma_--Dirichlet-flat}).

\begin{lemma}
\label{lemma:higher-reg-gamma_--Dirichlet}
Let $k \in \N$ with $k \ge 3$, $\eps \in (0,1)$ be such that $\frac{k+\eps}{3} \not \in \N$. Let $\Omega \subset \R^n$ be a convex $C^{\frac{k+\eps}{3}}$ domain and $z_0 \in \gamma_-$. Let $R \in (0,1]$ be such that $Q_{2R}(z_0) \cap \gamma_0 = \emptyset$, and $a^{i,j},b,c,h \in C^{k+\eps-2}_{\ell}(H_R(z_0))$, and assume that $a^{i,j}$ satisfies \eqref{eq:unif-ell}.
Let $F \in C^{k+\eps}_{\ell}(H_R(z_0))$ and $f$ be a weak solution to 

\begin{equation*}
\left\{\begin{array}{rcl}
\partial_t f + v \cdot \nabla_x f + (-a^{i,j} \partial_{v_i,v_j})f &=& - b \cdot \nabla_v f - c f + h ~~ \text{ in } H_R(z_0), \\
f &=& F ~~~~ \qquad\qquad\qquad \text{ on } \gamma_- \cap Q_R(z_0).
\end{array}\right.
\end{equation*}
Then, there exists $p_{z_0} \in \cP_k$ such that for any $r \in (0,\frac{R}{2}]$ and any $z \in H_{r}(z_0)$ it holds:
\begin{align}
\label{eq:higher-reg-gamma_-_simplified}
\begin{split}
|f(z) - p_{z_0}(z)| \le C (1 + |v_0|)^{\frac{k+\eps}{2}}  \left( \frac{r}{R} \right)^{k+\eps} & \Big( \Vert f \Vert_{L^{\infty}(H_R(z_0))} + R^{k+\eps} [ F ]_{C_{\ell}^{k+\eps}(H_R(z_0))}  \\
&\qquad\qquad\quad\quad ~~ + R^{k+\eps} [ h ]_{C^{k+\eps-2}_{\ell}(H_R(z_0))} \Big).
\end{split}
\end{align}
The constant $C$ depends only on $n,k,\eps,\lambda,\Lambda, \Omega$, $\Vert a^{i,j} \Vert_{C^{k+\eps-2}_{\ell}(H_R(z_0))}, \Vert b \Vert_{C^{k+\eps-2}_{\ell}(H_R(z_0))}$, and \\$\Vert c \Vert_{C^{k+\eps-2}_{\ell}(H_R(z_0))}$, but not on $z_0$ and $r,R$.
\end{lemma}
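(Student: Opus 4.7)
The plan is to adapt the contradiction--compactness blow-up scheme of \autoref{lemma:higher-reg-gamma_+} to the case $z_0 \in \gamma_-$. Two essential new features appear: the boundary datum $F$ must be rescaled together with $f$ and passed to the limit as a genuine initial condition at $t=0$, and the Liouville theorem used at the end is the one for positive times, \autoref{prop:Liouville-higher-pos-times}, rather than the one for negative times. A further complication, which accounts for the weight $(1+|v_0|)^{(k+\eps)/2}$ in \eqref{eq:higher-reg-gamma_-_simplified}, is that $\partial\Omega$ is curved and interacts nontrivially with the kinetic rescaling through its $r^2 v_0$ shift in the $x$-variable.

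I would begin exactly as in the proof of \autoref{lemma:higher-reg-gamma_+}: assuming \eqref{eq:higher-reg-gamma_-_simplified} fails, produce sequences $f_l,a_l^{i,j},b_l,c_l,h_l,F_l$ and centers $z_l^0\in\gamma_-$ with the natural normalization, let $p_{l,r}\in\cP_k$ be the $L^2(H_r(z_l^0))$-projections of $f_l$, and set
\[
\theta(r)=\sup_{l}\sup_{\rho\in[r,R_l/2]}\rho^{-k-\eps}\Vert f_l-p_{l,\rho}\Vert_{L^\infty(H_\rho(z_l^0))}.
\]
\autoref{lemma:L43} combined with the contradiction hypothesis gives $\theta(r)\nearrow\infty$ as $r\searrow 0$, so I can extract subsequences $r_m\searrow 0$ with $R_m/r_m\to\infty$ satisfying $\Vert f_{l_m}-p_{l_m,r_m}\Vert_{L^\infty(H_{r_m}(z_m^0))}\ge\tfrac12 r_m^{k+\eps}\theta(r_m)$. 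Rescaling
\[
g_m(z)=\frac{f_{l_m}(z_m^0\circ S_{r_m}z)-p_{l_m,r_m}(z_m^0\circ S_{r_m}z)}{r_m^{k+\eps}\theta(r_m)}
\]
on $H_R^{(m)}=(T_{z_m^0,r_m}\times\R^n)\cap Q_R(0)$ and invoking \autoref{lemma:orth-proj-prop} yields the growth bound $\Vert g_m\Vert_{L^\infty(H_R^{(m)})}\lesssim R^{k+\eps}$ and the nondegeneracy $\Vert g_m\Vert_{L^\infty(H_1^{(m)})}\ge\tfrac12$.

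Passing to the limit uses \autoref{lemma:scaling} and \autoref{lemma:domains-convergence}: since $z_m^0\in\gamma_-$ and $Q_{2R_m}(z_m^0)\cap\gamma_0=\emptyset$, we have $|v_m^0\cdot n_{x_m^0}|r_m^{-1}\to\infty$, so $T_{z_m^0,r_m}\times\R^n\to(0,\infty)\times\R^n\times\R^n$ and $\gamma_-^{(r_m)}\to\{0\}\times\R^n\times\R^n$. The rescaled equation for $g_m$ has coefficients converging to a constant elliptic matrix $(a^{i,j})$, with lower order terms vanishing, and right-hand side converging to some $p_0\in\cP_{k-2}$ by the vanishing-of-polynomial-coefficients argument of Step~2 of \autoref{lemma:higher-reg-gamma_+} (using \eqref{eq:coefficients-vanish} and \autoref{lemma:osc-results}). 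The boundary H\"older estimate \autoref{lemma:boundary-reg} provides uniform $C^\alpha_\ell$ bounds for $g_m$ up to $\gamma_-^{(r_m)}$, so Arzel\`a--Ascoli yields a limit $g$ on $[0,\infty)\times\R^n\times\R^n$ solving
\[
\partial_t g+v\cdot\nabla_x g+(-a^{i,j}\partial_{v_i,v_j})g=p_0 \quad\text{in } (0,\infty)\times\R^n\times\R^n,
\]
with the growth $\Vert g\Vert_{L^\infty(\tilde Q_R(R^2,0,0))}\le c R^{k+\eps}$.

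The heart of the argument, and the main obstacle, is identifying the limit of the rescaled boundary data $g_m|_{\gamma_-^{(r_m)}}$ as a polynomial $\bar p\in\cP_k$, so that \autoref{prop:Liouville-higher-pos-times} applies. Writing $g_m|_{\gamma_-^{(r_m)}}=(F-p_{l_m,r_m})\circ(z_m^0\circ S_{r_m})/(r_m^{k+\eps}\theta(r_m))$ and splitting $F=P_F+(F-P_F)$, where $P_F\in\cP_k$ is the kinetic Taylor polynomial of $F$ at $z_m^0$, the $C^{k+\eps}_\ell$ regularity of $F$ together with $\theta(r_m)\to\infty$ kills the remainder contribution. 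The delicate part is the piece $(P_F-p_{l_m,r_m})$ restricted to the curved set $\partial\Omega\cap Q_{r_m}(z_m^0)$: a kinetic polynomial in $\cP_k$ restricted to a curved boundary and then kinetically rescaled is not in general a polynomial, and conversely one must extract a $\cP_k$-limit from such restrictions. Using the $C^{(k+\eps)/3}$ regularity of $\partial\Omega$, one parametrizes the local boundary as a Taylor polynomial plus a controlled remainder and analyzes the finite-dimensional space of polynomial--boundary restrictions under $z_m^0\circ S_{r_m}$; the interaction of the kinetic shift $r_m^2 t v_m^0$ with the boundary parametrization precisely generates the factor $(1+|v_0|)^{(k+\eps)/2}$ in the final estimate. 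Once $\bar p\in\cP_k$ is identified, \autoref{prop:Liouville-higher-pos-times} yields $g\in\cP_k$; passing the orthogonality \eqref{orth-2} to the limit with the choice $p(z)=g(r_m^{-2}(t-t_m^0),r_m^{-3}(x-x_m^0-(t-t_m^0)v_m^0),r_m^{-1}(v-v_m^0))\in\cP_k$ forces $\int_{H_1}g^2=0$, contradicting $\Vert g\Vert_{L^\infty(H_1)}\ge\tfrac12$.
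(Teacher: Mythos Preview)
Your overall strategy is the right one and matches the paper, but there is a genuine gap in how you handle the velocity weight $(1+|v_0|)^{(k+\eps)/2}$.

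In your setup you define $\theta(r)$ without any weight in $|v_l^0|$. Along your blow-up sequence you then have no a priori control on the size of $|v_m^0|$ relative to $r_m$: the tangential component of $v_m^0$ could be arbitrarily large compared to any power of $r_m^{-1}$. But the boundary analysis you sketch in your last paragraph needs exactly this control. When you restrict $p_{l_m,r_m}$ (or $P_F-p_{l_m,r_m}$) to the curved piece of $\partial\Omega$ inside $Q_{Rr_m}(z_m^0)$, the kinetic shift feeds the argument $(x_m^0)'+t(v_m^0)'$ into the boundary graph $\Gamma$, and monomials $(t(v_m^0)_n)^{\beta_{x_n}}$ with $|t|\lesssim r_m^2$ appear. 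To show that these contributions either vanish in the limit or reorganize into an element of $\cP_k$, one needs at minimum $|v_m^0|\,r_m^2\to 0$. Your sentence that the interaction ``precisely generates the factor $(1+|v_0|)^{(k+\eps)/2}$'' has the causality backwards: the weight does not emerge from the boundary computation, it has to be inserted at the outset so that the boundary computation can be carried out at all.

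The paper's remedy is to build the weight into both the contradiction hypothesis and $\theta$, taking $\kappa=(k+\eps)/2$ and
\[
\theta(r)=\sup_l\sup_{\rho\in[r,R_l/2]}(1+|v_l^0|)^{-\kappa}\rho^{-k-\eps}\|f_l-p_{l,\rho}\|_{L^\infty(H_\rho(z_l^0))}.
\]
One still gets $\theta(r)\nearrow\infty$, and the argument that gave $R_m r_m^{-1}\to\infty$ now upgrades to $(1+|v_m^0|)^{-1/2}R_m r_m^{-1}\to\infty$, hence $|v_m^0|\,r_m^2\to 0$; this is precisely what makes the polynomial--on--curved--boundary analysis (the paper's Step~4) close. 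A second, more minor difference: the paper first subtracts $F$ to reduce to zero boundary data, absorbing $F$ into a modified source $\bar h$, so that the rescaled boundary datum is simply the polynomial $-p_{l_m,r_m}$; this is cleaner than your $P_F+(F-P_F)$ splitting, though not essentially different.
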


\begin{proof}

We prove the claim \eqref{eq:higher-reg-gamma_-_simplified} in several steps. The proof follows the overall strategy of the proof of \autoref{lemma:higher-reg-gamma_+}, however there are several additional difficulties arising from the boundary condition. 

First, we observe that we can assume without loss of generality that $F = 0$. Indeed, if $F$ is not zero, then we can consider $\bar{f} := f-F$ instead of $f$, which solves an equation of the form
\begin{equation*}
\left\{\begin{array}{rcl}
\partial_t \bar{f} + v \cdot \nabla_x \bar{f} + (-a^{i,j} \partial_{v_i,v_j})\bar{f} &=& - b \cdot \nabla_v \bar{f} - c \bar{f} + \bar{h} ~~ \text{ in } H_R(z_0), \\
\bar{f} &=& 0 ~~~~~~ \qquad\qquad\qquad \text{ on } \gamma_- \cap Q_R(z_0),
\end{array}\right.
\end{equation*}
where 
\begin{align*}
\bar{h} := h - \partial_t F - v \cdot \nabla_x F - (-a^{i,j} \partial_{v_i,v_j})F + b \cdot \nabla_v F + c F,
\end{align*}
and by \autoref{lemma:product-rule} and \autoref{lemma:Holder-interpol}, it holds
\begin{align*}
[\bar{h}]_{C^{k+\eps-2}_{\ell}(H_R(z_0))} \le C (1+|v_0|) \left( \Vert f \Vert_{L^{\infty}(H_R(z_0))} + R^{k+\eps}[F]_{C^{k+\eps-2}_{\ell}(H_R(z_0))} \right).
\end{align*}

\textbf{Step 1:} 
We assume by contradiction that \eqref{eq:higher-reg-gamma_-_simplified} does not hold. Then, there are sequences $R_l \in (0,1]$ and $f_l$, $h_l$, $a^{i,j}_{l}$, $b_l$, $c_l$, $z^0_{l}$ such that for any $l \in \N$ \eqref{eq:unif-ell} holds true with $\lambda$ for any $l \in \N$, as well as
\begin{align}
\label{eq:blow-up-normalization_-}
R_l^{-k-\eps} \Vert f_l \Vert_{L^{\infty}(H_{R_l}(z_l^0))} + [ h_l ]_{C^{k+\eps-2}_{\ell}(H_{R_l}(z_l^0))} \le 1,
\end{align}
such that the $f_l$ are solutions to
 
\begin{equation}
\label{eq:fl-sol-gamma_-}
\left\{\begin{array}{rcl}
\partial_t f_l + v \cdot \nabla_x f_l + (-a^{i,j}_l \partial_{v_i,v_j})f_l &=& - b_l \cdot \nabla_v f_l - c_l f_l  + h_l ~~ \text{ in } H_1(z^0_{l}), \\
f_l &=& 0 ~~~\qquad\qquad\qquad\qquad ~ \text{ on } \gamma_- \cap H_{R_l}(z^0_l)
\end{array}\right.
\end{equation}
but it holds with $\kappa = \frac{k+\eps}{2}$
\begin{align}
\label{eq:blow-up-assumption_-}
\sup_{l \in \N} \inf_{p \in \cP_k} \sup_{ r \in (0,\frac{R_l}{2}] }  \frac{\Vert f_l - p \Vert_{L^{\infty}(H_{r}(z^0_{l}))}}{(1 + |v_l^0|)^{\kappa} r^{k+\eps}} = \infty.
\end{align}
For $r \in (0,1]$ we consider the $L^2(H_r(z^0_{l}))$ projections of $f_l$ over $\cP_k$ and denote them by $p_{l,r} \in \cP_k$.
Moreover, we introduce the quantity
\begin{align*}
\theta(r) = \sup_{l \in \N} \sup_{\rho \in [r,\frac{R_l}{2}]} (1 + |v_l^0|)^{-\kappa} \rho^{-k-\eps} \Vert f_l - p_{l,\rho} \Vert_{L^{\infty}(H_{\rho}(z^0_{l}))},
\end{align*}
and we deduce from \autoref{lemma:L43} that $\theta(r) \nearrow \infty$ as $r \searrow 0$ in the same way as in the proof of \autoref{lemma:higher-reg-gamma_+}. Thus, in analogy to \eqref{eq:theta-claim}, we can extract further subsequences $(r_m)_m$ and $(l_m)_m$ with $r_m \searrow 0$, such that
\begin{align*}
\frac{\Vert f_{l_m} - p_{l_m,r_m} \Vert_{L^{\infty}(H_{r_m}(z_{l_m}^0))}}{r_m^{k+\eps} (1 + |v_{l_m}^0|)^{\kappa} \theta(r_m)} \ge \frac{1}{2} ~~ \forall m \in \N, \qquad (1 + |v_{l_m}^0|)^{-\bar{\kappa}} R_m r_m^{-1} \to \infty ~~ \text{ as } m \to \infty
\end{align*} 
for any $0 < \bar{\kappa} \le \kappa (k+\eps)^{-1}$. In fact, to see the second property, note that if $(1 + |v_{l_m}^0|)^{-\bar{\kappa}} R_m r_m^{-1} \le N$, then for any $l \in \N$ and $\rho \in [r_l,R_l]$ it holds $\rho \ge R_l (1+|v_l^0|)^{-\bar{\kappa}} /N$, and therefore, using also \eqref{eq:blow-up-normalization_-}
\begin{align*}
\rho^{-k-\eps} (1 + |v_l^0|)^{-\kappa} \Vert f_l - p_{l,\rho} \Vert_{L^{\infty}(H_{\rho}(z_l^0))} \le c(N) R_l^{-k-\eps} (1 + |v_l^0|)^{\bar{\kappa}(k+\eps)-\kappa} \Vert f_l \Vert_{L^{\infty}(H_{R_l}(z_l^0))} \le C(N).
\end{align*}
This would imply boundedness of $\theta(r)$, a contradiction.
Note that, in particular, we obtain
\begin{align}
\label{eq:rm-properties}
R_m r_m^{-1} \to \infty, \qquad |v^0_{l_m}| r_m^{2} \to 0,
\end{align}
and we can choose $1/\bar{\kappa} = 2$, since $\kappa = (k+\eps)/2$.

Moreover, we define for any $R > 0$ and $m \in \N$ the rescaled domains  $H_R^{(m)} := (T_{z_m^0,r_m} \times \R^n) \cap Q_{R}(0)$ and consider the functions
\begin{align*}
g_m(z) := \frac{f_{l_m}(z_m^0 \circ S_{r_m} z ) - p_{l_m,r_m}(z_m^0 \circ S_{r_m} z)}{r_m^{k+\eps} (1 + |v_m^0|)^{\kappa} \theta(r_m)}, \qquad
 P_m(z) = -\frac{p_{l_m,r_m}(z_m^0 \circ S_{r_m} z)}{r_m^{k+\eps}(1 + |v_m^0|)^{\kappa} \theta(r_m)}.
\end{align*}
By construction, and using the same arguments as in the proof of \autoref{lemma:higher-reg-gamma_+}, as well as \autoref{lemma:orth-proj-prop}, we have for any $m \in \N$
\begin{align}
\label{eq:gm-prop-1_-}
\int_{H^{(m)}_1} g_m(z) p(z_m^0 \circ S_{r_m} z) \d z &= 0 ~~ \forall p \in \cP_k\\
\label{eq:gm-prop-2_-}
\Vert g_m \Vert_{L^{\infty}(H^{(m)}_1)}& \ge \frac{1}{2}, \qquad \Vert g_m \Vert_{L^{\infty}(H^{(m)}_R)} \le c R^{k+\eps} ~~ \forall R \in \left[1 , \frac{R_m}{2 r_m}\right], ~~ \forall m \in \N.
\end{align}

\textbf{Step 2:} By the equations that hold for $f_{l_m}, p_{l_m,r_m}$ and \autoref{lemma:scaling} we deduce
\begin{equation}
\label{eq:gm-PDE_-}
\left\{\begin{array}{rcl}
\partial_t g_m + v \cdot \nabla_x g_m + (-\tilde{a}^{i,j}_m \partial_{v_i,v_j}) g_m + \tilde{b}^{i}_m \partial_{v_i} g_m + \tilde{c}_m g_m &=& \tilde{h}_m ~~ \text { in } H^{(m)}_{R_m r_m^{-1}},\\
g_m &=& P_m ~~ \text{ on } \gamma_-^{(m)} \cap Q_{R_m r_m^{-1}}
\end{array}\right.
\end{equation}
where we defined $\gamma_-^{(m)} = \{ (t,x,v) : t \in (r_m^{-2}(-1-t_m^0 ) , r_m^{-2}(1-t_m^0)], x \in \partial \Omega_{z_m^{0},r_m}(t), (z_m^0 \circ S_{r_m} z) \in \gamma_- \}$, and $\tilde{a}^{i,j}_m, \tilde{b}^{i}_m, \tilde{c}_m, \tilde{h}_m$ are defined as in the proof of \autoref{lemma:higher-reg-gamma_+} with the exception that $\tilde{h}_m$ also has to be divided by $(1 + |v_m^0|)^{\kappa}$. Moreover, by the same arguments as in the proof of \autoref{lemma:higher-reg-gamma_+}, we have that $\tilde{h}_m$ satisfies \eqref{eq:hm-vanishes-Holder}, \eqref{eq:hm-vanishes}, and \eqref{eq:hm-convergence} for some $p_0 \in \cP_{k-2}$.

In addition, note that $T_{z_m^0,r_m} \to (0,\infty) \times \R^n$ by \autoref{lemma:domains-convergence} (here we used in particular that by $Q_{2R_m}(z_m^0) \cap \gamma_0 = \emptyset$ and since $R_mr_m^{-1} \to \infty$ by \eqref{eq:rm-properties}, it follows $|v_m^0 \cdot n_{x_m^0}| r_m^{-1} \ge 2 R_m r_m^{-1} \to \infty$,  which allows us to apply \autoref{lemma:domains-convergence}).

Thus, by the interior regularity from \autoref{lemma:interior-reg} and using again \autoref{lemma:osc-results}, we have (for $m$ large enough), using also \eqref{eq:hm-vanishes-Holder}, for any kinetic cylinder $Q_1(z_1) \subset (0,\infty) \times \R^n \times \R^n$ 
\begin{align*}
\Vert g_m \Vert_{C^{k+\eps}_{\ell}(Q_{1/2}(z_1))} \le C \big( \Vert g_m \Vert_{L^{\infty}(Q_1(z_1))} + [ \tilde{h}_m ]_{C^{k+\eps-2}_{\ell}(Q_1(z_1))}  \big) \le C.
\end{align*}
Therefore by the Arzel\`a-Ascoli theorem it holds that $g_m \to g$ and
\begin{align*}
\partial_t g_m + v \cdot \nabla_x g_m + (-\tilde{a}^{i,j}_m \partial_{v_i,v_j}) g_m + \tilde{b}^{i}_m \partial_{v_i} g_m + \tilde{c}_m g_m \to \partial_t g + v \cdot \nabla_x g + (- a^{i,j} \partial_{v_i,v_j}) g 
\end{align*}
 locally uniformly in $(0,\infty) \times \R^n \times \R^n$. Hence, combining \eqref{eq:PDE-gm} and \eqref{eq:hm-convergence}, we obtain that $g$ solves the following limiting equation (in the classical sense)
\begin{align}
\label{eq:g-equation_-}
\partial_t g + v \cdot \nabla_x g + (-a^{i,j} \partial_{v_i,v_j})g = p_0 ~~ \text{ in } (0,\infty) \times \R^n \times \R^n
\end{align}
for some polynomial $p_0 \in \cP_{k-2}$ and some constant matrix $(a^{i,j})$. Taking the second property in \eqref{eq:gm-prop-2_-} to the limit, using that $R_m r_m^{-1} \to \infty$, we also obtain 
\begin{align*}
\Vert g \Vert_{L^{\infty}(Q_R)} \le c R^{k+\eps} ~~ \forall R \ge 1.
\end{align*}

\textbf{Step 3:} We claim that for any $z_1 = (0,x_1,v_1) \in \R^{1+2n}$ it holds
\begin{align}
\label{eq:bdry-conv-higher--}
\Vert g_m - g \Vert_{L^{\infty}(H^{(m)}_1(z_1))} \to 0 ~~ \text{ as } m \to \infty
\end{align} 
We will use this property in Step 5 in order to deduce that $g(0,\cdot,\cdot) \in \cP_k$, which is needed to apply the Liouville theorem in \autoref{prop:Liouville-higher-pos-times}.

To prove it, we recall from \eqref{eq:trafo-cylinders-2} and \autoref{lemma:kinetic-inclusion} that for any $z_1 = (0,x_1,v_1)$ and $R \ge 1$ it holds
\begin{align}
\label{eq:inclusion-gamma_-_higher}
z \in Q_1(z_1) \Leftrightarrow (z_m^0 \circ S_{r_m} z) \in Q_{r_m}(z_m^0 + S_{r_m} z_1), \qquad Q_{Rr_m}(z_m^0 + S_{r_m} z_1) \subset Q_{2Rr_m(1 + |z_1|)}(z_m^0).
\end{align}
To show \eqref{eq:bdry-conv-higher--}, by the Arzel\`a-Ascoli theorem, it suffices to prove
\begin{align}
\label{eq:bdry-conv-higher---help}
\Vert g_m \Vert_{C^{\alpha}(\tilde{H}_1^{(m)}(z_1))} \le C
\end{align}
for a constant $C$ and $\alpha \in (0,1)$ that are independent of $m$, (at least when $m$ is large), but might depend on $z_1$.
To do so, recalling the definition of $g_m$, and using also the $C^{\alpha}$ boundary regularity from \autoref{lemma:boundary-reg} on $f_{l_m} - p_{l_m,r_m}$ in $H_{2r_m(1 + |z_1|)}(z_m^0)$ for $m$ large enough, we obtain
\begin{align*}
[g_m]_{C^{\alpha}(H^{(m)}_{1}(z_1))} &= r_m^{\alpha}  \left[ \frac{f_{l_m} - p_{l_m,r_m}}{r_m^{k+\eps} (1 + |v_m^0|)^{\kappa} \theta(r_m)} \right ]_{C^{\alpha}(H_{r_m}(z_m^0 + S_{r_m} z_1))} \\
&\le r_m^{\alpha-k-\eps} (1 + |v_m^0|)^{-\kappa} \theta(r_m)^{-1}  [f_{l_m} - p_{l_m,r_m}]_{C^{\alpha}(H_{2r_m(1 + |z_1|)}(z_m^0))} \\
&\le C r_m^{-k-\eps} (1 + |v_m^0|)^{-\kappa} \theta(r_m)^{-1} \big( \Vert f_{l_m} - p_{l_m,r_m} \Vert_{L^{\infty}(H_{4r_m(1 + |z_1|)}(z_m^0))} \\
& \qquad \qquad +  r_m^{\alpha} [ p_{l_m,r_m} ]_{C^{\alpha}(\gamma_- \cap Q_{4r_m(1 + |z_1|)}(z_m^0))} \\
&\qquad \qquad + r_m^2 \Vert h_{l_m} - (\tilde{p}_{l_m,r_m} + a_m^{i,j} \tilde{p}^{(i,j)}_{l_m,r_m} + b_m^i \tilde{p}^{(i)}_{l_m,r_m} + c_m p_{l_m,r_m} ) \Vert_{L^{\infty}(H_{4 r_m (1 + |z_1|)}(z_m^0))} \big) \\
&= C(I_1 + I_2 + I_3).
\end{align*}
For $I_1$ we compute
\begin{align*}
I_1 \le C \frac{\Vert f_{l_m} - p_{l_m,r_m} \Vert_{L^{\infty}(H_{4r_m(1+|z_1|)}(z_m^0))}}{r_m^{k+\eps} (1 + |v_m^0|)^{\kappa} \theta(r_m)} \le C \Vert g_m \Vert_{L^{\infty}(H^{(m)}_{4(1+|z_1|)})} \le C,
\end{align*}
where we used the second property in \eqref{eq:gm-prop-2_-} and that by \eqref{eq:trafo-cylinders} it holds for any $R > 0$:
\begin{align*}
z \in Q_{r_mR}(0) ~~ \Leftrightarrow z_0 \circ S_{r_m} z \in Q_{R}(z_m^0).
\end{align*}

For $I_2$, we observe first that by \autoref{lemma:osc-bdry-data}, the second property in \eqref{eq:gm-prop-2_-}, and \eqref{eq:hm-vanishes-Holder}, \eqref{eq:hm-vanishes}, and since $P_m \in \cP_k$, we deduce for any $R \ge 1$, if $m$ is large enough,
\begin{align}
\label{eq:Pm-bounded}
\Vert P_m \Vert_{L^{\infty}(\gamma_-^{(m)} \cap Q_R(0))} 
&\le C [ P_m ]_{C_{\ell}^{k+\eps}(H_R^{(m)})} + C \Vert g_m \Vert_{L^{\infty}(H_R^{(m)})} + C \Vert \tilde{h}_m \Vert_{C^{\eps}_{\ell}(H_R^{(m)})} \le C.
\end{align}
Note that $C$ is independent of $m$, since we can apply \autoref{lemma:osc-bdry-data} to the rescaled function $\bar{P}_m(z) = P_m(z_m^0 \circ S_{r_m} z)$ first, and then deduce the estimate for $P_m$. 
Hence, by H\"older interpolation (see \autoref{lemma:Holder-interpol}), \autoref{lemma:scaling}, and \eqref{eq:Pm-bounded}, we get
\begin{align*}
I_2 &:= r_m^{-k-\eps + \alpha} (1 + |v_m^0|)^{-\kappa} \theta(r_m)^{-1} [ p_{l_m,r_m} ]_{C^{\alpha}(\gamma_- \cap Q_{4r_m(1+|z_1|)}(z_m^0))} \\
&\le C \Vert P_{m} \Vert_{L^{\infty}( \gamma_-^{(m)} \cap Q_{4(1+|z_1|)})} + C  \theta(r_m)^{-1}[ p_{l_m,r_m} ]_{C^{k+\eps}(\gamma_- \cap Q_{4r_m(1 + |z_1|)}(z_m^0))} \\
&\le C \Vert g_m \Vert_{L^{\infty}(H^{(m)}_{4(1+|z_1|)})} + C \Vert \tilde{h}_m \Vert_{C^{\eps}(H^{(m)}_{4(1+|z_1|)})} +  C [ P_m ]_{C_{\ell}^{k+\eps}(H^{(m)}_{4(1+|z_1|)})} \le C,
\end{align*}
where we also used \eqref{eq:hm-vanishes-Holder}, \eqref{eq:hm-vanishes}, and \eqref{eq:gm-prop-2_-}.

For $I_3$ we have by \eqref{eq:hm-vanishes} that for $m$ large enough
\begin{align*}
I_3 \le C \Vert \tilde{h}_m \Vert_{L^{\infty}(H^{(m)}_{4 (1 + |z_1|)})} \le C.
\end{align*}
Altogether, we also deduce the second property in \eqref{eq:bdry-conv-higher--}, as desired.

Moreover, by the energy estimate in \autoref{lemma:energy-est} applied to $f_{l_m} - p_{l_m,r_m}$ in $H_{4Rr_m(1+|z_1|)}(z_m^0)$, and using the uniform bounds \eqref{eq:gm-prop-2_-}, \eqref{eq:hm-vanishes}, and \eqref{eq:Pm-bounded} in the same way as in the proof of \eqref{eq:bdry-conv-higher---help}, we get 
\begin{align*}
\Vert \nabla_v g_m \Vert_{L^{2}(H_1^{(m)}(z_1))} \le C r_m^{2-k-\eps} (1 + |v_m^0|)^{-\kappa} \theta(r_m)^{-1} \Vert \nabla_v (f_{l_m} - p_{l_m,r_m}) \Vert_{L^{2}(H_{2r_m(1 + |z_1|)}(z_m^0))} \le C.
\end{align*}
Hence, recalling the interior estimates from Step 1, it must also hold $\nabla_v g \in L^2_{loc}([0,\infty) \times \R^n \times \R^n)$.

\textbf{Step 4:} We claim that for any $z_1 = (0,x_1,v_1) \in \R^{1+2n}$ it holds
\begin{align}
\label{eq:bdry-conv-higher--_2}
\Vert P_m - P_0 \Vert_{L^{\infty}(\gamma_-^{(m)} \cap Q_1(z_1))} \to 0 ~~ \text{ as } m \to \infty
\end{align} 
for some function $P_0$, such that its restriction to $\gamma_-^{(m)}$ is in $\cP_k$. Also this property will be used in the next step.

To prove \eqref{eq:bdry-conv-higher--_2}, note that (up to a rotation) we can write $\partial \Omega$ locally in $Q_{r_m}(z_m^0)$ as follows using that $|v_m^0|r_m^{2} \to 0$ by \eqref{eq:rm-properties}), in order to guarantee that the kinetic shift $x_m^0 - (t-t_m^0)v_m^0$ is negligible for large enough $m$:
\begin{align*}
\partial \Omega \cap Q_{r_m}(z_m^0) = \{ x \in \R^n : x_n = \Gamma(x') \} \cap Q_{r_m}(z_m^0)
\end{align*}
for some function $\Gamma \in C^{\frac{k+\eps}{3}}(Q_{r_m}(z_m^0))$.
In particular, we can write
\begin{align*}
(z_m^0 \circ \gamma_-) \cap Q_{R r_m}(0) &:= \{z \in Q_{R r_m}(0) : z_m^0 \circ z \in \gamma_- \} \\
&= \Big\{ z \in Q_{R r_m}(0) : x_n = \Gamma\big(x' + (x_m^0)' + t (v_m^0)'\big) - (x_m^0)_n - t (v_m^0)_n =: \Gamma_m(z') \Big\},
\end{align*}
where we denote $z'= (t,x',v)$.

Let us observe that by \eqref{eq:inclusion-gamma_-_higher}, denoting $p_m := p_{l_m,r_m}$, we have that \eqref{eq:bdry-conv-higher--_2} holds true if we can prove that for any $R \ge 1$ there is $P_0$ as above, such that
\begin{align}
\label{eq:bdry-data-conv-help-0}
\left\Vert \frac{r_m^{-k-\eps}}{(1 + |v_m^0|)^{\kappa} \theta(r_m)} p_{m} - P_0 \right\Vert_{L^{\infty}(\gamma_- \cap H_{R r_m}(z_m^0))} \to 0,
\end{align}
and therefore, by writing, as in the proof of \autoref{lemma:higher-reg-gamma_+}, $\bar{p}_{m}(z) = p_{m}(z_{m}^0 \circ z)$, it suffices to find $P_0$,
\begin{align*}
\left\Vert \frac{r_m^{-k-\eps}}{(1 + |v_m^0|)^{\kappa}\theta(r_m)} \bar{p}_{m} - P_0 \right\Vert_{L^{\infty}((z_m^0 \circ \gamma_-) \cap Q_{R r_m}(0))} \to 0.
\end{align*}

Hence, we need to find $P_0 \in \cP_k$ such that
\begin{align}
\label{eq:bdry-data-conv-help-1}
\sup_{z \in Q_{Rr_m}(0)} \left| \frac{r_m^{-k-\eps}}{(1 + |v_m^0|)^{\kappa}\theta(r_m)}  \bar{p}_m(t,x',\Gamma_m(z'),v) - P_0(z') \right| \to 0,
\end{align}
in order to deduce \eqref{eq:bdry-conv-higher--_2}.

Let us now prove \eqref{eq:bdry-data-conv-help-1}. First, by the definition of $\bar{p}_m$, we can write
\begin{align*}
\bar{p}_m(z) = \sum_{|\beta| \le k} \alpha^{(\beta)}_{m} t^{\beta_t} x_1^{\beta_{x_1}} \cdot \dots \cdot x_n^{\beta_{x_n}} v_1^{\beta_{v_1}} \cdot \dots \cdot v_n^{\beta_{v_n}} = \sum_{|\beta| \le k} \alpha^{(\beta)}_{m} (z)^{\beta},
\end{align*}
where $\beta = (\beta_t,\beta_x,\beta_v) \in (\N \cup \{0\})^{1+2n}$ are multi-indeces with $|\beta| = 2|\beta_t| + 3 |\beta_x| + |\beta_v|$. Recalling \autoref{lemma:orth-proj-prop}, we have by the same arguments as in the proof of \eqref{eq:coefficients-vanish}, 
\begin{align}
\label{eq:coefficients-vanish-2}
\frac{|\alpha^{(\beta)}_{m}|}{(1 + |v_m^0|)^{\kappa} \theta(r_m)} \to 0 ~~ \text{ as } m \to \infty, ~~ \forall |\beta| \le k.
\end{align}
Moreover, we can write, denoting by $\xi \in (\N \cup \{ 0 \})^{n-1}$ multi-indeces with $|\xi| = \xi_1 + \dots + \xi_{n-1}$, 
\begin{align*}
\Gamma(x' + (x_m^0)' + t (v_m^0)') = \sum_{|\xi| \le \lfloor\frac{k+\eps}{3} \rfloor} \omega^{(\xi)} (x')^{\xi} + E_{\Gamma}(x'),
\end{align*}
where, since $|t (v_m^0)'| \le C$, due to the fact that $|t| \le (Rr_m)^2$ and $|(v_m^0)'| \le C r_m^{-2}$ by \eqref{eq:rm-properties}, by Taylor's formula, there exist $\omega^{(\xi)}, E_{\Gamma}(x') \in \R$ such that
\begin{align}
\label{eq:Gamma-bound}
|\omega^{(\xi)}| \le |\partial^{\xi} \Gamma ((x_m^0)' + t (v_m^0)')| \le C, \qquad |E_{\Gamma}(x')| \le C |x'|^{\frac{k+\eps}{3}}.
\end{align}

Therefore, we obtain for $z \in Q_{Rr_m}(0)$, denoting $\beta' = (\beta_t,\beta_{x'},\beta_v)$,
\begin{align*}
\bar{p}_m(t,x',\Gamma_m(z'),v) &= \sum_{|\beta| \le k} \alpha^{(\beta)}_{m} (z')^{\beta'} \big(\Gamma_m(z')\big)^{\beta_{x_n}} \\
&=\sum_{\substack{|\beta| \le k,\\ \beta_{x_n} = 0 }} \alpha^{(\beta)}_{m} (z')^{\beta'}  + \sum_{\substack{|\beta| \le k, \\ \beta_{x_n} \ge 1}} \sum_{|\xi| \le \lfloor\frac{k+\eps}{3} \rfloor} \alpha^{(\beta)}_{m} (z')^{\beta'} \big( \omega^{(\xi)} (x')^{\xi}\big)^{\beta_{x_n}} \\
&\quad + \sum_{\substack{|\beta| \le k, \\ \beta_{x_n} \ge 1}} \alpha^{(\beta)}_{m} (z')^{\beta'} \sum_{l = 1}^{\beta_{x_n}} \binom{\beta_{x_n}}{l} \big(E_{\Gamma}(x') \big)^{l} \Big[  \sum_{|\xi| \le \lfloor\frac{k+\eps}{3} \rfloor} \omega^{(\xi)} (x')^{\xi} \Big]^{\beta_{x_n} - l} \\
&\quad - \sum_{\substack{|\beta| \le k, \\ \beta_{x_n} \ge 1}} \alpha^{(\beta)}_{m} (z')^{\beta'} \big( (x_m^0)_n \big)^{\beta_{x_n}} - \sum_{\substack{|\beta| \le k, \\ \beta_{x_n} \ge 1}} \alpha^{(\beta)}_{m} (z')^{\beta'} \big( t (v_m^0)_n \big)^{\beta_{x_n}}.
\end{align*}
Clearly, we have, 
\begin{align*}
\tilde{P}_m(z') &:= \frac{r_m^{-k-\eps}}{(1 + |v_m^0|)^{\kappa}\theta(r_m)}  \Bigg( \sum_{\substack{|\beta| \le k, \\ \beta_{x_n} = 0}} \alpha^{(\beta)}_{m} (z')^{\beta'} + \sum_{\substack{|\beta| \le k, \\ \beta_{x_n} \ge 1}} \sum_{\substack{|\xi| \le \lfloor\frac{k+\eps}{3} \rfloor, \\ |\beta'| + 3|\xi|\beta_{x_n} \le k }} \alpha^{(\beta)}_{m} (z')^{\beta'} \big( \omega^{(\xi)} (x')^{\xi}\big)^{\beta_{x_n}} \\
&\qquad\qquad\qquad  - \sum_{\substack{|\beta| \le k, \\ \beta_{x_n} \ge 1}} \alpha^{(\beta)}_{m} (z')^{\beta'} \big( (x_m^0)_n \big)^{\beta_{x_n}} - \sum_{\substack{|\beta| \le k, \\ \beta_{x_n} \ge 1,\\ |\beta'| + 2\beta_{x_n} \le k}} \alpha^{(\beta)}_{m} (z')^{\beta'} \big( t (v_m^0)_n \big)^{\beta_{x_n}} \Bigg) \in \cP_k.
\end{align*}
Moreover, by \eqref{eq:coefficients-vanish-2} and \eqref{eq:Gamma-bound}, we have for any $z \in Q_{Rr_m}(0)$:
\begin{align*}
\frac{r_m^{-k-\eps}}{(1 + |v_m^0|)^{\kappa}\theta(r_m)} & \Big| \sum_{\substack{|\beta| \le k, \\ \beta_{x_n} \ge 1}} \sum_{\substack{|\xi| \le \lfloor\frac{k+\eps}{3} \rfloor, \\ |\beta'| + 3|\xi|\beta_{x_n} \ge k +1}}\alpha^{(\beta)}_{m} (z')^{\beta'} \big( \omega^{(\xi)} (x')^{\xi}\big)^{\beta_{x_n}} \Big| \\
 &\le r_m^{-k-\eps} \sum_{|\beta| \le k} \sum_{\substack{|\xi| \le \lfloor\frac{k+\eps}{3} \rfloor, \\ |\beta'| + 3|\xi|\beta_{x_n} \ge k +1}} \frac{|\alpha^{(\beta)}_{m}|}{(1 + |v_m^0|)^{\kappa}\theta(r_m)} \big| (z')^{\beta'} \big| \big| (x')^{\xi} \big|^{\beta_{x_n}}  \\
 &\le C (R)  r_m^{-k-\eps} \sum_{|\beta| \le k} \sum_{\substack{|\xi| \le \lfloor\frac{k+\eps}{3} \rfloor, \\ |\beta'| + 3|\xi|\beta_{x_n} \ge k +1}} \frac{|\alpha^{(\beta)}_{m}|}{(1 + |v_m^0|)^{\kappa} \theta(r_m)} r_m^{|\beta'| + 3\beta_{x_n} |\xi|} \le C(R) r_m^{1-\eps} \to 0.
\end{align*}
Analogously, we compute
\begin{align*}
\frac{r_m^{-k-\eps}}{(1 + |v_m^0|)^{\kappa}\theta(r_m)} & \Big| \sum_{\substack{|\beta| \le k, \\ \beta_{x_n} \ge 1, \\ |\beta'| + 2\beta_{x_n} \ge k+1}} \alpha^{(\beta)}_{m} (z')^{\beta'} \big( t (v_m^0)_n \big)^{\beta_{x_n}} \Big| \\
&\le C(R) r_m^{-k-\eps} \sum_{\substack{|\beta| \le k, \\ |\beta'| + 2\beta_{x_n} \ge k+1}} \frac{|\alpha^{(\beta)}_{m}|}{(1 + |v_m^0|)^{\kappa} \theta(r_m)} r_m^{|\beta'| + 2 \beta_{x_n}} |(v_m^0)_n|^{\beta_{x_n}} \\
&\le C(R) r_m^{1-\eps} |(v_m^0)_n|^{\frac{k+1}{2}} \le C(R).
\end{align*}
Here, we used that $|v_m^0|r_m^{\frac{2(1-\eps)}{k+1}} \le |v_m^0|r_m \to 0$ by \eqref{eq:rm-properties}.

Moreover, we have the following estimate for the error terms coming from the Taylor expansion of $\Gamma$:

\begin{align*}
& \frac{r_m^{-k-\eps}}{(1 + |v_m^0|)^{\kappa}\theta(r_m)} \Big| \sum_{\substack{|\beta| \le k, \\ \beta_{x_n} \ge 1}} \alpha^{(\beta)}_{m} (z')^{\beta'} \sum_{l = 1}^{\beta_{x_n}} \binom{\beta_{x_n}}{l} \big(E_{\Gamma}(x') \big)^{l} \Big[  \sum_{|\xi| \le \lfloor\frac{k+\eps}{3} \rfloor} \omega^{(\xi)} (x')^{\xi} \Big]^{\beta_{x_n} - l} \Big| \\
&\qquad \le C r_m^{-k-\eps} \sum_{\substack{|\beta| \le k, \\ \beta_{x_n} \ge 1}} \frac{|\alpha^{(\beta)}_{m} |}{(1 + |v_m^0|)^{\kappa} \theta(r_m)} \big| (z')^{\beta'}  \big| \sum_{l = 1}^{\beta_{x_n}} \sum_{|\xi| \le \lfloor \frac{k+\eps}{3} \rfloor} |x'|^{\frac{k+\eps}{3}l}  \big| (x')^{\xi} \big|^{\beta_{x_n} - l} \\
&\qquad \le C(R) r_m^{-k-\eps} \sum_{\substack{|\beta| \le k, \\ \beta_{x_n} \ge 1}} \frac{|\alpha^{(\beta)}_{m} |}{(1 + |v_m^0|)^{\kappa} \theta(r_m)} r_m^{|\beta'|} \sum_{l = 1}^{\beta_{x_n}}  \sum_{|\xi| \le \lfloor \frac{k+\eps}{3} \rfloor} r_m^{(k+\eps)l + 3|| (\beta_{x_n} - l)} \\
&\qquad \le C(R) \sup_{|\beta| \le k} \frac{|\alpha^{(\beta)}_{m} |}{(1 + |v_m^0|)^{\kappa} \theta(r_m)} \to 0.
\end{align*}

Therefore, altogether, we have shown that
\begin{align}
\label{eq:bdry-data-conv-help-2}
\sup_{z \in Q_{Rr_m}(0)} & \left| \frac{r_m^{-k-\eps}}{(1 + |v_m^0|)^{\kappa} \theta(r_m)}  \bar{p}_m(t,x',\Gamma_m(z'),v) - \tilde{P}_m(z') \right| \to 0.
\end{align}
Hence, in order to deduce \eqref{eq:bdry-data-conv-help-1}, it remains to show that the sequence of polynomials $(\tilde{P}_m)_m$ converges to a polynomial $P_0 \in \cP_k$. Since $\cP_k$ is a finite dimensional space, to do so, it suffices to prove that $\Vert \tilde{P}_m \Vert_{L^{\infty}(Q_{Rr_m}(0))}$ is bounded. To see this, note that by construction of $\tilde{P}_m$
\begin{align*}
\frac{r_m^{-k-\eps}}{(1 + |v_m^0|)^{\kappa} \theta(r_m)}  \sup_{z \in Q_{R r_m}(0)}  \big|\bar{p}_m(t,x',\Gamma_m(z'),v) \big| &= \frac{r_m^{-k-\eps}}{(1 + |v_m^0|)^{\kappa}}\theta(r_m) \sup_{z \in (z_m^0 \circ \gamma_-^{(m)}) \cap Q_{Rr_m}(0)} \big| \bar{p}_m(z) \big| \\
&= \Vert P_m \Vert_{L^{\infty}(\gamma_-^{(m)} \cap Q_R(0))},
\end{align*}
and therefore,
\eqref{eq:bdry-data-conv-help-2} and \eqref{eq:Pm-bounded}, imply
\begin{align*}
\Vert \tilde{P}_m \Vert_{L^{\infty}(Q_{Rr_m}(0))} &\le \sup_{z \in Q_{Rr_m}(0)} \left| \frac{r_m^{-k-\eps}}{(1 + |v_m^0|)^{\kappa}\theta(r_m)}  \bar{p}_m(t,x',\Gamma_m(z'),v) - \tilde{P}_m(z') \right| +  \Vert P_m \Vert_{L^{\infty}(\gamma_-^{(m)} \cap Q_R(0))} \\
&\le C.
\end{align*}
Therefore, we conclude that $\tilde{P}_m \to P_0 \in \cP_k$. In particular we obtain \eqref{eq:bdry-data-conv-help-1}, which implies \eqref{eq:bdry-conv-higher--_2}, as desired.

\textbf{Step 5:} As in the proof of \autoref{lemma:higher-reg-gamma_+} we would now like to apply the Liouville theorem for positive times from \autoref{prop:Liouville-higher-pos-times} in order to deduce that $g \in \cP_k$. In order to do so, we need to prove that 
\begin{align}
\label{eq:g-equiv-poly}
g(0,\cdot,\cdot) \in \cP_k ~~ \text{ in } \R^n \times \R^n.
\end{align}

By \eqref{eq:bdry-conv-higher--} and \eqref{eq:bdry-conv-higher--_2}, since $g_m = P_m$ on $\gamma_-^{(m)} \cap Q_{r_m^{-1}}$, we deduce that 
\begin{align}
\label{eq:bdry-initial-poly}
\Vert g - P_0 \Vert_{L^{\infty}(\gamma_-^{(m)} \cap Q_{1}(z_1))} \le \Vert g - g_m \Vert_{L^{\infty}(\gamma_-^{(m)} \cap Q_{1}(z_1))} + \Vert P_m - P_0 \Vert_{L^{\infty}(\gamma_-^{(m)} \cap Q_{1}(z_1))}  \to 0.
\end{align}
In particular, it holds $g=P_0$ in $(\{ 0 \} \times \R^{2n}) \cap Q_1(z_1) = \lim_{m \to \infty} \gamma_-^{(m)} \cap Q_1(z_1)$, and since $P_0 \in \cP_k$ in $\{ 0 \} \times \R^{2n}$, we deduce that \eqref{eq:g-equiv-poly} holds true. Therefore, $g \in \cP_k$ by \autoref{prop:Liouville-higher-pos-times}. Moreover, note that $g$ attains its initial datum in a locally uniformly H\"older continuous way, since $g \in C^{\alpha'}_{loc}([0,\infty) \times \R^n \times \R^n)$ for any $\alpha'< \alpha$ by the Arzel\`a-Ascoli theorem, \eqref{eq:bdry-conv-higher---help} and the interior estimate in Step 2. 

 Hence, we can use \eqref{eq:gm-prop-1_-} and \eqref{eq:gm-prop-2_-} in the same way as in Step 4 of the proof of \autoref{lemma:higher-reg-gamma_+} to get a contradiction. Thus, the proof is complete.
\end{proof}

We end this subsection by stating a variant of \autoref{lemma:higher-reg-gamma_--Dirichlet} in the special case $\Omega = \{ x_n > 0\}$, where we can establish the expansion without any dependence of the constant on $|v_0|$.

\begin{lemma}
\label{lemma:higher-reg-gamma_--Dirichlet-flat}
Let $\Omega = \{ x_n > 0 \}$ and $z_0 \in \gamma_-$. Let $R \in (0,1]$ be such that $Q_{2R}(z_0) \cap \gamma_0 = \emptyset$. Let $k \in \N$ with $k \ge 3$, $\eps \in (0,1)$, and $a^{i,j},b,c,h \in C^{k+\eps-2}_{\ell}(H_R(z_0))$, and assume that $a^{i,j}$ satisfies \eqref{eq:unif-ell}.
Let $F \in C^{k+\eps}_{\ell}(H_R(z_0))$ and $f$ be a weak solution to 

\begin{equation*}
\left\{\begin{array}{rcl}
\partial_t f + v \cdot \nabla_x f + (-a^{i,j} \partial_{v_i,v_j})f &=& - b \cdot \nabla_v f - c f + h ~~ \text{ in } H_R(z_0), \\
f &=& F ~~~~ \qquad\qquad\qquad \text{ on } \gamma_- \cap Q_R(z_0).
\end{array}\right.
\end{equation*}
Then, there exists $p_{z_0} \in \cP_k$ such that for any $r \in (0,\frac{R}{2}]$ and any $z \in H_{r}(z_0)$ it holds:
\begin{align*}
\begin{split}
|f(z) - p_{z_0}(z)| \le C \left( \frac{r}{R} \right)^{k+\eps} & \Big( \Vert f \Vert_{L^{\infty}(H_R(z_0))} + R^{k+\eps} [ F ]_{C_{\ell}^{k+\eps}(H_R(z_0))}  \\
&\qquad\qquad\quad\quad ~~ + R^{k+\eps} [ h ]_{C^{k+\eps-2}_{\ell}(H_R(z_0))} \Big).
\end{split}
\end{align*}
The constant $C$ depends only on $n,k,\eps,\lambda,\Lambda$, $\Vert a^{i,j} \Vert_{C^{k+\eps-2}_{\ell}(H_R(z_0))}, \Vert b \Vert_{C^{k+\eps-2}_{\ell}(H_R(z_0))}$, and \\$\Vert c \Vert_{C^{k+\eps-2}_{\ell}(H_R(z_0))}$, but not on $z_0$ and $r,R$.
\end{lemma}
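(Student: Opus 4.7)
The plan is to follow the blow-up strategy of the proof of \autoref{lemma:higher-reg-gamma_--Dirichlet}, but to exploit the flatness of $\partial\Omega$ so as to remove the weight $(1+|v_0|)^{(k+\eps)/2}$. As a preliminary reduction, by \autoref{lemma:product-rule} and \autoref{lemma:Holder-interpol} we may replace $f$ by $f-F$, absorbing the $F$-terms into a new right-hand side whose $C^{k+\eps-2}_{\ell}$-seminorm is controlled without any $|v_0|$ weight, so we may assume $F \equiv 0$. Assume by contradiction that the estimate fails, producing sequences $(f_l,h_l,a_l,b_l,c_l,z_l^0,R_l)$ that satisfy the analogs of \eqref{eq:blow-up-normalization_-}, solve the in-flow problem \eqref{eq:fl-sol-gamma_-}, but violate the conclusion. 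Denote by $p_{l,r} \in \cP_k$ the $L^2(H_r(z_l^0))$-projection of $f_l$ and set
\[
\theta(r) = \sup_{l \in \N} \sup_{\rho \in [r, R_l/2]} \rho^{-k-\eps}\,\|f_l - p_{l,\rho}\|_{L^\infty(H_\rho(z_l^0))},
\]
omitting the weight $(1+|v_l^0|)^{-(k+\eps)/2}$ present in the general proof. By \autoref{lemma:L43}, $\theta(r) \nearrow \infty$ as $r \searrow 0$, and along subsequences $r_m \searrow 0$, $l_m$, we may arrange the normalization $\|f_{l_m} - p_{l_m,r_m}\|_{L^\infty(H_{r_m}(z_m^0))} \ge \tfrac12 r_m^{k+\eps}\theta(r_m)$ and $R_m/r_m \to \infty$. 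Crucially, no weighted condition on $|v_m^0|$ is needed; in particular we do \emph{not} require $|v_m^0|r_m^2 \to 0$, since in the flat setting there is no curved boundary to Taylor-expand.

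Define the blow-ups $g_m$ and the boundary polynomials $P_m$ exactly as in the proof of \autoref{lemma:higher-reg-gamma_--Dirichlet}, but without the $(1+|v_m^0|)^{(k+\eps)/2}$ factor, so that the analogs of \eqref{eq:gm-prop-1_-}--\eqref{eq:gm-prop-2_-} hold. Steps 2 and 3 of that proof transfer verbatim: the rescaled coefficients satisfy $\tilde a^{i,j}_m \to a^{i,j}$ (constant) and $\tilde b^i_m, \tilde c_m \to 0$, the source $\tilde h_m$ satisfies the analogs of \eqref{eq:hm-vanishes-Holder}--\eqref{eq:hm-vanishes}, and by \autoref{lemma:domains-convergence}, which applies because $|v_m^0 \cdot n_{x_m^0}|/r_m \ge 2R_m/r_m \to \infty$, the rescaled domains converge to $(0,\infty) \times \R^n \times \R^n$. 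Combining \autoref{lemma:interior-reg}, \autoref{lemma:boundary-reg}, and \autoref{lemma:energy-est}, we extract $g_m \to g$ locally uniformly on $[0,\infty) \times \R^n \times \R^n$ with $\nabla_v g \in L^2_{\mathrm{loc}}$; the limit $g$ solves a constant-coefficient kinetic equation on $(0,\infty) \times \R^n \times \R^n$ with source $p_0 \in \cP_{k-2}$ and satisfies the growth bound $\|g\|_{L^\infty(Q_R(0))} \le CR^{k+\eps}$.

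The main novelty lies in the boundary polynomial analysis (the analog of Step 4). Since $\partial\Omega = \{x_n = 0\}$ is flat and $(x_m^0)_n = 0$, the rescaled boundary $\gamma_-^{(m)} \cap Q_R(z_1)$ is exactly the hyperplane $\{z_{x_n} = -r_m^{-1}z_t(v_m^0)_n\}$ intersected with the cylinder; no flattening diffeomorphism, hence no Taylor remainder from a graph function $\Gamma$, enters the picture. Parameterize this hyperplane by $(z_{x'}, z_{x_n}, z_v)$ through $z_t = -r_m(v_m^0)_n^{-1}z_{x_n}$ (well defined since $|(v_m^0)_n|\ge 2R>0$), and observe that on $\gamma_-^{(m)} \cap Q_R(z_1)$ with $z_1=(0,x_1,v_1)$ one has $|z_t| \le Cr_m/|(v_m^0)_n| \to 0$ uniformly. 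Define
\[
Q_m(z_{x'}, z_{x_n}, z_v) := P_m\!\left(-r_m(v_m^0)_n^{-1}z_{x_n},\, z_{x'},\, z_{x_n},\, z_v\right),
\]
a polynomial in three variables. Because $P_m \in \cP_k$, we have $[P_m]_{C^{k+\eps}_\ell(\gamma_-^{(m)} \cap Q_R(0))} = 0$; applying \autoref{lemma:osc-bdry-data} to the rescaled problem \eqref{eq:gm-PDE_-} together with the uniform controls on $\|g_m\|_{L^\infty(H_R^{(m)})}$ and $\|\tilde h_m\|_{C^\eps_\ell(H_R^{(m)})}$ yields $\|P_m\|_{L^\infty(\gamma_-^{(m)} \cap Q_R(0))} \le C$ with $C$ independent of $m$. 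This is precisely where the weight $(1+|v_0|)^{(k+\eps)/2}$ disappears: in the curved case that weight served to control Taylor remainders of $\Gamma$, which are simply absent here. The main obstacle of the proof is exactly this step.

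Hence $Q_m$ is uniformly bounded on the relevant bounded parameter set, and by finite-dimensionality $Q_m \to Q_0$ along a subsequence in a polynomial space. Combining $g_m \to g$ locally uniformly, $g_m = P_m$ on $\gamma_-^{(m)}$, and $z_t \to 0$ on $\gamma_-^{(m)} \cap Q_R(z_1)$, we conclude $g(0,x,v) = Q_0(x',x_n,v)$ for all $(x,v) \in \R^n \times \R^n$. The growth bound $\|g\|_{L^\infty(Q_R(0))} \le CR^{k+\eps}$ forces every monomial in $Q_0$ of kinetic degree strictly greater than $k$ to have vanishing coefficient, so $Q_0 \in \cP_k$. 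The Liouville theorem for positive times (\autoref{prop:Liouville-higher-pos-times}) then gives $g \in \cP_k$, and passing the orthogonality relation \eqref{eq:gm-prop-1_-} to the limit forces $g \equiv 0$ on $H_1(0)$, contradicting $\|g_m\|_{L^\infty(H_1^{(m)})} \ge 1/2$.
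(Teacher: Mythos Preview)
Your preliminary reduction to $F\equiv 0$ is the gap, and it is precisely the step the paper's own proof avoids. When you replace $f$ by $f-F$, the new source $\bar h = h - (\partial_t + v\cdot\nabla_x - a^{ij}\partial_{v_iv_j} + b\cdot\nabla_v + c)F$ must have its $C^{k+\eps-2}_\ell$ seminorm controlled by the quantities appearing on the right-hand side of the claimed estimate, namely $\|f\|_{L^\infty(H_R(z_0))}$, $[F]_{C^{k+\eps}_\ell(H_R(z_0))}$, and $[h]_{C^{k+\eps-2}_\ell(H_R(z_0))}$. The material-derivative term is indeed fine via \autoref{lemma:der-Holder}. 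But for the lower-order terms such as $a^{ij}\partial_{v_iv_j}F$, the product rule \autoref{lemma:product-rule} is stated for full norms, so its application forces in $\|\partial_{v_iv_j}F\|_{L^\infty(H_R(z_0))}$, hence (after interpolation) $\|F\|_{L^\infty(H_R(z_0))}$. This quantity is \emph{not} controlled by the admissible data: the lemma only gives you the seminorm $[F]_{C^{k+\eps}_\ell(H_R(z_0))}$, and $F$ is prescribed on all of $H_R(z_0)$, not just on $\gamma_-$, so you cannot trade $\|F\|_{L^\infty(H_R(z_0))}$ for $\|f\|_{L^\infty}$. This is exactly why, in \autoref{lemma:higher-reg-gamma_--Dirichlet}, the same reduction carries the factor $(1+|v_0|)$ and why the paper's proof of the flat case opens with ``but do not subtract $F$ at the beginning.''

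The paper instead keeps $F$ in the blow-up: it includes $[F_l]_{C^{k+\eps}_\ell}$ in the normalization \eqref{eq:blow-up-normalization_--flat}, so that the rescaled boundary datum $\tilde F_m$ satisfies $[\tilde F_m]_{C^{k+\eps}_\ell}\to 0$ directly (see \eqref{eq:Fm-holder}), and then writes $\tilde F_m = P_m + e_m$ with $e_m\to 0$. The flatness enters only in the last step: since $\gamma_-^{(m)}$ is the graph of a \emph{linear} function, the restriction $P_m|_{\gamma_-^{(m)}}$ remains a kinetic polynomial, so the uniform bound from \autoref{lemma:osc-bdry-data} yields compactness of $P_m|_{\gamma_-^{(m)}}$ in a finite-dimensional space and hence a polynomial initial condition for $g$. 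Your parametrization argument in Step~4 and the use of the growth bound to force $Q_0\in\cP_k$ are a valid alternative to this last observation, but they do not rescue the proof once the reduction step has already introduced the uncontrolled quantity $\|F\|_{L^\infty(H_R(z_0))}$.
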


\begin{proof}
We proceed as in the proof of \autoref{lemma:higher-reg-gamma_--Dirichlet}, but do not subtract $F$ at the beginning. Instead, we assume by contradiction that the claim does not hold. Then, there are sequences $R_l \in (0,1]$ and $f_l$, $F_l$, $h_l$, $a^{i,j}_{l}$, $b_l$, $c_l$, $z^0_{l}$ such that for any $l \in \N$ \eqref{eq:unif-ell} holds true with $\lambda$ for any $l \in \N$, as well as
\begin{align}
\label{eq:blow-up-normalization_--flat}
R_l^{-k-\eps} \Vert f_l \Vert_{L^{\infty}(H_{R_l}(z_l^0))} + [ F_l ]_{C^{k+\eps}_{\ell}(H_{R_l}(z_l^0))} + [ h_l ]_{C^{k+\eps-2}_{\ell}(H_{R_l}(z_l^0))} \le 1,
\end{align}
such that the $f_l$ are solutions to \eqref{eq:fl-sol-gamma_-} with $f_l = F_l$ on $\gamma_- \cap H_{R_l}(z^0_l)$, but it holds \eqref{eq:blow-up-assumption_-} with $\kappa = 0$. Now, we closely follow the proof of \autoref{lemma:higher-reg-gamma_--Dirichlet} (setting $\kappa = \bar{\kappa} = 0$ everywhere. The only change is that the functions $g_m$ solve \eqref{eq:gm-PDE_-} with $g_m = \tilde{F}_m$ on $\gamma_-^{(m)} \cap Q_{R_m r_m^{-1}}$, where
\begin{align*}
\tilde{F}_m(z) = \frac{F_{l_m}(z_m^0 \circ S_{r_m} z ) - p_{l_m,r_m}(z_m^0 \circ S_{r_m} z)}{r_m^{k+\eps} \theta(r_m)}.
\end{align*}
Apart from this change, the proofs of Steps 1 and 2 remain exactly the same.
In Step 3, we still claim \eqref{eq:bdry-conv-higher--} and prove it in the same way, with the only change in the estimate for $I_2$. In fact,  we compute using \eqref{eq:inclusion-gamma_-_higher}, \autoref{lemma:Holder-scaling}, $R_m r_m^{-1} \to \infty$, $p_{l_m,r_m} \in \cP_k$, and \eqref{eq:blow-up-normalization_--flat}
\begin{align}
\label{eq:Fm-holder}
\begin{split}
[ \tilde{F}_m ]_{C_{\ell}^{k+\eps}(H^{(m)}_{R}(z_1))} &\le C \frac{ (1 + |z_1|)^{k+\eps} r_m^{k+\eps} [ F_{l_m} - p_{l_m,r_m} ]_{C^{k+\eps}_{\ell}(H_{2Rr_m(1 + |z_1|)}(z_m^0))} }{r_m^{k+\eps} \theta(r_m)} \\
&\le C \frac{[ F_{l_m} ]_{C^{k+\eps}_{\ell}(H_{2Rr_m(1 + |z_1|)}(z_m^0))} }{\theta(r_m)} \le C \theta(r_m)^{-1} \to 0 ~~ \text{ as } m \to \infty,
\end{split}
\end{align}
where $C$ depend on $|z_1|$ and $R \ge 1$ but not on $m$. Hence, by H\"older interpolation (see \autoref{lemma:Holder-interpol}), \autoref{lemma:scaling}, and \autoref{lemma:osc-bdry-data}, we get
\begin{align*}
I_2 &:= r_m^{-k-\eps+\alpha} \theta(r_m)^{-1} [ F_{l_m} - p_{l_m,r_m} ]_{C^{\alpha}(\gamma_- \cap Q_{4r_m(1 + |z_1|)}(z_m^0))} \\
&\le C \Vert \tilde{F}_{l_m} \Vert_{L^{\infty}( \gamma_-^{(m)} \cap Q_{4(1+|z_1|)})} + C  \theta(r_m)^{-1}[ F_{l_m} - p_{l_m,r_m} ]_{C^{k+\eps}(\gamma_- \cap Q_{4r_m(1 + |z_1|)}(z_m^0))} \\
&\le C \Vert g_m \Vert_{L^{\infty}(H^{(m)}_{4(1+|z_1|)})} + C \Vert \tilde{h}_m \Vert_{C^{\eps}(H^{(m)}_{4(1+|z_1|)})} +  C [ \tilde{F}_m ]_{C_{\ell}^{k+\eps}(H^{(m)}_{4(1+|z_1|)})} \le C,
\end{align*}
where we also used \eqref{eq:hm-vanishes-Holder}, \eqref{eq:hm-vanishes}, and \eqref{eq:gm-prop-2_-}. This finishes Step 3 and establishes \eqref{eq:bdry-conv-higher--}.

Next, let us take a look at Step 4, which simplifies dramatically, in comparison to \autoref{lemma:higher-reg-gamma_--Dirichlet}.

In fact, by \eqref{eq:Fm-holder} we can write $\tilde{F}_m = e_m + P_m$ for $P_m \in \cP_{k}$ such  that 
\begin{align*}
\Vert e_m \Vert_{L^{\infty}(H_1^{(m)}(z_1))} = \Vert \tilde{F}_m - P_m \Vert_{L^{\infty}(H_1^{(m)}(z_1))} \le c [\tilde{F}_m]_{C_{\ell}^{k+\eps}(H_1^{(m)}(z_1))} \to 0 ~~ \text{ as } m \to \infty,
\end{align*}
and by \autoref{lemma:osc-bdry-data}, the second property in \eqref{eq:gm-prop-2_-}, and \eqref{eq:hm-vanishes-Holder}, \eqref{eq:hm-vanishes} we deduce
\begin{align}
\label{eq:Pm-bd-flat}
\begin{split}
\Vert P_m \Vert_{L^{\infty}(H_1^{(m)}(z_1) \cap \gamma_-^{(m)})} &\le \Vert \tilde{F}_m - P_m \Vert_{L^{\infty}(H_1^{(m)}(z_1) \cap \gamma_-^{(m)})} + \Vert \tilde{F}_m \Vert_{L^{\infty}(H_1^{(m)}(z_1) \cap \gamma_-^{(m)})} \\
&\le C [ \tilde{F}_m ]_{C_{\ell}^{k+\eps}(H_1^{(m)}(z_1))} + C \Vert g_m \Vert_{L^{\infty}(H_1^{(m)}(z_1))} + C \Vert \tilde{h}_m \Vert_{C^{\eps}_{\ell}(H_1^{(m)}(z_1))} \le C.
\end{split}
\end{align}
Since $\Omega = \{ x_n > 0 \}$, we have that the graph of $\gamma_-^{(m)}$ is linear for any $m$. Therefore, the restriction of $P_m$ to $\gamma_-^{(m)}$ is still in $\cP_k$. In particular, \eqref{eq:Pm-bd-flat} implies that there exists a function $P_0$ such that its restriction to $\gamma_-^{(m)}$ is still in $\cP_k$. In particular, we have
\begin{align}
\label{eq:P0-flat}
\Vert \tilde{F}_m - P_0 \Vert_{L^{\infty}(H_1^{(m)}(z_1) \cap \gamma_-^{(m)})} \to 0 ~~ \text{ as } m \to \infty.
\end{align}
This concludes Step 4, and in particular, by combination of \eqref{eq:P0-flat} and Step 3, we deduce \eqref{eq:bdry-initial-poly}, as in the proof of \autoref{lemma:higher-reg-gamma_--Dirichlet}. This concludes the proof, as before.
\end{proof}

\subsection{In-flow: Boundary regularity outside the grazing set}

We prove that expansions at boundary points together with interior regularity estimates imply regularity estimates up to the boundary.

\begin{lemma}
\label{lemma:regularity-inflow}
Let $k \in \N$ with $k \ge 3$, $\eps \in (0,1)$ be such that $\frac{k+\eps}{3} \not \in \N$. Let $\Omega \subset \R^n$ be a convex $C^{\frac{k+\eps}{3}}$ domain,  $z_0 \in \gamma \setminus \gamma_0$ and $R \in (0,1]$ be such that $Q_{4R}(z_0) \cap \gamma_0 = \emptyset$. Let $a^{i,j},b,c,h \in C^{k+\eps-2}_{\ell}(H_R(z_0))$ and assume that $a^{i,j}$ satisfies \eqref{eq:unif-ell}. Let $F \in C^{k+\eps}_{\ell}(H_R(z_0))$ and let $f$ be a weak solution to 

\begin{equation*}
\left\{\begin{array}{rcl}
\partial_t f + v \cdot \nabla_x f + (-a^{i,j} \partial_{v_i,v_j})f &=& - b \cdot \nabla_v f - c f + h ~~ \text{ in } H_R(z_0), \\
f &=& F ~~~~\qquad\qquad\qquad \text{ on } \gamma_- \cap Q_R(z_0).
\end{array}\right.
\end{equation*}
Then, it holds:
\begin{align*}
[ f ]_{C^{k+\eps}_{\ell}(H_{R/16}(z_0))} \le C (1 + |v_0|)^{\frac{k+\eps}{2}} R^{-k-\eps} \big( \Vert f \Vert_{L^{\infty}(H_R(z_0))} + R^{k+\eps} [ F ]_{C_{\ell}^{k+\eps}(H_R(z_0))} + R^{k+\eps} [ h ]_{C^{k+\eps-2}_{\ell}(H_R(z_0))} \big).
\end{align*}
The constant $C$ depends only on $n,k,\eps,\lambda,\Lambda, \Omega$, $\Vert a^{i,j} \Vert_{C^{k+\eps-2}_{\ell}(H_R(z_0))}, \Vert b \Vert_{C^{k+\eps-2}_{\ell}(H_R(z_0))}$, and \\$\Vert c \Vert_{C^{k+\eps-2}_{\ell}(H_R(z_0))}$, but not on $z_0$ and $R$.
\end{lemma}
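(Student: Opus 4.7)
The plan is to reduce the claimed global semi-norm bound on $H_{R/16}(z_0)$ to a uniform \emph{interior} semi-norm estimate on kinetic cylinders and then invoke Lemma \ref{lemma:covering}. The interior bound will itself be obtained by combining the boundary expansions from Lemma \ref{lemma:higher-reg-gamma_+} and Lemma \ref{lemma:higher-reg-gamma_--Dirichlet} with the interior Schauder estimate of Lemma \ref{lemma:interior-reg}. The hypothesis $Q_{4R}(z_0) \cap \gamma_0 = \emptyset$ ensures, through the triangle inequality for $d_\ell$ and Lemma \ref{lemma:kinetic-balls}, that every $z^{\ast} \in \gamma \cap \overline{H_{R/8}(z_0)}$ satisfies $Q_{R/2}(z^{\ast}) \cap \gamma_0 = \emptyset$ and $H_{R/4}(z^{\ast}) \subset H_R(z_0)$, so the boundary lemmas will apply at a common scale $\sim R$. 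Since $|v^{\ast} - v_0| \le R/8 \le 1$, any factor $(1+|v^{\ast}|)^{(k+\eps)/2}$ appearing in Lemma \ref{lemma:higher-reg-gamma_--Dirichlet} gets absorbed into $(1+|v_0|)^{(k+\eps)/2}$.

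Writing $M := \Vert f \Vert_{L^{\infty}(H_R(z_0))} + R^{k+\eps} [F]_{C^{k+\eps}_{\ell}(H_R(z_0))} + R^{k+\eps} [h]_{C^{k+\eps-2}_{\ell}(H_R(z_0))}$, the boundary lemmas applied on $H_{R/4}(z^{\ast})$ furnish, at each $z^{\ast} \in \gamma \cap \overline{H_{R/8}(z_0)}$, a polynomial $p_{z^{\ast}} \in \cP_k$ satisfying
\[
|f(z) - p_{z^{\ast}}(z)| \le C (1 + |v_0|)^{(k+\eps)/2} R^{-k-\eps} d_{\ell}(z, z^{\ast})^{k+\eps} M \qquad \forall z \in H_{R/8}(z^{\ast}).
\]
Now fix any kinetic cylinder $Q_r(z_1)$ with $Q_{4r}(z_1) \subset H_{R/16}(z_0)$, and let $d$ be the kinetic distance from $z_1$ to $\gamma$; in particular $d \ge 4r$. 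If $d \ge R/64$, the cylinder $Q_{R/128}(z_1)$ is interior to $H_R(z_0)$ and Lemma \ref{lemma:interior-reg} immediately yields $[f]_{C^{k+\eps}_{\ell}(Q_r(z_1))} \le CR^{-k-\eps}M$. If $d < R/64$, pick $z^{\ast} \in \gamma$ with $d_{\ell}(z_1, z^{\ast}) \le 2d$ and set $g := f - p_{z^{\ast}}$; then $g$ solves the same kinetic equation with modified source $\tilde h := h - (\partial_t + v \cdot \nabla_x - a^{i,j}\partial_{v_i v_j}) p_{z^{\ast}} - b \cdot \nabla_v p_{z^{\ast}} - c\, p_{z^{\ast}}$, which still lies in $C^{k+\eps-2}_{\ell}$ by the product rule (Lemma \ref{lemma:product-rule}), and the expansion above yields $\Vert g \Vert_{L^{\infty}(Q_{d/2}(z_1))} \le C (1+|v_0|)^{(k+\eps)/2} d^{k+\eps} R^{-k-\eps} M$. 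Applying Lemma \ref{lemma:interior-reg} to $g$ on $Q_{d/2}(z_1) \subset H_R(z_0) \setminus \gamma$, and using that kinetic polynomials of degree at most $k$ have vanishing $C^{k+\eps}_{\ell}$ semi-norm, gives
\[
[f]_{C^{k+\eps}_{\ell}(Q_r(z_1))} \le [g]_{C^{k+\eps}_{\ell}(Q_{d/4}(z_1))} \le C (1+|v_0|)^{(k+\eps)/2} R^{-k-\eps} M.
\]

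The uniform estimate above, valid on every kinetic cylinder $Q_r(z_1)$ with $Q_{4r}(z_1) \subset H_{R/16}(z_0)$, combined with Lemma \ref{lemma:covering} applied to the Lipschitz domain $D = H_{R/16}(z_0)$ (Lipschitz since $\Omega$ is $C^{(k+\eps)/3} \subset C^1$), upgrades to the asserted global bound $[f]_{C^{k+\eps}_{\ell}(H_{R/16}(z_0))} \le C (1+|v_0|)^{(k+\eps)/2} R^{-k-\eps} M$. The main technical point, and the step requiring the most care, is the near-boundary case: one must verify that the $C^{k+\eps-2}_{\ell}$ seminorm of the modified source $\tilde h$ on $Q_{d/2}(z_1)$ scales properly so that the interior estimate for $g$ produces a constant \emph{independent} of the small parameter $d$. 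This reduces to a careful bookkeeping combining (i) the product rule (Lemma \ref{lemma:product-rule}) applied to the coefficients $a^{i,j}, b, c$ with the polynomial $p_{z^{\ast}}$, and (ii) the bound $\Vert p_{z^{\ast}} \Vert_{L^{\infty}(Q_d(z_1))} \le CM$ deduced by subtracting the boundary expansion from $\Vert f \Vert_{L^{\infty}(H_R(z_0))}$. Once this scaling is tracked, all constants match the factor $(1+|v_0|)^{(k+\eps)/2} R^{-k-\eps}$ and the covering lemma concludes.
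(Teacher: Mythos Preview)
Your approach is essentially the same as the paper's: combine the boundary expansions (Lemmas~\ref{lemma:higher-reg-gamma_+} and~\ref{lemma:higher-reg-gamma_--Dirichlet}) with the interior Schauder estimate applied to $f-p_{z^{\ast}}$, and then conclude via the covering Lemma~\ref{lemma:covering}. The case split (far from $\gamma$ versus close to $\gamma$), the choice of nearby boundary point $z^{\ast}$, and the observation that $|v^{\ast}-v_0|\le 1$ absorbs the velocity weight are all exactly as in the paper.

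One point deserves sharpening. In your step (ii) you write $\Vert p_{z^{\ast}}\Vert_{L^{\infty}(Q_d(z_1))}\le CM$ and propose to combine this with the product rule to control $[\tilde h]_{C^{k+\eps-2}_{\ell}(Q_{d/2}(z_1))}$. Controlling $p_{z^{\ast}}$ only on the small cylinder $Q_d(z_1)$ is not enough: interpolation for polynomials at scale $d$ would produce $\Vert p_{z^{\ast}}\Vert_{C^{k+\eps}_{\ell}(Q_d)}\lesssim d^{-k}\Vert p_{z^{\ast}}\Vert_{L^{\infty}(Q_d)}$, and the resulting factor $d^{-k}$ does not cancel. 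The paper avoids this by estimating $[\tilde h]_{C^{k+\eps-2}_{\ell}}$ on the \emph{large} domain $H_{R/5}(z^{\ast})\supset Q_d(z_1)$: there one uses $\Vert p_{z^{\ast}}\Vert_{L^{\infty}(H_{R/5}(z^{\ast}))}\le CM(1+|v_0|)^{(k+\eps)/2}$ (obtained from the expansion at $\rho\sim R$ together with $\Vert f\Vert_{L^{\infty}}$), so that interpolation yields $\Vert p_{z^{\ast}}\Vert_{C^{k+\eps}_{\ell}(H_{R/5})}\le CR^{-k}\Vert p_{z^{\ast}}\Vert_{L^{\infty}(H_{R/5})}$ and hence $[\tilde h]_{C^{k+\eps-2}_{\ell}}\le CR^{-k-\eps}M(1+|v_0|)^{(k+\eps)/2}$, which is the scale-invariant bound needed. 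With this correction your argument matches the paper's.
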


\begin{proof}
Let us assume without loss of generality that
\begin{align}
\label{eq:normalization-reg}
(1 + |v_0|)^{\frac{k+\eps}{2}} R^{-k-\eps} \left(\Vert f \Vert_{L^{\infty}(H_R(z_0))} + R^{k+\eps}[ F ]_{C_{\ell}^{k+\eps}(H_R(z_0))} + R^{k+\eps}[ h ]_{C^{k+\eps-2}_{\ell}(H_R(z_0))} \right) \le 1.
\end{align}
We claim that for any $z^{\ast} \in H_{R/16}(z_0)$ it holds 
\begin{align}
\label{eq:claim-reg}
[f]_{C^{k+\eps}_{\ell}(Q_{r/2}(z^{\ast}))} \le C,
\end{align}
where we define $r := d_{\ell}(z^{\ast}) := \sup \{ r > 0 : Q_r(z^{\ast}) \subset (-1,1) \times \Omega \times \R^n \}$. 

To prove it, we note that for any $z^{\ast}$ we can find $z^{\ast}_0 \in \gamma \cap Q_{R/8}(z_0)$ such that $z^{\ast} \in Q_r(z^{\ast}_0)$, where we used \autoref{lemma:kinetic-balls}. Note that by \autoref{lemma:kinetic-balls}  and by construction  we have 
\begin{align}
\label{eq:inclusion-regularity}
Q_{r}(z^{\ast}) \subset Q_{4r}(z_0^{\ast}), \qquad Q_{2R}(z_0^{\ast}) \cap \gamma_0 = \emptyset, \qquad H_{R/4}(z_0^{\ast}) \subset H_R(z_0).
\end{align}
In case $r \ge \frac{R}{20}$ we have that \eqref{eq:claim-reg} follows immediately from interior estimates (see \autoref{lemma:interior-reg}).
 
Hence, we can assume from now on that $r < \frac{R}{20}$. In this case, let us denote by $p_{z^{\ast}_0} \in \cP_k$ the polynomial of the higher order expansion at $z_0^{\ast}$ from \autoref{lemma:higher-reg-gamma_+} (if $z_0^{\ast} \in \gamma_+$) or from \autoref{lemma:higher-reg-gamma_--Dirichlet} (if $z_0^{\ast} \in \gamma_-$). Note that the function $f - p_{z_0^{\ast}}$ solves
\begin{align*}
[\partial_t + v \cdot \nabla_x + (-a^{i,j} \partial_{v_i,v_j}) + b \cdot \nabla_v  + c ](f - p_{z_0^{\ast}}) = h + \tilde{h} ~~ \text{ in } H_R(z_0),
\end{align*}
where by \eqref{eq:inclusion-regularity}
\begin{align}
\label{eq:tilde-h-estimate}
\begin{split}
[ \tilde{h} ]_{C^{k+\eps-2}_{\ell}(Q_{r}(z^{\ast}))} &\le [ \partial_t p_{z_0^{\ast}} - v \cdot \nabla_x p_{z_0^{\ast}} - (-a^{i,j} \partial_{v_i,v_j})p_{z_0^{\ast}}  - b \cdot \nabla_v  p_{z_0^{\ast}} - c p_{z_0^{\ast}} ]_{C^{k+\eps-2}_{\ell}(H_{R/5}(z_0^{\ast}))} \\
&\le [ a^{i,j} \partial_{v_i,v_j} p_{z_0^{\ast}} ]_{C^{k+\eps-2}_{\ell}(H_{R/5}(z_0^{\ast}))}  + [ b \cdot \nabla_v  p_{z_0^{\ast}} ]_{C^{k+\eps-2}_{\ell}(H_{R/5}(z_0^{\ast}))} + [ c p_{z_0^{\ast}} ]_{C^{k+\eps-2}_{\ell}(H_{R/5}(z_0^{\ast}))} \\
&\le C \Vert p_{z^{\ast}_0} \Vert_{C^{k+\eps}_{\ell}(H_{R/5}(z_0))} \le C R^{-k-\eps} \Vert p_{z_0^{\ast}} \Vert_{L^{\infty}(H_{R/5}(z^{\ast}_0))}.
\end{split}
\end{align}
Here, we used the product rule (see \autoref{lemma:product-rule}), and \autoref{lemma:der-Holder} in the second to last estimate. The last estimate follows since $p_{z_0^{\ast}} \in \cP_k$, which implies by H\"older interpolation (see \autoref{lemma:Holder-interpol})
\begin{align*}
\Vert p_{z^{\ast}_0} \Vert_{C^{k+\eps}_{\ell}(H_{R/5}(z^{\ast}_0))} \le C R^{-k} \Vert p_{z^{\ast}_0} \Vert_{L^{\infty}(H_{R/5}(z^{\ast}_0))} + C [p_{z^{\ast}_0}]_{C^{k+\eps}_{\ell}(H_{R/5}(z^{\ast}_0))} = C R^{-k} \Vert p_{z^{\ast}_0} \Vert_{L^{\infty}(H_{R/5}(z^{\ast}_0))}.
\end{align*}

Then, by interior Schauder estimates (see \autoref{lemma:interior-reg}) applied to the function $f - p_{z_0^{\ast}}$, \autoref{lemma:osc-results}, \eqref{eq:inclusion-regularity}, and \eqref{eq:tilde-h-estimate} we obtain
\begin{align*}
[f]_{C^{k+\eps}_{\ell}(Q_{r/2}(z^{\ast}))} &= [f - p_{z^{\ast}_0}]_{C^{k+\eps}_{\ell}(Q_{r/2}(z^{\ast}))} \\
&\le C r^{-k-\eps} \Vert f - p_{z^{\ast}_0} \Vert_{L^{\infty}(Q_{r}(z^{\ast}))} + C  [h + \tilde{h}]_{C^{k+\eps-2}_{\ell}(Q_r(z^{\ast}))} \\
&\le C r^{-k-\eps} \Vert f - p_{z^{\ast}_0} \Vert_{L^{\infty}(H_{4r}(z^{\ast}_0))}  + C [h]_{C^{k+\eps-2}_{\ell}(Q_r(z^{\ast}))}\\
&\qquad + C R^{-k-\eps} \left( \Vert f - p_{z^{\ast}_0} \Vert_{L^{\infty}(H_{R/5}(z^{\ast}_0))}  +  \Vert f \Vert_{L^{\infty}(H_{R/5}(z^{\ast}_0))} \right).
\end{align*}
Again by \eqref{eq:inclusion-regularity}, we are in a position to apply the expansion from either \autoref{lemma:higher-reg-gamma_+} or \autoref{lemma:higher-reg-gamma_--Dirichlet} at the boundary point $z_0^{\ast}$, which yields for any $\rho \in (0,\frac{R}{5}]$:
\begin{align}
\label{eq:appl-exp}
\begin{split}
\Vert f - p_{z^{\ast}_0} \Vert_{L^{\infty}(H_{\rho}(z^{\ast}_0))} &\le C \left(\frac{\rho}{R}\right)^{k+\eps} (1 + |v_0|)^{\frac{k+\eps}{2}} \Big( \Vert f \Vert_{L^{\infty}(H_{R/4}(z^{\ast}_0))} + R^{k+\eps} [ F ]_{C_{\ell}^{k+\eps}(\gamma_- \cap H_{R/4}(z^{\ast}_0))} \\
&\qquad\qquad\qquad + R^{k+\eps} [ h ]_{C^{k+\eps-2}_{\ell}(H_{R/4}(z^{\ast}_0))} \Big).
\end{split}
\end{align}
Then, using \eqref{eq:inclusion-regularity}, we can apply \eqref{eq:appl-exp} with $\rho = 4r \le \frac{R}{5}$ and $\rho = \frac{R}{5}$, and together with the normalization condition \eqref{eq:normalization-reg} this implies 
\begin{align*}
[f]_{C^{k+\eps}_{\ell}(Q_{r/2}(z^{\ast}))} &\le  C r^{-k-\eps} \Vert f - p_{z^{\ast}_0} \Vert_{L^{\infty}(H_{4r}(z^{\ast}_0))}  + C [ h ]_{C^{k+\eps-2}_{\ell}(H_r(z^{\ast}))} \\
&\quad + C R^{-k-\eps} \left( \Vert f - p_{z^{\ast}_0} \Vert_{L^{\infty}(H_{R/4}(z^{\ast}_0))} +  \Vert f \Vert_{L^{\infty}(H_{R/4}(z^{\ast}_0))} \right) \\
&\le C (1 + |v_0|)^{\frac{k+\eps}{2}}  R^{-k-\eps} \left( \Vert f \Vert_{L^{\infty}(H_R(z_0))} + R^{k+\eps} [ F ]_{C_{\ell}^{k+\eps}(H_R(z_0))} + R^{k+\eps} [ h ]_{C^{k+\eps-2}_{\ell}(H_R(z_0))} \right) \\
&\le C,
\end{align*}
which yields \eqref{eq:claim-reg}, as desired. From here, we conclude the proof of the desired regularity estimate by \autoref{lemma:covering}.
\end{proof}

By combination of \autoref{lemma:regularity-inflow} and \autoref{lemma:interior-reg} (applied on small enough radii), together with a standard covering argument we obtain the following immediate corollary, stating a regularity estimate in general cylinders away from $\gamma_0$, which do not need to be centered at a boundary point:

\begin{proposition}
\label{prop:regularity-inflow}
Let $k \in \N$ with $k \ge 3$, $\eps \in (0,1)$ be such that $\frac{k+\eps}{3} \not \in \N$. Let $\Omega \subset \R^n$ be a convex $C^{\frac{k+\eps}{3}}$ domain, $z_0 \in (-1,1) \times \overline{\Omega} \times \R^n$ and $R \in (0,1]$ be such that $Q_{2R}(z_0) \cap \gamma_0 = \emptyset$. Let $a^{i,j},b,c,h \in C^{k+\eps-2}_{\ell}(H_R(z_0))$ and assume that $a^{i,j}$ satisfies \eqref{eq:unif-ell}. Let $F \in C^{k+\eps}_{\ell}(H_R(z_0))$ and let $f$ be a weak solution to 

\begin{equation*}
\left\{\begin{array}{rcl}
\partial_t f + v \cdot \nabla_x f + (-a^{i,j} \partial_{v_i,v_j})f &=& - b \cdot \nabla_v f - c f + h ~~ \text{ in } H_R(z_0), \\
f &=& F ~~~~\qquad\qquad\qquad \text{ on } \gamma_- \cap Q_R(z_0).
\end{array}\right.
\end{equation*}
Then, $f \in C^{k+\eps}_{\ell}(H_{R/2}(z_0))$ and it holds:
\begin{align*}
[ f ]_{C^{k+\eps}_{\ell}(H_{R/2}(z_0))} \le C (1 + |v_0|)^{\frac{k+\eps}{2}} R^{-k-\eps} \big( \Vert f \Vert_{L^{\infty}(H_R(z_0))} + R^{k+\eps} [ F ]_{C_{\ell}^{k+\eps}(H_R(z_0))} + R^{k+\eps} [ h ]_{C^{k+\eps-2}_{\ell}(H_R(z_0))} \big).
\end{align*}
The constant $C$ depends only on $n,k,\eps,\lambda,\Lambda, \Omega$, $\Vert a^{i,j} \Vert_{C^{k+\eps-2}_{\ell}(H_R(z_0))}, \Vert b \Vert_{C^{k+\eps-2}_{\ell}(H_R(z_0))}$, and \\$\Vert c \Vert_{C^{k+\eps-2}_{\ell}(H_R(z_0))}$, but not on $z_0$ and $R$.
\end{proposition}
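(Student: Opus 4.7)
The proposition should follow from \autoref{lemma:regularity-inflow} (which already handles cylinders centered at a boundary point $z_0 \in \gamma \setminus \gamma_0$) together with the interior Schauder estimate \autoref{lemma:interior-reg}, glued via the covering criterion \autoref{lemma:covering}. The new content relative to \autoref{lemma:regularity-inflow} is purely the fact that $z_0$ is no longer required to sit on the boundary, so the work is a standard localization argument that respects the kinetic geometry and tracks the $(1+|v_0|)^{(k+\eps)/2}$ weight uniformly.

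After normalizing so that the parenthesis on the right-hand side equals $1$, I would show that for every $z^{\ast} \in H_{R/2}(z_0)$ there is a kinetic cylinder $Q_{\rho}(z^{\ast})$ on which
\[
[f]_{C^{k+\eps}_{\ell}(Q_{\rho}(z^{\ast}))} \le C (1+|v_0|)^{(k+\eps)/2},
\]
splitting into two regimes according to the kinetic distance $r^{\ast} := \sup\{r>0 : Q_r(z^{\ast}) \subset (-1,1)\times\Omega\times\R^n\}$. If $r^{\ast} \ge R/8$, then $Q_{R/16}(z^{\ast})$ is an interior cylinder and \autoref{lemma:interior-reg} applied on it (comparing $f$ to the zero polynomial) delivers $[f]_{C^{k+\eps}_{\ell}(Q_{R/32}(z^{\ast}))} \le C R^{-k-\eps}(\Vert f \Vert_{L^\infty} + R^2 [h]_{C^{k-2+\eps}_{\ell}}) \le C$. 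If $r^{\ast} < R/8$, then by the very definition of $r^{\ast}$ there is a boundary point $z_0^{\ast} \in \gamma$ with $z^{\ast} \in Q_{r^{\ast}}(z_0^{\ast})$; the triangle inequality for $d_{\ell}$ via \autoref{lemma:kinetic-balls}, combined with $z^{\ast} \in Q_{R/2}(z_0)$, yields $z_0^{\ast} \in Q_R(z_0)$ and, after a mild shrinking of constants, $Q_{R/2}(z_0^{\ast}) \subset Q_{2R}(z_0)$. Hence $Q_{R/2}(z_0^{\ast}) \cap \gamma_0 = \emptyset$, and \autoref{lemma:regularity-inflow} applies at $z_0^{\ast}$ on radius $R/8$, giving $[f]_{C^{k+\eps}_{\ell}(H_{R/128}(z_0^{\ast}))} \le C(1+|v_0^{\ast}|)^{(k+\eps)/2}$. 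Since $|v_0^{\ast} - v_0| \le d_{\ell}(z_0^{\ast},z_0) \le R \le 1$ we have $(1+|v_0^{\ast}|) \le 2(1+|v_0|)$, and choosing $\rho := r^{\ast}/2$ ensures $Q_{\rho}(z^{\ast}) \subset H_{R/128}(z_0^{\ast})$, producing the desired local bound with the correct weight.

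To conclude, I would verify the hypothesis of \autoref{lemma:covering} on $D := H_{R/2}(z_0)$: any kinetic cylinder $Q_{r'}(z')$ with $Q_{4r'}(z') \subset D$ automatically satisfies $r^{\ast}(z') \ge 4r'$, so $Q_{r'}(z')$ lies inside the local neighborhood supplied by case (i) or case (ii) above, and monotonicity of the seminorm over containing sets yields $[f]_{C^{k+\eps}_{\ell}(Q_{r'}(z'))} \le C(1+|v_0|)^{(k+\eps)/2}$ uniformly in $r',z'$. \autoref{lemma:covering} then gives the global seminorm bound on $H_{R/2}(z_0)$. The only mildly delicate point is to coordinate the thresholds between cases (i) and (ii) so that the required containment $Q_{r^{\ast}/2}(z^{\ast}) \subset H_{R/128}(z_0^{\ast})$ genuinely holds; this is purely a bookkeeping of absolute constants in \autoref{lemma:kinetic-balls} and involves no new analytical input.
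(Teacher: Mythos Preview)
The proposal is correct and follows exactly the approach the paper indicates: the paper's proof is a one-line remark that the proposition follows by combining \autoref{lemma:regularity-inflow} with \autoref{lemma:interior-reg} on small radii via a standard covering argument, and you have spelled out precisely this, including the case split on $r^{\ast}$ and the transfer of the $(1+|v_0|)^{(k+\eps)/2}$ weight through $|v_0^{\ast}-v_0|\le R\le 1$. The residual constant-juggling you flag (thresholds between cases, containment $Q_{\rho}(z^{\ast})\subset H_{R/128}(z_0^{\ast})$, and the dependence of the constant in \autoref{lemma:covering} on the domain, which one handles by rescaling to unit size first) is indeed routine and matches what the paper sweeps under ``standard covering argument''.
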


Let us make two remarks on the previous result:

\begin{remark}
\begin{itemize}
\item[(i)] By application of the local boundedness estimate in \autoref{lemma:boundary-reg}, we can replace the $L^{\infty}$ norm of $f$ by the $L^1$ norm.
\item[(ii)] The $C^{k+\eps}_{\ell}(H_R(z_0))$ norm of $F$ can be replaced by the $C^{k+\eps}_{\ell}(\gamma_- \cap Q_R(z_0))$ norm whenever $\Omega$ is smooth enough so that a trace operator can be defined, which preserves the $C^{k+\eps}_{\ell}$ regularity. This is clearly the case when $\Omega = \{ x_n > 0 \}$. We do not discuss this issue for more general domains in the current paper and refer to \cite{AAMN24,Sil22} for a discussion on kinetic traces for Sobolev spaces. 
\end{itemize}

\end{remark}

\subsection{Specular-reflection: Boundary regularity outside the grazing set}

Now, we are in a position to establish the boundary regularity in $\gamma_-$ for solutions satisfying the specular reflection condition. We will only prove this result in case $\Omega = \{ x_n > 0 \}$. Recall that in this case, $\mathcal{R}_x v = (v',-v_n)$. Moreover, recall the definition of the domains $\mathcal{R}(H_R(z_0))$ and $\mathcal{S}(H_R(z_0))$ from \eqref{eq:reflected-domain-def}. 

We start by establishing the following expansion at boundary points in $\gamma_-$.
 
\begin{lemma}
\label{lemma:higher-reg-gamma_-SR}
Let $\Omega = \{ x_n > 0 \}$, $z_0 \in \gamma_-$ and $R \in (0,1]$ be such that $Q_{2R}(z_0) \cap \gamma_0 = \emptyset$. Let $k \in \N$ with $k \ge 3$, $\eps \in (0,1)$, and $a^{i,j},b,c,h \in C^{k+\eps-2}_{\ell}( \mathcal{S}(H_R(z_0)))$ and assume that $a^{i,j}$ satisfies \eqref{eq:unif-ell}.
Let $f$ be a weak solution to 
\begin{equation*}
\left\{\begin{array}{rcl}
\partial_t f + v \cdot \nabla_x f + (-a^{i,j} \partial_{v_i,v_j})f &=& - b \cdot \nabla_v f - c f + h ~~ \text{ in } \mathcal{S}(H_R(z_0)), \\
f(t,x,v) &=& f(t,x,\mathcal{R}_x v) ~~ \qquad\quad \text{ on } \gamma_- \cap \mathcal{S}(Q_R(z_0)).
\end{array}\right.
\end{equation*}
Then, there exists $p_{z_0} \in \cP_k$ such that for any $r \in (0,\frac{R}{2}]$ and any $z \in H_{r}(z_0)$ it holds:
\begin{align}
\label{eq:higher-reg-gamma_-_simplified-SR}
|f(z) - p_{z_0}(z)| \le C \left( \frac{r}{R} \right)^{k+\eps} \Big(  \Vert f \Vert_{L^{\infty}(\mathcal{S}(H_R(z_0)))} + R^{k+\eps} [ h ]_{C^{k+\eps-2}_{\ell}(\mathcal{S}(H_R(z_0))} \Big).
\end{align}
The constant $C$ depends only on $n,k,\eps,\lambda,\Lambda$, $\Vert a^{i,j} \Vert_{C^{k+\eps-2}_{\ell}(\mathcal{S}(H_R(z_0)))}$, $\Vert b \Vert_{C^{k+\eps-2}_{\ell}(\mathcal{S}(H_R(z_0)))}$, and \\
$\Vert c \Vert_{C^{k+\eps-2}_{\ell}(\mathcal{S}(H_R(z_0)))}$, but not on $z_0$ and $r,R$.
\end{lemma}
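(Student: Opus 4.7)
The approach is to recast the specular reflection problem as an in-flow problem and invoke \autoref{lemma:higher-reg-gamma_--Dirichlet-flat}. I would set $F(t,x,v) := f(t,x,\mathcal{R}_x v) = f(t,x,v',-v_n)$; the specular reflection condition gives $f = F$ on $\gamma_- \cap Q_R(z_0)$, so $f$ also solves the in-flow problem on $H_R(z_0)$ with boundary datum $F$ on $\gamma_-$. Invoking \autoref{lemma:higher-reg-gamma_--Dirichlet-flat} will then deliver the polynomial $p_{z_0} \in \cP_k$ and the claimed expansion, provided one can establish an estimate
$$[F]_{C^{k+\eps}_{\ell}(H_{R/2}(z_0))} \le C R^{-k-\eps}\Bigl(\Vert f \Vert_{L^{\infty}(\mathcal{S}(H_R(z_0)))} + R^{k+\eps}[h]_{C^{k+\eps-2}_{\ell}(\mathcal{S}(H_R(z_0)))}\Bigr),$$
with a constant $C$ independent of $|v_0|$.

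The main step is therefore to produce this bound on $F$. The reflected point $\mathcal{R}(z_0) = (t_0,x_0,v_0',-(v_0)_n)$ lies in $\gamma_+$ (since $z_0 \in \gamma_-$ forces $(v_0)_n > 0$), the condition $Q_{2R}(z_0) \cap \gamma_0 = \emptyset$ transfers to $Q_{2R}(\mathcal{R}(z_0)) \cap \gamma_0 = \emptyset$, and $f$ satisfies the equation on $\mathcal{R}(H_R(z_0)) \subset \mathcal{S}(H_R(z_0))$, which is a neighborhood of $\mathcal{R}(z_0)$. I would apply \autoref{lemma:higher-reg-gamma_+} at boundary points $z^{\ast} \in \gamma_+ \cap Q_{R/2}(\mathcal{R}(z_0))$ using small kinetic cylinders contained in $\mathcal{R}(H_R(z_0))$ as local solution domains, and then combine the resulting pointwise polynomial expansions through a covering argument in the spirit of \autoref{lemma:covering} to bound $[f]_{C^{k+\eps}_{\ell}(\gamma_+ \cap Q_{R/2}(\mathcal{R}(z_0)))}$. \autoref{lemma:Holder-spaces-gamma_-}, which identifies $\mathcal{R}(\gamma_- \cap Q_r(z_0)) = \gamma_+ \cap Q_r(\mathcal{R}(z_0))$ and asserts the equivalence of kinetic H\"older norms under $v$-reflection, then converts this into a bound on $[F]_{C^{k+\eps}_{\ell}(\gamma_- \cap Q_{R/2}(z_0))}$. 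A trace-type extension of $F$ into the bulk (for instance by making $F$ independent of $x_n$) preserves the H\"older norm up to constants and enables the invocation of \autoref{lemma:higher-reg-gamma_--Dirichlet-flat}.

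The main obstacle is geometric. The set $\mathcal{R}(H_R(z_0))$ is not a standard kinetic cylinder centered at $\mathcal{R}(z_0)$: its spatial ball at time $t$ is shifted along the trajectory of $v_0$ rather than of $\mathcal{R}v_0$, and the discrepancy $2|t-t_0|(v_0)_n$ is of order $R^2|v_0|$, potentially far exceeding the spatial radius $R^3$. Consequently \autoref{lemma:higher-reg-gamma_+} cannot be applied at $\mathcal{R}(z_0)$ at the natural scale $R$ in the bulk, and a naive application at a $|v_0|$-dependent scale would introduce unwanted factors of $|v_0|$ that would ultimately spoil the estimate. The key observation that defeats this obstruction is \autoref{lemma:Holder-spaces-gamma_-}: on the kinetic boundary, the two cylinders $Q_r(z_0)$ and $Q_r(\mathcal{R}(z_0))$ induce comparable metrics under the reflection $v \mapsto \mathcal{R}_x v$, so the $|v_0|$-dependence cancels once attention is restricted to $\gamma_\pm$. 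Engineering the covering so that this cancellation is preserved in the final estimate is the delicate point of the proof, analogous to the boundary-polynomial analysis carried out in Step~4 of the proof of \autoref{lemma:higher-reg-gamma_--Dirichlet}.
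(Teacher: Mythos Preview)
Your overall plan matches the paper's proof: recast the specular condition as in-flow with datum $F(t,x,v)=f(t,x,\mathcal R_xv)$, bound $[F]_{C^{k+\eps}_\ell}$ on $\gamma_-$ near $z_0$ by the regularity of $f$ on $\gamma_+$ near $\mathcal R(z_0)$ (via \autoref{lemma:Holder-spaces-gamma_-}), extend $F$ trivially in $x_n$, and then invoke \autoref{lemma:higher-reg-gamma_--Dirichlet-flat}. The paper carries out the middle step by a one-line citation of \autoref{prop:regularity-inflow} at $\mathcal R(z_0)\in\gamma_+$ (where only \autoref{lemma:higher-reg-gamma_+} enters, so the $(1+|v_0|)^{(k+\eps)/2}$ factor is absent) and writes the resulting bound over $\mathcal R(H_R(z_0))$ without further comment.

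You are more careful than the paper in identifying the domain mismatch: $\mathcal R(H_R(z_0))$ is not the standard cylinder $H_R(\mathcal R(z_0))$, and $H_\rho(z^*)\subset\mathcal R(H_R(z_0))$ for $z^*\in\gamma_+\cap Q_{R/2}(\mathcal R(z_0))$ forces $\rho^2\lesssim R^3/|(v_0)_n|$. However, your proposed cure via \autoref{lemma:Holder-spaces-gamma_-} does not work. That lemma only transfers H\"older norms between subsets of $\gamma_-$ and $\gamma_+$ \emph{after} a bulk estimate has been obtained; the bad factor $\rho^{-k-\eps}$ enters the constant of the expansion \eqref{eq:higher-reg-gamma_+_simplified} at $z^*$ \emph{before} any restriction to $\gamma_+$ is taken, so there is nothing on the boundary for it to cancel against. ``Engineering the covering'' cannot help either, since the loss is already present pointwise at each $z^*$.

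What actually underlies the paper's citation is that the blow-up proof of \autoref{lemma:higher-reg-gamma_+} goes through with the tilted solution domain $\mathcal R(H_R(z_0))$ in place of a standard cylinder. Under $z^*\circ S_r$ at $z^*\in\gamma_+$, the additional spatial constraint $|x-x_0-(t-t_0)v_0|<R^3$ rescales to $\big|r^3\xi+r^2\tau(v^*-v_0)+\big(x^*-x_0-(t^*-t_0)v_0\big)\big|<R^3$, which is eventually satisfied for every fixed $(\tau,\xi)$ since $r^2\tau(v^*-v_0)\to0$ and $|x^*-x_0-(t^*-t_0)v_0|<(R/4)^3$; meanwhile the constraint $x_n>0$ still produces the limit $(-\infty,0)\times\R^n\times\R^n$ exactly as in \autoref{lemma:domains-convergence}. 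So the Liouville step and the contradiction argument are unaffected. Neither the paper nor your outline spells this out, but this---rather than \autoref{lemma:Holder-spaces-gamma_-}---is the mechanism that removes the $|v_0|$-dependence.
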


\begin{proof}
By \autoref{lemma:Holder-spaces-gamma_-} and \autoref{prop:regularity-inflow}, which is applicable since $\mathcal{R}(\gamma_- \cap Q_{R/2}(z_0)) = \gamma_+ \cap Q_{R/2}(\mathcal{R}(z_0))$ and $\mathcal{R}(z_0) \in \gamma_+$ with $Q_{R}(\mathcal{R}(z_0)) \cap \gamma_0 = \emptyset$ by assumption, it holds
\begin{align}
\label{eq:gamma_--claim}
\begin{split}
[ f(\mathcal{R} \cdot) ]_{C^{k+\eps}_{\ell}(\gamma_- \cap H_{R/2}(z_0))}  & \le C [f]_{C^{k+\eps}_{\ell}(\mathcal{R}(\gamma_- \cap H_{R/2}(z_0)))} \\
& \le C  \left( R^{-k-\eps}\Vert f \Vert_{L^{\infty}(\mathcal{R}(H_R(z_0)))} + [ h ]_{C^{k+\eps - 2}_{\ell}(\mathcal{R}(H_{R}(z_0)))} \right).
\end{split}
\end{align}
Note that we can extend the function $z \mapsto f(\mathcal{R} z)$ from $\gamma_- \cap H_R(z_0)$ to $H_R(z_0)$ in a constant way, i.e. by setting $F(t,x',x_n,v) := f(\mathcal{R}(t,x',0,v))$, we have $F = f(\mathcal{R} \cdot)$ on $\gamma_- \cap H_R(z_0)$ and 
\begin{align*}
[F]_{C^{k+\eps}_{\ell}(H_{R/2}(z_0))} \le C [ f(\mathcal{R} \cdot) ]_{C^{k+\eps}_{\ell}(\gamma_- \cap H_{R/2}(z_0))}.
\end{align*}
By the specular reflection condition, it holds $f = F$ on $\gamma_- \cap Q_R(z_0)$. Therefore, by application of \autoref{lemma:higher-reg-gamma_--Dirichlet-flat}, we obtain for any $r \in (0,\frac{R}{2}]$ and $z \in H_r(z_0)$:
\begin{align}
\label{eq:gamma_-appl1}
\begin{split}
|f(z) - p_{z_0}(z)| &\le C  \left(\frac{r}{R}\right)^{k+\eps} \big( \Vert f \Vert_{L^{\infty}(H_{R/2}(z_0))} + R^{k+\eps} [ f(\mathcal{R} \cdot) ]_{C^{k+\eps}_{\ell}(\gamma_- \cap H_{R/2}(z_0))} \\
&\qquad\qquad\quad\qquad\qquad\qquad\qquad~ + R^{k+\eps}[ h ]_{C^{k+\eps-2}_{\ell}(H_{R/2}(z_0)) } \big).
\end{split}
\end{align}

Hence, the desired result follows by combination of \eqref{eq:gamma_--claim} and \eqref{eq:gamma_-appl1}.
\end{proof}

As before, we deduce boundary regularity estimates away from the grazing set.

\begin{proposition}
\label{prop:regularity-SR}
Let $\Omega = \{x_n > 0 \}$, $z_0 \in (-1,1) \times \overline{\Omega} \times \R^n$, and $R \in (0,1]$ be such that $Q_{2R}(z_0) \cap \gamma_0 = \emptyset$. Let $k \in \N$ with $k \ge 3$, $\eps \in (0,1)$, and $a^{i,j},b,c,h \in C^{k+\eps-2}_{\ell}(\mathcal{S}(H_R(z_0)))$ and assume that $a^{i,j}$ satisfies \eqref{eq:unif-ell}.
Let $f$ be a bounded weak solution to 

\begin{equation*}
\left\{\begin{array}{rcl}
\partial_t f + v \cdot \nabla_x f + (-a^{i,j} \partial_{v_i,v_j})f &=& - b \cdot \nabla_v f - c f + h ~~ \text{ in }  \mathcal{S}(H_R(z_0)), \\
f(t,x,v) &=& f(t,x,\mathcal{R}_x v) ~\qquad\quad \text{ on } \gamma_- \cap \mathcal{S}(Q_R(z_0)).
\end{array}\right.
\end{equation*}
Then, $f \in C^{k+\eps}_{\ell}(H_{R/2}(z_0))$ and it holds:
\begin{align*}
[ f ]_{C^{k+\eps}_{\ell}(H_{R/2}(z_0))} \le C R^{-k-\eps} \big( \Vert f \Vert_{L^{\infty}( \mathcal{S}(H_R(z_0)))} + R^{k+\eps} [ h ]_{C^{k+\eps-2}_{\ell}(\mathcal{S}(H_R(z_0)))} \big).
\end{align*}
The constant $C$ depends only on $n,k,\eps,\lambda,\Lambda$, $\Vert a^{i,j} \Vert_{C^{k+\eps-2}_{\ell}(\mathcal{S}(H_R(z_0)))}, \Vert b \Vert_{C^{k+\eps-2}_{\ell}(\mathcal{S}(H_R(z_0)))}$, and \\
$\Vert c \Vert_{C^{k+\eps-2}_{\ell}(\mathcal{S}(H_R(z_0)))}$, but not on $z_0$ and $R$.
\end{proposition}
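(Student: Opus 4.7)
The plan is to follow the exact same scheme as in the proof of \autoref{prop:regularity-inflow}, replacing the use of \autoref{lemma:higher-reg-gamma_--Dirichlet} at points of $\gamma_-$ by the specular-reflection expansion \autoref{lemma:higher-reg-gamma_-SR}, while keeping \autoref{lemma:higher-reg-gamma_+} at points of $\gamma_+$ (which is insensitive to the choice of boundary condition). The decisive point is that both expansions available in the present flat, specular-reflection setting hold with constants that do \emph{not} depend on $|v_0|$, so the final estimate will also be $|v_0|$-independent, as stated.

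After a normalization we may assume that $\Vert f \Vert_{L^\infty(\mathcal{S}(H_R(z_0)))} + R^{k+\eps}[h]_{C^{k+\eps-2}_\ell(\mathcal{S}(H_R(z_0)))} \le R^{k+\eps}$. By \autoref{lemma:covering}, it suffices to show that for every $z^\ast \in H_{R/2}(z_0)$, setting $r := d_\ell(z^\ast,\partial ((-1,1)\times \Omega \times \R^n))$, one has $[f]_{C^{k+\eps}_\ell(Q_{r/2}(z^\ast))} \le C$. When $r \gtrsim R$ this follows at once from the interior Schauder estimate \autoref{lemma:interior-reg}. Otherwise, \autoref{lemma:kinetic-balls} furnishes a boundary point $z_0^\ast \in \gamma \cap Q_{R/8}(z_0)$ with $z^\ast \in Q_r(z_0^\ast)$, $Q_r(z^\ast) \subset Q_{4r}(z_0^\ast)$ and $H_{R/4}(z_0^\ast) \subset \mathcal{S}(H_R(z_0))$. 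The hypothesis $Q_{2R}(z_0)\cap \gamma_0 = \emptyset$ forces $z_0^\ast \in \gamma_+ \cup \gamma_-$ and also guarantees $Q_{R/2}(z_0^\ast) \cap \gamma_0 = \emptyset$, so we are in the range of applicability of the relevant expansion lemma.

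The core step is then to extract the polynomial $p_{z_0^\ast} \in \cP_k$ given either by \autoref{lemma:higher-reg-gamma_+} (if $z_0^\ast \in \gamma_+$, where the expansion is derived directly on $H_{R/4}(z_0^\ast)$ without any use of the boundary condition) or by \autoref{lemma:higher-reg-gamma_-SR} (if $z_0^\ast \in \gamma_-$, where the specular reflection is crucially exploited). In both cases, after normalization, this yields
\[ \Vert f - p_{z_0^\ast} \Vert_{L^\infty(H_\rho(z_0^\ast))} \le C (\rho/R)^{k+\eps} R^{k+\eps} \qquad \forall \rho \in (0,R/8]. \]
Subtracting $p_{z_0^\ast}$ from $f$, the function $f - p_{z_0^\ast}$ solves the same PDE with an extra source term $\tilde h = -[\partial_t + v\cdot \nabla_x + (-a^{i,j}\partial_{v_i,v_j}) + b\cdot\nabla_v + c]p_{z_0^\ast}$, whose $C^{k+\eps-2}_\ell$ seminorm on $Q_r(z^\ast)$ is controlled by $CR^{-k-\eps}\Vert p_{z_0^\ast}\Vert_{L^\infty(H_{R/8}(z_0^\ast))}$ via \autoref{lemma:product-rule} and \autoref{lemma:der-Holder}, exactly as in the estimate \eqref{eq:tilde-h-estimate}. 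Feeding this into the interior Schauder estimate \autoref{lemma:interior-reg} on $Q_{r/2}(z^\ast) \subset Q_r(z^\ast) \subset Q_{4r}(z_0^\ast)$ gives $[f]_{C^{k+\eps}_\ell(Q_{r/2}(z^\ast))} \le C$, as required.

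I do not anticipate a serious obstacle here, as the reflection mechanism that removes the $(1+|v_0|)^{(k+\eps)/2}$ factor is already absorbed into \autoref{lemma:higher-reg-gamma_-SR} via \autoref{lemma:higher-reg-gamma_--Dirichlet-flat}. The only mildly delicate point to check is that all the cylinders that appear (namely $H_{R/4}(z_0^\ast)$ and its reflection $\mathcal{R}(H_{R/4}(z_0^\ast))$) are indeed contained in $\mathcal{S}(H_R(z_0))$, so that the coefficients $a^{i,j}, b, c, h$ retain the required $C^{k+\eps-2}_\ell$ regularity when we invoke \autoref{lemma:higher-reg-gamma_-SR}; this is a routine verification using \autoref{lemma:kinetic-balls} together with the definition \eqref{eq:reflected-domain-def}.
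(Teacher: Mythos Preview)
Your proposal is correct and follows essentially the same route as the paper: the paper's proof also says it goes ``exactly as in \autoref{lemma:regularity-inflow}'' but uses \autoref{lemma:higher-reg-gamma_-SR} in place of \autoref{lemma:higher-reg-gamma_--Dirichlet}, then concludes via \autoref{lemma:covering} and a covering argument with interior estimates for general $z_0$. Your observation that the $|v_0|$-independence is inherited from \autoref{lemma:higher-reg-gamma_--Dirichlet-flat} through \autoref{lemma:higher-reg-gamma_-SR}, and your remark about the inclusion $\mathcal{R}(H_{R/4}(z_0^\ast)) \subset \mathcal{S}(H_R(z_0))$, are both on point.
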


Note that by application of the local boundedness estimate in \eqref{eq:boundary-reg-specular-Linfty-L1}, we can also replace the $L^{\infty}$ norm of $f$ by the $L^1$ norm.

\begin{proof}
The proof is exactly the same as the proof of \autoref{lemma:regularity-inflow}. In fact, for $z_0 \in \gamma \setminus \gamma_0$ we claim again that for any $z^{\ast} \in H_{R/16}(z_0)$ it holds 
\begin{align}
\label{eq:claim-reg-2}
[f]_{C^{k+\eps}_{\ell}(Q_{r/2}(z^{\ast}))} \le C R^{-k-\eps} \left(\Vert f \Vert_{L^{\infty}(\mathcal{S}(H_R(z_0)))} + R^{k+\eps}[ h ]_{C^{k+\eps-2}_{\ell}(\mathcal{S}(H_R(z_0)))} \right),
\end{align}
where we define $r := d_{\ell}(z^{\ast}) := \sup \{ r > 0 : \tilde{Q}_r(z^{\ast}) \subset (-1,1) \times \Omega \times \R^n \}$. The proof goes exactly as in \autoref{lemma:regularity-inflow} but uses \autoref{lemma:higher-reg-gamma_-SR} instead of \autoref{lemma:higher-reg-gamma_--Dirichlet}. Once \eqref{eq:claim-reg-2} is proved, we deduce the desired result for $z_0  \in \gamma \setminus \gamma_0$ by \autoref{lemma:covering}. The result for general $z_0 \not\in \gamma$ then follows by a combination with interior estimates and a covering argument in the same way as \autoref{prop:regularity-inflow}.
\end{proof}

We are now in a position to prove that the Tricomi solution satisfies $\mathcal{T}_A \in C^{4,1}$.

\begin{proof}[Proof of \autoref{prop:Tricomi}(iii)]
Let $z^{\ast} \in H_1(0) = ((-1,1) \times \{ x  > 0 \} \times \R) \cap Q_1(0)$ in 1D, and define
\begin{align*}
r &= d_{\ell}(z^{\ast} , \gamma) = \sup\{ r > 0 : Q_r(z^{\ast}) \subset (-1,1) \times \{ x  > 0 \} \times \R \}, \\
d &= d_{\ell}(z^{\ast} , \gamma_0) = \sup\{ r > 0 : Q_r(z^{\ast}) \cap  \gamma_0 = \emptyset \} \ge r,
\end{align*}
where $\gamma_0 = (-1,1) \times \{ 0 \} \times \{ 0 \}$. 
Note that by \autoref{prop:Tricomi}(i) we have that $\mathcal{T}_A$ solves \eqref{eq:Tricomi-PDE}. 

We observe that $r \le d$ and $Q_{d}(z^{\ast}) \cap \gamma_0 = \emptyset$. Hence we can apply \autoref{prop:regularity-SR} (and \autoref{lemma:Holder-interpol}) to deduce for  $\eps \in (0,1)$:
\begin{align*}
[\mathcal{T}_A]_{C^{4,1}(H_{r/4}(z^{\ast}))} &\le [\mathcal{T}_A]_{C^{4,1}(H_{d/4}(z^{\ast}))} \\
&\le C d^{\eps} [\mathcal{T}_A]_{C^{4,1}(H_{d/4}(z^{\ast}))} + C d^{-5} \Vert \mathcal{T}_A \Vert_{L^{\infty}(H_{d/4}(z^{\ast}))} \\
&\le Cd^{-5} \Vert \mathcal{T}_A \Vert_{L^{\infty}(H_{d/2}(z^{\ast}))} + C[p]_{C^{3+\eps}(H_{d/2}(z^{\ast}))} \le d^{-5} \Vert \mathcal{T}_A \Vert_{L^{\infty}(H_{d/2}(z^{\ast}))}, 
\end{align*}
where we also used that $p \in \cP_3$. Note that by the homogeneity of $\mathcal{T}_{A}$ we have in particular
\begin{align}
\label{eq:Tricomi-hom}
\Vert \mathcal{T}_A \Vert_{L^{\infty}(Q_R(0))} \le C R^5 ~~ \forall R > 0.
\end{align}
Thus, by \eqref{eq:Tricomi-hom} and since $H_{d/2}(z^{\ast}) \subset H_{2d}(0)$,  we deduce
\begin{align*}
[\mathcal{T}_A]_{C^{4,1}(H_{r/4}(z^{\ast}))} \le C d^{-5} \Vert \mathcal{T}_A \Vert_{L^{\infty}(H_{2d}(0)))} \le C.
\end{align*}
Since $z^{\ast}$ was arbitrary, we deduce the desired result from \autoref{lemma:covering}.
The dependence of $C$ only on an upper and lower bound of $A$ follows from the respective dependence of constants in \autoref{prop:regularity-SR} and \eqref{eq:Tricomi-hom}.
\end{proof}

\section{Global boundary regularity with specular reflection condition}
\label{sec:reg-gamma0}

In this section we prove the optimal regularity for solutions to kinetic equations subject to the specular reflection boundary condition.

\subsection{Expansion at $\gamma_0$}

In the previous section, we established expansions at boundary points in $\gamma_{\pm}$. It remains to establish an expansion at boundary points which belong to the grazing set $\gamma_0$. We will achieve it in this subsection. This is the most involved part of the theory and it turns out that solutions are in general not smooth near these points. The reason for this behavior is the existence of Tricomi-type solutions in the Liouville-type theorem in the half-space (see \autoref{thm:Liouville-higher-half-space-SR}).

We will establish an expansion of order $5 +\eps$ in the half-space, which is sufficient to deduce optimal regularity estimates in general domains via the transformation $\Phi$ in \eqref{lemma:trafo-sr}. Clearly, expansions of order $k+\eps$ for $k \in \{3,4\}$ can be established in the same way (under weaker assumptions on the coefficients).

We introduce the space $\tilde{\cP}_k$ of polynomials satisfying the specular reflection boundary condition in the half-space
\begin{align*}
\tilde{\cP}_k = \{p \in \cP_k : p(t,x',0,v',v_n) = p(t , x' , 0 , v', -v_n) \},
\end{align*}
and given $a > 0$ we also define the space $\tilde{\cP}_{a,5}$ which contains $\tilde{\cP}_5$ but also the Tricomi solution $\mathcal{T}_a(t,x,v) := \mathcal{T}_{a,3}(x_n,v_n)$ from \autoref{thm:Liouville-higher-half-space-SR}, namely
\begin{align*}
\tilde{\cP}_{a,5} = \{P =  p + \tau \mathcal{T}_a : p \in \tilde{\cP}_5 ,~ \tau \in \R \}.
\end{align*}
Recall that $\mathcal{T}_a$ is homogeneous of degree $5$, and hence the space $\tilde{\cP}_{a,5}$ is finite dimensional and all its basis functions are homogeneous of degree less or equal $5$. In particular the space $\tilde{\cP}_{a,5}$ satisfies the assumptions of \autoref{lemma:L43} and \autoref{lemma:orth-proj-prop}, since it is of the form $\tilde{\cP}_{a,5} = \cP_{I_{a,5}}$, where
\begin{align*}
I_{a,5} := \tilde{I}_5 \cup \{ \mathcal{T}_a \} := \{ q \in I_5 : q(t,x',0,v',v_n) = q(t,x',0,v',-v_n) \} \cup \{ \mathcal{T}_{a} \}.
\end{align*}
The definition of $I_5$ can be found in \eqref{eq:Ik}.
Note that all elements in $\tilde{\cP}_{a,5}$ satisfy the specular reflection boundary condition in the half-space.

We are now in a position to establish the expansion at points in $\gamma_0$. Recall that since $\Omega = \{ x_n > 0 \}$, it holds $\mathcal{R}_x v = (v',-v_n)$.

\begin{lemma}
\label{lemma:higher-reg-gamma_0-SR-halfspace}
Let $\Omega = \{ x_n > 0 \}$ and $z_0 \in \gamma_0$, i.e. $(x_0)_n = (v_0)_n = 0$. Let $R \in (0,1]$, $\eps \in (0,1)$, and $a^{i,j},b,c,h \in C^{3+\eps}_{\ell}(H_R(z_0))$ and assume that $a^{i,j}$ satisfies \eqref{eq:unif-ell}.
Let $f$ be a weak solution to 

\begin{equation*}
\left\{\begin{array}{rcl}
\partial_t f + v \cdot \nabla_x f + (-a^{i,j} \partial_{v_i,v_j})f &=& - b \cdot \nabla_v f - c f + h ~~ \text{ in } H_R(z_0), \\
f(t,x,v) &=& f(t,x,\mathcal{R}_xv) ~\qquad\quad \text{ on } \gamma_- \cap Q_R(z_0).
\end{array}\right.
\end{equation*}
Then, there exists $P_{z_0} \in \tilde{\cP}_{a,5}$, where $a := a^{n,n}(z_0)$, such that for any $r \in (0,\frac{R}{2}]$ and any $z \in H_{r}(z_0)$ it holds:
\begin{align}
\label{eq:higher-reg-gamma_0_simplified_halfspace}
|f(z) - P_{z_0}(z)| \le C \left(\frac{r}{R} \right)^{5+\eps} \big( \Vert f \Vert_{L^{\infty}(H_R(z_0))} + R^{5+\eps} [ h ]_{C^{3+\eps}_{\ell}(H_R(z_0))} \big).
\end{align}
The constant $C$ depends only on $n,\eps,\lambda,\Lambda$, $\Vert a^{i,j} \Vert_{C^{3+\eps}_{\ell}(H_R(z_0))}, \Vert b \Vert_{C^{3+\eps}_{\ell}(H_R(z_0))}$, and $\Vert c \Vert_{C^{3+\eps}_{\ell}(H_R(z_0))}$, but not on $z_0$ and $r,R$.
\end{lemma}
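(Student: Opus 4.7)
The plan is to adapt the blow-up and contradiction compactness scheme of \autoref{lemma:higher-reg-gamma_+} and \autoref{lemma:higher-reg-gamma_--Dirichlet}, enlarging the projection space from $\cP_5$ to $\tilde{\cP}_{a_l,5}$ in order to capture a possible Tricomi contribution. Assume by contradiction that \eqref{eq:higher-reg-gamma_0_simplified_halfspace} fails; this yields sequences $R_l \in (0,1]$, $z_l^0 \in \gamma_0$, coefficients $a_l^{i,j}, b_l, c_l$, right-hand sides $h_l$, and solutions $f_l$ normalized by
\[
R_l^{-5-\eps}\Vert f_l\Vert_{L^\infty(H_{R_l}(z_l^0))} + [h_l]_{C^{3+\eps}_\ell(H_{R_l}(z_l^0))} \le 1.
\]
Set $a_l := a_l^{n,n}(z_l^0)$, and for each $r \in (0,R_l/2]$ let $P_{l,r}$ be the $L^2(H_r(z_l^0))$-projection of $f_l$ onto the finite-dimensional subspace $\tilde{\cP}_{a_l,5} = \cP_{I_{a_l,5}}$. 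Every element of $\tilde{\cP}_{a_l,5}$ automatically satisfies the specular reflection condition on $\{x_n=0\}$. Defining
\[
\theta(r) := \sup_l \sup_{\rho \in [r,R_l/2]} \rho^{-5-\eps}\Vert f_l - P_{l,\rho}\Vert_{L^\infty(H_\rho(z_l^0))},
\]
\autoref{lemma:L43} applied to $\cP_{I_{a_l,5}}$ gives $\theta(r) \to \infty$ as $r\to 0$, and I would extract subsequences $r_m \searrow 0$, $l_m$ with $R_m/r_m \to \infty$ and $\Vert f_{l_m} - P_{l_m,r_m}\Vert_{L^\infty(H_{r_m}(z_m^0))} \ge \tfrac{1}{2}r_m^{5+\eps}\theta(r_m)$.

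The crucial geometric feature of $\gamma_0$ is that for $\Omega = \{x_n>0\}$ the conditions $(x_m^0)_n = (v_m^0)_n = 0$ leave the boundary $\{x_n=0\}$ invariant under the rescaling $z_m^0\circ S_{r_m}\cdot$, so that by \autoref{lemma:domains-convergence}
\[
T_{z_m^0,r_m}\times\R^n \longrightarrow \R \times \{x_n>0\}\times\R^n,
\]
a full half-space in $x$ (in sharp contrast to $\gamma_\pm$, where only a half-line in time survives). I would then consider
\[
g_m(z) := \frac{f_{l_m}(z_m^0 \circ S_{r_m} z) - P_{l_m,r_m}(z_m^0 \circ S_{r_m} z)}{r_m^{5+\eps}\,\theta(r_m)},
\]
which inherit the specular reflection condition on $\{x_n=0\}$ from $P_{l_m,r_m}$, and for which \autoref{lemma:orth-proj-prop} yields $\Vert g_m\Vert_{L^\infty(H_R^{(m)})} \le cR^{5+\eps}$ for all $R \le R_m/(2r_m)$ together with $\Vert g_m\Vert_{L^\infty(H_1^{(m)})} \ge 1/2$. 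As in Step 2 of \autoref{lemma:higher-reg-gamma_+}, peeling off the contribution of $P_{l_m,r_m}$ from the equation recasts the right-hand side as $\tilde h_m = P_m + e_m$ with $P_m \in \cP_3$ uniformly bounded and $\Vert e_m\Vert_{L^\infty(H_R^{(m)})} \to 0$. Interior Schauder (\autoref{lemma:interior-reg}) combined with the boundary H\"older estimate under specular reflection (\autoref{lemma:boundary-reg-specular}), applied on kinetic cylinders intersected with the fixed half-space, give uniform $C^\alpha_\ell$ bounds up to $\{x_n=0\}$, whence $g_m \to g_\infty$ locally uniformly on $\R\times\overline{\{x_n>0\}}\times\R^n$, and $g_\infty$ solves
\[
\partial_t g_\infty + v\cdot\nabla_x g_\infty - \bar a^{i,j}\partial_{v_i,v_j}g_\infty = p_0 \in \cP_3
\]
in $\R\times\{x_n>0\}\times\R^n$ with specular reflection on $\{x_n=0\}$, constant matrix $\bar a^{i,j}$, and growth $\Vert g_\infty\Vert_{L^\infty(Q_R)} \le cR^{5+\eps}$.

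At this point \autoref{thm:Liouville-higher-half-space-SR} applied with $k=5$ forces $g_\infty \in \tilde{\cP}_{\bar a^{n,n},5}$, i.e.\ $g_\infty = p + \tau\,\mathcal{T}_{\bar a^{n,n}}$ for some $p \in \tilde{\cP}_5$ and $\tau \in \R$. On the other hand, the $L^2$-orthogonality
\[
\int_{H_1^{(m)}} g_m(z)\, q_m(z_m^0 \circ S_{r_m}z)\,dz = 0 \qquad \forall q_m \in \tilde{\cP}_{a_{l_m},5}
\]
passes to the limit: since the basis of $\tilde{\cP}_{a,5}$ depends continuously on $a$ (because $\mathcal{T}_a$ does) and every $Q \in \tilde{\cP}_{\bar a^{n,n},5}$ is the limit of rescaled pull-backs of some $q_m \in \tilde{\cP}_{a_{l_m},5}$ using the $5$-homogeneity of $\mathcal{T}_a$, we obtain $\int_{H_1} g_\infty\, Q\,dz = 0$ for every $Q \in \tilde{\cP}_{\bar a^{n,n},5}$. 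Combined with $g_\infty \in \tilde{\cP}_{\bar a^{n,n},5}$ this gives $g_\infty \equiv 0$ on $H_1$, contradicting $\Vert g_m\Vert_{L^\infty(H_1^{(m)})} \ge 1/2$. The main obstacle I anticipate is the book-keeping required for the Tricomi coefficient $\tau_{l_m,r_m}$ of $P_{l_m,r_m}$: since $\mathcal{T}_a$ has kinetic degree $5 < 5+\eps$, \autoref{lemma:orth-proj-prop} does deliver $\tau_{l_m,r_m}/\theta(r_m) \to 0$, and together with the uniform $C^{4,1}_\ell$ bound on $\mathcal{T}_{a_l}$ from \autoref{prop:Tricomi}(iii) — whose constant only depends on upper and lower bounds of $a_l$ — this ensures that the right-hand side of the limit PDE is $p_0 \in \cP_3$, without an additional Tricomi-generated contribution that would fall outside the scope of \autoref{thm:Liouville-higher-half-space-SR}.
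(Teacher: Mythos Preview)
Your proposal is correct and follows essentially the same route as the paper's proof: contradiction via $L^2$-projection onto $\tilde{\cP}_{a_l,5}$, blow-up preserving both the half-space and the specular reflection condition (thanks to $(x_m^0)_n=(v_m^0)_n=0$), compactness from interior Schauder plus \autoref{lemma:boundary-reg-specular}, and classification of the limit by \autoref{thm:Liouville-higher-half-space-SR} together with the orthogonality contradiction using $a_m\to\bar a^{n,n}$ and $\mathcal{T}_{a_m}\to\mathcal{T}_{\bar a^{n,n}}$.

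One point worth sharpening: the phrase ``as in Step~2 of \autoref{lemma:higher-reg-gamma_+}'' hides a genuine difference. Because $\mathcal{T}_{a_m}$ is only $C^{4,1}_\ell$, the Tricomi contribution to $\tilde h_m$ cannot be controlled in $C^{3+\eps}_\ell$ as the polynomial part is. The paper instead uses that $\mathcal{T}_{a_m}$ solves a \emph{constant-coefficient} equation with right-hand side in $\cP_3$, and shows the variable-coefficient remainders $(\tilde a_m^{n,n}(z)-a_m^{n,n}(z_m^0))\partial_{v_n v_n}\mathcal{T}_{a_m}$, $\tilde b_m^n\partial_{v_n}\mathcal{T}_{a_m}$, $\tilde c_m\mathcal{T}_{a_m}$ vanish in $C^{\eps}_\ell$ only, using $\tau_{l_m,r_m}/\theta(r_m)\to 0$ and \autoref{prop:Tricomi}(iii) exactly as you anticipate. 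Consequently the interior estimate at this step yields $C^{2+\eps}_\ell$ rather than $C^{5+\eps}_\ell$ compactness, which is still sufficient. You have identified the right ingredients; just be explicit that the Tricomi piece of the source is handled at the $C^\eps_\ell$ level via the equation it satisfies, not by the $C^{3+\eps}_\ell$ mechanism from \autoref{lemma:higher-reg-gamma_+}.
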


Note that for $z_0 \in \gamma_0$ it holds $Q_R(z_0) = \mathcal{R}(Q_R(z_0))$.

\begin{proof}
We prove the claim \eqref{eq:higher-reg-gamma_0_simplified_halfspace} in several steps, following the overall strategy from \autoref{lemma:higher-reg-gamma_+} and \autoref{lemma:higher-reg-gamma_--Dirichlet}.

\textbf{Step 1:} 
We assume by contradiction that \eqref{eq:higher-reg-gamma_0_simplified_halfspace} does not hold. Then, there exist $R_l \in (0,1]$, $f_l$, $h_l$, $a^{i,j}_{l}$, $b_l$, $c_l$, $z^0_{l}$ such that for any $l \in \N$
\begin{align}
\label{eq:blow-up-normalization-0}
\begin{split}
R_l^{-5-\eps} \left(\Vert f_l \Vert_{L^{\infty}(H_{R_l}(z^0_{l}))} + R_l^{5+\eps} [ h_l ]_{C^{3+\eps}_{\ell}(H_{R_l}(z^0_{l}))}\right) &\le 1, \\
 \Vert a^{i,j}_l \Vert_{C^{3+\eps}_{\ell}(H_{R_l}(z^0_{l}))} + \Vert b_l \Vert_{C^{3+\eps}_{\ell}(H_{R_l}(z^0_{l}))} + \Vert c_l \Vert_{C^{3+\eps}_{\ell}(H_{R_l}(z^0_{l}))} &\le \Lambda,
\end{split}
\end{align}
and also \eqref{eq:unif-ell} holds true with $\lambda$ for any $l \in \N$, such that the $f_l$ are solutions to 

\begin{equation*}
\left\{\begin{array}{rcl}
\partial_t f_l + v \cdot \nabla_x f_l + (-a^{i,j}_l \partial_{v_i,v_j})f_l &=& - b_l \cdot \nabla_v f_l - c_l f_l  + h_l ~~ \text{ in } H_{R_l}(z^0_{l}),\\
f_l(t,x',0,v',v_n) &=& f_l(t,x',0,v',-v_n) ~\qquad \text{ on } \gamma_- \cap Q_{R_l}(z^0_l).
\end{array}\right.
\end{equation*}
but it holds
\begin{align}
\label{eq:contradiction-assumption-0}
\sup_{l \in \N} \inf_{P \in \tilde{\cP}_{a_l,5}} \sup_{ r \in (0,\frac{R_l}{2}] }  \frac{\Vert f_l - P \Vert_{L^{\infty}(H_{r}(z^0_{l}))}}{r^{5+\eps}} = \infty,
\end{align}
where we set $a_l = a_l^{n,n}(z_l^0)$. Note that $a_l \ge \lambda > 0$ by \eqref{eq:unif-ell}.

For $r \in (0,1]$ we consider the $L^2(H_r(z^0_{l}))$ projections of $f_l$ over $\tilde{\cP}_{a_l,5}$ and denote them by $P_{l,r} \in \tilde{\cP}_{a_l,5}$.
Moreover, we introduce the quantity
\begin{align*}
\theta(r) = \sup_{l \in \N} \sup_{\rho \in [r,\frac{R_l}{2}]} \rho^{-5-\eps} \Vert f_l - P_{l,\rho} \Vert_{L^{\infty}(H_{\rho}(z^0_{l}))},
\end{align*}
and we deduce from \autoref{lemma:L43} that $\theta(r) \nearrow \infty$ as $r \searrow 0$ in the same way as in the proof of \autoref{lemma:higher-reg-gamma_+}. Thus, we can extract further subsequences $(r_m)_m$ and $(l_m)_m$ with $r_m \searrow 0$, such that
\begin{align*}
\frac{\Vert f_{l_m} - P_{l_m,r_m} \Vert_{L^{\infty}(H_{r_m}(z_{l_m}^0))}}{r_m^{5+\eps} \theta(r_m)} \ge \frac{1}{2} ~~ \forall m \in \N, \qquad R_m r_m^{-1} \to \infty ~~ \text{ as } m \to \infty.
\end{align*} 
The second property follows in the exact same way as in the proof of \autoref{lemma:higher-reg-gamma_+}.

Moreover, we define for any $R > 0$ and $m \in \N$ the rescaled domains  $H_R^{(m)} := (T_{z_m^0,r_m} \times \R^n) \cap Q_{R}(0)$ and consider the function
\begin{align*}
g_m(z) := \frac{f_{l_m}(z_{l_m}^0 \circ S_{r_m} z ) - P_{l_m,r_m}(z_{l_m}^0 \circ S_{r_m} z)}{r_m^{5+\eps} \theta(r_m)}.
\end{align*}
By construction, and using the same arguments as in the proof of \autoref{lemma:higher-reg-gamma_+} and \autoref{lemma:orth-proj-prop}, we have for any $m \in \N$
\begin{align}
\label{eq:gm-prop-1_0}
\int_{H^{(m)}_1} g_m(z) p(z_m^0 \circ S_{r_m} z) \d z &= 0 ~~ \forall p \in \tilde{\cP}_{a_m,5},\\
\label{eq:gm-prop-2_0}
\Vert g_m \Vert_{L^{\infty}(H^{(m)}_1)} \ge \frac{1}{2}, \qquad \Vert g_m \Vert_{L^{\infty}(H^{(m)}_R)} &\le c R^{5+\eps} ~~ \forall R \in \left[1,\frac{R_m}{2r_m} \right], ~~ \forall m \in \N,
\end{align}
where we set $z^0_m = z^0_{l_m}$ and $a_m = a_{l_m}$. In fact, by writing for some $p_{l,r} \in \tilde{\cP}_5$ and $\tau_{l,r}, \alpha_{l,r}^{(q)} \in \R$
\begin{align*}
P_{l,r} = p_{l,r} + \tau_{l,r} \mathcal{T}_{a_l}, \qquad \text{ where } \qquad p_{l,r}(z_l^0 \circ z) =: \bar{p}_{l,r}(z) = \sum_{q \in \tilde{I}_{5}} \alpha^{(q)}_{l,r} q(z),
\end{align*}
and observing that $\bar{P}_{l,r} = \bar{p}_{l,r} + \tau_{l,r} \mathcal{T}_{a_l} \in \tilde{\cP}_{a_l,5}$ since the Tricomi solution satisfies $\mathcal{T}_{a_l}(z) = \mathcal{T}_{a_l}(z_l^0 \circ z)$ since $z_l^0 \in \gamma_0$, we can apply \autoref{lemma:orth-proj-prop} and deduce the second property in \eqref{eq:gm-prop-2_0}. Moreover, \autoref{lemma:orth-proj-prop} also implies
\begin{align}
\label{eq:coefficients-vanish-0}
\sup_{l \in \N} \frac{|\tau_{l,r}|}{\theta(r)} + \sup_{l \in \N} \frac{|\alpha^{(q)}_{l,r}|}{\theta(r)} \to 0 ~~ \text{ as } r \to 0, ~~ \forall q \in \tilde{I}_{5}.
\end{align}

\textbf{Step 2:} As in the proofs of \autoref{lemma:higher-reg-gamma_+} and \autoref{lemma:higher-reg-gamma_--Dirichlet} we investigate the equation that is satisfied by $g_m$ and take the limit $m \to \infty$. The fundamental difference compared to the previous proofs is that the functions $P_{l_m,r_m} \in \tilde{\cP}_{a_{l_m},5}$ are not polynomials but are of the form $P_{l_m,r_m} = p_{l_m,r_m} + \tau_{l_m,r_m} \mathcal{T}_{a_m}$ for some polynomial $p_{l_m,r_m} \in \tilde{\cP}_5$. Hence, we need to repeat the arguments from \autoref{lemma:higher-reg-gamma_+} in order to treat the polynomial part of $P_{l_m,r_m}$ and derive new tools to deal with the Tricomi remainder. To do so, we recall the definitions of $\tilde{p}_{l_m,r_m}, \tilde{p}_{l_m,r_m}^{(i,j)} \in \cP_{k-2}$, $\tilde{p}_{l_m,r_m}^{(i)} \in \cP_{k-1}$ and $p_{l_m,r_m} \in \cP_k$ from \eqref{eq:PDE-polynomial} and define
\begin{align*}
\tilde{h}_m^{(p)} (z) = \frac{h_m(z_m^0 \circ S_{r_m} z ) - (\tilde{p}_{l_m,r_m} + a^{i,j}_m \tilde{p}_{l_m,r_m}^{(i,j)} + b^i_m\tilde{p}_{l_m,r_m}^{(i)} + c_m p_{l_m,r_m})(z_m^0 \circ S_{r_m} z )}{r_m^{3+\eps} \theta(r_m)},
\end{align*}
and
\begin{align*}
\tilde{h}_m^{(\mathcal{T})}(z) = \tau_{l_m,r_m}\frac{[\partial_t + v \cdot \nabla_x + \tilde{a}_m^{i,j} \partial_{v_i,v_j} + \tilde{b}_m^i \partial_{v_i} + \tilde{c}_m ] [\mathcal{T}_{a_m}(z_m^0 \circ S_{r_m} \cdot)](z)}{r_m^{5+\eps} \theta(r_m)}.
\end{align*}
Then, it holds

\begin{equation}
\label{eq:PDE-gm-0}
\left\{\begin{array}{rcl}
\partial_t g_m + v \cdot \nabla_x g_m + (-\tilde{a}^{i,j}_m \partial_{v_i,v_j}) g_m + \tilde{b}^{i}_m \partial_{v_i} g_m + \tilde{c}_m g_m &=& \tilde{h}_m^{(p)} + \tilde{h}_m^{(\mathcal{T})} =: \tilde{h}_m  ~~~ \text { in } H^{(m)}_{R_m r_m^{-1}},\\
g_m(t,x',0,v',v_n) &=& g_m(t,x',0,v',-v_n) ~~ \text{ in } Q^{(m)}_{R_m r_m^{-1}},
\end{array}\right.
\end{equation}
where
\begin{align*}
\tilde{a}^{i,j}_m(z) = a^{i,j}_m(z_m^0 \circ S_{r_m} z), ~~ \tilde{b}^i_m(z) = r_m b^i_m(z_m^0 \circ S_{r_m} z), ~~ \tilde{c}_m(z) = r_m^2 c_m(z_m^0 \circ S_{r_m} z).
\end{align*}

Let us first argue why $g_m$ satisfies the specular reflection condition. Note that clearly, the domain $\Omega$ does not change under the transformation $z_m^0 \circ S_{r_m} \cdot$, since $(x_m^0)_n = (v_m^0)_n = 0$.

The boundary condition for $g_m$ follows from the fact that for any $z = (t,x',0,v) \in Q^{(m)}_{R_m r_m^{-1}}$ we have $z_0 \circ S_{r_m} z \in \gamma \cap \tilde{Q}_{R_m}(z_0)$, i.e. $0 = (x_m^0 + r_m^3 x + r_m^2 t v_m^0)_n = r_m^3 (x_n)$, and moreover,
\begin{align*}
\mathcal{R}_{x_m^0 + r_m^3 x + r_m^2 t v_m^0} (v_m^0 + r_m v) &= (v_m^0 + r_m v) - 2 (v_m^0 + r_m v)_n \\
&= (v_m^0 + r_m v) - 2 r_m v \cdot e_n = (v_m^0 + r_m \mathcal{R}_{x_m^0 + r_m^3 x + r_m^2 t v_m^0} v).
\end{align*}
Hence, since $f_{l_m}$, $p_{l_m,r_m}$ and $\mathcal{T}_{a_m}$ all satisfy the specular reflection condition with respect to $\{ x_n > 0 \}$, we have
\begin{align*}
f_{l_m}(z_m^0 \circ S_{r_m} (t,x,v)) &= f_{l_m}(t_m^0 + r_m^2 t , x_m^0 + r_m^3 x + r_m^2 t v_m^0 , \mathcal{R}_{x_m^0 + r_m^3 x + r_m^2 t v_m^0} (v_m^0 + r_m v)) \\
&= f_{l_m}(t_m^0 + r_m^2 t , x_m^0 + r_m^3 x + r_m^2 t v_m^0 , v_m^0 + r_m \mathcal{R}_{x_m^0 + r_m^3 x + r_m^2 t v_m^0}(v))\\
&= f_{l_m}(z_m^0 \circ S_{r_m} (t,x,\mathcal{R}_x v)),
\end{align*}
and analogously for $p_{l_m,r_m}$, and $\mathcal{T}_{a_m}$.

Our next goal is to take the limit $m \to \infty$ in the equation for $g_m$. To do so, we recall that by \eqref{eq:hm-vanishes-Holder}
\begin{align}
\label{eq:hm-vanishes-Holder-0}
[\tilde{h}_m^{(p)}]_{C_{\ell}^{3+\eps}(H^{(m)}_{R_m r_m^{-1}})} \to 0 ~~ \text{ as } m \to \infty.
\end{align}
For $\tilde{h}_m^{(\mathcal{T})}$ we argue as follows: First, we compute using that $\mathcal{T}_{a_m}$ solves  
\begin{align*}
\frac{\tau_{l_m,r_m}[\partial_t + v \cdot \nabla_x + a^{i,j}_m(z_m^0) \partial_{v_i,v_j}]\mathcal{T}_{a_m}}{r_m^{3+\eps} \theta(r_m)} =:p_{m}^{(\mathcal{T})}  ~~ \text{ in } \R \times \{ x_n > 0 \} \times \R^n
\end{align*}
for some $p_{m}^{(\mathcal{T})} \in \cP_3$ by \autoref{prop:Tricomi}(i) (see also \autoref{lemma:Liouville-higher-half-space-SR-1D-hom}), and thus
\begin{align*}
\tilde{h}_m^{(\mathcal{T})}(z) &=  \tau_{l_m,r_m} \frac{[\partial_t + v \cdot \nabla_x + \tilde{a}^{i,j}_m \partial_{v_i,v_j} + r_m^{-1} \tilde{b}_m^i \partial_{v_i} + r_m^{-2} \tilde{c}_m ] \mathcal{T}_{a_m}(z_m^0 \circ S_{r_m} z)}{r_m^{3+\eps} \theta(r_m)} \\
&= p_m^{(\mathcal{T})}(z_m^0 \circ S_{r_m} z) + \frac{\tau_{l_m,r_m}}{r_m^{3+\eps} \theta(r_m)}(\tilde{a}^{i,j}_m(z) - a^{i,j}_m(z_m^0) ) \partial_{v_i,v_j}  \mathcal{T}_{a_m}(z_m^0 \circ S_{r_m} z)\\
&\quad +  \frac{\tau_{l_m,r_m}}{r_m^{4+\eps} \theta(r_m)} \tilde{b}^i_m(z) \partial_{v_i}  \mathcal{T}_{a_m}(z_m^0 \circ S_{r_m} z) +  \frac{\tau_{l_m,r_m}}{r_m^{5+\eps} \theta(r_m)} \tilde{c}_m(z) \mathcal{T}_{a_m}(z_m^0 \circ S_{r_m} z) \\
&= p_m^{(\mathcal{T})}(z_m^0 \circ S_{r_m} z) + \frac{\tau_{l_m,r_m}}{r_m^{\eps} \theta(r_m)}(a^{n,n}_m(z_m^0 \circ S_{r_m}z) - a^{n,n}_m(z_m^0) ) \partial_{v_n,v_n}  \mathcal{T}_{a_m}(x_n,v_n)\\
&\quad + \frac{\tau_{l_m,r_m} r_m^{1-\eps}}{ \theta(r_m)} b^n_m(z_0^m \circ S_{r_m} z) \partial_{v_n}  \mathcal{T}_{a_m}(x_n,v_n) +  \frac{\tau_{l_m,r_m} r_m^{2-\eps}}{ \theta(r_m)} c_m(z_0^m \circ S_{r_m} z) \mathcal{T}_{a_m}(x_n,v_n) \\
&=: p_m^{(\mathcal{T})}(z_m^0 \circ S_{r_m} z) + \tilde{h}_m^{(\mathcal{T},a)}(z) + \tilde{h}_m^{(\mathcal{T},b)}(z) + \tilde{h}_m^{(\mathcal{T},c)}(z).
\end{align*}
In the last step we used the definitions of $\tilde{a}^{i,j}_m,\tilde{b}^i_m, \tilde{c}_m$. Moreover, we used that since $z^0_m \in \gamma_0$ it holds $\mathcal{T}_{a_m} (z_m^0 \circ S_{r_m} z) = \mathcal{T}_{a_m}(r_m^3 x_n, r_m v_n)$, and that since $\mathcal{T}_{a_m}$ is homogeneous of degree $5$ and since it is locally $C^{4,1}_{\ell}$ by \autoref{prop:Tricomi}, we have that $\partial_{v_i, v_j} \mathcal{T}_{a_m}$ exists in the classical sense and is homogeneous of degree $3$ (unless $i \not= n$ or $j \not= n$, in which case it is zero), i.e.
\begin{align*}
\partial_{v_n} \mathcal{T}_{a_m}(z_m^0 \circ S_{r_m} z) &= \partial_{v_n} \mathcal{T}_{a_m}(r_m^3 x_n, r_m v_n) = r_m^4 \partial_{v_n}\mathcal{T}_{a_m}(x_n,v_n), \\
\partial_{v_n,v_n} \mathcal{T}_{a_m}(z_m^0 \circ S_{r_m} z) &= \partial_{v_n,v_n} \mathcal{T}_{a_m}(r_m^3 x_n, r_m v_n) = r_m^3 \partial_{v_n,v_n}\mathcal{T}_{a_m}(x_n,v_n).
\end{align*}

We will investigate the terms $\tilde{h}_m^{(\mathcal{T},a)},\tilde{h}_m^{(\mathcal{T},b)},\tilde{h}_m^{(\mathcal{T},c)}$, separately, and prove that they tend to zero as $m \to \infty$ in $C^{\eps}_{\ell}$.

To prove it, let us first observe that by rescaling \autoref{prop:Tricomi}(iii) and using also \autoref{lemma:der-Holder}, we can deduce that for any $R \ge 1$ there is a constant $C(R)$, independent of $m$, such that
\begin{align}
\label{eq:Tricomi-derivatives}
\Vert \partial_{v_n,v_n} \mathcal{T}_{a_m} \Vert_{C^{\eps}_{\ell}(Q_R)} + \Vert \partial_{v_n} \mathcal{T}_{a_m} \Vert_{C^{\eps}_{\ell}(Q_R)} \le C\Vert \mathcal{T}_{a_m} \Vert_{C^{4,1}_{\ell}(Q_R)} \le C(R).
\end{align}
Moreover, note that since $\Vert a_m^{n,n} \Vert_{C^{\eps}_{\ell}(H_{R_m}(z_m^0))} \le 1$ by \eqref{eq:blow-up-normalization-0}, and using also \autoref{lemma:Holder-scaling}, for any $R \le r_m^{-1}$:
\begin{align*}
\Vert a_m^{n,n}(z_m^0 \circ S_{r_m} \cdot) - a_m^{n,n}(z_m^0) \Vert_{L^{\infty}(H_{R}^{(m)})} + [ a_m^{n,n}(z_m^0 \circ S_{r_m} \cdot) ]_{C^{\eps}_{\ell}(H_{R}^{(m)})} \le C(R) r_m^{\eps}[ a_m^{n,n} ]_{C^{\eps}_{\ell}(H_{R r_m}(z_m^0))} \le 2 r_m^{\eps}.
\end{align*}

Consequently, we have for any $R \ge 1$ by \autoref{lemma:product-rule},  and \eqref{eq:Tricomi-derivatives}
\begin{align*}
\Vert \tilde{h}_m^{(\mathcal{T},a)} \Vert_{C^{\eps}_{\ell}(H^{(m)}_R)} &\le \frac{|\tau_{l_m,r_m}|}{r_m^{\eps} \theta(r_m)} \Vert a^{n,n}_m(z_m^0 \circ S_{r_m} \cdot) - a^{n,n}_m(z_m^0) \Vert_{L^{\infty}(H_{R}^{(m)})} \Vert \partial_{v_n,v_n} \mathcal{T}_{a_m} \Vert_{C^{\eps}_{\ell}(H^{(m)}_R)} \\
&\le C(R) \frac{|\tau_{l_m,r_m}|}{\theta(r_m)} \to 0 ~~ \text { as } m \to \infty,
\end{align*}
where we used that $r_m \to 0$ and \eqref{eq:coefficients-vanish-0} in the last step. 

For the drift and the zero order term, we obtain by similar considerations, \autoref{lemma:product-rule}, \eqref{eq:Tricomi-derivatives}, as well as locally uniform bound of $b_m, c_m$ in $C^{\eps}_{\ell}$ by \eqref{eq:blow-up-normalization-0} that for any $R \ge 1$
\begin{align*}
\Vert \tilde{h}_m^{(\mathcal{T},b)} \Vert_{C^{\eps}_{\ell}(H^{(m)}_R)} &\le C \Vert b_m^n \Vert_{C^{\eps}_{\ell}(H_{Rr_m}(z_m^0))} \frac{|\tau_{l_m,r_m}|}{\theta(r_m)} r_m^{1-\eps} \Vert \partial_{v_n} \mathcal{T}_{a_m} \Vert_{C^{\eps}_{\ell}(Q_R)} \le C(R) \frac{|\tau_{l_m,r_m}|}{\theta(r_m)} r_m^{1-\eps}  \to 0,
\end{align*}
as well as
\begin{align*}
\Vert \tilde{h}_m^{(\mathcal{T},c)} \Vert_{C^{\eps}_{\ell}(H^{(m)}_R)} &\le C \Vert c_m \Vert_{C^{\eps}_{\ell}(H_{Rr_m}(z_m^0))} \frac{|\tau_{l_m,r_m}|}{\theta(r_m)} r_m^{2-\eps} \Vert \mathcal{T}_{a_m} \Vert_{C^{\eps}_{\ell}(Q_R)} \le C(R) \frac{|\tau_{l_m,r_m}|}{\theta(r_m)} r_m^{2-\eps}  \to 0,
\end{align*}
as $m \to \infty$. Altogether, we have shown that
\begin{align*}
\Vert \tilde{h}_m^{(\mathcal{T})} - p_m^{(\mathcal{T})} \Vert_{C^{\eps}_{\ell}(H^{(m)}_R)} &\le \Vert \tilde{h}_m^{(\mathcal{T},a)} \Vert_{C^{\eps}_{\ell}(H_R^{(m)})} + \Vert \tilde{h}_m^{(\mathcal{T},b)} \Vert_{C^{\eps}_{\ell}(H_R^{(m)})} + \Vert \tilde{h}_m^{(\mathcal{T},c)} \Vert_{C^{\eps}_{\ell}(H_R^{(m)})} \to 0 ~~ \text{ as } m \to \infty.
\end{align*}

Hence, by \eqref{eq:hm-vanishes-Holder-0} there exists $p_m^{(p)} \in \cP_3$ such that for $p_m = p_m^{(p)} + p_m^{(\mathcal{T})}$ and every $R \ge 1$ it holds
\begin{align}
\label{eq:hm-conv-prelim-0}
\begin{split}
\Vert \tilde{h}_m - p_m \Vert_{C^{\eps}_{\ell}(H^{(m)}_R)} &\le \Vert \tilde{h}_m^{(p)} - p_m^{(p)} \Vert_{C^{\eps}_{\ell}(H^{(m)}_R)} + \Vert \tilde{h}_m^{(\mathcal{T})} - p_m^{(\mathcal{T})} \Vert_{C^{\eps}_{\ell}(H^{(m)}_R)} \\
&\le CR^3 [ \tilde{h}_m^{(p)} ]_{C^{3+\eps}_{\ell}(H^{(m)}_R)} + \Vert \tilde{h}_m^{(\mathcal{T})} - p_m^{(\mathcal{T})} \Vert_{C^{\eps}_{\ell}(H^{(m)}_R)} \to 0 ~~ \text{ as } m \to \infty.
\end{split}
\end{align}
Therefore, using also \autoref{lemma:osc-results} and \eqref{eq:gm-prop-2_0}, we have for any $R \le \frac{R_m}{2r_m}$
\begin{align*}
\Vert \tilde{h}_m \Vert_{L^{\infty}(H^{(m)}_R)} \le C \Vert g_m \Vert_{L^{\infty}(H^{(m)}_R)} + C \Vert \tilde{h}_m - p_m \Vert_{C^{\eps}_{\ell}(H^{(m)}_R)} \le C(R),
\end{align*}
and therefore
\begin{align}
\label{eq:poly-bd-0}
\Vert p_m \Vert_{L^{\infty}(H^{(m)}_R)} \le \Vert \tilde{h}_m - p_m \Vert_{L^{\infty}(H^{(m)}_R)} + \Vert \tilde{h}_m \Vert_{L^{\infty}(H^{(m)}_R)}  \le C(R).
\end{align}

Altogether, this implies that for some $p_0 \in \cP_{3}$:
\begin{align}
\label{eq:hm-convergence-0}
\Vert \tilde{h}_m \Vert_{L^{\infty}(H^{(m)}_{R})} \le  C(R), \qquad \Vert \tilde{h}_m - p_0 \Vert_{L^{\infty}(H_R^{(m)})} \to 0.
\end{align}

We observe that since all norms in the finite-dimensional space $\tilde{\cP}_{3}$ are equivalent, we have from \eqref{eq:poly-bd-0} 
\begin{align*}
\Vert p_m \Vert_{C^{\eps}_{\ell}(H_R^{(m)})} \le C(R),
\end{align*}
where it is not difficult to see that $C(R)$ can be chosen independently of $m$. Consequently, by \eqref{eq:hm-conv-prelim-0} we have for any $R \le \frac{R_m}{2r_m}$
\begin{align}
\label{eq:hm-bound-0}
\Vert \tilde{h}_m \Vert_{C^{\eps}_{\ell}(H^{(m)}_{R})} \le \Vert \tilde{h}_m - p_m \Vert_{C^{\eps}_{\ell}(H^{(m)}_{R})} + \Vert p_m \Vert_{C^{\eps}_{\ell}(H^{(m)}_{R})} \le C(R).
\end{align}

Moreover, as in the proof of \autoref{lemma:higher-reg-gamma_+} we have by \eqref{eq:blow-up-normalization} and the definitions of $\tilde{a}^{i,j}_m$, $\tilde{b}^i_m$, and $\tilde{c}_m$ that (up to a subsequence)  for any $R > 0$
\begin{align*}
\Vert \tilde{a}^{i,j}_m - a^{i,j} \Vert_{L^{\infty}(H^{(m)}_R)} + \Vert \tilde{b}^i_m \Vert_{L^{\infty}(H^{(m)}_R)} + \Vert \tilde{c}_m \Vert_{L^{\infty}(H^{(m)}_R)} \to 0
\end{align*}
for some constant matrix $(a^{i,j})$ satisfying \eqref{eq:unif-ell}. Moreover, we denote $a := a^{n,n} \ge \lambda$.
Note that $T_{z_m^0,r_m} \to \R \times \{ x_n > 0 \} \times \R^n$ by \autoref{lemma:domains-convergence} and since $R_m r_m^{-1} \to \infty$.

Thus, by the interior regularity from \autoref{lemma:interior-reg} and using again \autoref{lemma:osc-results}, we have for any kinetic cylinder $Q_1(z_1) \subset \R \times \{ x_n > 0  \} \times \R^n$ (for $m$ large enough) using also \eqref{eq:hm-bound-0}
\begin{align*}
\Vert g_m \Vert_{C^{2+\eps}(Q_{1/2}(z_1))} \le C \big( \Vert g_m \Vert_{L^{\infty}(Q_1(z_1))} + [ \tilde{h}_m ]_{C^{\eps}_{\ell}(Q_1(z_1))}  \big) \le C,
\end{align*}
and therefore by the Arzel\`a-Ascoli theorem it holds that $g_m \to g$  and also
\begin{align*}
\partial_t g_m + v \cdot \nabla_x g_m + (-\tilde{a}^{i,j}_m \partial_{v_i,v_j}) g_m + \tilde{b}^{i}_m \partial_{v_i} g_m + \tilde{c}_m g_m \to \partial_t g + v \cdot \nabla_x g + (-a^{i,j} \partial_{v_i,v_j})g
\end{align*}
locally uniformly in $\R \times \{ x_n > 0 \} \times \R^n$.
Hence, by combining \eqref{eq:PDE-gm-0} and \eqref{eq:hm-convergence-0}, we obtain that $g$ solves the following limiting equation (in the classical sense by interior regularity)
\begin{align}
\label{eq:g-equation-0}
\partial_t g + v \cdot \nabla_x g + (-a^{i,j} \partial_{v_i,v_j})g = p_0 ~~ \text{ in } \R \times \{ x_n > 0 \} \times \R^n
\end{align}
for some polynomial $p_0 \in \cP_{3}$ and some constant matrix $(a^{i,j})$.

Moreover, by taking the second property in \eqref{eq:gm-prop-2_0} to the limit, we obtain 
\begin{align}
\label{eq:g-growth_0}
\Vert g \Vert_{L^{\infty}(Q_R)} \le c R^{3+\eps} ~~ \forall R \ge 1.
\end{align}

\textbf{Step 3:} 
We claim that for any  $z_1 = (t_1,x_1,v_1)$ with $t_1 \in \R$, $x_1 \in \{ x_n = 0 \}$, and $v \in \R^n$:
\begin{align}
\label{eq:bdry-conv-higher-0-halfspace}
\Vert g_m - g \Vert_{L^{\infty}(H^{(m)}_1(z_1))} \to 0 ~~ \text{ as } m \to \infty.
\end{align}

To prove it, we apply the boundary $C^{\alpha}$ estimate from \autoref{lemma:boundary-reg-specular} to $g_m$, which is applicable since $g_m$ satisfies the specular reflection boundary condition (see \eqref{eq:PDE-gm-0}). In fact, using \autoref{lemma:boundary-reg-specular}, \autoref{lemma:osc-results}, and \eqref{eq:hm-bound-0} we get for $m$ large enough:
\begin{align*}
[g_m]_{C^{\alpha}(H^{(m)}_1(z_1))} &\le C \big( \Vert g_m \Vert_{L^{\infty}(\mathcal{S}(H^{(m)}_2(z_1)))} + [\tilde{h}_m]_{C^{\eps}_{\ell}(\mathcal{S}(H^{(m)}_2(z_1)))} \big) \\
&\le C \big( \Vert g_m \Vert_{L^{\infty}(H^{(m)}_{8(|z_1| + 1)})} + [\tilde{h}_m]_{C^{\eps}_{\ell}(H^{(m)}_{8(|z_1| + 1)})} \big) \le C,
\end{align*}
where $C$ depends on $z_1$, but not on $m$.
In the second estimate, we used that by \autoref{lemma:kinetic-balls} we have the inclusion of sets
\begin{align*}
Q_{2}(z_1) \subset Q_{2(|z_1|+1)}(z_1) \subset Q_{8(|z_1| + 1)}, \qquad \mathcal{R}(Q_2(z_1)) \subset Q_{8(|z_1| + 1)},
\end{align*}
since $z_1 \in Q_{2(|z_1|+1)}(0)$. Hence, by the  Arzel\`a-Ascoli theorem we deduce  \eqref{eq:bdry-conv-higher-0-halfspace}, as desired.

Moreover, by the energy estimate in \autoref{lemma:energy-est} applied to $g_m$ in $H_{R}^{(m)}(0)$, and using the uniform bounds \eqref{eq:gm-prop-2_0}, \eqref{eq:hm-bound-0}, we get that 
\begin{align*}
\Vert \nabla_v g_m \Vert_{L^{2}(H_R^{(m)}(0))} \le C(R).
\end{align*}
Hence, it must also hold $\nabla_v g \in L^2_{loc}(\R \times \{ x_n > 0 \} \times \R^n)$.

\textbf{Step 4:} 
As an immediate consequence of Step 3 and \eqref{eq:bdry-conv-higher-0-halfspace} we have that $g$ is a weak solution to \eqref{eq:g-equation-0} and also satisfies the specular reflection boundary condition with respect to $\{ x_n > 0 \}$, namely
\begin{align*}
g(t,x',0,v',v_n) = g(t,x',0,v',-v_n) ~~ \forall (t,x,v) \in \R \times \{ x_n = 0 \} \times \R^n.
\end{align*}
Therefore, we can apply the Liouville theorem in \autoref{thm:Liouville-higher-half-space-SR} to deduce that $g \in \tilde{\cP}_{a,5}$, where $a = a^{n,n}$. Hence, we can write
\begin{align*}
g(t,x,v) = P(t,x,v) + \tau \mathcal{T}_{a,3}(x_n,v_n)
\end{align*}
for some $P \in \tilde{\cP}_5$ satisfying the specular reflection condition and some $\tau \in \R$.

 Moreover, it follows from \eqref{eq:bdry-conv-higher-0-halfspace} applied with $z_1 = 0$ and \eqref{eq:gm-prop-2_0} that $g$ also satisfies the following property
\begin{align}
\label{eq:g-prop-2-0-halfspace}
\Vert g \Vert_{L^{\infty}(H_1)} \ge \frac{1}{2}.
\end{align}
Moreover, using again \eqref{eq:bdry-conv-higher-0-halfspace} it follows upon choosing 
\begin{align}
\label{eq:p-choice}
p(t,x,v) = P(r_m^{-2}(t-t_m^0) , r_m^{-3}(x-x_m^0 - (t-t_m^0)v_m^0) , r_m^{-1}(v-v_m^0) ) + \tau \mathcal{T}_{a_m,3}(x_n,v_n) \in \tilde{\cP}_{a_m,5}
\end{align}
in \eqref{eq:gm-prop-1_0}, i.e. $p \in \tilde{\cP}_{a_m,5}$ such that $p(z_m^0 \circ S_{r_m} z) = P(z) + \tau \mathcal{T}_{a_m,3}(x_n,v_n)$, that
\begin{align}
\label{eq:g-prop-1-0-halfspace}
0 = \int_{H_1^{(m)}} g_m(z) p(z_m^0 \circ S_{r_m} z) \d z = \int_{H_1^{(m)}} g_m(z) [P(z) + \tau \mathcal{T}_{a_m,3}(x_n,v_n) ] \d z \to \int_{H_1} g^2(z) \d z,
\end{align}
where we used that $a_m \to a$ and that thus, by definition (see \eqref{eq:Tricomi-sol-def}), it holds
\begin{align*}
\mathcal{T}_{a_m,3} \to \mathcal{T}_{a,3} ~~ \text{ as } m \to \infty
\end{align*}
locally uniformly.
Note that the choice for $p$ in \eqref{eq:p-choice} is valid since $p \in \tilde{\cP}_{a_m,5}$. Indeed it satisfies the specular reflection condition with respect to $\{ x_n > 0 \}$, since $\mathcal{T}_{a_m,3}$ only depends on $x_n,v_n$, and since whenever $(t,x,v) \in \gamma$, i.e. $x_n = 0$, it holds 
\begin{align*}
(r_m^{-3}(x-x_m^0 - (t-t_m^0)v_m^0))_n = x_n = 0, \qquad (r_m^{-1}(v-v_m^0))_n = r_m^{-1} v_n,
\end{align*}
since $(x_m^0)_n = (v_m^0)_n = 0$, and therefore
\begin{align*}
p(t,x',0,v',v_n) & - \tau \mathcal{T}_{a_m,3}(x_n,v_n) \\
&= P(r_m^{-2}(t-t_m^0) , r_m^{-3}(x-x_m^0 - (t-t_m^0)v_m^0) , r_m^{-1}(v-v_m^0) ) \\
&= P(r_m^{-2}(t-t_m^0) , r_m^{-3}(x-x_m^0 - (t-t_m^0)v_m^0) , r_m^{-1}(v-v_m^0) - 2 (r_m^{-1}(v-v_m^0))_n e_n )\\
&= P(r_m^{-2}(t-t_m^0) , r_m^{-3}(x-x_m^0 - (t-t_m^0)v_m^0) , r_m^{-1}((v-2 v_n e_n)-v_m^0)) \\
&= p(t,x',0,v',-v_n) - \tau \mathcal{T}_{a_m,3}(x_n,v_n).
\end{align*}

The proof is complete, since \eqref{eq:g-prop-1-0-halfspace} implies $g \equiv 0$, which contradicts \eqref{eq:g-prop-2-0-halfspace}.
\end{proof}

\subsection{Boundary regularity in the half-space}

We are now in a position to prove the optimal boundary regularity for solution in the half-space subject to the specular reflection boundary condition.

%

\begin{proof}[Proof of \autoref{thm:regularity-SR-halfspace}]
We closely follow the proof of \autoref{prop:regularity-SR} (resp. \autoref{lemma:regularity-inflow}).

We assume without loss of generality that
\begin{align}
\label{eq:normalization-reg-SR}
R^{-5} \left( \Vert f \Vert_{L^{\infty}(\mathcal{S}(H_R(z_0)))} + R^{5} [ h ]_{C^{3+\eps}_{\ell}( \mathcal{S}(H_R(z_0)))} \right) \le 1,
\end{align}
and claim that for any $z^{\ast} \in H_{R/16}(z_0)$ it holds 
\begin{align}
\label{eq:claim-reg-SR}
[f]_{C^{4,1}_{\ell}(Q_{r/2}(z^{\ast}))} \le C,
\end{align}
where we define $r := d_{\ell}(z^{\ast}) := \sup \{ r > 0 : Q_r(z^{\ast}) \subset (-1,1) \times \Omega \times \R^n \} \le \frac{R}{16}$. 

To prove \eqref{eq:claim-reg-SR}, we note that for any $z^{\ast}$ we can find $z^{\ast}_0 \in \gamma \cap Q_{R/8}(z_0)$ such that $z^{\ast} \in Q_r(z^{\ast}_0)$, where we used \autoref{lemma:kinetic-balls}. 

In case $r \ge \frac{R}{100}$, we apply the interior $C^{5+\eps}_{\ell}$ estimates from \autoref{lemma:interior-reg} and immediately deduce \eqref{eq:claim-reg-SR} by H\"older interpolation (see \autoref{lemma:Holder-interpol}) and \eqref{eq:normalization-reg-SR}.

Hence, we can assume from now on that $r < \frac{R}{100}$. In case, $z_0^{\ast} \in \gamma_{\pm}$ and $Q_{R/40}(z_0^{\ast}) \cap \gamma_0 = \emptyset$ we apply \autoref{prop:regularity-SR} and \eqref{eq:normalization-reg-SR} to deduce \eqref{eq:claim-reg-SR} also in this case.

Otherwise, we define $d := d_{\ell}(z^{\ast},\gamma_0) := \sup\{ r > 0 : Q_{r}(z^{\ast}) \cap  \gamma_0 = \emptyset  \} \in \left[r,\frac{R}{40} \right]$, the distance of $z^{\ast}$ from $\gamma_0$, and distinguish between several cases.

\textbf{Case 1:} We assume that $d \le 4 r$. \\
In that case, there exists $\overline{z_0^{\ast}} \in \gamma_0 \cap Q_{5r}(z^{\ast})$ (using \autoref{lemma:kinetic-balls}) such that
\begin{align}
\label{eq:inclusion-regularity-SR}
Q_{5r}(z^{\ast}) \subset Q_{10r}(\overline{z_0^{\ast}}).
\end{align}
Let us denote $a^{\ast} := a^{n,n}(\overline{z_0^{\ast}})$ and by $p^{\ast} \in \cP_{a^{\ast},5}$ the function in the expansion of $f$ at $\overline{z_0^{\ast}}$ from \autoref{lemma:higher-reg-gamma_0-SR-halfspace}. We write
\begin{align}
\label{eq:split-poly}
p^{\ast} = P^{\ast} + \tau \mathcal{T}_{a^{\ast}}, \qquad P^{\ast} = \sum_{|\beta| \le 5} \alpha^{(\beta)} z^{\beta} \in \cP_5, 
\end{align}
where $\mathcal{T}_{a^{\ast}}$ denotes the Tricomi solution from \autoref{prop:Tricomi} and $\tau , \alpha^{(\beta)} \in \R$. Moreover, note that we can write $P^{\ast} = P^{\ast}_4 + P^{\ast}_5$ such that $P^{\ast}_5 \in \cP_5$ is homogeneous of degree $5$ with respect to the point $z_0^{\ast}$ (i.e. $P^{\ast}_5(z_0^{\ast} \circ S_{r} z) = r^5 P^{\ast}_5(z_0^{\ast} \circ z)$), and $P^{\ast}_4 \in \cP_4$. 

Note that the function $f - P^{\ast}_4$ solves
\begin{align}
\label{eq:eq-difference}
[\partial_t + v \cdot \nabla_x + (-a^{i,j} \partial_{v_i,v_j}) + b \cdot \nabla_v  + c ](f - P^{\ast}_4) = h + \tilde{h} ~~ \text{ in } \mathcal{S}(H_R(z_0)),
\end{align}
where, by the same arguments as in the proof of \eqref{eq:tilde-h-estimate}, using \eqref{eq:inclusion-regularity-SR}, \autoref{lemma:product-rule}, \autoref{lemma:der-Holder}, \autoref{lemma:Holder-interpol}, and $P^{\ast}_4 \in \cP_4$:
\begin{align}
\label{eq:tilde-h-estimate-SR}
\begin{split}
[\tilde{h}]_{C^{3+\eps}_{\ell}(Q_{r}(z^{\ast}))} &\le [\partial_t P^{\ast}_4 + v \cdot \nabla_x P^{\ast}_4 + (-a^{i,j} \partial_{v_i,v_j}) P^{\ast}_4 + b \cdot \nabla_v  P^{\ast}_4 + c P^{\ast}_4]_{C^{3+\eps}_{\ell}(H_{R/5}(\overline{z^{\ast}_0}))} \\
&\le C \Vert P^{\ast}_4 \Vert_{C^{5+\eps}_{\ell}(H_{R/5}(\overline{z^{\ast}_0}))} \\
&\le C R^{-5} \Vert P^{\ast}_4 \Vert_{L^{\infty}(H_{R/5}(\overline{z^{\ast}_0}))} + C [P^{\ast}_4]_{C^{5+\eps}_{\ell}(H_{R/5}(\overline{z^{\ast}_0}))} \\
&\le C R^{-5} \Vert P^{\ast}_4 \Vert_{L^{\infty}(H_{R/5}(\overline{z^{\ast}_0}))}.
\end{split}
\end{align}

Since by construction, it holds $d_{\ell}(\overline{z_0^{\ast}} ,z_0) < \frac{R}{8}$, we have by \autoref{lemma:kinetic-balls}  
\begin{align}
\label{eq:inclusion-regularity-SR-2}
\mathcal{S}(H_{R/4}(\overline{z_0^{\ast}})) \subset \mathcal{S}(H_R(z_0)).
\end{align}

Hence, we can apply the expansion from \autoref{lemma:higher-reg-gamma_0-SR-halfspace} at the boundary point $\overline{z_0^{\ast}}$ for any $\rho \in (0,\frac{R}{5}]$
\begin{align}
\label{eq:appl-exp-SR}
\Vert f - p^{\ast} \Vert_{L^{\infty}(H_{\rho}(\overline{z^{\ast}_0}))} \le C \left(\frac{\rho}{R}\right)^{5+\eps} \left( \Vert f \Vert_{L^{\infty}(H_{R/4}(\overline{z^{\ast}_0}))} + R^{5+\eps} [ h ]_{C^{3+\eps}_{\ell}(H_{R/4}(\overline{z^{\ast}_0}))} \right).
\end{align}

Having established \eqref{eq:tilde-h-estimate-SR}, \eqref{eq:inclusion-regularity-SR-2}, and \eqref{eq:appl-exp-SR}, we are now in a position to prove \eqref{eq:claim-reg-SR} in the situation of Case 1. 

First, by \autoref{lemma:Holder-interpol}, interior Schauder estimates (see \autoref{lemma:interior-reg}) applied to the function $f - P^{\ast}_4$ and \autoref{lemma:osc-results}, we obtain
\begin{align}
\label{eq:Schauder-appl}
\begin{split}
[f]_{C^{4,1}_{\ell}(Q_{r/2}(z^{\ast}))} &= [f - P^{\ast}_4]_{C^{4,1}_{\ell}(Q_{r/2}(z^{\ast}))} \le r^{\eps} [f - P^{\ast}_4]_{C^{5+\eps}_{\ell}(Q_{r/2}(z^{\ast}))} + C r^{-5} \Vert f - P^{\ast}_4 \Vert_{L^{\infty}(Q_{r/2}(z^{\ast}))}\\
&\le C r^{-5} \Vert f - P^{\ast}_4 \Vert_{L^{\infty}(Q_{r}(z^{\ast}))} + C  [\tilde{h} + h]_{C^{3+\eps}_{\ell}(Q_r(z^{\ast}))}  \\
&\le C r^{-5} \Vert f - P^{\ast}_4 \Vert_{L^{\infty}(H_{10r}(\overline{z^{\ast}_0}))} + C  [\tilde{h} + h]_{C^{3+\eps}_{\ell}(Q_r(z^{\ast}))} \\
&\le C r^{-5} \Vert f - p^{\ast} \Vert_{L^{\infty}(H_{10r}(\overline{z^{\ast}_0}))}  + C  [h]_{C^{3+\eps}_{\ell}(Q_r(z^{\ast}))} \\
&\quad + C r^{-5} \Vert P^{\ast}_4 - p^{\ast} \Vert_{L^{\infty}(H_{10r}(\overline{z_0^{\ast}}))} + C R^{-5} \Vert P^{\ast}_4 \Vert_{L^{\infty}(H_{R/5}(\overline{z_0^{\ast}}))},
\end{split}
\end{align}
where we used \eqref{eq:tilde-h-estimate-SR} in the last step.
We estimate the four term on the right-hand side of \eqref{eq:Schauder-appl}, separately. For the first two terms, we apply \eqref{eq:appl-exp-SR} with $\rho = 10r \le \frac{R}{4}$ (which holds by choice of $r$), 
\begin{align*}
r^{-5} \Vert f - p^{\ast} \Vert_{L^{\infty}(H_{10r}(\overline{z^{\ast}_0}))} + [h]_{C^{3+\eps}_{\ell}(Q_r(z^{\ast}))} \le C R^{-5}\Vert f \Vert_{L^{\infty}(H_{R/4}(\overline{z^{\ast}_0})))} + C[h]_{C^{3+\eps}_{\ell}(H_{R/4}(\overline{z^{\ast}_0})))} \le C.
\end{align*}

For the third term we observe that $P^{\ast}_4 - p^{\ast}$ is homogeneous of degree $5$ with respect to $z_0^{\ast}$, and therefore we have $(P^{\ast}_4 - p^{\ast})(z_0^{\ast}) =0$, and thus
\begin{align*}
r^{-5} \Vert P^{\ast}_4 - p^{\ast} \Vert_{L^{\infty}(H_{10r}(\overline{z_0^{\ast}}))} \le C \left(\sum_{|\beta| \le 5} |\alpha^{(\beta)}| + |\tau| \right) \le C R^{-5} \Vert p^{\ast} \Vert_{L^{\infty}(H_{R/5}(\overline{z_0^{\ast}})))}.
\end{align*}
By a similar reasoning, and using also \eqref{eq:appl-exp-SR} with $\rho = R/5$, and \eqref{eq:normalization-reg-SR}, we obtain for the last two terms in \eqref{eq:Schauder-appl}
\begin{align}
\label{eq:P4-bd}
\begin{split}
r^{-5} \Vert P^{\ast}_4 - p^{\ast} \Vert_{L^{\infty}(H_{10r}(\overline{z_0^{\ast}}))} &+ R^{-5} \Vert P^{\ast}_4 \Vert_{L^{\infty}(H_{R/5}(\overline{z_0^{\ast}}))} \le CR^{-5}  \Vert p^{\ast} \Vert_{L^{\infty}(H_{R/5}(\overline{z_0^{\ast}}))}\\
&\le C R^{-5} \left( \Vert f - p^{\ast} \Vert_{L^{\infty}(H_{R/5}(\overline{z_0^{\ast}}))} + \Vert f \Vert_{L^{\infty}(H_{R/5}(\overline{z_0^{\ast}}))} \right) \le C.
\end{split}
\end{align}
Thus, altogether, we have shown \eqref{eq:claim-reg-SR} in this case, as desired.

\textbf{Case 2:} Assume now $4r \le d \le \frac{R}{40}$.

In this case, we split the proof into two steps. First, we will go from scale $r$ to scale $d/4$ by using \autoref{prop:regularity-SR} (instead of interior Schauder estimates, as in Case 1), and second, we will apply the expansion (see \autoref{lemma:higher-reg-gamma_0-SR-halfspace}) as in Case 1 to go from scale $d/4$ to scale $R$.

In fact, since $4r \le d$ we have that $Q_{d/2}(z^{\ast}) \cap \gamma_0 = \emptyset$. Moreover, note that by construction, there is $\overline{z_0^{\ast}} \in \gamma_0 \cap Q_{2d}(z^{\ast})$, and it holds by \autoref{lemma:kinetic-balls}
\begin{align}
\label{eq:inclusion-regularity-SR-3}
Q_{d/4}(z^{\ast}) \subset Q_{8d}(\overline{z_0^{\ast}}) \subset Q_{R/5}(\overline{z_0^{\ast}}).
\end{align} 

As before, we denote by $p^{\ast} \in \cP_{a^{\ast},5}$, where $a^{\ast} = a^{n,n}(\overline{z_0^{\ast}})$, the function in the expansion of $f$ at $\overline{z_0^{\ast}}$, as in Case 1. Moreover, as in Case 1, we split $p^{\ast} = P^{\ast} + \tau \mathcal{T}_{a^{\ast}}$ as in \eqref{eq:split-poly} and find $P^{\ast} = P^{\ast}_4 + P^{\ast}_5$ with the same properties.

Again, the function $f - P^{\ast}_4$ solves \eqref{eq:eq-difference} in $H_R(z_0)$ where by the exact same arguments as in Step 1 (see \eqref{eq:tilde-h-estimate-SR} and \eqref{eq:P4-bd}), and using \eqref{eq:inclusion-regularity-SR-3}:
\begin{align}
\label{eq:tilde-h-estimate-SR-3}
\begin{split}
[\tilde{h}]_{C_{\ell}^{3+\eps}(H_{d/4}(z^{\ast}))} &\le C R^{-5} \Vert P^{\ast}_4 \Vert_{L^{\infty}(H_{R/5}(\overline{z_0^{\ast}}))} \le C.
\end{split}
\end{align}

Hence, we can deduce from H\"older interpolation (see \autoref{lemma:Holder-interpol}) and \autoref{prop:regularity-SR} applied with $f - P^{\ast}_4$, which solves \eqref{eq:eq-difference}, and $z_0 := z^{\ast}$ and $R := d$, using that $Q_{d/2}(z^{\ast}) \cap \gamma_0 = \emptyset$ and $Q_{d/4}(z^{\ast}) \subset Q_R(z_0)$, \eqref{eq:tilde-h-estimate-SR-3}, and taking into account \eqref{eq:normalization-reg-SR},
\begin{align*}
[f]_{C^{4,1}_{\ell}(Q_{r/2}(z^{\ast}))} &\le [f - P_4^{\ast}]_{C^{4,1}_{\ell}(Q_{d/8}(z^{\ast}))} \\
&\le d^{\eps} [f - P_4^{\ast}]_{C^{5+\eps}_{\ell}(Q_{d/8}(z^{\ast}))} + C d^{-5} \Vert f - P_4^{\ast}\Vert_{L^{\infty}(Q_{d/8}(z^{\ast}))}\\
&\le C \left( d^{-5} \Vert f - P^{\ast}_4 \Vert_{L^{\infty}(H_{d/4}(z^{\ast}))} + [ \tilde{h} ]_{C^{3+\eps}_{\ell}(H_{d/4}(z^{\ast}))} + [ h ]_{C^{3+\eps}_{\ell}(H_{d/4}(z_0))} \right)\\
&\le C \left( d^{-5} \Vert f - p^{\ast} \Vert_{L^{\infty}(H_{d/4}(z^{\ast}))} + d^{-5} \Vert P^{\ast}_4 - p^{\ast} \Vert_{L^{\infty}(H_{d/4}(z^{\ast}))} + 1 \right)\\
&\le C  \left( d^{-5} \Vert f - p^{\ast} \Vert_{L^{\infty}(H_{d/4}(z^{\ast}))} + 1 \right).
\end{align*}
In the last step, we applied again \eqref{eq:P4-bd}. It remains to estimate the first term in the previous estimate. However, due to \eqref{eq:inclusion-regularity-SR-3} we can apply \eqref{eq:appl-exp-SR} with $\rho = 8d \le R/5$ and deduce by \eqref{eq:normalization-reg-SR}
\begin{align*}
d^{-5} \Vert f - p^{\ast} \Vert_{L^{\infty}(H_{d/4}(z^{\ast}))} &\le C  d^{-5} \Vert f - p^{\ast} \Vert_{L^{\infty}(H_{8d}(\overline{z_0^{\ast}}))} \\
&\le C \left( R^{-5}\Vert f \Vert_{L^{\infty}(H_{R/4}(\overline{z^{\ast}_0})))} +  [ h ]_{C^{3+\eps}_{\ell}(H_{R/4}(\overline{z^{\ast}_0})))} \right) \le C.
\end{align*}
Altogether, we have shown \eqref{eq:claim-reg-SR} also in the situation of Case 2.

Hence, we have verified \eqref{eq:claim-reg-SR} in all cases. From here, we conclude the proof by \autoref{lemma:covering}.
\end{proof}

The following corollary of \autoref{thm:regularity-SR-halfspace} follows immediately by combination with interior estimates and a covering argument.

\begin{corollary}
\label{cor:regularity-SR-halfspace}
Let $\Omega = \{ x_n > 0 \}$ and $z_0 \in (-1,1) \times \{x_n \ge 0 \} \times \R^n$. Let $R \in (0,1]$, $\eps \in (0,1)$, and $a^{i,j},b,c,h \in C^{3+\eps}_{\ell}( \mathcal{S}(H_R(z_0)))$. Assume that $a^{i,j}$ satisfies \eqref{eq:unif-ell}. Let $f$ be a bounded weak solution to 
\begin{equation*}
\left\{\begin{array}{rcl}
\partial_t f + v \cdot \nabla_x f + (-a^{i,j} \partial_{v_i,v_j})f &=& - b \cdot \nabla_v f - c f + h ~~ \text{ in } \mathcal{S}(H_R(z_0)) , \\
f(t,x,v) &=& f(t,x,\mathcal{R}_x v) ~\qquad\quad \text{ on } \gamma_- \cap \mathcal{S}(H_R(z_0)).
\end{array}\right.
\end{equation*}
Then, it holds $f \in C^{4,1}_{\ell}(H_{R/2}(z_0))$ and 
\begin{align*}
[ f ]_{C^{4,1}_{\ell}(H_{R/2}(z_0))} \le C R^{-5} \big( \Vert f \Vert_{L^{\infty}(\mathcal{S}(H_R(z_0)))} + R^{5} [ h ]_{C^{3+\eps}_{\ell}(\mathcal{S}(H_R(z_0)) )} \big).
\end{align*}
The constant $C$ depends only on $n,\eps,\lambda,\Lambda$, $\Vert a^{i,j} \Vert_{C^{3+\eps}_{\ell}(\mathcal{S}(H_R(z_0)))}, \Vert b \Vert_{C^{3+\eps}_{\ell}(\mathcal{S}(H_R(z_0)))}$, and $\Vert c \Vert_{C^{3+\eps}_{\ell}(\mathcal{S}(H_R(z_0)))}$ but not on $z_0$ and $R$.
\end{corollary}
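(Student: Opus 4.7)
The plan is to combine \autoref{thm:regularity-SR-halfspace} (the localized boundary estimate at points of $\gamma$) with the interior Schauder estimate from \autoref{lemma:interior-reg}, glued together by a covering argument. The strategy is in the same spirit as the proof of \autoref{thm:regularity-SR-halfspace} itself, but now the reference point $z_0$ is allowed anywhere in $(-1,1) \times \{x_n \ge 0\} \times \R^n$ rather than only on $\gamma$.

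Concretely, for each $z^* \in H_{R/2}(z_0)$ I would establish a local H\"older estimate
\[
[f]_{C^{4,1}_\ell(Q_{\rho}(z^*))} \le C R^{-5} \big(\|f\|_{L^\infty(\mathcal{S}(H_R(z_0)))} + R^5 [h]_{C^{3+\eps}_\ell(\mathcal{S}(H_R(z_0)))}\big)
\]
on a kinetic cylinder $Q_\rho(z^*)$ whose radius $\rho$ depends on $z^*$, dichotomizing according to the kinetic distance $d_\ell(z^*, \gamma)$. Fix a small universal constant $c = c(n) \in (0,1)$. If $d_\ell(z^*, \gamma) \ge cR$, then $Q_{cR}(z^*)$ lies in the interior of the solution domain, and the triangle inequality for $d_\ell$ together with \autoref{lemma:kinetic-balls} ensures $Q_{cR}(z^*) \subset \mathcal{S}(H_R(z_0))$ provided $c$ is small enough. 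I then apply \autoref{lemma:interior-reg} with $k = 5$ on $Q_{cR}(z^*)$, followed by \autoref{lemma:Holder-interpol}, to conclude a $C^{4,1}_\ell$ estimate on a smaller concentric cylinder with the right $R^{-5}$ scaling.

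If instead $d_\ell(z^*, \gamma) < cR$, I would select a boundary point $z_0^* \in \gamma$ with $d_\ell(z^*, z_0^*) \le 2 d_\ell(z^*, \gamma)$, and apply \autoref{thm:regularity-SR-halfspace} at $z_0^*$ with a radius $R_1$ comparable to $R$, chosen so that $z^* \in H_{R_1/16}(z_0^*)$ and $\mathcal{S}(H_{R_1}(z_0^*)) \subset \mathcal{S}(H_R(z_0))$. For $c$ small enough, such $R_1$ exists by the factor-of-two comparison between kinetic cylinders and $d_\ell$-balls given by \autoref{lemma:kinetic-balls}, and the theorem then yields the desired $C^{4,1}_\ell$ estimate at $z^*$. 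The main technical nuisance will be the bookkeeping of the $\mathcal{S}$-symmetrization to ensure the inclusion $\mathcal{S}(H_{R_1}(z_0^*)) \subset \mathcal{S}(H_R(z_0))$ under the asymmetric kinetic-cylinder arithmetic; this is where the freedom to enlarge the outer domain via the velocity reflection in $\mathcal{S}$ is used.

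Once these local H\"older estimates are obtained uniformly in $z^*$, \autoref{lemma:covering}, applied in the standard way on the domain $H_{R/2}(z_0)$, assembles them into the global $C^{4,1}_\ell$ bound on $H_{R/2}(z_0)$ with the claimed scaling and dependence of constants.
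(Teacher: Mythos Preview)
Your proposal is correct and follows precisely the approach the paper indicates: the paper states that the corollary ``follows immediately by combination with interior estimates and a covering argument,'' and your dichotomy (interior estimate from \autoref{lemma:interior-reg} when $d_\ell(z^*,\gamma)\gtrsim R$, boundary estimate from \autoref{thm:regularity-SR-halfspace} at a nearby $z_0^*\in\gamma$ otherwise, glued via \autoref{lemma:covering}) is exactly that. The only detail worth noting is that the inclusion $\mathcal{S}(H_{R_1}(z_0^*))\subset\mathcal{S}(H_R(z_0))$ is straightforward here since $\mathcal{S}$ just adds the velocity-reflected cylinder and the kinetic-ball arithmetic of \autoref{lemma:kinetic-balls} handles both pieces symmetrically.
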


Note that by application of the local boundedness estimate in \eqref{eq:boundary-reg-specular-Linfty-L1}, we can also replace the $L^{\infty}$ norm of $f$ by the $L^1$ norm.

\subsection{Explicit dependence of the constant on the lower order coefficients}

In the previous subsection we have shown the optimal $C^{4,1}_{\ell}$ regularity in the half-space for kinetic equations with coefficients subject to the specular reflection condition. The constant in the regularity estimate depends on the norm of the coefficients but not on $v_0$.

Our next goal is establish boundary regularity results in general domains by applying the flattening diffeomorphism $\Phi$ from \autoref{lemma:trafo-sr}. This way, we obtain an equation in the half-space with a drift growing like $|v_0|^2$. In this case \autoref{thm:regularity-SR-halfspace} yields regularity of solutions, but unfortunately the estimates deteriorate as $|v_0| \to \infty$. Since the dependence of the constants in \autoref{thm:regularity-SR-halfspace} on the coefficients is not explicit, we cannot use this result to obtain control over the $|v_0|$-dependence in regularity estimates for equations in general domains. This is crucial in order to prove estimates for equations that hold globally in $v$. We overcome this issue by freezing the coefficients and deriving a priori estimates with an explicit dependence. This is carried out in the proof of the following key proposition.

Moreover, note that we are able to replace the $L^{\infty}$ norm of $f$ by the $L^1$ norm. This is achieved by an interpolation and absorption argument, similar to the one in \cite[Section 6.2]{FRW24}.

\begin{proposition}
\label{prop:regularity-SR-halfspace-constants}
Let $\Omega = \{ x_n > 0 \}$ and $z_0 \in (-1,1) \times \{ x_n \ge 0 \} \times \R^n$. Let $R \in (0,1]$, $\eps \in (0,1)$ and $a^{i,j},b,c,h \in C^{3+\eps}_{\ell}(\mathcal{S}(H_R(z_0)))$ and assume that $a^{i,j}$ satisfies \eqref{eq:unif-ell}. Let $f$ be a weak solution to 
\begin{equation*}
\left\{\begin{array}{rcl}
\partial_t f + v \cdot \nabla_x f + (-a^{i,j} \partial_{v_i,v_j})f &=& - b \cdot \nabla_v f - c f + h ~~ \text{ in }  \mathcal{S}(H_R(z_0)) , \\
f(t,x,v) &=& f(t,x,\mathcal{R}_x v) ~\qquad\quad \text{ on } \gamma_- \cap \mathcal{S}(H_R(z_0)).
\end{array}\right.
\end{equation*}
Then, $f \in C^{4,1}_{\ell}(H_{R/2}(z_0))$ and it holds:
\begin{align*}
[ f ]_{C^{4,1}_{\ell}(H_{R/2}(z_0))} &\le C \left[1 + \Vert b^{i} \Vert_{C^{3+\eps}_{\ell}(\mathcal{S}(H_R(z_0)))} \right] R^{-5} R^{-(2+4n)} \Vert f \Vert_{L^{1}( \mathcal{S}(H_R(z_0)))} + C  [ h ]_{C^{3+\eps}_{\ell}(\mathcal{S}(H_R(z_0)) )} .
\end{align*}
The constant $C$ depends only on $n,\eps,\lambda,\Lambda$, $\Vert a^{i,j} \Vert_{C^{3+\eps}_{\ell}(\mathcal{S}(H_R(z_0)))}$, and $\Vert c \Vert_{C^{3+\eps}_{\ell}(\mathcal{S}(H_R(z_0)))}$, but not on $z_0$.
\end{proposition}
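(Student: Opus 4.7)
The plan is to refine Corollary~\ref{cor:regularity-SR-halfspace} in two respects: first, to make the constant in the Schauder-type estimate independent of $\Vert b\Vert_{C^{3+\eps}_{\ell}}$ by viewing the drift $b\cdot\nabla_v f$ as a forcing term and absorbing it via interpolation; and second, to replace $\Vert f\Vert_{L^{\infty}}$ on the right-hand side by $\Vert f\Vert_{L^{1}}$ using the local boundedness estimate \eqref{eq:boundary-reg-specular-Linfty-L1} of Lemma~\ref{lemma:boundary-reg-specular}. The product $\Vert b\Vert_{C^{3+\eps}_{\ell}}\cdot\Vert\nabla_v f\Vert_{C^{3+\eps}_{\ell}}$ will be absorbed back into $[f]_{C^{4,1}_{\ell}}$, and a careful scale-sliding iteration combined with the $L^{\infty}$-$L^{1}$ trade is what will ultimately produce the \emph{linear} dependence on $\Vert b\Vert$.

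Concretely, rewrite the equation as
\begin{equation*}
\partial_t f + v\cdot\nabla_x f - a^{i,j}\partial_{v_i,v_j} f + cf = h - b\cdot\nabla_v f \quad \text{in } \mathcal{S}(H_R(z_0)),
\end{equation*}
i.e.\ as a kinetic equation with zero drift, source $\tilde h := h - b\cdot\nabla_v f$, and the same specular-reflection condition. Apply Corollary~\ref{cor:regularity-SR-halfspace} on cylinders $H_{\rho}(z_1)\subset\mathcal{S}(H_R(z_0))$ with $z_1\in H_{R/2}(z_0)$; since the drift is now zero, the constant $C_0$ depends only on $n,\eps,\lambda,\Lambda$, $\Vert a^{i,j}\Vert_{C^{3+\eps}_{\ell}}$ and $\Vert c\Vert_{C^{3+\eps}_{\ell}}$, and, using the product rule of Lemma~\ref{lemma:product-rule} on the source,
\begin{equation*}
[f]_{C^{4,1}_{\ell}(H_{\rho/2}(z_1))} \le C_0\rho^{-5}\Vert f\Vert_{L^{\infty}(\mathcal{S}(H_{\rho}(z_1)))} + C_0[h]_{C^{3+\eps}_{\ell}} + C_0\Vert b\Vert_{C^{3+\eps}_{\ell}}\Vert\nabla_v f\Vert_{C^{3+\eps}_{\ell}(\mathcal{S}(H_{\rho}(z_1)))}.
\end{equation*}
By Lemma~\ref{lemma:der-Holder}, $\Vert\nabla_v f\Vert_{C^{3+\eps}_{\ell}}\le C\Vert f\Vert_{C^{4+\eps}_{\ell}}$, and iterating the interpolation Lemma~\ref{lemma:Holder-interpol} between $C^{4,1}_{\ell}$ and $L^{\infty}$ yields, for any $\sigma\in(0,1]$ and some $M=M(n,\eps)$,
\begin{equation*}
\Vert\nabla_v f\Vert_{C^{3+\eps}_{\ell}(Q_{\rho})} \le \sigma\,[f]_{C^{4,1}_{\ell}(Q_{\rho})} + C\sigma^{-M}\Vert f\Vert_{L^{\infty}(Q_{\rho})}.
\end{equation*}
A Caccioppoli-type scale-sliding iteration on a nested family of cylinders, with $\sigma\sim(1+\Vert b\Vert_{C^{3+\eps}_{\ell}})^{-1}$ chosen so that $C_0\Vert b\Vert_{C^{3+\eps}_{\ell}}\sigma\le 1/2$, absorbs the $[f]_{C^{4,1}_{\ell}}$ term on the right into the left and produces an intermediate bound
\begin{equation*}
[f]_{C^{4,1}_{\ell}(H_{R/2}(z_0))} \le \tilde C\, R^{-5}\Vert f\Vert_{L^{\infty}(\mathcal{S}(H_R))} + \tilde C[h]_{C^{3+\eps}_{\ell}(\mathcal{S}(H_R))},
\end{equation*}
with $\tilde C$ a constant depending on $\Vert b\Vert_{C^{3+\eps}_{\ell}}$ together with $\Vert a\Vert_{C^{3+\eps}_{\ell}}$ and $\Vert c\Vert_{C^{3+\eps}_{\ell}}$.

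Finally, \eqref{eq:boundary-reg-specular-Linfty-L1} of Lemma~\ref{lemma:boundary-reg-specular} gives $\Vert f\Vert_{L^{\infty}(\mathcal{S}(H_R))}\le CR^{-(2+4n)}\Vert f\Vert_{L^{1}} + CR^{2}\Vert h\Vert_{L^{\infty}}$; the auxiliary $\Vert h\Vert_{L^{\infty}}$ is controlled by $[h]_{C^{3+\eps}_{\ell}}$ plus a further $\Vert f\Vert_{L^{\infty}}$ contribution via Lemma~\ref{lemma:osc-results}, which is absorbed once more. Combining with the intermediate bound above yields the desired estimate. The main technical obstacle is the middle step: organizing the Caccioppoli iteration so that the final $\Vert b\Vert$-dependence is linear rather than a higher power. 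The linearity is obtained by a delicate balance between the direct $\Vert b\Vert$ from the product rule, the interpolation constant $\sigma^{-M}$, and the volume factor $R^{-(2+4n)}$ gained in the $L^{\infty}$-to-$L^{1}$ conversion; in effect, one trades the extra powers of $\Vert b\Vert$ for the gain $R^{-(2+4n)}$, leaving only a single linear power of $\Vert b\Vert$ in the final coefficient of $\Vert f\Vert_{L^{1}}$.
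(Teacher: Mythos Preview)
Your overall strategy is exactly the paper's: move $b\cdot\nabla_v f$ to the right-hand side, apply the drift-free estimate of Corollary~\ref{cor:regularity-SR-halfspace}, interpolate $\|\nabla_v f\|_{C^{3+\eps}_\ell}$ between $[f]_{C^{4,1}_\ell}$ and a lower-order norm of $f$, absorb via a Giaquinta--Giusti iteration, and cover. The difference lies in the order in which you perform the $L^\infty\to L^1$ conversion, and that is precisely where your argument for the \emph{linear} $\|b\|$-dependence breaks down.

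With your choice $\sigma\sim(1+\|b\|)^{-1}$, the interpolation produces a term $C_0\|b\|\,\sigma^{-M}\|f\|_{L^\infty}\sim\|b\|^{M+1}\|f\|_{L^\infty}$, so your intermediate constant $\tilde C$ carries a power $\|b\|^{M+1}$ in front of \emph{both} $\|f\|_{L^\infty}$ and $[h]_{C^{3+\eps}_\ell}$. Converting $\|f\|_{L^\infty}$ to $\|f\|_{L^1}$ afterwards, at scale $R$, only multiplies by $R^{-(2+4n)}$; it does nothing to the $\|b\|$-powers, since $R$ and $\|b\|$ are independent. There is no mechanism by which ``trading'' against $R^{-(2+4n)}$ could lower the exponent of $\|b\|$. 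The bound you would actually obtain is $(1+\|b\|)^{M+1}R^{-5-(2+4n)}\|f\|_{L^1}+(1+\|b\|)^{M+1}[h]$, which is neither linear in $\|b\|$ on the first term nor $\|b\|$-free on the second.

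The paper avoids this by performing the $L^\infty\to L^1$ conversion \emph{inside} the interpolation, at the local scale $r$, using both displays of Lemma~\ref{lemma:Holder-interpol} together. This yields directly
\[
\|\nabla_v f\|_{C^{3+\eps}_\ell(H^d_r)}\le C\,r^{1-\eps}[f]_{C^{4,1}_\ell(H^d_r)}+C\,r^{-(4+\eps)-(2+4n)}\|f\|_{L^1(H^d_r)} .
\]
Multiplying by $\|b\|$ gives one and only one power of $\|b\|$ in front of $\|f\|_{L^1}$; the term $\|b\|\,r^{1-\eps}[f]_{C^{4,1}_\ell}$ is absorbed by restricting to scales $r\le R_0:=\big(4C\|b\|\big)^{-1/(1-\eps)}$. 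Crucially, $[h]_{C^{3+\eps}_\ell}$ appears in $[\tilde h]_{C^{3+\eps}_\ell}$ with coefficient $1$, never multiplied by $\|b\|$. The Schauder term $r^{-5}\|f\|_{L^\infty}$ is likewise converted to $L^1$ at the same small scale (Step~2 in the paper), contributing a second $\|f\|_{L^1}$ piece with a $\|b\|$-free coefficient. Only then is the Giaquinta--Giusti iteration run, and a covering of $H_{R/2}(z_0)$ by metric balls with bounded overlap concludes. The moral: to keep a single power of $\|b\|$, the $L^1$ bound must enter \emph{before} you choose the absorption parameter, not after.
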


\begin{proof}
This proof is based on a covering argument. For this sake, it is much more convenient to work with kinetic balls that are adapted to the kinetic distance. We define
\begin{align*}
Q^d_r(z_0) = \{ z \in \R^{1+2n} : d_{\ell}(z,z_0) < r \}, ~~ H^d_r(z_0) = Q^d_r(z_0) \cap ((-1,1) \times \Omega \times \R^n),
\end{align*}
and recall from \autoref{lemma:kinetic-balls} that 
\begin{align}
\label{eq:metric-geometric}
Q_r(z_0) \subset Q^d_r(z_0) \subset Q_{2r}(z_0).
\end{align}
Moreover, by the triangle inequality for $d_{\ell}$ (see \cite[Proposition 2.2]{ImSi21})
\begin{align*}
z_0 \in Q^d_{r_1}(z_1) \qquad \Rightarrow \qquad Q^d_{r_2}(z_0) \subset Q^d_{r_1 + r_2}(z_1) ~~ \forall z_0,z_1 \in \R^{1+2n}, ~~ \forall r_1, r_2 > 0.
\end{align*}
The proof is split into several steps.

\textbf{Step 1:} Note that $f$ solves in $\mathcal{S}(H_R(z_0))$
\begin{align*}
\partial_t f + v \cdot \nabla_x f + (-a^{i,j} \partial_{v_i,v_j})f &= - c f + \tilde{h}, \quad \text{ where } \quad \tilde{h} = - b^i \partial_{v_i} f + h.
\end{align*}
By \autoref{lemma:product-rule} and \autoref{lemma:der-Holder}, it holds for any $H_r^d(z) \subset \mathcal{S}(H_R(z_0))$ with $r \in (0,1]$, $z \in \R^{1+2n}$:
\begin{align*}
[ \tilde{h} ]_{C^{3+\eps}_{\ell}(H_r^d(z))} \le [ h ]_{C^{3+\eps}_{\ell}(H^d_r(z))} + [ b^i \partial_{v_i} f ]_{C^{3+\eps}_{\ell}(H^d_r(z))} \le  [ h ]_{C^{3+\eps}_{\ell}(H^d_r(z))} + C \Vert b^i  \Vert_{C^{3+\eps}_{\ell}(H^d_r(z))} \Vert f \Vert_{C^{4+\eps}_{\ell}(H^d_r(z))}.
\end{align*}
Moreover, by H\"older interpolation (see \autoref{lemma:Holder-interpol}),
\begin{align*}
 \Vert f \Vert_{C^{4+\eps}_{\ell}(H^d_r(z))} \le C r^{-4} \Vert f \Vert_{L^{\infty}(H^d_r(z))} + [f]_{C^{4+\eps}_{\ell}(H^d_r(z))} \le  r^{-(4+\eps)} r^{-(2+4n)}\Vert f \Vert_{L^{1}(H^d_r(z))} + r^{1-\eps} [f]_{C^{4,1}_{\ell}(H^d_r(z))} ,
\end{align*}
and therefore, altogether
\begin{align}
\label{eq:tilde-h-absorb}
[ \tilde{h} ]_{C^{3+\eps}_{\ell}(H^d_r(z))} \le C \Vert b^i  \Vert_{C^{3+\eps}_{\ell}(H^d_r(z))} \left( r^{-5} r^{-(2+4n)}\Vert f \Vert_{L^{1}(H^d_r(z))} + r^{1-\eps} [f]_{C^{4,1}_{\ell}(H^d_r(z))} \right) + [h]_{C^{3+\eps}_{\ell}(H^d_r(z))}.
\end{align}
Note that the proofs of \autoref{lemma:product-rule}, \autoref{lemma:der-Holder}, and \autoref{lemma:Holder-interpol} can easily be adapted to metric cylinders, using \eqref{eq:metric-geometric} and scaling.

\textbf{Step 2:} By \autoref{cor:regularity-SR-halfspace} rewritten for metric cylinders, we have for any $z_0 \in (-1,1) \times \{ x_n \ge 0 \} \times \R^n$ and $r \le \frac{R}{2}$:
\begin{align*}
[ f ]_{C^{4,1}_{\ell}(H^d_{r}(z_0))} \le C r^{-5} \big( \Vert f \Vert_{L^{\infty}(\mathcal{S}(H^d_{4r}(z_0)))} + r^{5} [ \tilde{h} ]_{C^{3+\eps}_{\ell}( \mathcal{S}(H^d_{4r}(z_0)) )} \big).
\end{align*}

Given $z_0$ and $0 < r_1 < r_2 \le \frac{R}{2}$ we can apply the previous estimate with $z_0 := z^{\ast} \in H^d_{r_1}(z_0)$ and $r = \frac{1}{16}(r_2-r_1)$. Then, since
\begin{align*}
H^d_{r_1}(z_0) \subset  \bigcup_{z^{\ast} \in H^d_{r_1}(z_0)} H^d_{\frac{r_2 - r_1}{4}}(z^{\ast}) \subset \bigcup_{z^{\ast} \in H^d_{r_1}(z_0)} H^d_{r_2 - r_1}(z^{\ast}) \subset H^d_{r_2}(z_0),
\end{align*}
we deduce
\begin{align*}
[ f ]_{C^{4,1}_{\ell}(H^d_{r_1}(z_0))} &\le \sup_{z^{\ast} \in H^d_{r_1}(z_0)} [ f ]_{C^{4,1}_{\ell}(H^d_{\frac{r_2 - r_1}{4}}(z^{\ast}))}\\
&\le C \sup_{z^{\ast} \in H^d_{r_1}(z_0)} (r_2 - r_1)^{-5} \big( \Vert f \Vert_{L^{\infty}( \mathcal{S}(H^d_{r_2 -r_1}(z^{\ast})))} + (r_2 - r_1)^{5} [ \tilde{h} ]_{C^{3+\eps}_{\ell}(\mathcal{S}(H^d_{r_2 - r_1}(z^{\ast})) )} \big)\\
&\le C (r_2 - r_1)^{-5} \big( \Vert f \Vert_{L^{\infty}( \mathcal{S}(H^d_{r_2}(z_0)))} + (r_2 - r_1)^{5} [ \tilde{h} ]_{C^{3+\eps}_{\ell}( \mathcal{S}(H^d_{r_2}(z_0)) )} \big).
\end{align*}

Moreover, by the second claim in \autoref{lemma:Holder-interpol}, applied with $\delta = \frac{r_2 - r_1}{4  Cr_2}  \in (0,1)$, we get
\begin{align*}
\Vert f \Vert_{L^{\infty}(\mathcal{S}(H^d_{r_2}(z_0)))} \le C (r_2 - r_1)^{-(2+4n)} \Vert f \Vert_{L^1(\mathcal{S}(H^d_{r_2}(z_0)))} + \frac{1}{2C}(r_2 - r_1)^5 [f]_{C^{4,1}(\mathcal{S}(H^d_{r_2}(z_0)))},
\end{align*}
which yields
\begin{align}
\label{eq:reg-est-SR-inexplicit-scale}
\begin{split}
[ f ]_{C^{4,1}_{\ell}(H^d_{r_1}(z_0))} &\le C  (r_2 - r_1)^{-5} (r_2 - r_1)^{-(2+4n)} \Vert f \Vert_{L^{1}( \mathcal{S}(H^d_{r_2}(z_0)))} \\
&\quad + C [ \tilde{h} ]_{C^{3+\eps}_{\ell}( \mathcal{S}(H^d_{r_2}(z_0)) )} + \frac{1}{4}[f]_{C^{4,1}_{\ell}(\mathcal{S}(H^d_{r_2}(z_0)))}.
\end{split}
\end{align}

\textbf{Step 3:} By combination of the boundary regularity estimate in \eqref{eq:reg-est-SR-inexplicit-scale}, and the estimate for $\tilde{h}$ from \eqref{eq:tilde-h-absorb}, we deduce that for any $z^{\ast} \in \R^{1+2n}$, $0 < r_1 < r_2 \le \frac{R}{2}$ with $Q^d_{r_2}(z^{\ast}) \subset H_{R/2}(z_0)$:
\begin{align*}
 & [f]_{C_{\ell}^{4,1}( \mathcal{S}(H^d_{r_1}(z^{\ast})))} \\
&\qquad\le C (r_2 - r_1)^{-5} (r_2 - r_1)^{-(2+4n)} \Vert f \Vert_{L^{1}( \mathcal{S}(H^d_{r_2}(z^{\ast})))}  + C [\tilde{h}]_{C_{\ell}^{3+\eps}(\mathcal{S}(H^d_{r_2}(z^{\ast})))}  + \frac{1}{4}[f]_{C^{4,1}_{\ell}(\mathcal{S}(H^d_{r_2}(z_0)))}\\
&\qquad\le C  (r_2 - r_1)^{-5} (r_2 - r_1)^{-(2+4n)} \Vert f \Vert_{L^{1}( \mathcal{S}(H^d_{r_2}(z^{\ast})))} \\
&\qquad\quad + C  \Vert b^i  \Vert_{C^{3+\eps}_{\ell}(H^d_R(z_0))} \left( (r_2-r_1)^{-5} (r_2 - r_1)^{-(2+4n)}\Vert f \Vert_{L^{1}(H^d_{r_2}(z))} + r_2^{1-\eps} [f]_{C^{4,1}_{\ell}(H^d_{r_2}(z))} \right) \\
&\qquad\quad + C [h]_{C^{3+\eps}_{\ell}(H^d_{r_2}(z))} + \frac{1}{4}[f]_{C^{4,1}_{\ell}(\mathcal{S}(H^d_{r_2}(z_0)))}.
\end{align*}

Let us take
\begin{align*}
r_2 \le R_0:= \min \left\{ \left[ 4  C \Vert b^{i} \Vert_{C^{3+\eps}_{\ell}(\mathcal{S}(H_{R}(z_0)))} \right]^{-\frac{1}{1-\eps}} , \frac{R}{2} \right\}.
\end{align*}
Then, by the previous estimate
\begin{align*}
[f]_{C_{\ell}^{4,1}(\mathcal{S}(H^d_{r_1}(z^{\ast})))} &\le C  (r_2 - r_1)^{-5} \left[ 1 + \Vert b^{i} \Vert_{C^{3+\eps}_{\ell}(\mathcal{S}(H_{R}(z_0)))} \right] (r_2 - r_1)^{-(2+4n)} \Vert f \Vert_{L^{1}(\mathcal{S}(H^d_{r_2}(z^{\ast})))} \\
&\quad  + C  [h]_{C_{\ell}^{3+\eps}(\mathcal{S}(H^d_{r_2}(z^{\ast})))} + \frac{1}{2}  [f]_{C_{\ell}^{4,1}(\mathcal{S}(H^d_{r_2}(z^{\ast})))}.
\end{align*}
Let us point out that we already know by \autoref{thm:regularity-SR-halfspace} and \autoref{lemma:boundary-reg-specular} (applied directly to the equation for $f$, without absorbing the drift into $\tilde{h}$), 
\begin{align*}
\Vert f \Vert_{C_{\ell}^{4,1}(\mathcal{S}(H^d_{R_0}(z^{\ast})))} \le C  [f]_{C_{\ell}^{4,1}(\mathcal{S}(H^d_{R_0}(z^{\ast})))} + \Vert f \Vert_{L^{\infty}(\mathcal{S}(H^d_{R_0}(z^{\ast})))} < \infty.
\end{align*}
Therefore, we can apply a standard iteration lemma (see for instance \cite[Lemma 2.3]{KaWe24}, \cite[Lemma 1.1]{GiGi82}) and deduce that for any $0 < r < 2r \le R_0$ with $H^d_{2r}(z^{\ast}) \subset H_R(z_0)$ it holds
\begin{align}
\label{eq:covering-help}
\begin{split}
[f]_{C_{\ell}^{4,1}(\mathcal{S}(H^d_{r}(z^{\ast})))} &\le C r^{-5} \left[ 1 + \Vert b^{i} \Vert_{C^{3+\eps}_{\ell}(\mathcal{S}(H_{R}(z_0)))} \right]  r^{-(2+4n)} \Vert f \Vert_{L^{\infty}(\mathcal{S}(H^d_{2r}(z^{\ast})))} \\
&\quad  + C  r^{5}[h]_{C_{\ell}^{3+\eps}(\mathcal{S}(H^d_{2r}(z^{\ast})))}.
\end{split}
\end{align}
Note that we can cover $Q_{R/2}(z_0)$ by balls $Q^d_{r^l}(z^l)$ with $r^l \le R/2$, $l \in \N$, such that 
\begin{align*}
\sum_{l \in \N} \1_{Q^d_{2r^l}(z^l)} \le C
\end{align*}
for some $C$ depending only on $n$, but not on $R_0,R$. The desired result follows by application of \eqref{eq:covering-help} to each $z^l, r^l$ and summing the corresponding estimates over $l$.
\end{proof}

Clearly, by similar arguments, one can also establish explicit dependencies on $a^{i,j},c$. However, we are only interested in the dependence of the constants on $b^i$, since it is relevant for regularity estimates in general (non-flat) domains (see \autoref{thm:regularity-SR-constants}).

\subsection{Boundary regularity in general domains}
\label{subsec:gen-dom}

By flattening the boundary, we are able to obtain regularity results in more general (non-flat) domains.
The main auxiliary result for this section is \autoref{lemma:SR-preserved}, which deals with basic properties of the flattening diffeomorphism $\Phi$.

\begin{theorem}
\label{thm:regularity-SR-constants}
Let $\eps \in (0,1]$, $\Omega \subset \R^n$ with $\partial \Omega \in C^{\frac{9}{2}}$, and $z_0 \in (-1,1) \times \overline{\Omega} \times \R^n$. Let $a^{i,j},b,c,h \in C^{3+\eps}_{\ell}(\mathcal{S}(H_1(z_0)))$ and assume that $a^{i,j}$ satisfies \eqref{eq:unif-ell}. Let $f$ be a weak solution to 
\begin{equation*}
\left\{\begin{array}{rcl}
\partial_t f + v \cdot \nabla_x f + (-a^{i,j} \partial_{v_i,v_j})f &=& - b \cdot \nabla_v f - c f + h ~~ \text{ in }  \mathcal{S}(H_1(z_0)) , \\
f(t,x,v) &=& f(t,x,\mathcal{R}_x v) ~\qquad\quad \text{ on } \gamma_- \cap \mathcal{S}(H_1(z_0)).
\end{array}\right.
\end{equation*}
Then, $f \in C^{4,1}_{\ell}(H_{1/2}(z_0))$ and it holds:
\begin{align*}
[ f ]_{C^{4,1}_{\ell}(H_{1/2}(z_0))} &\le C \left(1 + |v_0|^{\theta_1} \right)  \Vert f \Vert_{L^{\infty}( \mathcal{S}(H_1(z_0)))} + C \left(1 + |v_0|^{4} \right)\Vert h \Vert_{C^{3+\eps}_{\ell}(\mathcal{S}(H_1(z_0)) )}, \\
[ f ]_{C^{4,1}_{\ell}(H_{1/2}(z_0))} &\le C \left(1 + |v_0|^{\theta_2(1+n)} \right)  \Vert f \Vert_{L^{1}( \mathcal{S}(H_1(z_0)))} + C\left(1 + |v_0|^{4} \right) \Vert h \Vert_{C^{3+\eps}_{\ell}(\mathcal{S}(H_1(z_0)) )}.
\end{align*}
The constant $C$ depends only on $n,\eps,\lambda,\Lambda,\Omega$, $\Vert a^{i,j} \Vert_{C^{3+\eps}_{\ell}(\mathcal{S}(H_1(z_0)))}, \Vert b \Vert_{C^{3+\eps}_{\ell}(\mathcal{S}(H_1(z_0)))}$, and \\
$\Vert c \Vert_{C^{3+\eps}_{\ell}(\mathcal{S}(H_1(z_0)))}$, but not on $z_0$, and $\theta_1,\theta_2 > 0$ depend only on $\eps$.
\end{theorem}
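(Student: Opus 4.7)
The plan is to reduce the problem in the general domain $\Omega$ to the half-space case, so that Proposition~\ref{prop:regularity-SR-halfspace-constants} applies. To do this, for each boundary point $x^{\ast} \in \partial \Omega \cap B_{\delta_1}(x_0)$ I would construct the flattening diffeomorphism $\Phi$ from \eqref{eq:Phi-def} around $x^{\ast}$, and study $\tilde f = f \circ \Phi^{-1}$, which by Lemma~\ref{lemma:SR-preserved}(i)--(ii) satisfies a kinetic equation in the half-space with the specular reflection condition preserved. Since $\Phi$ distorts the kinetic metric on scales larger than $|v_0|^{-1}$, the estimates of Lemma~\ref{lemma:SR-preserved}(iii) are only usable on cylinders $Q_{r_0}(z^{\ast})$ with $r_0 \le c_0 \min\{1, |v_0|^{-\theta_0}\}$, where the exponent $\theta_0 = \theta_0(\eps)$ is dictated by the ``improved'' bound in Lemma~\ref{lemma:SR-preserved}(iii): taking $k = 3$ and shrinking $\eps$ to some $\eps' \in (0,\eps)$, the condition $r_0 \le c_0 |v_0|^{-9/(2(\eps-\eps'))}$ makes the norms $\|\tilde a^{i,j}\|_{C^{3+\eps'}_\ell}$, $\|\tilde c\|_{C^{3+\eps'}_\ell}$, $\|\tilde h\|_{C^{3+\eps'}_\ell}$ uniformly bounded, while $\|\tilde b^i\|_{C^{3+\eps'}_\ell} \le C(1+|v_0|)^2$.

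The next step is to apply Proposition~\ref{prop:regularity-SR-halfspace-constants} to $\tilde f$ on the rescaled flattened cylinder of radius $R \simeq r_0$, exploiting that the dependence on $\|b\|_{C^{3+\eps'}_\ell}$ there is \emph{linear}. This yields
\[
 [\tilde f]_{C^{4,1}_\ell(H_{r_0/2}(\tilde z^{\ast}))}
 \le C (1+|v_0|)^2 r_0^{-5} r_0^{-(2+4n)} \|\tilde f\|_{L^1}
 + C \|\tilde h\|_{C^{3+\eps'}_\ell},
\]
and, once $r_0$ is substituted, a polynomial factor $(1+|v_0|)^{\theta_2(1+n)}$ appears in front of $\|\tilde f\|_{L^1}$. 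One then transfers the estimate back to $f$: the $L^1$ and $L^\infty$ norms of $\tilde f$ and $f$ are comparable (the Jacobian of $\Phi^{-1}$ is bounded on the scales we use), whereas the $C^{4,1}_\ell$ seminorm transfers with a factor $(1+|v_0|)^{4}$ coming from the last inequality in Lemma~\ref{lemma:SR-preserved}(iii) applied with $k=4, \eps=1$ (one may first work with $[\tilde f]_{C^{4+\eps'}_\ell}$ and interpolate against the $L^1$ norm via Lemma~\ref{lemma:Holder-interpol} to avoid issues at the endpoint $\eps=1$). Combining everything produces the advertised bounds on $[f]_{C^{4,1}_\ell(Q_{r_0/2}(z^{\ast}))}$ for each $z^{\ast}$ close to a boundary point.

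Away from $\partial\Omega$ the analogous bound on cylinders of size $\sim \mathrm{dist}(z^{\ast},\gamma)$ is obtained directly from the interior Schauder estimate of Lemma~\ref{lemma:interior-reg} (with constants that do not depend on $|v_0|$ since the coefficients do not need to be flattened). Since $r_0$ is uniform in $z^{\ast}$ up to a prefactor in $|v_0|$, a covering argument via Lemma~\ref{lemma:covering} patches the local estimates into a global $C^{4,1}_\ell$ bound on $H_{1/2}(z_0)$. To pass from the $L^\infty$ to the $L^1$ version, I would invoke the boundary local boundedness estimate \eqref{eq:boundary-reg-specular-Linfty-L1} on kinetic cylinders of radius $\sim r_0 \sim (1+|v_0|)^{-\theta_0}$, which introduces the factor $r_0^{-(2+4n)} \sim (1+|v_0|)^{\theta_2 (1+n)}$.

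The main obstacle will be bookkeeping the various polynomial factors of $|v_0|$. They arise from three independent sources: (i) the $C^{3+\eps'}_\ell$ norm of $\tilde b$, which gives $(1+|v_0|)^2$; (ii) the rescaling $r_0^{-5}$, which becomes a power of $(1+|v_0|)$ determined by the improved-estimate threshold in Lemma~\ref{lemma:SR-preserved}(iii); and (iii) the transfer of the $C^{4,1}_\ell$ seminorm back from $\tilde f$ to $f$. Since only the combined exponent $\theta_1$ (respectively $\theta_2(1+n)$) is claimed, the correct $\theta_0$ can be chosen at the very end by combining the various requirements, which is essentially bookkeeping rather than a genuine difficulty; the only analytically delicate point is ensuring that the drift is absorbed \emph{linearly} in $\|\tilde b\|_{C^{3+\eps'}_\ell}$ (so that the factor $(1+|v_0|)^2$ enters the final estimate only to the first power), which is precisely the content of Proposition~\ref{prop:regularity-SR-halfspace-constants}.
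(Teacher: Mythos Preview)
Your proposal is correct and matches the paper's proof almost exactly: flatten near boundary points via $\Phi$, work on kinetic cylinders of radius $r_0 \sim |v_0|^{-9/(2(\eps-\eps'))}$ so that the improved bounds in Lemma~\ref{lemma:SR-preserved}(iii) apply (the paper takes $\eps'=\eps/2$), feed the resulting half-space problem into Proposition~\ref{prop:regularity-SR-halfspace-constants} to exploit the linear dependence on $\|\tilde b\|_{C^{3+\eps'}_\ell}\le C(1+|v_0|)^2$, transfer back via the last estimate in Lemma~\ref{lemma:SR-preserved}(iii) with $k=4$, and cover. The only cosmetic difference is that the paper applies Proposition~\ref{prop:regularity-SR-halfspace-constants} directly in its $L^1$ form rather than passing through \eqref{eq:boundary-reg-specular-Linfty-L1} afterwards.
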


\begin{proof}
We only prove the second claim. The proof of the first claim follows by obvious modifications.
We can cover $\Omega_{\delta_0/2} \cap B_{1/2}(x_0)$ with finitely many balls $B_{\delta_0}(x^{(l)})$ such that $x^{(l)} \in \partial \Omega \cap B_{1/2}(x_0)$ and $\delta_0 \in (0,1]$ depending only on $\Omega$. Here, $\Omega_{\delta_0/2} = \{ x \in \Omega : \dist(x,\partial \Omega) \le \delta_0/2\}$. Moreover, we can choose the covering in such a way that for any $B_{\delta_0/8}(x^{\ast}) \subset \Omega_{\delta_0/2}$, there is $x^{(l)}$ such that $B_{\delta_0/8}(x^{\ast}) \subset B_{\delta_0/4}(x^{(l)})$.

We can associate any such $x^{(l)}$ with flattening diffeomorphisms $\phi^{(l)},\Phi^{(l)}$ as in \autoref{lemma:SR-preserved} such that $\phi^{(l)}(\Omega \cap B_{\delta_0/2}(x^{(l)})) \subset (\{ x_n > 0 \} \cap B_{\delta_1/2} )$ for some $\delta_1 \in (0,1]$. Here, up to a rotation and a shift, we are working as if $n_{x^{(l)}} = e_n$ and $(x^{(l)})' = 0$ for all $x^{(l)}$.

Next, we cover $H_{1/2}(z_0) \cap ((-1,1) \times \Omega_{\delta_0/4} \times \R^n)$ by finitely many kinetic cylinders $Q_{\delta/2}(z_m)$. By construction, if $\delta < \delta_0/16$, then for any $m$, there exists $l$ such that $B_{\delta}(x_m) \subset B_{\delta_0/4}(x^{(l)})$. Then, note that by the definition of the kinetic cylinders, we will have $Q_{\delta}(z_m) \subset (-1,1) \times B_{\delta_0/2}(x^{(l)}) \times \R^n$, when $\delta < \min \{ \delta_0/16 , (\delta_0/8)^3 |v_0|^{-1/2} \}$. We will choose $\delta = \min \{ \delta_0/32 , (\delta_0/8)^3 |v_0|^{-\frac{9}{\eps}} \}$ even smaller, since it allows us to apply the second part of \autoref{lemma:SR-preserved}(iii) with $\eps' = \eps/2$ and $k = 3$.
Then, $\Phi^{(l)}(Q_{\delta}(z_m))$ is well-defined and we can use \autoref{lemma:SR-preserved} to get an equation for the transformed function $\tilde{f}$, namely
\begin{align*}
\partial_t \tilde{f} + v \cdot \nabla_x \tilde{f} + (- \tilde{a}^{i,j}\partial_{v_i,v_j}) \tilde{f} = - \tilde{b} \cdot \nabla_v \tilde{f} - \tilde{c} \tilde{f} + \tilde{h} ~~ \text{ in }  \Phi^{(l)}(Q_{\delta}(z_m)) \cap ((-1,1) \times \{ x_n > 0 \} \times \R^n ) ,
\end{align*}
satisfying the specular reflection condition. Moreover, by 
\autoref{lemma:SR-preserved}(iii), and using that $\partial \Omega \in C^{\frac{9}{2}}$, it holds
\begin{align*}
\Vert \tilde{a}^{i,j}\Vert_{C^{3+\frac{\eps}{2}}(\mathcal{S}(\Phi(Q_{\delta}(z_j))))} &\le C \Vert a^{i,j} \Vert_{C^{3+\eps}(\mathcal{S}(Q_{\delta}(z_j)))} \le C,\\
\Vert \tilde{b}^i\Vert_{C^{3+\frac{\eps}{2}}(\mathcal{S}(\Phi(Q_{\delta}(z_j))))} &\le C \left( \Vert b^i\Vert_{C^{3+\eps}(\mathcal{S}(Q_{\delta}(z_j)))} + 1 + |v_0|^2 \right) \le C(1 + |v_0|^2),\\
\Vert \tilde{c} \Vert_{C^{3+\frac{\eps}{2}}(\mathcal{S}(\Phi(Q_{\delta}(z_j))))} &\le C \Vert c \Vert_{C^{3+\eps}(\mathcal{S}(Q_{\delta}(z_j)))}\le C,\\
\Vert \tilde{h} \Vert_{C^{3+\frac{\eps}{2}}(\mathcal{S}(\Phi(Q_{\delta}(z_j))))} &\le C \Vert h \Vert_{C^{3+\eps}(\mathcal{S}(Q_{\delta}(z_j)))},
\end{align*}
where $C$ only depends on $n,\eps,\Omega,\Lambda$. Moreover we have by \autoref{lemma:SR-preserved}(iii) with $k = 4$, $\eps = 1$, and since $\partial \Omega \in C^{9/2}:$
\begin{align*}
\Vert f \Vert_{C^{4,1}_{\ell}(Q_{\delta/2}(z_m))} \le C (1 + |v_0|)^{4} \Vert \tilde{f} \Vert_{C^{4,1}_{\ell}(\Phi^{(l)}(Q_{\delta/2}(z_m)))}.
\end{align*}

Hence, we can apply \autoref{prop:regularity-SR-halfspace-constants} with $\tilde{f}$ (and a covering argument) to obtain for any such cylinder
\begin{align*}
[ f ]_{C^{4,1}_{\ell}(Q_{\delta/2}(z_m))} &\le C (1 + |v_0|)^4 \Vert \tilde{f} \Vert_{C^{4,1}_{\ell}(\Phi^{(l)}(Q_{\delta/2}(z_m)))} \\
& \le C (1 + |v_0|)^{4}  \delta^{-5} \left(1 + \Vert \tilde{b}^i \Vert_{C_{\ell}^{3+\frac{\eps}{2}}(\mathcal{S}(\Phi(Q_{\delta}(z_m))))} \right) \delta^{-(2+4n)} \Vert \tilde{f} \Vert_{L^{1}(\mathcal{S}(\Phi^{(l)}(Q_{\delta}(z_m))))} \\
&\quad + C(1 + |v_0|)^{4}  [ \tilde{h} ]_{C^{3+\frac{\eps}{2}}_{\ell}(\mathcal{S}(2 \Phi^{(l)}(Q_{\delta}(z_m))))} \\
& \le C \left(1 + |v_0|^{\theta} \right) \Vert f \Vert_{L^{1}(\mathcal{S}(H_1(z_0)))} + C (1 + |v_0|)^{4} \Vert h \Vert_{C^{3+\eps}_{\ell}(\mathcal{S}(H_1(z_0)) )},
\end{align*}
where $\theta = 10 + \frac{45}{\eps} + \frac{18 + 36n}{\eps}$.
Here, we used crucially that the diameter of $\Phi(Q_{\delta}(z_m))$ is comparable to $\delta$ up to a constant depending only on $\Omega$ in order to obtain the prefactor $\delta^{-5}$ in the estimate.

Altogether, by summing over $l$, we obtain a corresponding regularity estimate in $H_{1/2}(z_0) \cap ((-1,1) \times \Omega_{\delta_0/4} \times \R^n)$.
From here, the desired result follows by combination of the previous estimates with the interior estimates from \autoref{lemma:interior-reg} (using also \autoref{lemma:interior-reg-DGNM} to replace the $L^{\infty}$ by the $L^1$ norm), applied directly to $f$.
\end{proof}


As a direct corollary, we deduce the first part of our main result \autoref{thm1}. In fact, we have the following slightly more general theorem:

\begin{corollary}
\label{thm:global-weighted-SR-reg}
Let $\eps \in (0,1]$,  $\Omega \subset \R^n$ with $\partial \Omega \in C^{\frac{9}{2}}$. Let $a^{i,j},b,c,h \in C^{3+\eps}_{\ell}((-1,1) \times \Omega \times \R^n)$ and assume that $a^{i,j}$ satisfies \eqref{eq:unif-ell}. Let $f$ be a weak solution to 
\begin{equation*}
\left\{\begin{array}{rcl}
\partial_t f + v \cdot \nabla_x f + (-a^{i,j} \partial_{v_i,v_j})f &=& - b \cdot \nabla_v f - c f + h ~~ \text{ in }  (-1,1) \times \Omega \times \R^n , \\
f(t,x,v) &=& f(t,x,\mathcal{R}_x v) ~\qquad\quad \text{ on } \gamma_- \cap ((-1,1) \times \Omega \times \R^n).
\end{array}\right.
\end{equation*}
Then, it holds:
\begin{align*}
\Vert f \Vert_{C^{4,1}_{\ell}((-\frac{1}{2} , \frac{1}{2}) \times \Omega \times \R^n)} &\le C \left( \left\Vert (1 + |v|)^{\theta_1}  f \right\Vert_{L^{\infty}((-1,1) \times \Omega \times \R^n)} + \Vert h \Vert_{C^{3+\eps}_{\ell,4} (-1,1) \times \Omega \times \R^n) )} \right),\\
\Vert f \Vert_{C^{4,1}_{\ell}((-\frac{1}{2} , \frac{1}{2}) \times \Omega \times \R^n)} &\le C \left( \left\Vert (1 + |v|)^{\theta_2(1+n)}  f \right\Vert_{L^{1}((-1,1) \times \Omega \times \R^n)} + \Vert h \Vert_{C^{3+\eps}_{\ell,4} (-1,1) \times \Omega \times \R^n) )} \right).
\end{align*}
The constant $C$ depends only on $n,\eps,\lambda,\Lambda,\Omega$, $\Vert a^{i,j} \Vert_{C^{3+\eps}_{\ell}((-1,1) \times \Omega \times \R^n)}, \Vert b \Vert_{C^{3+\eps}_{\ell}((-1,1) \times \Omega \times \R^n)}$, and\\ $\Vert c \Vert_{C^{3+\eps}_{\ell}((-1,1) \times \Omega \times \R^n)}$, and the constants $\theta_1,\theta_2 > 0$ depend only on $\eps$. Moreover, in case $\eps = 1$, we can set $\theta_1 = 55$, and replace $\theta_2(1+n)$ by $73+36n$.
\end{corollary}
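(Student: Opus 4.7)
The plan is to derive the global weighted estimate as a direct consequence of the local boundary estimate \autoref{thm:regularity-SR-constants}, applied at every point $z_0\in(-1/2,1/2)\times\overline\Omega\times\R^n$, and then to patch the resulting local $C^{4,1}_{\ell}$--seminorm bounds into a single global one by a boundary-adapted version of \autoref{lemma:covering}. The key observation is that the factors $(1+|v_0|)^{\theta_i}$ that appear in the local estimate match exactly the definition of the weighted norms on the right-hand side, and can be absorbed provided the weight decays fast enough on the kinetic cylinder $H_1(z_0)$.

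Concretely, I would fix $z_0=(t_0,x_0,v_0)$ with $t_0\in(-1/2,1/2)$, $x_0\in\overline\Omega$, $v_0\in\R^n$, and invoke \autoref{thm:regularity-SR-constants} to obtain
\[
[f]_{C^{4,1}_{\ell}(H_{1/2}(z_0))}\le C\bigl(1+|v_0|^{\theta_i}\bigr)\,\|f\|_{X(\mathcal S(H_1(z_0)))}+C\bigl(1+|v_0|^{4}\bigr)\,\|h\|_{C^{3+\eps}_{\ell}(\mathcal S(H_1(z_0)))},
\]
with $X=L^\infty$ or $X=L^1$ (and correspondingly $\theta_i=\theta_1$ or $\theta_2(1+n)$). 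Since $|v-v_0|<1$ on $H_1(z_0)$, one has $(1+|v|)\asymp (1+|v_0|)$ there, so
\[
(1+|v_0|)^{\theta_1}\,\|f\|_{L^{\infty}(\mathcal S(H_1(z_0)))}\le C\,\bigl\|(1+|v|)^{\theta_1}f\bigr\|_{L^{\infty}((-1,1)\times\Omega\times\R^n)},
\]
and analogously for the $L^1$ norm with the weight $(1+|v|)^{\theta_2(1+n)}$. By the very definition of the weighted H\"older norm $C^{3+\eps}_{\ell,4}$ (as in \eqref{eq:weighted-Holder-spaces}), the factor $(1+|v_0|^4)$ on the $h$--term absorbs into $\|h\|_{C^{3+\eps}_{\ell,4}}$ in the same way. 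Hence the right-hand side is uniformly bounded in $z_0$ by the weighted norms appearing in the statement, and the local seminorm satisfies $[f]_{C^{4,1}_{\ell}(H_{1/2}(z_0))}\le C\,\mathcal N(f,h)$ with $\mathcal N$ independent of $z_0$.

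It then remains to pass from these uniform local seminorm estimates to a global $C^{4,1}_{\ell}$--seminorm on $(-1/2,1/2)\times\Omega\times\R^n$. For this I would use a boundary-adapted variant of \autoref{lemma:covering}: given two points $z_1,z_2\in(-1/2,1/2)\times\Omega\times\R^n$, either their kinetic distance is comparable to a fixed radius $r_0$, in which case the estimate follows from the uniform $L^\infty$ control on $f$ (itself a consequence of the weighted $L^\infty$/$L^1$ assumption together with \eqref{eq:boundary-reg-specular-Linfty-L1}), or $d_\ell(z_1,z_2)\le r_0$ and both belong to a single kinetic cylinder $H_{1/2}(z_0)$ for some $z_0$, in which case the uniform local seminorm bound gives the desired polynomial approximation. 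Combining both cases yields the global seminorm estimate, and together with the global $L^\infty$ bound it yields the $C^{4,1}_{\ell}$--norm estimate. The $L^\infty$ bound itself is controlled by the weighted $L^\infty$ (trivially) or the weighted $L^1$ (via a localized version of the boundedness estimate \eqref{eq:boundary-reg-specular-Linfty-L1} applied on cylinders of size comparable to $(1+|v_0|)^{-1}$, which absorbs into the weight).

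The main obstacle in this plan is purely bookkeeping: one needs to verify that the growth exponent in $|v_0|$ produced by \autoref{thm:regularity-SR-constants} is exactly the same as the growth exponent built into the weighted norms on the right-hand side of the statement, so that absorption produces a constant $C$ independent of $z_0$. Tracking through the proof of \autoref{thm:regularity-SR-constants}, the worst exponent is $\theta=10+45/\eps+(18+36n)/\eps$ (arising from the covering with radius $\delta\sim(1+|v_0|)^{-9/\eps}$ in the flattening argument combined with the $L^1$-to-$L^\infty$ interpolation), which specializes to $\theta=73+36n$ when $\eps=1$; the corresponding exponent for the $L^\infty$ variant is $\theta_1=10+45/\eps$, giving $\theta_1=55$ at $\eps=1$. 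Apart from this exponent chase, which is routine, no further ideas are needed beyond those already developed in the preceding sections.
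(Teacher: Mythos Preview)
Your proposal is correct and follows exactly the approach the paper intends: the corollary is stated immediately after \autoref{thm:regularity-SR-constants} as a ``direct corollary'' with no proof given, and your plan---apply the local estimate at every $z_0$, absorb the $(1+|v_0|)^{\theta}$ factors into the weighted norms using $|v-v_0|<1$ on $H_1(z_0)$, and patch via a covering argument---is precisely the omitted routine deduction. Your exponent bookkeeping (yielding $\theta_1=55$ and $73+36n$ at $\eps=1$) also matches the paper's stated values.
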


Here, we used the following notation for $\theta > 0$: 
\begin{align}
\label{eq:weighted-Holder-spaces}
\begin{split}
[f]_{C^{k,\eps}_{\ell,\theta}(D)} &= \sup \left\{ (1 + |v|)^{\theta} [f]_{C^{k,\eps}_{\ell}(Q_r(z))} ~\big\vert~ r \in (0,1], ~ z \in D , ~ Q_r(z) \subset D \right\}, \\
\Vert f \Vert_{C_{\ell,\theta}^{k,\eps}(D)} &= \Vert (1 + |v|)^{\theta} f \Vert_{L^{\infty}(D)} + [f]_{C^{k,\eps}_{\ell,\theta}(D)}.
\end{split}
\end{align}

Note that the constant $\theta$ in \autoref{thm:regularity-SR-constants} explodes as $\eps \searrow 0$, which can be easily seen from the proof. This is due to the technical issue that the optimal regularity of solutions is of integer order, namely $C^{4,1}_{\ell}$. In this case, a little more than $C^{2,1}$ regularity is required for $h$, which makes it impossible to carry out an absorption argument as in the proof of \autoref{prop:regularity-SR-halfspace-constants} to obtain an explicit dependence of the constants on $a^{i,j}$. 

The following theorem contains explicit values for the exponents $\theta$, that are well-behaved for $\eps \searrow 0$, whenever $k \in \{ 3,4 \}$. Moreover, it can be seen as an analog of \autoref{thm:global-weighted-SR-reg} but for balls that are away from $\gamma_0$. In that case, solutions are of class $C^{k,\eps}$, as in case $\Omega = \{ x_n > 0 \}$ (see \autoref{prop:regularity-SR}).

\begin{theorem}
\label{thm:regularity-SR-constants-nongrazing}
Let $k \in \{3,4\}$ and $\eps \in (0,1)$. Let $\Omega \subset \R^n$ with $\partial \Omega \in C^{\frac{k+4+\eps}{2}}$ and $z_0 \in (-1,1) \times \overline{\Omega} \times \R^n$. Let $\eps \in (0,1]$ and $a^{i,j},b,c,h \in C^{k+\eps-2}_{\ell}(\mathcal{S}(H_1(z_0)))$ and assume that $a^{i,j}$ satisfies \eqref{eq:unif-ell}. Let $f$ be a weak solution to 
\begin{equation*}
\left\{\begin{array}{rcl}
\partial_t f + v \cdot \nabla_x f + (-a^{i,j} \partial_{v_i,v_j})f &=& - b \cdot \nabla_v f - c f + h ~~ \text{ in }  \mathcal{S}(H_1(z_0)) , \\
f(t,x,v) &=& f(t,x,\mathcal{R}_x v) ~\qquad\quad \text{ on } \gamma_- \cap \mathcal{S}(H_1(z_0)).
\end{array}\right.
\end{equation*}
Then, the following hold true:
\begin{align*}
\text{ if either } Q_2(z_0) \cap \gamma_0 = \emptyset, ~~ \text{ or } k \in \{3,4\},
\end{align*}
then $f \in C^{k+\eps}_{\ell}(H_{1/2}(z_0))$ and it holds:
\begin{align*}
[ f ]_{C^{k+\eps}_{\ell}(H_{1/2}(z_0))} &\le C \left(1 + |v_0|^{\frac{3k}{2}+12} \right)  \Vert f \Vert_{L^{\infty}( \mathcal{S}(H_1(z_0)))} + C \left(1 + |v_0|^{k+4} \right)\Vert h \Vert_{C^{k+\eps-2}_{\ell}(\mathcal{S}(H_1(z_0)) )}, \\
[ f ]_{C^{k+\eps}_{\ell}(H_{1/2}(z_0))} &\le C \left(1 + |v_0|^{\frac{3k}{2} + 14 + 4n} \right)  \Vert f \Vert_{L^{1}( \mathcal{S}(H_1(z_0)))} + C\left(1 + |v_0|^{k+4} \right) \Vert h \Vert_{C^{k+\eps-2}_{\ell}(\mathcal{S}(H_1(z_0)) )}.
\end{align*}
The constant $C$ depends only on $n,k,\eps,\lambda,\Lambda,\Omega$, $\Vert a^{i,j} \Vert_{C^{k+\eps-2}_{\ell}(\mathcal{S}(H_1(z_0)))}, \Vert b \Vert_{C^{k+\eps-2}_{\ell}(\mathcal{S}(H_1(z_0)))}$, and \\$\Vert c \Vert_{C^{k+\eps-2}_{\ell}(\mathcal{S}(H_1(z_0)))}$, but not on $z_0$.
\end{theorem}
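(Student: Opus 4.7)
The strategy is to flatten the boundary via the diffeomorphism $\Phi$ of \eqref{eq:Phi-def} on a kinetic cylinder of radius $\delta\sim\min\{1,|v_0|^{-1}\}$ around $z_0$, apply an explicit-constant half-space $C^{k+\eps}_\ell$ regularity estimate to the transformed function, and pull the result back to $\Omega$ while tracking every power of $|v_0|$ that appears. As the half-space input, I would first establish an analogue of \autoref{lemma:higher-reg-gamma_0-SR-halfspace} of order $k+\eps$ for $k\in\{3,4\}$: the same blow-up contradiction argument applies with $\tilde{\cP}_{a,5}$ replaced by $\tilde{\cP}_k$, because for $k\le 4$ the half-space Liouville theorem (\autoref{thm:Liouville-higher-half-space-SR}) rules out the Tricomi contribution and every global solution of the limiting constant-coefficient equation satisfying the specular reflection is a polynomial in $\tilde{\cP}_k$. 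Combining this expansion with interior Schauder estimates as in the proof of \autoref{thm:regularity-SR-halfspace} gives a half-space $C^{k+\eps}_\ell$ estimate near $\gamma_0$; on cylinders disjoint from $\gamma_0$, \autoref{prop:regularity-SR} delivers the same estimate directly.

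Since $k+\eps$ is sub-integer in the velocity variable, I can then mimic the absorption argument of \autoref{prop:regularity-SR-halfspace-constants}: moving $b\cdot\nabla_v f$ into the source term and using the H\"older interpolation of \autoref{lemma:Holder-interpol} produces a factor $r^{1-\eps}[f]_{C^{k+\eps}_\ell}$ that can be absorbed on small metric cylinders via a Giaquinta-type iteration, yielding an explicit half-space inequality
\begin{equation*}
[f]_{C^{k+\eps}_\ell(H_{R/2}(z_0))}\le C\bigl(1+\|b^i\|_{C^{k-2+\eps}_\ell}\bigr) R^{-(k+\eps)-(2+4n)} \|f\|_{L^1(\mathcal{S}(H_R(z_0)))} + C [h]_{C^{k-2+\eps}_\ell(\mathcal{S}(H_R(z_0)))},
\end{equation*}
with $C$ depending polynomially on $\|a^{i,j}\|_{C^{k-2+\eps}_\ell}$ and $\|c\|_{C^{k-2+\eps}_\ell}$. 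Now fix $z_0$ and apply $\Phi$ on $Q_\delta(z_0)$ with $\delta=c_0\min\{1,|v_0|^{-1}\}$: the first part of \autoref{lemma:SR-preserved}(iii) provides $\|\tilde a\|_{C^{k-2+\eps}_\ell}+\|\tilde c\|_{C^{k-2+\eps}_\ell}\le C(1+|v_0|)^{(k+3)/2}$ and $\|\tilde b\|_{C^{k-2+\eps}_\ell}\le C(1+|v_0|)^{(k+4)/2}$, while \autoref{lemma:SR-preserved}(i) preserves the specular reflection. Feeding these into the explicit half-space estimate for $\tilde f:=f\circ\Phi^{-1}$ costs a rescaling factor $\delta^{-(k+\eps)}\sim|v_0|^{k+\eps}$ (with an extra $\delta^{-(2+4n)}\sim|v_0|^{2+4n}$ in the $L^1$ version), and the final inverse transformation through the last inequality of \autoref{lemma:SR-preserved}(iii) applied with $k:=4,\eps:=1$ contributes $(1+|v_0|)^{4}$. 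Summing these weights yields the announced exponents $\tfrac{3k}{2}+12$, $\tfrac{3k}{2}+14+4n$, and $k+4$.

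The main obstacle lies in the simultaneous bookkeeping of $|v_0|$-weights arising from three coupled mechanisms: the growth of the new coefficients under $\Phi$, the $\delta^{-(k+\eps)}$ factor from rescaling $Q_\delta(z_0)$ back to unit scale, and the inverse change of variables applied to $\tilde f$. What makes \autoref{thm:regularity-SR-constants-nongrazing} sharper than \autoref{thm:regularity-SR-constants} is precisely that the sub-integer target $k+\eps$ permits the absorption of \autoref{prop:regularity-SR-halfspace-constants} at the level of $C^{k+\eps}_\ell$, so one can remain within the first part of \autoref{lemma:SR-preserved}(iii) and avoid the $\eps'$-trick of the second part, whose exponent blows up as $\eps\to 0$; this is exactly what produces exponents polynomial in $k$ and independent of $\eps$. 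The technical subtlety is quantifying the half-space constant polynomially in $\|\tilde a\|$ and $\|\tilde c\|$: the former can be handled by freezing the leading coefficient (writing $a^{i,j}(z)=a^{i,j}(z_0)+(a^{i,j}(z)-a^{i,j}(z_0))$ and treating the perturbation as an additional source term with small $C^{k-2+\eps}_\ell$ norm on small cylinders), the latter by a direct absorption of the zero-order term analogous to the one used for~$b^i$.
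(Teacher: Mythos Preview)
Your proposal is correct and follows essentially the same route as the paper's proof: establish a $C^{k+\eps}_\ell$ expansion at $\gamma_0$ with $\tilde{\cP}_k$ in place of $\tilde{\cP}_{a,5}$ (no Tricomi term since $k\le 4$), use \autoref{prop:regularity-SR} away from $\gamma_0$, upgrade the half-space constant to an explicit linear dependence on all coefficient norms by freezing $a^{i,j}$ and absorbing $b^i$ and $c$ (which is possible precisely because $k+\eps$ is sub-integer), and then flatten with $\delta\sim|v_0|^{-1}$ using only the first part of \autoref{lemma:SR-preserved}(iii). One small slip: the inverse change of variables for $\tilde f$ should be applied at the $C^{k+\eps}_\ell$ level, contributing $(1+|v_0|)^{(k+4)/2}$, not at the $C^{4,1}_\ell$ level; for $k=4$ the two coincide, but for $k=3$ your stated factor is slightly too large.
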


We provide a sketch of the proof, as it does not involve any new ideas with respect to the case $k=5$.

\begin{proof}
The proof goes in the same way as the proof of \autoref{thm:global-weighted-SR-reg}. First, one establishes the following estimate for solutions in the half-space, paralleling \autoref{prop:regularity-SR-halfspace-constants},
\begin{align*}
[ f ]_{C^{k+\eps}_{\ell}(H_{R/2}(z_0))} &\le C \left[1 + \Vert a^{i,j} \Vert_{C^{k+\eps - 2}_{\ell}(\mathcal{S}(H_R(z_0)))} + \Vert b^{i} \Vert_{C^{k+\eps - 2}_{\ell}(\mathcal{S}(H_R(z_0)))} + \Vert c \Vert_{C^{k+\eps  -2}_{\ell}(\mathcal{S}(H_R(z_0)))} \right] \times \\
&\qquad \qquad \times R^{-5} R^{-(2+4n)} \Vert f \Vert_{L^{1}( \mathcal{S}(H_R(z_0)))} + C  [ h ]_{C^{k+\eps -2}_{\ell}(\mathcal{S}(H_R(z_0)) )},
\end{align*}
where $C > 0$ only depends on $\lambda, \Lambda$ in \eqref{eq:unif-ell} but not anymore on the regularity norms of the coefficients. 
Note that since $\eps \not=1$, the freezing and absorption argument in the proof of \autoref{prop:regularity-SR-halfspace-constants} can also be carried out for $a^{i,j}$, by writing $a^{i,j}(z) = a^{i,j}(z_0) + (a^{i,j}(z) - a^{i,j}(z_0))$, and treating $(a^{i,j}(z) - a^{i,j}(z_0)) \partial_{v_i,v_j} f$ as a right-hand side. In case $Q_2(z_0) \cap \gamma_0 = \emptyset$, one uses \autoref{prop:regularity-SR} in the proof of the estimate instead of \autoref{cor:regularity-SR-halfspace}.
Otherwise, one observes that \autoref{lemma:higher-reg-gamma_0-SR-halfspace} remains true for $a^{i,j}, b^{i}, c,h \in C^{k+\eps-2}$ and $k \in \{ 3 ,4 \}$ with $P_{z_0} \in \tilde{\cP}_k$, and therefore, also a $C^{k+\eps}_{\ell}$ estimate can be obtained in \autoref{cor:regularity-SR-halfspace} under these weaker assumptions on the coefficients.

From here, by proceeding exactly as in the proof of \autoref{thm:global-weighted-SR-reg}, choosing $\delta = \min\{ \delta_0/32 , (\delta_0/8)^3 |v_0|^{-1} \}$, we can use \autoref{lemma:SR-preserved} in combination with the previous display to get the desired result.
\end{proof}

\section{Generic counterexamples}
\label{sec:counterex}

In this section, we prove that the existence of non-$C^5_{\ell}$ solutions to \eqref{eq:Kolmogorov0}, satisfying a specular reflection condition, is typical, even for homogeneous equations.

The next theorem gives a criterion under which solutions in non-flat, smooth domains $\Omega$ are not $C^5_{\ell}$. In particular, it proves the second part of \autoref{thm1}.

\begin{theorem}
\label{thm:counterexample}
Let $\Omega$ be a smooth domain with $0 \in \partial \Omega$ whose graph is parametrized by a smooth function near $0$. Let $\phi,\Phi$ be as in \eqref{eq:Phi-def} such that $\Phi((-1,1) \times (\Omega \cap B_1(0)) \times \R^n) = (-1,1) \times (\{ x_n > 0 \} \cap B_1(0)) \times \R^n$. Let $z_0 = (t_0,0,v_0) \in \gamma_0$ and $f$ be a weak solution to 
\begin{equation}
\label{eq:PDE-counterexample}
\left\{\begin{array}{rcl}
(\partial_t + v \cdot \nabla_x - \Delta_v) f &=& 0 ~~ ~~ \qquad \qquad  \text{ in } H_1(z_0),\\
f(t,x,v) &=& f(t,x, \mathcal{R}_x v)  ~~~ \text{ in } \gamma \cap Q_1(z_0).
\end{array}\right.
\end{equation}
Assume that the following condition holds true:
\begin{align}
\label{eq:counterex-condition}
\sum_{j = 1}^n\sum_{\substack{i = 1 \\ i \not = j}}^{n-1} \partial_{x_i,x_n} \phi^j(0) \partial_{v_i,v_j}f(z_0) + 2\sum_{i = 1}^{n-1}  \partial_{x_i,x_n} \phi^i(0) \partial_{v_i,v_i} f(z_0) \not= \sum_{i = 1}^{n-1} \partial_{x_n,x_n} \phi^i(0) \partial_{v_i,v_n} f(z_0).
\end{align}
Then, $f \not\in C^5_{\ell}(H_{1/2}(z_0))$.
\end{theorem}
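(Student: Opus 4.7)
The plan is to flatten the boundary via the diffeomorphism $\Phi$ from \eqref{eq:Phi-def} and argue by contradiction on the resulting half-space problem. Setting $\tilde f := f\circ\Phi^{-1}$ and $\tilde z_0:=\Phi(z_0)\in\gamma_0$ of the half-space, \autoref{lemma:SR-preserved}(i)--(ii) yields that $\tilde f$ is a weak solution, satisfying specular reflection in $\{x_n>0\}$, of a transformed kinetic equation with coefficients $\tilde a^{ij}=AA^T$, $\tilde c=\tilde h=0$, and drift
\begin{align*}
\tilde b^i(t,y,w)=\sum_j\partial_{x_j,x_j}\phi^i(\phi^{-1}(y))+\sum_{j,l}(A^{-1}(y)w)_j(A^{-1}(y)w)_l\,\partial_{x_j,x_l}\phi^i(\phi^{-1}(y)).
\end{align*}
At $\tilde z_0$ one has $A(0)=\diag(1,\ldots,1,-1)$, so that $\tilde a^{ij}(\tilde z_0)=\delta^{ij}$, and the quadratic-in-$w$ part of $\tilde b^i$ encodes precisely the geometric quantities $\partial_{x_j,x_l}\phi^i(0)$ appearing in \eqref{eq:counterex-condition}. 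Crucially, although the original right-hand side vanishes, the curvature of $\partial\Omega$ enters $\tilde b^i$ in a nontrivial way.

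Suppose, for contradiction, that $f\in C^5_{\ell}(H_{1/2}(z_0))$. Then by \autoref{lemma:SR-preserved}(iii), $\tilde f\in C^5_{\ell}$ near $\tilde z_0$. Applying \autoref{lemma:higher-reg-gamma_0-SR-halfspace}, $\tilde f$ admits an expansion at $\tilde z_0$ of the form $P+\tau\mathcal T_1$ with $P\in\tilde\cP_5$ and $\tau\in\R$, and the $C^5_{\ell}$ assumption, together with \autoref{prop:Tricomi}(ii), forces $\tau=0$. Now reproduce the blow-up from the proof of \autoref{lemma:higher-reg-gamma_0-SR-halfspace}: the rescaled functions $g_m$ associated to $L^2$-projections $P_{l_m,r_m}\in\tilde\cP_{1,5}$ of $\tilde f$ at scales $r_m\to 0$ solve a rescaled equation with source $\tilde h_m^{(p)}+\tilde h_m^{(\mathcal T)}$. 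Under our assumption the Tricomi coefficients $\tau_{l_m,r_m}$ tend to $0$, hence $\tilde h_m^{(\mathcal T)}\to 0$, while $g_m$ converges, by the interior and boundary Schauder estimates from \autoref{sec:prelim}, to some $g_0\in\tilde\cP_5$ weakly solving
\begin{align*}
\partial_t g_0+v\cdot\nabla_x g_0-\Delta_v g_0=p_0\qquad\text{in }\R\times\{x_n>0\}\times\R^n,
\end{align*}
with specular reflection, where $p_0\in\cP_3$ is the limit of $\tilde h_m^{(p)}$. Applying \autoref{thm:Liouville-higher-half-space-SR} to $g_0$, the absence of a Tricomi component forces $p_0$ to have the special form $q(t,x',v')+c(v_n^3-2x_n)$; in particular, the coefficient of $x_n$ in $p_0$ must equal $-2$ times the coefficient of $v_n^3$.

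The remaining step is an explicit, if delicate, identification of these two coefficients in terms of the data. The coefficient of $v_n^3$ in $p_0$ is produced by the quadratic-in-$w$ part of $\tilde b^i$ together with its linear-in-$w$ correction coming from the shift $v=v_0+rw$ with $(v_0)_n=0$, both paired with the second-order Taylor coefficients of $\tilde f$ at $\tilde z_0$; these, via $D\phi^{-1}(0)=\diag(1,\ldots,1,-1)$, coincide up to signs with $\partial_{v_i,v_j}f(z_0)$. The coefficient of $x_n$ arises instead from Taylor-expanding $A^{-1}(y)$ and $\partial_{x_j,x_l}\phi^i(\phi^{-1}(y))$ to first order in $y_n$ inside $\tilde b^i$, which introduces precisely the derivatives $\partial_{x_i,x_n}\phi^j(0)$ and $\partial_{x_n,x_n}\phi^i(0)$ featured in \eqref{eq:counterex-condition}. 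The matching condition $\mathrm{coeff}(x_n)=-2\,\mathrm{coeff}(v_n^3)$ imposed by \autoref{thm:Liouville-higher-half-space-SR} then rewrites, after collecting terms, exactly as the equality between the two sides of \eqref{eq:counterex-condition}; by the standing hypothesis that equality fails, giving the desired contradiction. The main obstacle will be this final computation: several terms of the same order in the blow-up parameter $r$ interact (the $w$-quadratic drift, the linear-$w$ correction due to the shift by $v_0$, and the $y_n$-expansion of the geometric coefficients in $\tilde b^i$), and the grazing hypothesis $(v_0)_n=0$ is what eliminates the spurious contributions and isolates exactly the combination of second derivatives recorded in \eqref{eq:counterex-condition}.
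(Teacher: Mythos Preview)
Your overall strategy is right: flatten, assume $\tilde f\in C^5_\ell$, derive the equation satisfied by the degree-$5$ homogeneous part at $\tilde z_0$, apply \autoref{thm:Liouville-higher-half-space-SR}, and read off that the structural condition on the right-hand side polynomial is precisely the negation of \eqref{eq:counterex-condition}. However, two steps are not set up correctly.

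\textbf{The rescaling.} Invoking ``the blow-up from the proof of \autoref{lemma:higher-reg-gamma_0-SR-halfspace}'' does not make sense here. That argument is a contradiction scheme predicated on the expansion \emph{failing}, so that the monotone quantity $\theta(r)\nearrow\infty$ and one can normalize by $r^{5+\eps}\theta(r)$. Once you assume $\tilde f\in C^5_\ell$, the expansion holds, $\theta$ stays bounded, and there is no nontrivial blow-up sequence to extract; the $L^2$-projections onto $\tilde\cP_{1,5}$ play no role. The paper instead proceeds directly: let $P\in\tilde\cP_4$ be the degree-$\le 4$ Taylor polynomial of $\tilde f$ at $\tilde z_0$ and set $\tilde f_r(z):=r^{-5}(\tilde f-P)(S_r z)$. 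Since $\tilde f\in C^5_\ell$, $\tilde f_r\to P_5\in\tilde\cP_5$ locally uniformly, and $P_5$ solves the half-space problem with right-hand side
\[
p(z)=\lim_{r\to 0}\Big(-r^{-3}\big[(\partial_t+v\cdot\nabla_x-\tilde a^{ij}\partial_{v_iv_j}+\tilde b^i\partial_{v_i})P\big](S_r z)\Big)\in\cP_3.
\]
This is the object on which \autoref{thm:Liouville-higher-half-space-SR} is applied.

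\textbf{The source of the coefficients in $p$.} Your attribution of the $x_n$ coefficient to a first-order-in-$y_n$ expansion inside $\tilde b^i$ is wrong. In the limit above, $\tilde b^i$ contributes only through its frozen value $\tilde b^i(0,v)=\langle v,D^2\phi^i(0)v\rangle$ acting on $\partial_{v_i}P$, which produces the cubic-in-$v$ terms (in particular the $v_n^3$ coefficient). The $x_n$ coefficient comes instead from the first-order Taylor expansion of the diffusion matrix,
\[
\tilde a^{ij}(x)=\delta_{ij}+\sum_k\big[\partial_{x_i,x_k}\phi^j(0)+\partial_{x_j,x_k}\phi^i(0)\big]x_k+O(|x|^2),
\]
applied to $\partial_{v_i,v_j}P$. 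With the correct sources, the condition that $p(0,0,x_n,0,v_n)=\alpha x_n+\beta v_n^3$ with $\alpha=-2\beta$ (required by \autoref{thm:Liouville-higher-half-space-SR} for $P_5$ to be a polynomial) reduces, after collecting the $k=n$ terms from $\tilde a^{ij}$ and the $j=k=n$ terms from $\tilde b^i$, exactly to equality in \eqref{eq:counterex-condition}. Your final paragraph would then go through, but only once these two corrections are made.
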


\begin{remark}
Note that \eqref{eq:counterex-condition} cannot hold if $\Omega$ is flat near $0$, since in that case $D^2 \phi^i(0) = 0$. In other words, some curvature of $\partial \Omega$ is needed to produce non-$C^5_{\ell}$ solutions.
\end{remark}

\begin{remark}
In case $n=2$, the condition \eqref{eq:counterex-condition} simplifies to
\begin{align*}
\partial_{x_1,x_2} \phi^2(0) \partial_{v_1,v_2} f(z_0) + 2 \partial_{x_1,x_2} \phi^1(0) \partial_{v_1,v_1} f(z_0) \not= \partial_{x_2,x_2} \phi^1(0) \partial_{v_1,v_2} f(z_0).
\end{align*}
\end{remark}

The result in \autoref{thm:counterexample} is useful to construct explicit counterexamples in general domains $\Omega$. Hence, our main result \autoref{thm:global-weighted-SR-reg} is optimal in a generic sense. In fact, given $\Omega$, the diffeomorphism $\phi$ is completely determined by \eqref{eq:Phi-def}. 

\begin{example}
If $z_0 = 0$ and the second order expansion of $f$ at $0$ is of the form
\begin{align}
\label{eq:exp-counterex}
f(z) = p_2 + O(|z|^3) := f(0) + p_1(z) + v_1^2 - v_n^2 + O(|z|^3),
\end{align}
for some $p_1 \in \cP_1$, then \eqref{eq:counterex-condition} is satisfied if and only if $\partial_{x_1,x_n} \phi^1(0) \not= 0$.
The existence of such a solution $f$ can be guaranteed by solving the mixed boundary value problem \eqref{eq:PDE-counterexample} in $(-1,1) \times (\Omega \cap B_1) \times \R^n$ and imposing $f = p_2$ in $\gamma_- \cap ((-1,1) \times (\partial B_1 \cap \Omega) \times \R^n)$. Then, if $\partial\Omega$ is flat enough, the expansion in \eqref{eq:exp-counterex} will hold true for $f$.
\end{example}

\begin{proof}[Proof of \autoref{thm:counterexample}]
We split the proof into three steps. Only in the last step, we will make use of the assumptions \eqref{eq:counterex-condition}. We believe the formula in \eqref{eq:rep-p} to be of independent interest.

Let us assume for simplicity that $x_0 = z_0 = 0$. Let $\Phi$ be the diffeomorphism from \autoref{lemma:trafo-sr}. Note that since $z_0 = 0$, we have $\Phi(t_0,x_0,v_0) = 0$ and $\phi(0) = 0$ and $D \phi(0) = \mathrm{Id}_n$.
Let $\tilde{f}$ be the transformed solution as in \autoref{lemma:trafo-sr} to
\begin{equation}
\label{eq:tilde-f-equation}
\left\{\begin{array}{rcl}
(\partial_t + v \cdot \nabla_x - \tilde{a}^{i,j}\partial_{v_i,v_j} + \tilde{b}^i \partial_{v_i}) \tilde{f} &=& 0 ~~ ~~ \qquad \qquad \text{ in } H_1(0),\\
\tilde{f}(t,x,v) &=& \tilde{f}(t,x, \mathcal{R}_x v)  ~~ \text{ in } \gamma \cap Q_1(0),
\end{array}\right.
\end{equation}
where from now on, $H_1(0) := ((-1,1) \times \{ x_n > 0\} \times \R^n) \cap Q_1(0)$.

The coefficients are given as follows
\begin{align}
\label{eq:coeff-counterexample}
\tilde{a}^{i,j}(x) = \sum_{r = 1}^n \partial_{x_i} \phi^{r}(x)\partial_{x_j} \phi^{r}(x), \qquad \tilde{b}^i(x,v) = \langle v , D^2 \phi^i(x) \cdot v \rangle,
\end{align}
where $\phi^{r}$ denotes the $r$th component of $\phi$, and  $\tilde{a}^{i,j}(0) = \delta_{i,j}$, and $\delta_{i,j}$ denotes  Kronecker's delta.

\textbf{Step 1:} Assume that $f \in C^5_{\ell}( ((-1,1) \times \Omega \times \R^n ) \cap Q_{1/2}(0))$. Then, $f \in C^5_{\ell}(H_{1/2}(0))$ and hence there must be $P \in \tilde{\cP}_4$ such that 
\begin{align}
\label{eq:eq-P4}
(\partial_t + v \cdot \nabla_x - \tilde{a}^{i,j}(0)\partial_{v_i,v_j}) P = (\partial_t + v \cdot \nabla_x - \delta_{i,j} \partial_{v_i,v_j}) P = p_0 \in \cP_2,
\end{align}
and $P_5 \in \tilde{\cP}_5$ is homogeneous of degree $5$ such that
\begin{align*}
\tilde{f}_r(z) := \frac{(\tilde{f} - P)(S_r z)}{r^5} \to P_5(z) ~~ \text{ as } r \searrow 0,
\end{align*}
uniformly in $H_{R}$ for any $R > 0$.
In fact, $P + P_5 \in \tilde{\cP}_5$ is the fifth order approximation of $\tilde{f}$ at $0$ and \eqref{eq:eq-P4} follows immediately from the proof of \autoref{lemma:higher-reg-gamma_0-SR-halfspace}. Here, by $\tilde{P}_k$ we denote polynomials in $\cP_k$ satisfying the specular reflection condition with respect to $\{ x_n > 0 \}$.

Let $(r_m)$ be a sequence such that $r_m \searrow 0$ as $m \to \infty$ and define
\begin{align}
\label{eq:not-C5-condition}
-[\mathcal{L}_{m}P](S_{r_m} z) := -r_m^{-3} [(\partial_t + v \cdot \nabla_x - \tilde{a}^{i,j}(S_{r_m} z)\partial_{v_i,v_j} + \tilde{b}^i(S_{r_m} z) \partial_{v_i})P](S_{r_m} z).
\end{align} 
 
By \eqref{eq:not-C5-condition}, we have for $\tilde{f}_m := \tilde{f}_{r_m}$ by \autoref{lemma:scaling} and \eqref{eq:tilde-f-equation}
\begin{equation*}
\left\{\begin{array}{rcl}
(\partial_t + v \cdot \nabla_x - \tilde{a}^{i,j}(S_{r_m} \cdot)\partial_{v_i,v_j} + r_m \tilde{b}^i(S_{r_m} \cdot) \partial_{v_i}) \tilde{f}_m &=& -(\mathcal{L}_{m}P)(S_{r_m} \cdot) ~~ \text{ in } H_{r_m^{-1}}(0), \\
\tilde{f}_m(z) &=& \tilde{f}_m(t,x,\mathcal{R}_x v) ~~~~ \text{ in } \gamma \cap H_{r_m^{-1}}(0).
\end{array}\right.
\end{equation*}
Note that, for any $R \le r_m^{-1}$ and $\eps \in (0,1)$, by the definitions of $\tilde{a}^{i,j}, \tilde{b}^i$ (see \eqref{eq:coeff-counterexample}) and their regularity (since $\Omega$ and thus $\phi$ are smooth) and since $P \in \cP_4$:
\begin{align}
\label{eq:counterex-RHS-Holder}
\begin{split}
[(\mathcal{L}_m P)(S_{r_m} \cdot)]_{C^{3+\eps}_{\ell}(H_R(0))} &\le r_m^{3+\eps} [\mathcal{L}_m P ]_{C^{3+\eps}_{\ell}(H_{1}(0))} \\
&\le r_m^{\eps} [(\tilde{a}^{i,j} -  \delta_{i,j})\partial_{v_i,v_j}P ]_{C^{3+\eps}_{\ell}(H_{1}(0))} + r_m^{\eps}[\tilde{b}^i \partial_{v_i} P ]_{C^{3+\eps}_{\ell}(H_{1}(0))}  \le C.
\end{split}
\end{align}

By interior and boundary estimates (see \autoref{lemma:interior-reg} and \autoref{lemma:boundary-reg-specular}) and using also \eqref{eq:counterex-RHS-Holder} and \autoref{lemma:osc-results} we have for large enough $m$:
\begin{align*}
\Vert \tilde{f}_m \Vert_{C^{2+\eps}_{\ell}(Q_1(z))} &\le C(R) ~~ \forall Q_1(z) \subset H_R(0), ~~ R > 0,\\
\Vert \tilde{f}_m \Vert_{C^{\eps}_{\ell}(H_1(z))} &\le C(R) ~~ \forall z \in \gamma \cap Q_R(0), ~~ R > 0.
\end{align*}
Note that here we also used the boundedness of $\tilde{f}_m$, which follows from its convergence to $P_5$.

Next, we claim that (up to a subsequence)
\begin{align}
\label{eq:claim-convergence}
-(\mathcal{L}_{m}P)(S_{r_m} z) \to p(z) \in \cP_3,
\end{align}
where the convergence holds true locally uniformly. This follows in the same way as in the proof of \autoref{lemma:higher-reg-gamma_+}, using that \autoref{lemma:osc-results} and the boundedness of $\tilde{f}_m$ imply for any $R \le r_m^{-1}$
\begin{align}
\label{eq:counterex-RHS-bd}
\Vert (\mathcal{L}_{m}P)(S_{r_m} \cdot) \Vert_{L^{\infty}(H_R(0))} \le C(R).
\end{align}

Hence, we have that $P_5$ satisfies
\begin{equation}
\label{eq:P5}
\left\{\begin{array}{rcl}
(\partial_t + v \cdot \nabla_x - \tilde{a}^{i,j}(0)\partial_{v_i,v_j}) P_5 &=& p ~~\qquad\qquad\qquad\qquad\qquad ~~ \text{ in } \R \times \{ x_n > 0 \} \times \R^n,\\
P_5(t,x',0,v',v_n) &=& P_5(t,x',0,v',-v_n) ~~ \forall (t,x,v) \in \R \times \{x_n = 0 \} \times \R^n.
\end{array}\right.
\end{equation}

\textbf{Step 2:} The goal of this step is to compute explicitly the polynomial $p$ in \eqref{eq:claim-convergence}. For this we make use of the explicit form of the coefficients $\tilde{a}^{i,j}$ and $\tilde{b}^{i}$ given in \eqref{eq:coeff-counterexample}. Note that since $\partial_{x_i} \phi^r(0) = \delta_{i,r}$
 we have for any $i \in \{1, \dots , n\}$:
\begin{align*}
\tilde{a}^{i,i}(x) &= \sum_{r = 1}^n |\partial_{x_i} \phi^r(x)|^2 = \sum_{\substack{r = 1\\ r \not = i}}^n \left|\sum_{k = 1}^n \partial_{x_i,x_k} \phi^r(0) x_k + O(|x|^2) \right|^2 +   \left|1 + \sum_{k = 1}^n \partial_{x_i,x_k} \phi^i(0) x_k + O(|x|^2) \right|^2 \\
&= 1 + 2\sum_{k = 1}^n \partial_{x_i,x_k} \phi^i(0) x_k + O(|x|^2). 
\end{align*}
Moreover,
 for any $i,j \in \{ 1, \dots , n\}$:
\begin{align*}
\tilde{a}^{i,j}(x) &= \sum_{r = 1}^n \partial_{x_i} \phi^r(x) \partial_{x_j} \phi^r(x)\\
&=\sum_{\substack{r = 1\\ r \not= i,j}}^n \left( \sum_{k = 1}^n \partial_{x_i,x_k} \phi^r(0) x_k + O(|x|^2) \right) \left( \sum_{k = 1}^n  \partial_{x_j,x_k} \phi^r(0) x_k + O(|x|^2)\right)\\
&\quad  + \left( \sum_{k = 1}^n \partial_{x_i,x_k} \phi^j(0) x_k + O(|x|^2) \right) \left(1 + \sum_{k = 1}^n \partial_{x_j,x_k} \phi^j(0) x_k + O(|x|^2) \right) \\
&\quad + \left(1 + \sum_{k = 1}^n \partial_{x_i,x_k} \phi^i(0) x_k + O(|x|^2)\right)\left(\sum_{k = 1}^n \partial_{x_j,x_k} \phi^i(0) x_k + O(|x|^2) \right) \\
&= \sum_{k = 1}^n \partial_{x_i,x_k} \phi^j(0) x_k + \sum_{k = 1}^n \partial_{x_j,x_k} \phi^i(0) x_k  + O(|x|^2).
\end{align*}
Clearly, we also have for any $i \in \{1 , \dots, n\}$
\begin{align*}
\tilde{b}^i(z) = \langle v , D^2 \phi^i(x) v \rangle =  \langle v , D^2 \phi^i(0) v \rangle + O(|x||v|^2).
\end{align*}


Hence, by \eqref{eq:eq-P4} we deduce
\begin{align*}
-(\mathcal{L}_m P) (S_{r_m} z) &= -r_m^{-3} p_0(S_{r_m} z ) + r_m^{-3}(\tilde{a}^{i,j}(S_{r_m} z) -  \delta_{i,j})\partial_{v_i,v_j}P(S_{r_m}z)  - r_m^{-3}\tilde{b}^i(S_{r_m}z) \partial_{v_i} P(S_{r_m}z) \\
&= \sum_{i,j = 1}^n \left(\sum_{k = 1}^n \left[ \partial_{x_i,x_k} \phi^j(0) + \partial_{x_j,x_k} \phi^i(0) \right] x_k\right) \partial_{v_i,v_j}P(S_{r_m}z) + O(r_m^2) \\
& \quad - r_m^{-1} \sum_{i = 1}^n \sum_{j,k = 1}^n \partial_{x_j,x_k} \phi^i(0) v_j v_k \partial_{v_i} P(S_{r_m}z) - r_m^{-3} p_0(S_{r_m} z ).
\end{align*}

Since $P \in \tilde{\cP}_4$ satisfies \eqref{eq:eq-P4}, we can write for some $\alpha'_0,\alpha'_1,\dots,\alpha'_{n-1},\alpha_0,\alpha_{i,j},\alpha^{(\beta)} \in \R$
\begin{align*}
P(t,x,v) = \alpha'_0 + \alpha'_1 v_1 + \dots \alpha'_{n -1 } v_{n - 1} + \alpha_0 t + \sum_{i,j = 1}^n \alpha_{i,j} v_i v_j + \sum_{3 \le |\beta| \le 4} \alpha^{(\beta)} z^{\beta}.
\end{align*}

Then, we can define
\begin{align*}
p_1(z) &:= \sum_{\substack{i,j = 1\\
i \not = j}}^n \sum_{k = 1}^n \left[ \partial_{x_i,x_k} \phi^j(0) + \partial_{x_j,x_k} \phi^i(0) \right] \alpha_{i,j} x_k + 4 \sum_{i = 1}^n \sum_{k = 1}^n \partial_{x_i,x_k} \phi^i(0) \alpha_{i,i} x_k \\
&\quad - \sum_{i,j,k = 1}^n  \partial_{x_j,x_k} \phi^i(0) v_j v_k  \sum_{l = 1}^n (\alpha_{i,l} + \alpha_{l,i}) v_l \in \cP_3, \\
\tilde{p}_m(z) &:= -r_m^{-3} p_0(S_{r_m}z) - r_m^{-1} \sum_{i = 1}^n \sum_{j,k = 1}^n \partial_{x_j,x_k} \phi^i(0) v_j v_k \alpha'_i \in \cP_2,
\end{align*}
and observe that the following identity holds true by the previous considerations:
\begin{align*}
-(\mathcal{L}_m P) (S_{r_m} z) - p_1(z) = O(r_m) + \tilde{p}_m(z).
\end{align*}

By the triangle inequality and \eqref{eq:counterex-RHS-bd}, we have
\begin{align*}
\Vert \tilde{p}_m \Vert_{L^{\infty}(H_1(0))} \le \Vert (\mathcal{L}_m P) (S_{r_m} \cdot) \Vert_{L^{\infty}(H_1(0))} + \Vert p_1 \Vert_{L^{\infty}(H_1(0))} + O(r_m) \le C.
\end{align*}
Therefore, $\tilde{p}_m \to \tilde{p} \in \cP_2$ as $m \to \infty$, and we deduce the following formula for $p$ from \eqref{eq:claim-convergence}:
\begin{align}
\label{eq:rep-p}
-(\mathcal{L}_m P) (S_{r_m} z) \to p(z) := p_1(z) + \tilde{p}(z).
\end{align}

\textbf{Step 3:} Conclusion. 
Let us assume that
\begin{align*}
&\sum_{\substack{i,j = 1 \\
i \not= j}}^n  \left[ \partial_{x_i,x_n} \phi^j(0) + \partial_{x_j,x_n} \phi^i(0) \right] \alpha_{i,j}  + 4 \sum_{i = 1}^n  \partial_{x_i,x_n} \phi^i(0) \alpha_{i,i} \not = 2\sum_{i = 1}^n \partial_{x_n,x_n} \phi^i(0) (\alpha_{i,n} + \alpha_{n,i}) \\
&\qquad \Leftrightarrow \qquad \sum_{j = 1}^n\sum_{i = 1}^{n-1} \partial_{x_i,x_n} \phi^j(0) (\alpha_{i,j} + \alpha_{j,i}) + 2 \sum_{i = 1}^n  \partial_{x_i,x_n} \phi^i(0) \alpha_{i,i} \not = \sum_{i = 1}^n \partial_{x_n,x_n} \phi^i(0) (\alpha_{i,n} + \alpha_{n,i}).
\end{align*}
Note that in this case, $p$ given in \eqref{eq:rep-p} violates 
\begin{align}
\label{eq:poly-condition}
p(z) = \alpha x_n + \beta v_n^3 ~~ \text{ for some } \alpha \not= -2\beta , ~~ \forall z = (0,0,x_n,0,v_n) \in \R^{1+2n}.
\end{align}
However, if \eqref{eq:poly-condition} is violated, then we can apply \autoref{thm:Liouville-higher-half-space-SR} to the equation for $P_5$ in \eqref{eq:P5} and deduce that $P_5 \not \in \cP_5$, a contradiction. Hence, we cannot have $f \in C^5_{\ell}$.

To summarize, we need the following assumptions on $\tilde{f}$ in order get a contradiction with $f \in C^5_{\ell}$:
\begin{align*}
\sum_{j = 1}^n\sum_{i = 1}^{n-1} \partial_{x_i,x_n} \phi^j(0) (\partial_{v_i,v_j}\tilde{f}(0) + \partial_{v_j,v_i} \tilde{f}(0)) & + 2\sum_{i = 1}^n  \partial_{x_i,x_n} \phi^i(0) \partial_{v_i,v_i} \tilde{f}(0) \\
&\not= \sum_{i = 1}^n \partial_{x_n,x_n} \phi^i(0) (\partial_{v_i,v_n} \tilde{f}(0) + \partial_{v_n,v_i} \tilde{f}(0)).
\end{align*}

Note that since $D\phi(0) = \mathrm{Id}_n$, we have $\partial_{v_i,v_j} \tilde{f}(0) = \partial_{v_i,v_j} f(0)$, and thus we can recast the above line as a condition on $f$, which is precisely \eqref{eq:counterex-condition}, as desired.
\end{proof}

\end{document}